\newcommand{\n}{\noindent}   
\newcommand{\vp}{\varepsilon}
\newcommand{\bb}[1]{\mathbb{#1}}
\newcommand{\cl}[1]{\mathcal{#1}}
\newcommand{\sst}{\scriptstyle}
\newcommand{\ovl}{\overline}
\theoremstyle{plain}
\newtheorem{lem}{Lemma}[section]
\newtheorem*{lm}{Lemma}
\newtheorem{pro}[lem]{Proposition}
\newtheorem*{prop}{Proposition}
\newtheorem{thm}[lem]{Theorem}
\newtheorem{cor}[lem]{Corollary}
\theoremstyle{definition}
\newtheorem*{defn}{Definition}
\newtheorem{prbl}[lem]{Problem}
\theoremstyle{remark}
\newtheorem{rem}[lem]{Remark}
\newtheorem*{rk}{Remark}
\numberwithin{equation}{section}
\begin{document}

\pagenumbering{roman}

\title{Complex Interpolation\\
 between\\ Hilbert, Banach and Operator spaces \\}

\author{by\\
Gilles Pisier\footnote{Partially supported by NSF grant 0503688 and   ANR-06-BLAN-0015}\\
Texas A\&M University\\
College Station, TX 77843, U. S. A.\\
and\\
Universit\'e Paris VI\\
Equipe d'Analyse, Case 186, 75252\\
Paris Cedex 05, France}

\date{}
\maketitle

\pagenumbering{arabic}
\setcounter{page}{1}

\n{\large\bf Abstract.} 
Motivated by a question of Vincent Lafforgue,
we study the Banach spaces $X$ satisfying the following property:\ there is a function $\vp\to \Delta_X(\vp)$ tending to zero with $\vp>0$ such that every operator $T\colon \ L_2\to L_2$ with $\|T\|\le \vp$ that is simultaneously contractive (i.e.\ of norm $\le 1$) on $L_1$ and on $L_\infty$ must be of norm $\le \Delta_X(\vp)$ on $L_2(X)$.
  We show that  $\Delta_X(\vp)\in O(\vp^\alpha)$ for some $\alpha>0$ iff  $X$ is isomorphic to a quotient of a subspace of an ultraproduct of $\theta$-Hilbertian spaces for some $ \theta>0$ (see Corollary \ref{comcor4.3}), where   $\theta$-Hilbertian is meant in a slightly more general sense than in our previous paper \cite{P1}. 
Let   $B_{{r}}(L_2(\mu))$ be the space of all regular operators on $L_2(\mu)$. We are able to describe the complex interpolation space
\[
(B_{{r}}(L_2(\mu)), B(L_2(\mu)))^\theta.
\]
We show that $T\colon \ L_2(\mu)\to L_2(\mu)$ belongs to this space iff $T\otimes id_X$ is bounded on $L_2(X)$ for any $\theta$-Hilbertian space $X$. 

More generally, we are able to describe the spaces
$$  (B(\ell_{p_0}), B(\ell_{p_1}))^\theta \ {\rm or}\  (B(L_{p_0}), B(L_{p_1}))^\theta $$
for any pair $1\le p_0,p_1\le \infty$ and $0<\theta<1$.
In the same vein, given a locally compact Abelian group $G$, let $M(G)$ (resp.\ $PM(G)$) be the space of complex measures (resp.\ pseudo-measures) on $G$ equipped with the usual norm  $\|\mu\|_{M(G)} = |\mu|(G)$ (resp.
\[
\|\mu\|_{PM(G)} = \sup\{|\hat\mu(\gamma)| \ \big| \ \gamma\in\widehat G\}).
\]
We describe similarly the interpolation space $(M(G), PM(G))^\theta$. Various extensions and variants of this result will be given, e.g.\ to Schur multipliers on $B(\ell_2)$ and to operator  spaces.

\bigskip\bigskip

MSC Class: 46B70, 47B10, 46M05, 47A80

\vfill\eject

\tableofcontents
\vfill\eject
\n{\large\bf Introduction.}
This paper is a contribution to the study of the complex interpolation method. The latter originates in 1927 with the famous Marcel Riesz theorem which says that, if $1\le p_0 < p_1\le\infty$, and if $(a_{ij})$ is a matrix of norm $\le 1$ simultaneously on $\ell^n_{p_0}$ and $\ell^n_{p_1}$, then it must be also of norm $\le 1$ on $\ell^n_p$ for any $p_0 < p < p_1$, and similarly for operators on $L_p$-spaces. Later on in 1938, Thorin found the most general form using a complex variable method; see \cite{BL} for more on this history.

Then around 1960, J.L.~Lions and independently A.~Calder\'on \cite{Cal} invented the complex interpolation method, which may be viewed as a far reaching ``abstract'' version of the Riesz--Thorin theorem, see \cite{BL,KaM}. There the pair $(L_{p_0}, L_{p_1})$ can be replaced by a pair $(B_0,B_1)$ of Banach spaces (assumed compatible in a suitable way). One then defines for any $0<\theta<1$ the complex interpolation space $B_\theta = (B_0,B_1)_\theta$ which appears as a continuous deformation of $B_0$ into $B_1$ when $\theta$ varies from 0 to 1. In many ways, the unit ball ${\cl B}_\theta$  of the space $B_\theta$ looks like the ``geometric mean'' of the respective unit balls ${\cl B}_0$ and ${\cl B}_1$ of $B_0$ and $B_1$, i.e.\ it seems to be the multiplicative analogue of the Minkowski sum $(1-\theta){\cl B}_0 + \theta {\cl B}_1$. The main result of this paper relates directly to the sources of interpolation theory:\ we give a description of the space $B_\theta = (B_0,B_1)_\theta$ when $B_0 = B(\ell^n_{p_0})$ and $B_1 = B(\ell^n_{p_1})$, or more generally for the pair $B_0 = B(L_{p_0}(\mu))$, $B_1 = B(L_{p_1}(\mu))$.

Although our description of the norm of $B_\theta$ for these pairs is, admittedly, rather ``abstract'' it shows that the problem of calculating $B_\theta$ is equivalent to the determination of a certain class of Banach spaces
\[
(SQ(p_0), SQ(p_1))_\theta
\]
roughly interpolated between the classes $SQ(p_0)$ and $SQ(p_1)$ where $SQ(p)$ denotes the class of subspaces of quotients (subquotients in short) of $L_p$-spaces. 

When $p_0=1$ or $=\infty$, this class $SQ(p_0)$ is the class of all Banach spaces while when $p_1=2$, $SQ(p_1)$ is the class of all Hilbert spaces. In that case, the class $(SQ(p_0), SQ(p_1))_\theta$ is the class of all the Banach spaces $B$ which can be written (isometrically) as $B = (B_0,B_1)_\theta$ for some compatible pair $(B_0,B_1)$ with
\[
 B_j \in SQ(p_j)\qquad (j=0,1).
\]
We already considered this notion in a previous paper \cite{P1}. There we called $\theta$-Hilbertian the resulting spaces. However, in the present context we need to slightly extend the notion of $\theta$-Hilbertian, so we decided to rename ``strictly $\theta$-Hilbertian'' the spaces called $\theta$-Hilbertian in \cite{P1}. In our new notion of ``$\theta$-Hilbertian'' we found it necessary to use the complex interpolation method for ``families'' $\{B_z\mid z\in \partial D\}$ 
defined on the boundary of a complex domain $D$ and not only pairs of Banach spaces

This generalization was developed around 1980 in a series of papers mainly by Coifman, Cwikel, Rochberg, Sagher, Semmes and Weiss (cf.\ \cite{CCRS1,CCRS2,CCRS3,Sem,CS}). There $\partial D$ can be the unit circle and, restricting to the $n$-dimensional case for simplicity, we may take $B_z = ({\bb C}^n, \|~~\|_z)$ where $\{\|~~\|_z \mid z\in \partial D\}$ is a measurable family of norms on ${\bb C}^n$ (with a suitable nondegeneracy). The interpolated spaces now consist in a family $\{B(\xi)\mid \xi \in D\}$ which extends the boundary data $\{B_z\mid z\in \partial D\}$ in a specific way reminiscent of the harmonic extension. When $B_z = \ell^n_{p(z)}$ with $1\le p(z)\le \infty$ $(z\in\partial D)$ one finds $B(\xi) = \ell^n_{p(\xi)}$ where $p(\xi)$ is determined by
\[
p(\xi)^{-1} = \int\limits_{\partial D} p(z)^{-1} \mu_\xi(dz)
\]
where $\mu_\xi$ is the harmonic (probability) measure of $\xi\in D$ relative to $\partial D$. 

Consider then an $n\times n$ matrix $a = [a_{ij}]$, let $\beta_z= B(\ell^n_{p(z)})$ for $z\in\partial D$ and let $\beta(\xi)$ $(\xi\in D)$ be the resulting interpolation space.

One of our main results is the equality
\begin{equation}\label{intro1} 
\|a\|_{\beta(\xi)} = \sup\{\|a_X\colon \ \ell^n_{p(\xi)}(X)\to \ell^n_{p(\xi)}(X)\}
\end{equation}
where $a_X$ is the matrix $[a_{ij}]$ viewed as acting on $X^n$ in the natural way and where the supremum runs over all the $n$-dimensional Banach spaces $X$ in the class ${\cl C}(\xi)$. The class ${\cl C}(\xi)$ consists of all the spaces $X$ which can be written as $X = X(\xi)$ for some (compatible) family $\{X(z)\mid z\in \partial D\}$ such that $X(z)\in SQ(p(z))$ for all $z$ in $\partial D$ and of all ultraproducts of such spaces.

By a sort of ``duality,'' this also provides us with a characterization of this class ${\cl C}(\xi)$, or more precisely
of the class of subspaces of quotients of spaces in  ${\cl C}(\xi)$: 
a Banach space $ X$ belongs to the latter class (resp.\ is $C$-isomorphic to a space in that class) iff for any $n$
\begin{equation}\label{intro2} 
 \sup_{\|a\|_{\beta(\xi)}\le 1} \|a_X\colon \ \ell^n_{p(\xi)}(X)\to \ell^n_{p(\xi)}(X)\|\le 1 \ \ ({\rm resp.}\ \le C).
\end{equation}

Consider for example the case when $p(z)$ takes only two values $p(z)=1$ and $p(z)=2$ with measure respectively $1-\theta$ and $\theta$. Then $\beta(0) = (B(\ell^n_1), B(\ell^n_2))_\theta$ and ${\cl C}(0)$ is the class of all the spaces which can be written as $X(0)$ for some boundary data $\partial D\ni z\longmapsto X(z)$ such that $X(z)$ is Hilbertian on a subset of normalized Haar measure $\ge\theta$ (and is Banach on the complement). We call these spaces $\theta$-Euclidean and we call $\theta$-Hilbertian all ultraproducts of $\theta$-Euclidean spaces of arbitrary dimension.

Actually, our result can be formulated in a more general framework:\ we give ourselves classes of Banach spaces $\{{\cl C}(z)\mid z\in \partial D\}$ with minimal assumptions and we set by definition
\[
\|a\|_{\beta(z)} = \sup_{X\in {\cl C}(z)} \|a_X\colon \ \ell^n_{p(z)}(X) \to \ell^n_{p(z)}(X)\|.
\]
Then \eqref{intro1} and \eqref{intro2} remain true with ${\cl C}(z)$ in the place of $SQ(p(z))$. In particular, we may now restrict to the case when $p(z) = 2$ for all $z$ in $\partial D$. Consider for instance the case when ${\cl C}(z) = \ell^n_2$ for $z$ in a subset (say an arc) of $\partial D$ of normalized Haar measure $\theta$ and let ${\cl C}(z)$ be the class of all $n$-dimensional Banach spaces on the complement. Then \eqref{intro1} yields a description of the space $(B_0,B_1)_\theta$ when $B_0,B_1$ is the following pair of normed spaces consisting of $n\times n$ matrices:
\begin{align*}
 \|a\|_{B_0} &= \|[|a_{ij}|]\|_{B(\ell^n_2)}\\
\|a\|_{B_1} &= \|[a_{ij}]\|_{B(\ell^n_2)}.
\end{align*}
More generally, if $B_1 = B(L_2(\mu))$ and if $B_0$ is the Banach space $B_r(L_2(\mu))$ of all regular operators $T$ on $L_2(\mu)$ (i.e.\ those $T$ with a kernel $(T(s,t))$ such that $|T(s,t)|$ is bounded on $L_2(\mu)$), then we are able to describe the space $(B_r(L_2(\mu)), B(L_2(\mu)))^\theta$. By \cite{Be} this also yields $(B_0,B_1)_\theta$ as the closure of $B_0\cap B_1$ in $(B_0,B_1)^\theta$. 

The origin of this paper is a question raised by Vincent Lafforgue:\ what are the Banach spaces $X$ satisfying the following property:\ there is a function $\vp\to \Delta_X(\vp)$ tending to zero with $\vp>0$ such that every operator $T\colon \ L_2\to L_2$ with $\|T\|\le \vp$ that is simultaneously contractive (i.e.\ of norm $\le 1$) on $L_1$ and on $L_\infty$ must be of norm $\le \Delta_X(\vp)$ on $L_2(X)$ ?

We show that  $\Delta_X(\vp)\in O(\vp^\alpha)$ for some $\alpha>0$ iff  $X$ is isomorphic to  a subspace of  a quotient of a $\theta$-Hilbertian space for some $ \theta>0$ (see Corollary \ref{comcor4.3}). We also give
a sort of structural, but   less
satisfactory, characterization of the spaces $X$ such that
$\Delta_X(\vp)\to 0$ when $\vp\to 0$ (see Theorem \ref{comthm7.3}).

V. Lafforgue's question
 is motivated by  
 the (still open) problem whether
expanding graphs can be coarsely embedded into uniformly convex Banach spaces; he observed that such an embedding is impossible
into $X$  if $\Delta_X(\vp)\to 0$ when $\vp\to 0$.
See \S \ref{expan}  for more on this.

The preceding results all have analogues in the recently developed theory of operator spaces (\cite{ER,P5}). Indeed, 
the author previously introduced and studied 
mainly in \cite{P6,P7} all the necessary ingredients, notably complex interpolation and operator space valued non-commutative $L_p$-spaces. With these tools, it is an easy task to check the generalized statements, so that we merely review them, giving only indications of proofs. In addition, in the last section, we include an  example
   hopefully   demonstrating  
 that 
interpolation of {\it families} (i.e. involving more than a pair)   of operator spaces, appears very naturally in   harmonic analysis on the free group.

Let us now describe the contents, section by section. In \S \ref{comsec1}, we review some background on regular operators. An operator $T$ on $L_p(\mu)$ is called regular if there is a positive operator $S$, still bounded on $L_p(\mu)$, such that
\begin{equation}
|Tf| \le S(|f|).\tag*{$\forall f\in L_p(\mu)$}
\end{equation}
The regular norm of $T$ is equal to the infimum of $\|S\|$. These operators can be characterized in many ways. They play an extremal role in Banach space valued analysis because they are precisely the operators on $L_p(\mu)$ that extend (with the same norm) to $L_p(\mu;X)$ for any Banach space $X$.

In \S \ref{comsec2} we use the fact that  regular operators on $L_p(\mu)$ $(1<p<\infty)$ with regular norm $\le 1$ are closely related (up to a change of density) to what we call fully contractive operators, i.e.\ operators that are of norm $\le 1$ simultaneously on $L_1$ and $L_\infty$.

This allows us to rewrite the definition of $\Delta_X(\vp)$ in terms of regular operators.

In \S \ref{comsec3}, we describe a certain duality between, on one hand, classes of Banach spaces, and on the other one, classes of operators on $L_p$. Although these ideas already  appeared (cf. \cite{Kw1,Kw2,He,Ju}), the viewpoint we emphasize was left sort of implicit. We hope to stimulate further research on the list of related questions that we present in this section.

In \S \ref{comsec4}, we present background on the complex interpolation method for families (or ``fields'') of Banach spaces. This was developed mainly by   Coifman, Cwikel, Rochberg, Sagher, Semmes, and Weiss cf. \cite{CCRS1,CCRS2,CCRS3,CS,Sem}. 

In \S \ref{comsec4bis}, we generalize the notion of $\theta$-Hilbertian Banach space from our previous paper \cite{P1}. We first call $\theta$-Euclidean any $n$-dimensional space which can be obtained as the interpolation space at the center of the unit disc $D$ associated to a family of $n$-dimensional spaces $\{X(z)\mid z\in \partial D\}$ such that $X(z)$ is Hilbertian for a set of $z$ with (Lebesgue) measure $\ge\theta$.
Then we call  $\theta$-Hilbertian
all ultraproducts of $\theta$-Euclidean spaces. In our previous definition (now called strictly $\theta$-Hilbertian), we only considered a two-valued family $\{X(z)\mid z\in \partial D\}$. We are then able to describe the interpolation space
\[
 (B_r, B)^\theta
\]
where $B_r$ and $B$ denote respectively the regular and the bounded operators on $\ell_2$. We then characterize the Banach spaces $X$ such that $\Delta_X(\vp)\in O(\vp^\alpha)$ for some $\alpha>0$ as the subspaces of quotients of $\theta$-Hilbertian spaces.

In \S \ref{comsec6}, we briefly compare our notion of $\theta$-Hilbertian with the corresponding ``arcwise'' one, where the set of $z$'s for which $X(z)$ is Hilbertian is required to be an arc.

In \S \ref{comsec55}, we turn to Fourier and Schur multipliers:\ we can describe analogously the complex interpolation spaces $(B_0,B_1)^\theta$ when $B_0$ (resp.\ $B_1$) is the space of measures (resp.\ pseudo-measures) on a locally compact Abelian group $G$ (and similarly on an amenable group). We also treat the case when $B_0$ (resp.\ $B_1$) is the class of bounded Schur multipliers on $B(\ell_2)$ (resp.\ on the Hilbert--Schmidt class $S_2$ on $\ell_2$). In the latter case, $B_1$ can be identified with 
the space of bounded functions on ${\bb N}\times {\bb N}$.

In \S \ref{comsec7}, we give a characterization of ``uniformly curved'' spaces, i.e.\ the Banach spaces $X$ such that $\Delta_X(\vp)\to 0$ when $\vp\to 0$. This appears as a real interpolation result, but  is less satisfactory than in the case $\Delta_X(\vp)\in O(\vp^\alpha)$ for some $\alpha>0$ and many natural questions remain open.

In \S \ref{comsec8}, we generalize an extension property of regular operators from \cite{P2} which may be of independent interest. See \cite{MN} for related questions. This result will probably be relevant if one tries, in analogy with \cite{Kw2}, to characterize the subspaces or the complemented subspaces of $\theta$-Hilbertian spaces.
In particular we could not distinguish any of the two latter classes from that
of subquotients of  $\theta$-Hilbertian spaces. The paper \cite{MoG} 
contains useful related information.   We should mention that an
  extension property similar to ours appears in \cite[1.3.2]{Ju}.

In \S \ref{comsec12}, we describe the complex interpolation spaces $(B_0,B_1)^\theta$ when $B_0 = B(L_{p_0}(\mu))$ and $B_1 = B(L_{p_1}(\mu))$ with $1\le p_0,p_1\le \infty$. Actually, the right framework seems to be here again the interpolation of families $\{B_z\mid z\in \partial D\}$ where $B_z = B(\ell^n_{p(z)})$. We treat this case and an even more general one related to the ``duality'' discussed in \S 3, see Theorem \ref{comthm12.1} for the most general statement.

In \S \ref{comsec9} and \S \ref{sec13}, we turn to the analogues of the preceding results in the operator space framework. There operators on $L_p(\mu)$ are replaced by mappings acting on ``non-commutative'' $L_p$-spaces associated to a trace. The main results are entirely parallel to the ones obtained in \S \ref{comsec4bis} and \S \ref{comsec12} in the commutative case.

Lastly, in \S \ref{comsec10}, we describe a family of operator spaces closely connected to various works on the ``non-commutative Khintchine inequalities'' for homogeneous polynomials of degree $d$ (see e.g.\ \cite{PaP}). Here we specifically need to consider a family $\{X(z)\mid z\in \partial D\}$ taking $(d+1)$-values but we are able to compute precisely the interpolation at the center of $D$ (or at any point inside $D$).

\section{Preliminaries. Regular operators}\label{comsec1}

Let $1\le p<\infty$ throughout this section. For operators on  $L_p$
it is well known that the notions of ``regular' and ``order bounded" coincide,
so we will simply use the term regular.  We refer to \cite{Mey,Sch} for general facts on this.
The results of this section are all essentially well known, we only recall a few short proofs
for the reader's convenience and to place them in the context that is relevant for us.

\subsection{}\label{comsec1.1}
We say that an operator $T\colon \ L_p(\mu) \to L_p(\nu)$ is regular if there is a constant $C$ such that for all $n$ and all $x_1,\ldots, x_n$ in $L_p(\mu)$ we have
\[
\|\sup|Tx_k|~\|_p \le C\|\sup|x_k|~\|_p.
\]
We denote by $\|T\|_{\text{reg}}$ the smallest $C$ for which this holds and by $B_r(L_p(\mu), L_p(\nu))$ (or simply $B_r(L_p(\mu))$ if $\mu=\nu$) the Banach space of all such operators equipped with the norm $\|~~\|_{\text{reg}}$.

Clearly this definition makes sense more generally for operators $T\colon \ \Lambda_1\to \Lambda_2$ between two Banach lattices $\Lambda_1,\Lambda_2$.

\subsection{}\label{comsec1.2}
It is known that $T\colon \ L_p(\mu)\to L_p(\nu)$ is regular iff $T\otimes id_X\colon \ L_p(\mu; X)\to L_p(\nu; X)$ is bounded for \emph{any} Banach space $X$ 
and \begin{equation}\|T\|_{\text{reg}}=\sup_{X} \|  T\otimes id_X\colon \ L_p(\mu; X)\to L_p(\nu; X)\| .\end{equation}
This assertion 
follows from the fact that any finite dimensional subspace $Y\subset X$ can be embedded almost isometrically into
$\ell_\infty^n$ for some large enough $n$. See \ref{comsec1.8} below.
The preceding definition corresponds to $\ell^n_\infty$ for all $n$, or equivalently to $X=c_0$. 

\n Actually, $T\colon \ L_p(\mu)\to L_p(\nu)$ is regular iff there is a constant $C$ such that for all $n$ and all $x_1,\ldots, x_n$ in $L_p(\mu)$ we have
\[
\|\sum|Tx_k|~\|_p \le C\|\sum|x_k|~\|_p,
\] and the smallest such $C$ is equal to $\|T\|_{\rm reg}$.
This follows from the fact that any finite dimensional space $X$ is almost isometric   to
a quotient of $\ell_1^n$ for some large enough $n$.

\subsection{}\label{comsec1.3} A (bounded) positive (meaning positivity preserving)  operator $T$ is regular and $\|T\|_{\text{reg}} = \|T\|$.
More precisely, it is a classical fact that $T$ is regular iff there is a bounded positive operator $S\colon \ L_p(\mu)\to L_p(\nu)$ (here $1\le p<\infty$) such that $|T(x)|\le S(|x|)$ for any $x$ in $L_p(\mu)$. Moreover, there is a smallest $S$ with this property, denoted by $|T|$, and we have:
\[
\|T\|_{\text{reg}} = \|~|T|~\|.
\]
In case $L_p(\mu) = L_p(\nu) = \ell_p$, the operator $T$ can be described by a matrix $T = [t_{ij}]$. Then
\[
|T| = [|t_{ij}|].
\]
Similarly, if $T$ is given by a nice kernel $(K(s,t))$ then $|T|$ corresponds to the kernel $(|K(s,t)|)$.

\subsection{}\label{comsec1.3bis}
In this context, although we will not use this, we should probably mention the following identities (see \cite{P1}) that are closely related to Schur's criterion for boundedness of a matrix on $\ell_2$ and its (less well known) converse:
\begin{align*}
(B(\ell^n_1), B(\ell^n_\infty))_\theta &= B_r(\ell^n_p, \ell^n_p)\\
(B(\ell_1), B(c_0))^\theta &= B_r(\ell_p,\ell_p).
\end{align*}
These are isometric isomorphisms with $p$ defined as usual by $p^{-1} = (1-\theta)$. 

More explicitly, a matrix $b = (b_{ij})$ is in the unit ball of $B_r(\ell^n_p)$ iff there are matrices $b^0$ and $b^1$ satisfying
\[
|b_{ij}| \le |b^0_{ij}|^{1-\theta} |b^1_{ij}|^\theta\]
and such that
\[
\sup_i \sum\nolimits_j |b^0_{ij}|\le 1\quad \text{and}\quad \sup_j \sum\nolimits_i |b^1_{ij}|\le 1.
\]
The ``if" direction boils down to Schur's well known classical criterion when $p=2$ (see  also   \cite{Ko}).

\subsection{}\label{comsec1.7}
We will now describe the unit ball of the dual of $B_r(\ell^n_2)$.
\begin{lem}\label{comlem1.6}
Consider an $n\times n$ matrix $\varphi = (\varphi_{ij})$. Then
\begin{equation}\label{comeq1.1}
\|\varphi\|_{B_r(\ell^n_2)^*} = \inf\left\{\left(\sum\nolimits^n_1 |x_i|^2 \sum\nolimits^n_1 |y_j|^2\right)^{1/2}\right\}
\end{equation}
where the infimum runs over all $x,y$ in $\ell^n_2$ such that
\begin{equation}
|\varphi_{ij}| \le |x_i|~|y_j|.\tag*{$\forall i,j$}
\end{equation}
\end{lem}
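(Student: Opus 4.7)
My plan is to reduce the computation of $\|\varphi\|_{B_r(\ell^n_2)^*}$ to a supremum over the cone $M_n^+$ of entry-wise nonnegative matrices via trace duality, and then prove equality with the asserted infimum by a Hahn--Banach separation argument on $M_n^+$.

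\emph{Reduction.} By the definition of the dual norm and the identity $\|T\|_{\rm reg}=\|\,[|T_{ij}|]\,\|_{B(\ell^n_2)}$ from \S\ref{comsec1.3}, which is unaffected when each entry $T_{ij}$ is rotated by a unimodular scalar, I choose those scalars so that $\varphi_{ij}T_{ij}=|\varphi_{ij}|\,|T_{ij}|$ for all $i,j$; setting $a_{ij}=|T_{ij}|$ this yields
\[
(\ast)\qquad \|\varphi\|_{B_r(\ell^n_2)^*} \;=\; \sup\Bigl\{\textstyle\sum_{i,j}|\varphi_{ij}|\,a_{ij}\;:\;a\in M_n^+,\ \|a\|_{B(\ell^n_2)}\le 1\Bigr\}.
\]

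\emph{Easy direction.} If $|\varphi_{ij}|\le|x_i||y_j|$ and $a\ge 0$ with $\|a\|_{B(\ell^n_2)}\le 1$, then
\[
\textstyle\sum_{i,j}|\varphi_{ij}|\,a_{ij}\le\sum_{i,j}|x_i|\,|y_j|\,a_{ij}=\langle a|y|,|x|\rangle\le\|a\|\,\|x\|_2\|y\|_2\le\|x\|_2\|y\|_2,
\]
so $\|\varphi\|_{B_r(\ell^n_2)^*}$ is bounded above by the infimum on the right-hand side of \eqref{comeq1.1}.

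\emph{Hard direction.} Let $\alpha:=\|\varphi\|_{B_r(\ell^n_2)^*}$ and
\[
B=\bigl\{\psi\in M_n^+:\psi\le xy^T\text{ for some }x,y\in\bb{R}^n_+\text{ with }\|x\|_2\|y\|_2\le 1\bigr\}.
\]
The crucial step is convexity of $B$: given $\psi^k\le x^k(y^k)^T$ with $\|x^k\|_2\|y^k\|_2\le 1$ for $k=0,1$, after the harmless rescaling making $\|x^k\|_2=\|y^k\|_2=r_k\le 1$, the vectors $x_i=((1-t)(x^0_i)^2+t(x^1_i)^2)^{1/2}$ and $y_j=((1-t)(y^0_j)^2+t(y^1_j)^2)^{1/2}$ satisfy $x_iy_j\ge(1-t)x^0_iy^0_j+tx^1_iy^1_j$ by Cauchy--Schwarz, and $\|x\|_2\|y\|_2=(1-t)r_0^2+tr_1^2\le 1$, so $(1-t)\psi^0+t\psi^1\in B$. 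Also $B$ is closed, contains $0$, and is a downset in $M_n^+$. Assume for contradiction $|\varphi|\notin\alpha B$; by the downset property, $|\varphi|$ also avoids the closed convex set $\alpha B-M_n^+\subset M_n(\bb{R})$, and Hahn--Banach yields $a\in M_n(\bb{R})$ with $\sum a_{ij}|\varphi_{ij}|>\sup_{\zeta\in\alpha B-M_n^+}\sum a_{ij}\zeta_{ij}$. The latter sup would be infinite unless $a\ge 0$ (consider $\zeta=-t\,e_ie_j^T$), and for $a\ge 0$ it equals
\[
\alpha\sup_{x,y\ge 0,\,\|x\|_2\|y\|_2\le 1}\langle ay,x\rangle=\alpha\|a\|_{B(\ell^n_2)},
\]
since a nonnegative matrix attains its operator norm on nonnegative vectors. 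This contradicts $(\ast)$, so $|\varphi|\in\alpha B$, giving ``$\ge$'' in \eqref{comeq1.1}.

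The main obstacle I expect is the convexity of $B$, not evident because the constraint $\psi\le xy^T$ is bilinear in $(x,y)$; it is resolved by the pointwise quadratic-mean / Cauchy--Schwarz trick above. A secondary technical point is extracting a nonnegative separating functional from Hahn--Banach, which is handled by the downset structure of $B$.
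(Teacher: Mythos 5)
Your proof is correct, and the crucial step --- the Cauchy--Schwarz trick showing that a set defined by the bilinear constraint $\psi\le xy^T$ (equivalently $|\varphi_{ij}|\le|x_i|\,|y_j|$) is in fact convex --- is exactly the one the paper uses. What differs is the duality packaging. The paper introduces the set $C=\{\varphi:|\varphi_{ij}|\le|x_i|\,|y_j|,\ \|x\|_{\ell^n_2}\le1,\ \|y\|_{\ell^n_2}\le1\}$, observes that $\|a\|_{B_r(\ell^n_2)}=\sup_{\varphi\in C}\left|\sum\varphi_{ij}a_{ij}\right|$, and then simply notes that the right-hand side of \eqref{comeq1.1} is the gauge of $C$; once $C$ is convex (and closed, balanced, absorbing) the gauge of $C$ is $\|\cdot\|_{B_r(\ell^n_2)^*}$ by the bipolar theorem, and the proof is done in a line. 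You instead first reduce to entrywise-nonnegative test matrices $a$ (using that $\|\cdot\|_{\text{reg}}$ depends only on $|a_{ij}|$, so the phases can be absorbed into $T$) and then run an explicit Hahn--Banach separation of $|\varphi|$ from $\alpha B-M_n^+$, exploiting the downset structure of $B$ to force the separating functional to be entrywise nonnegative. Both routes are valid; the paper's is shorter because it recognizes $C$ outright as the polar of the $B_r$-unit ball and lets the bipolar theorem do the separating, whereas yours is more self-contained and makes the nonnegativity visible throughout, at the modest cost of verifying that $B$ and $\alpha B - M_n^+$ are closed, that the separator is nonnegative, and that a nonnegative matrix attains its $\ell_2\to\ell_2$ norm on nonnegative vectors. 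All of these extra checks go through as you state them.
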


\begin{proof}
Let $C$ be the set of all $\varphi$ for which there are $x,y$ in the unit ball of $\ell^n_2$ such that $|\varphi_{ij}|\le |x_i|~|y_j|$. Clearly we have for all $a$ in $B(\ell^n_2)$ 
\[
\|a\|_{B_r(\ell^n_2)} = \|[|a_{ij}|]\| = \sup_{\varphi\in C} \left|\sum \varphi_{ij}a_{ij}\right|.
\]
Therefore, to prove the Lemma it suffices to check that $C$ is convex (since the right-hand side of \eqref{comeq1.1} is the gauge of $C$). This is easy to check:\ consider $\varphi,\varphi'$ in $C$ and $0<\theta<1$ then assuming
\[
|\varphi_{ij}| \le |x_i|~|y_j|\quad \text{and}\quad |\varphi'_{ij}|\le |x'_i|~|y'_j|
\]
with $x, y, x', y'$ all in the Euclidean unit ball, we have by Cauchy-Schwarz
\[
|(1-\theta)\varphi_{ij} + \theta\varphi'_{ij}| \le ((1-\theta) |x_i|^2 + \theta|x'_i|^2)^{1/2} ((1-\theta) |y_j|^2 + \theta|y'_j|^2)^{1/2},
\]
which shows that $(1-\theta)\varphi + \theta\varphi'$ is in $C$.
\end{proof}

Let $C$ be as above. Then $\varphi\in C$ iff there are $h_i,k_j$ in ${\bb C}^n$ such that $\varphi_{ij} = \langle h_i,k_j\rangle$ and
\[
\sum\|h_i\|^2_{\ell^n_1}\le 1,\qquad \sum \|k_j\|^2_{\ell^n_\infty} \le 1.
\]
Indeed, if this holds we can write
\[
|\varphi_{ij}| \le \sum\nolimits_m |h_i(m)|~|k_j(m)| \le \|h_i\|_{\ell^n_1} \|k_j\|_{\ell^n_\infty}
\]
from which $\varphi\in C$ follows. Conversely, if $\varphi\in C$, we may assume $\varphi_{ij} = x_iy_j\gamma_{ij}$ with $|\gamma_{ij}|\le 1$, $\|x\|_2 \le 1$, $\|y\|_2 \le 1$. Let $(e_m)$ denote the canonical basis of ${\bb C}^n$. Then, letting
\[
h_i = x_ie_i\quad \text{and}\quad k_j = y_j\quad \sum\nolimits_m \gamma_{mj}e_m
\]
we obtain the desired representation.

\subsection{}\label{comsec1.6}
The predual of $B(L_2(\mu), L_2(\mu'))$ is classically identified with the projective tensor product $L_2(\mu)\widehat\otimes L_2(\mu')$, i.e.\ the completion of the algebraic tensor product $L_2(\mu)\otimes L_2(\mu')$ with respect to the norm
\[
\|T\|_\wedge = \inf \sum\|x_m\| \|y_m\|
\]
where the infimum runs over all representations of $T$ as a sum $T = \Sigma x_m\otimes y_m$ of rank one tensors. Let $T(s,t) = \Sigma x_m(s) y_m(t)$ be the corresponding kernel in $L_2(\mu\times \mu')$. An easy verification shows that
\[
\|T\|_\wedge = \inf\{\|h\|_{L_2(\ell_2)} \|k\|_{L_2(\ell_2)}\}
\]
where the infimum runs over all $h,k$ in $L_2(\ell_2)$ such that $T(s,t) = \langle h(s), k(t)\rangle$.

 We now describe a predual of $B_r(L_2(\mu), L_2(\mu'))$. For any $T$ in $L_2(\mu) \otimes L_2(\mu')$, let 
\begin{equation}\label{comeq1.2}
N_r(T) = \inf\{\|x\|_2\|y\|_2\}
\end{equation}
where the infimum runs over all $x$ in $L_2(\mu)$ and all $y$ in $L_2(\mu')$ such that
\[
|T(s,t)|\le x(s)y(t)
\]
for almost all $s,t$. Equivalently, we have
\begin{equation}\label{comeq1.3}
N_r(T) = \inf\left\{\left\|\sum\nolimits^n_1 |h_i|\right\|_2 \|\sup|k_j|\|_2\right\} = \inf\{\|h\|_{L_2(\ell^n_1)} \|k\|_{L_2(\ell^n_\infty)}\}
\end{equation}
where the infimum runs over all $n$ and all $h = (h_1,\ldots, h_n)$ $k = (k_1,\ldots, k_n)$ in $(L_2)^n$ such that
\begin{equation}\label{comeq1.4}
T(s,t) = \sum\nolimits^n_1 h_i(s) k_i(t).
\end{equation}
Indeed, it is easy to show  that the right-hand sides of both \eqref{comeq1.2} and \eqref{comeq1.3} are convex functions of $T$ and moreover 
(recalling \ref{comsec1.1}, \ref{comsec1.2} and \ref{comsec1.3}) that for any $b$ in $B_r(L_2(\mu), L_2(\mu'))$
\[
\|b\|_{\text{reg}} = \sup\{|\langle b,T\rangle|\}
\]
where the supremum runs over $T$ such that the right-hand side of either \eqref{comeq1.2} or \eqref{comeq1.3} is $\le 1$. This implies that \eqref{comeq1.2} and \eqref{comeq1.3} are equal. Let $L_2(\mu)\widehat\otimes_r L_2(\mu')$ be the completion of $L_2(\mu)\otimes L_2(\mu')$ with respect to this norm. Then there is an isometric isomorphism
\[
(L_2(\mu) \widehat\otimes_r L_2(\mu'))^* \simeq B_r(L_2(\mu), L_2(\mu'))
\]
associated to the duality pairing
\begin{equation}
\langle b,x\otimes y\rangle = \langle b(x), y\rangle\tag*{$\forall b\in B_r(L_2(\mu), L_2(\mu'))$}
\end{equation}

\subsection{}\label{comsec1.8}
More generally,  a predual of $B_r(L_p(\mu), L_p(\mu'))$ can be obtained as the completion of $L_{p'}(\mu) \otimes L_{p}(\mu')$ for the norm
\begin{equation}\label{comeq1.5}
\forall T\in L_{p'}(\mu')\otimes L_{p}(\mu)\qquad\qquad N_r(T) = \inf\left\{\left\| \sum\nolimits^n_1 |f_i|\right\|_{p'} \|\sup_{i\le n}|g_i|\|_{p}\right\}~~~~~~~~~~~~~~~~~~ 
\end{equation}
where the supremum runs  over all decompositions of the kernel of $T$ as $T(s,t) = \sum^n_1 f_i(s)g_i(t)$. To verify that \eqref{comeq1.5} is indeed a norm, we will first show that \eqref{comeq1.5} coincides with
\begin{equation}\label{comeq1.6}
M_r(T) = \inf\{\|\xi\|_{L_{p'}(Y^*)} \|\eta\|_{L_{p}(Y)}\}
\end{equation}
where the infimum runs over all finite dimensional normed spaces $Y$ and all pairs $(\xi,\eta)\in L_{p'}(\mu';Y^*)\times L_{p}(\mu,Y)$ such that $T(s,t) = \langle\xi(s), \eta(t)\rangle$.

Clearly $M_r(T) \le N_r(T)$. Conversely, given $Y$ as in \eqref{comeq1.6}, for any $\vp>0$ there is $n$ and an embedding $j\colon \ Y\to \ell^n_\infty$ such that $\|y\| \le \|j(y)\| < (1+\vp)\|y\|$ for all $y$ in $Y$. Let $(\xi,\eta)$ be as in \eqref{comeq1.6}. Let $\hat\eta = j\eta\in L_{p}(\ell^n_\infty)$. Note $\|\hat\eta\|\le (1+\vp)\|\eta\|$. Let $q=j^*\colon \ \ell^n_1\to Y^*$ be the corresponding surjection. By an elementary lifting, there is $\hat\xi$ in $L_{p'}(\ell^n_1)$ with $\|\hat\xi\|\le (1+\vp)\|\xi\|$ such that $\xi =q\hat \xi$.

We have then $T(s,t) = \langle\xi(s), \eta(t)\rangle = \langle q\hat\xi(s), \eta(t)\rangle = \langle\hat\xi(s), \hat\eta(t)\rangle$ and $\|\hat\xi\|_{L_{p'}(\ell^n_1)} \|\hat\eta\|_{L_{p}(\ell^n_\infty)}\le$ $(1+\vp)^2 \|\xi\|_{L_{p'}(Y^*)} \|\eta\|_{L_{p}(Y)}$. Thus we conclude that $M_r(T) \le N_r(T)$ and hence $M_r(T) = N_r(T)$.

To check that $N_r$ is a norm, we will prove it for $M_r$. This is very easy. Consider $T_1,T_2$ with $M_r(T_j) < 1$, $(j=1,2)$ and let $0\le \theta\le 1$. We can write 
\[
T_1(s,t) = \langle\xi_1(s), \eta_1(t)\rangle
\]
$$T_2(s,t) = \langle\xi_2(s), \eta_2(t)\rangle$$ with $(\xi_j,\eta_j) \in L_p(Y_j) \times L_{p'}(Y^*_j)$. Then $$(1-\theta) T_1(s,t) + \theta T_2(s,t) = \langle\xi(s), \eta(t)\rangle$$ where $(\xi,\eta) \in L_p(Y) \times L_{p'}(Y^*)$ with
\begin{alignat*}{2}
Y &= Y_1\oplus_p Y_2 &\qquad Y^* &= Y^*_1 \oplus_{p'} Y^*_2\\
\xi &= ((1-\theta)^{1/p}\xi_1 \oplus \theta^{1/p}\xi_2), &\qquad \eta &= ((1-\theta)^{1/{p'}} \eta_1 \oplus \theta^{1/{p'}}\eta_2).
\end{alignat*}
We conclude that
\[
M_p(T_1+T_2) \le \|\xi\|_{L_p(Y)} \|\eta||_{L_{p'}(Y^*)} \le 1.
\]
Now that we know that \eqref{comeq1.5} is indeed a norm, it is clear (either by \ref{comsec1.1} or \ref{comsec1.2}) that the completion $L_p(\mu) \widehat\otimes_r L_{p'}(\mu')$ of $(L_p(\mu) \otimes L_{p'}(\mu), N_r)$ is isometrically a predual of $B_r(L_p(\mu), L_{p'}(\mu')$).

\subsection{}\label{comsec1.9} 
 We refer e.g. to \cite{P00} for more information and references
  on all this subsection (see also \cite{ER,P5} for the operator space analogue). The original ideas  
  can  be traced back to \cite{Gr1}.

\n An operator $v\colon\ E\to F$ between Banach spaces is called nuclear if it can be written as an absolutely convergent series of rank one operators, i.e.
there are $x^*_n\in E^*$, $y_n\in F$ with
$\sum \|  x^*_n  \|  \|  y_n\|<\infty$ such that
$$v(x)=\sum \langle x^*_n ,x\rangle y_n\quad \forall x\in E.$$
The space of such maps is denoted by $N(E,F)$.
 The nuclear norm $N(v)$ is defined
as
$$ N(v)=\inf  \sum \|  x^*_n  \|  \|  y_n\|,$$
where the infimum runs over all possible such representations of $v$.
Equipped with this norm,
  $N(E,F)$ is a Banach space. 
  
If   $E$ and $F$ are finite dimensional, it is well known that  we have isometric identities
$$B(E,F)^*=N(F,E)\quad\text{and}\quad
N(E,F)^*=B(F,E)$$
with respect to the duality defined
for $u:\ E\to F$ and $v:\ F\to E$ by
$$\langle u,v\rangle=\text{tr}(uv).$$

We will denote by $\Gamma_H (E,F)$ the set of operators $u\colon \ E\to F$ that factorize through a Hilbert space, i.e.\ there are bounded operators $u_1\colon \ H\to F$, $u_2\colon \ E\to H$ such that $u=u_1u_2$. We equip this space with the norm $\gamma_H(.)$ defined by
\[
\gamma_H(u) = \inf\{\|u_1\| \ \|u_2\|\}
\]
where the infimum runs over all such factorizations.
    
We will denote by $\gamma^*_H(.)$ the  norm that is dual to $\gamma_H(.)$ in the above duality, i.e. for all $v\colon\ F\to E$ we set
$$\gamma^*_H(v)=\sup \{ | \text{tr}(uv)  |\mid u\in \Gamma_H (E,F), \gamma_H(u)\le 1.\}$$

\begin{pro}\label{compro7.2}
Consider $v\colon \ \ell^n_\infty\to \ell^n_1$.
\begin{itemize}
\item[\rm (i)] $\gamma^*_H(v)\le 1$ iff there are $\lambda,\mu$ in the unit ball of $\ell^n_2$ and $(a_{ij})$ in the unit ball of $B(\ell^n_2)$ such that $v_{ij} = \lambda_ia_{ij}\mu_j$.
\item[\rm (ii)] $N(v)\le 1$ iff there are $\lambda',\mu'$ in the unit ball of $\ell^n_2$ and $(b_{ij})$ in the unit ball of $B_r(\ell^n_2)$ such that
$
v_{ij} = \lambda'_ib_{ij}\mu'_j.
$
\end{itemize}
\end{pro}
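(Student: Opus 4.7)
I would prove both parts along the same template: express the dual norms $\gamma^*_H$ and $N$ via trace duality as suprema over test operators, then identify the set of $v$'s admitting the claimed decomposition as the corresponding unit ball. The ``$\Leftarrow$'' direction in each case is a direct tracing inequality; the converse in (ii) is an explicit construction, while the converse in (i) requires a separation argument tied to the Schur-multiplier characterization of Hilbertian factorization.

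For part (ii), trace duality combined with $\|u\colon \ell^n_1\to \ell^n_\infty\|=\max_{ij}|u_{ji}|$ gives $N(v)=\sum_{ij}|v_{ij}|$. If $v_{ij}=\lambda'_ib_{ij}\mu'_j$ with all three unit-ball constraints, then using $\|b\|_{B_r(\ell^n_2)}=\||b|\|_{B(\ell^n_2)}$ and Cauchy--Schwarz,
\[
\sum_{ij}|v_{ij}|=\langle|\lambda'|,|b|\,|\mu'|\rangle\le\||\lambda'|\|_2\,\||b|\|_{B(\ell^n_2)}\,\||\mu'|\|_2\le 1.
\]
Conversely, assuming $\sum|v_{ij}|\le 1$ I would set $\lambda'_i=(\sum_j|v_{ij}|)^{1/2}$, $\mu'_j=(\sum_i|v_{ij}|)^{1/2}$, and $b_{ij}=v_{ij}/(\lambda'_i\mu'_j)$ (with an $\varepsilon$-perturbation of $v$ if needed to avoid $0/0$). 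The identities $\|\lambda'\|_2^2=\|\mu'\|_2^2=\sum|v_{ij}|\le 1$ are immediate; applying Schur's test with weights $p_i=\lambda'_i$ and $q_j=\mu'_j$ yields $\sum_j|b_{ij}|q_j=p_i$ and $\sum_i|b_{ij}|p_i=q_j$, whence $\||b|\|_{B(\ell^n_2)}\le 1$.

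The ``$\Leftarrow$'' direction of (i) runs in parallel but uses the Hilbertian factorization. For any $u$ with $\gamma_H(u)\le 1$, write $u_{ji}=\langle r_j,c_i\rangle$ with $\sup_j\|r_j\|\le 1$, $\sup_i\|c_i\|\le 1$. Then $\text{tr}(uv)=\sum_{ij}a_{ij}M_{ij}$ where $M_{ij}=\lambda_i\mu_j\langle r_j,c_i\rangle$. Writing $M=\Lambda^*R$ with the $i$-th column of $\Lambda$ equal to $\lambda_ic_i$ and the $j$-th column of $R$ equal to $\mu_jr_j$, one computes $\|\Lambda\|_{S_2}^2=\sum|\lambda_i|^2\|c_i\|^2\le 1$ and similarly $\|R\|_{S_2}\le 1$, hence $\|M\|_{S_1}\le 1$. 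Trace duality between $B(\ell^n_2)$ and $S_1(\ell^n_2)$ now gives $|\text{tr}(uv)|\le\|a\|_{B(\ell^n_2)}\|M\|_{S_1}\le 1$, so $\gamma^*_H(v)\le 1$.

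For the converse of (i), let $K=\{D_\lambda aD_\mu: \|\lambda\|_2,\|\mu\|_2\le 1,\ \|a\|_{B(\ell^n_2)}\le 1\}$. The previous paragraph shows $K\subseteq\{v:\gamma^*_H(v)\le 1\}$; the missing inclusion $\{\gamma^*_H\le 1\}\subseteq\overline{\mathrm{co}}(K)$ will follow from the bipolar theorem once one identifies the polar $K^\circ$ (in the trace pairing) with $\{u:\gamma_H(u)\le 1\}$. Computing $\sup_{k\in K}|\text{tr}(uk)|$ by first varying $a$ with $\|a\|_{B(\ell^n_2)}\le 1$ produces $\sup_{\|\lambda\|_2,\|\mu\|_2\le 1}\|D_\lambda u^T D_\mu\|_{S_1}$; reversing the order of suprema transforms the same quantity into $\sup_{\|a\|_{B(\ell^n_2)}\le 1}\|a\circ u^T\|_{B(\ell^n_2)}=\|u^T\|_{\mathrm{mult}}$, the Schur-multiplier norm of $u^T$. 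The classical Grothendieck--Haagerup description $\|M\|_{\mathrm{mult}}=\inf\{\sup_i\|x_i\|\sup_j\|y_j\|:M_{ij}=\langle x_i,y_j\rangle\}$ matches term-by-term the $\gamma_H$-factorization formula for $u\colon\ell^n_1\to\ell^n_\infty$, so $\|u^T\|_{\mathrm{mult}}=\gamma_H(u)$ and the proof closes. The main obstacle is precisely this last identity; once it is granted, the remainder is pure trace manipulation.
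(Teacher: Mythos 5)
Your proof of (ii) is essentially the paper's own argument: you use the identical weights $\lambda'_i=(\sum_j|v_{ij}|)^{1/2}$, $\mu'_j=(\sum_i|v_{ij}|)^{1/2}$ and the same normalized matrix $b_{ij}=v_{ij}/(\lambda'_i\mu'_j)$, and you bound $\|b\|_{\text{reg}}$ via Schur's test with exactly the weights the paper uses (the paper routes this through the interpolation identity of \ref{comsec1.3bis}, which at $p=2$ is Schur's criterion). You additionally write out the easy ``if'' direction, which the paper leaves implicit. This part is correct and matches.

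For (i) the paper simply cites a classical fact without proof, whereas you attempt a full derivation. Your route --- trace duality, the order-of-suprema swap that identifies $\sup_{k\in K}|\mathrm{tr}(uk)|$ with the Schur-multiplier norm of $u$, and the Grothendieck--Haagerup identity $\|u\|_{\mathrm{mult}}=\gamma_H(u\colon\ell^n_1\to\ell^n_\infty)$ --- is a legitimate way to recover the cited result, and the computations check out modulo transpose/conjugation bookkeeping. There is, however, a real gap at the end. You establish
\[
K\subseteq\{v\colon\gamma^*_H(v)\le 1\}\subseteq\overline{\mathrm{co}}(K),
\]
and declare the proof closed, but the proposition asserts $K=\{\gamma^*_H\le 1\}$, not merely that the two sets have the same closed convex hull. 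To close the argument you must know that $K$ is itself convex. This is true but not obvious from the definition: given $D_{\lambda^1}a^1D_{\mu^1}$ and $D_{\lambda^2}a^2D_{\mu^2}$, one sets $\lambda_i=((1-\theta)|\lambda^1_i|^2+\theta|\lambda^2_i|^2)^{1/2}$, similarly for $\mu$, factors the convex combination through the block operator $\mathrm{diag}(a^1,a^2)$ on $\ell^{2n}_2$, and recompresses to a contraction on $\ell^n_2$ via the isometries $D_\lambda^{-1}\bigl(\sqrt{1-\theta}D_{\lambda^1}\ \sqrt{\theta}D_{\lambda^2}\bigr)$, etc. This is the same Cauchy--Schwarz convexity argument the paper runs in Lemma~\ref{comlem1.6} for the $B_r(\ell^n_2)^*$ description, so the fix is short; but as written, your proof of (i) is incomplete without it.
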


\begin{proof}
(i) is a classical fact (cf.\ e.g.\ \cite[Prop.~5.4]{P4}). To verify (ii), note that $N(v) = \sum\limits_{ij}|v_{ij}|$. Assume $N(v)=1$. Let then $\lambda'_i = (\sum_j |v_{ij}|)^{1/2}$ and $\mu'_j = \Big(\sum\limits_i |v_{ij}|\Big)^{1/2}$ and $b_{ij}=v_{ij} (\lambda'_i \mu'_j)^{-1}$. We have then (with the convention $\frac00=0$)
\[
|b_{ij}|\le |b^0_{ij}|^{1/2} |b^1_{ij}|^{1/2}
\]
with $b^0_{ij} = |v_{ij}|(\sum_j|v_{ij}|)^{-1}$ and $b^1_{ij} = |v_{ij}| (\sum_i |v_{ij}|)^{-1}$. Since $\sup\limits_i \sum_j |b^0_{ij}| \le 1$ and $\sup\limits_j \sum_i|b^1_{ij}|\le 1$, by \ref{comsec1.3bis} we have $\|b\|_{\text{reg}} \le 1$.
\end{proof}

\begin{pro}\label{compro7.3}
 Consider $\varphi\colon \ \ell^n_2\to \ell^n_2$.
\begin{itemize}
 \item[\rm (i)] $\|\varphi\|_{B(\ell^n_2)^*} \le 1$ iff there are $\lambda,\mu$ in the unit ball of $\ell^n_2$ and $v\colon \ \ell^n_1\to \ell^n_\infty$ with $\gamma_H(v)\le 1$ such that $\varphi_{ij} = \lambda_i v_{ij}\mu_j$ for all $i,j$.
\item[\rm (ii)] $\|\varphi\|_{B_r(\ell^n_2)^*} \le 1$ iff there are $\lambda,\mu$ in the unit ball of $\ell^n_2$ and $v\colon \ \ell^n_1\to \ell^n_\infty$ with $\|v\|\le 1$ such that $\varphi_{ij} = \lambda_iv_{ij}\mu_j$ for all $i,j$.
\end{itemize}
\end{pro}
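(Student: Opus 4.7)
The plan is to treat (ii) as an immediate consequence of Lemma~\ref{comlem1.6} and to prove (i) by combining Proposition~\ref{compro7.2}(i) with an explicit singular value decomposition.

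For (ii), I use that $\|v\colon\ell^n_1\to\ell^n_\infty\|=\max_{i,j}|v_{ij}|$. Thus the factorization $\varphi_{ij}=\lambda_iv_{ij}\mu_j$ with $\|v\|\le 1$ and $\|\lambda\|_2,\|\mu\|_2\le 1$ is exactly a representation $|\varphi_{ij}|\le|\lambda_i||\mu_j|$ with $\lambda,\mu$ in the Euclidean unit ball, which by Lemma~\ref{comlem1.6} is precisely $\|\varphi\|_{B_r(\ell^n_2)^*}\le 1$. The easy direction of (i) runs parallel: given $\varphi_{ij}=\lambda_iv_{ij}\mu_j$ and any $a\in B(\ell^n_2)$, I set $\tilde a_{ij}=\lambda_ia_{ij}\mu_j$ and apply Proposition~\ref{compro7.2}(i) to obtain $\gamma^*_H(\tilde a)\le\|\lambda\|_2\|\mu\|_2\|a\|$. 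Since $\sum_{ij}a_{ij}\varphi_{ij}=\sum_{ij}\tilde a_{ij}v_{ij}$ is, modulo one transposition (which leaves $\gamma_H$ and $\gamma^*_H$ invariant), the trace pairing for which $\gamma_H$ and $\gamma^*_H$ are dual, this sum is bounded by $\gamma_H(v)\gamma^*_H(\tilde a)\le\|\lambda\|_2\|\mu\|_2\gamma_H(v)\|a\|$. Taking the supremum over $\|a\|\le 1$ yields $\|\varphi\|_{B(\ell^n_2)^*}\le\|\lambda\|_2\|\mu\|_2\gamma_H(v)$.

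For the ``only if'' direction of (i), I use that $\|\varphi\|_{B(\ell^n_2)^*}$ is the trace-class norm of $\varphi$ and take the SVD $\varphi_{ij}=\sum_ks_ke_k(i)\overline{f_k(j)}$ with $\{e_k\},\{f_k\}$ orthonormal and $\sum_ks_k\le 1$. I define
\[
\lambda_i=\Bigl(\sum\nolimits_ks_k|e_k(i)|^2\Bigr)^{1/2},\qquad\mu_j=\Bigl(\sum\nolimits_ks_k|f_k(j)|^2\Bigr)^{1/2},
\]
so that $\|\lambda\|_2^2=\|\mu\|_2^2=\sum_ks_k\le 1$, and set $v_{ij}=\varphi_{ij}/(\lambda_i\mu_j)$ (with the convention $0/0=0$). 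The core claim is that $v\colon\ell^n_1\to\ell^n_\infty$ then factors through the Hilbert space $\ell^n_2$ as $v=v_2v_1$, where
\[
v_1(e_j)_k=s_k^{1/2}\overline{f_k(j)}/\mu_j,\qquad v_2(w)_i=\sum\nolimits_k w_k s_k^{1/2}e_k(i)/\lambda_i.
\]
The choice of $\mu_j$ forces $\|v_1(e_j)\|_2=1$, whence $\|v_1\colon\ell^n_1\to\ell^n_2\|\le 1$; Cauchy--Schwarz against the choice of $\lambda_i$ gives $\|v_2\colon\ell^n_2\to\ell^n_\infty\|\le 1$; and a direct computation using the SVD shows $v_2v_1=v$. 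Hence $\gamma_H(v)\le 1$.

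The main obstacle is this SVD construction: one must calibrate $\lambda$ and $\mu$ so that simultaneously $\|\lambda\|_2,\|\mu\|_2\le 1$ and the residual $v$ admits a norm-one Hilbert-space factorization. The weighting by $s_k^{1/2}$ in the definitions of $v_1$ and $v_2$ is exactly what makes both the $\ell^n_1\to\ell^n_2$ and $\ell^n_2\to\ell^n_\infty$ bounds tight. Everything else reduces to duality bookkeeping on top of Lemma~\ref{comlem1.6} and Proposition~\ref{compro7.2}(i).
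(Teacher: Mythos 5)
Your proof is correct and follows essentially the same route as the paper: the SVD is precisely the factorization $\varphi=\varphi_1\varphi_2$ into two Hilbert--Schmidt halves used there, and your choice of $\lambda_i=\|\varphi_1^*e_i\|_2$, $\mu_j=\|\varphi_2 e_j\|_2$ with the residual $v$ factoring contractively through $\ell_2$ is exactly the paper's normalization. The only cosmetic difference is in the easy direction, where the paper bounds the trace-class norm directly by $\|D_\lambda v_1\|_{HS}\,\|v_2 D_\mu\|_{HS}$ rather than routing through the $\gamma_H/\gamma^*_H$ duality and Proposition~\ref{compro7.2}(i).
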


\begin{proof}
  (i) If $\varphi$ factors as indicated we have $v=v_1v_2$ with $v_1\colon \ H\to \ell^n_\infty$ and $v_2\colon\ \ell^n_1\to H$ such that $\|v_1\| \|v_2\|\le 1$.
\n Let $D_\lambda$ and $D_\mu$ denote the diagonal operators with coefficients $(\lambda_i)$ and $(\mu_j)$. We have then $\varphi = D_\lambda v_1v_2D_\mu$, hence using the Hilbert--Schmidt norm $\|\cdot\|_{HS}$ we find that $\|\varphi\|_{B(\ell^n_2)^*}$ (which is the trace class norm of $\varphi$) is $\le\|D_\lambda v_1\|_{HS} \|v_2D_\mu\|_{HS} \le 1$. Conversely, if the trace class norm of $\varphi$ is $\le 1$, then for some $H$ Hilbert (actually $H=\ell^n_2$) we can write $\varphi = \varphi_1\varphi_2$, $\varphi_2\colon \ \ell^n_2\to H$ and $\varphi_1\colon \ H\to \ell^n_2$ such that $\|\varphi_1\|_{HS} \|\varphi_2\|_{HS}\le 1$. Let $v_2\colon \ \ell^n_1\to H$ and $v_1\colon \ H\to \ell^n_\infty$ be the maps defined by $v_2e_j = (\varphi_2e_j) \|\varphi_2e_j\|^{-1}$ and $v^*_1e_i = (\varphi_1^*e_i)\|\varphi_1^*e_i\|^{-1}$. Note that $\|v_1\| = \|v_2\|=1$. Let $v=v_1v_2$ and $\lambda_i = \|\varphi_1^*e_i\|$, $\mu_j = \|\varphi_2 e_j\|$. We have then $\|v\|\le 1$ and $\varphi_{ij} =\langle \varphi e_j,e_i\rangle= \lambda_i v_{ij}\mu_j$, which verifies (i).

(ii) By Lemma \ref{comlem1.6}, $\|\varphi\|_{B_r(\ell^n_2)^*}\le 1$ iff there are $\lambda,\mu$ in the unit ball of $\ell^n_2$ and $v\colon \ \ell^n_1\to \ell^n_\infty$ with $\|v\| = \sup\limits_{ij} |v_{ij}| \le 1$ such that $ 
 \varphi_{ij} = \lambda_iv_{ij}\mu_j. $
\end{proof}

\subsection{}\label{comsec1.10}

In the sequel, we will invoke several times ``a measurable selection argument.'' Each time, the following well known fact will be sufficient for our purposes.
Consider a continuous surjection
$f\colon \ K \to L$ from a compact metric space $K$ onto another one $L$. Then
there is a Borel measurable map $g\colon \ L \to K$ lifting $f$, i.e.  such that $f\circ g$
is the identity on $L$.
This (now folkloric) fact   essentially goes back to von Neumann. 
 The references \cite[p. 9]{JR} or \cite[chap. 5]{Sr} contain considerably more sophisticated results.

\subsection{}\label{comsec1.11} 
 Throughout this memoir (at least until we reach \S \ref{comsec9}), given an operator $T\colon \ L_p(\mu)\to L_p(\nu)$ such that $T\otimes id_X$ extends to a bounded operator from $L_p(\mu;X)$ to $L_p(\nu;X)$, we will denote for short by
\[
T_X\colon \ L_p(\mu;X)\to L_p(\nu;X)
\]
the resulting operator. In \S \ref{comsec9}, this notation will be extended to the non-commutative setting.

\section{Regular and fully contractive operators}\label{comsec2}

For short we say that an operator $T\colon \ L_p(\mu)\to L_p(\nu)$, $(1<p<\infty)$ is ``fully contractive'' if $T$ is a contraction from $L_q(\mu)$ to $L_q(\nu)$ simultaneously for $q=1$ and $q=\infty$ (and hence for all $1\le q\le \infty$ by interpolation).

It is known (and easy to show) that this implies $\|T\|_{\text{reg}}\le 1$. Indeed, since for $T\colon \ L_q(\mu)\to L_q(\nu)$ we have $\|T\| = \|T\|_{\text{reg}}$ both for $q=1$ and $q=\infty$, it follows by interpolation that $\|T\colon\ L_p(\mu)\to L_p(\nu)\|_{\text{reg}}\le 1$. 

In the next statement, we use a change of density argument to replace fully contractive operators by regular ones in the definition of $\Delta_X$. This kind of argument (related to the classical  ``Schur test" and its converse) is well known, see e.g. \cite{Ko,JJ,We}.

\begin{pro}\label{compro2.1}
Let $X$ be a Banach space. Fix $0<\vp<1$ and $\delta>0$. The following are equivalent:
\begin{itemize}
\item[\rm (i)] For any $n$ and any $n\times n$ matrix $T = [a_{ij}]$ with $\|T\colon \ \ell^n_2\to \ell^n_2\| \le \vp$ and such that $\|T\colon \ \ell^n_2\to \ell^n_2\|_{\text{reg}}\le 1$, we have $\|T_X\colon \ \ell^n_2(X) \to \ell^n_2(X)\|\le \delta$.
\item[\rm (i)$'$] For any $n$ and any $n\times n$ matrix $T = [a_{ij}]$ with $\|T\colon \ \ell^n_2 \to \ell^n_2\|\le\vp$ which is fully contractive (i.e.\ contractive on $\ell^n_q$ for all $1\le q\le\infty$), we have
\[
\|T_X\colon \ \ell^n_2(X) \to \ell^n_2(X)\|\le \delta.
\]
\item[\rm (ii)] For any measure spaces $(\Omega,\mu)$, $(\Omega',\mu')$, for any regular operator $T\colon \ L_2(\mu)\to L_2(\mu')$ with $\|T\|_{\text{reg}}\le 1$ and $\|T\|\le\vp$, we have $\|T_X\colon\ L_2(X) \to L_2(X)\|\le\delta$.
\item[\rm (ii)$'$] For any $(\Omega,\mu), (\Omega',\mu')$ and any fully contractive $T\colon \ L_2(\mu) \to L_2(\mu')$ with $\|T\| \le \vp$ we have $\|T_X\colon \ L_2(X)\to L_2(X)\|\le \delta$.
\end{itemize}
\end{pro}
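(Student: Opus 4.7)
Several implications are immediate. Since every fully contractive operator automatically has $\|\cdot\|_{\text{reg}}\le 1$ (as noted at the start of \S\ref{comsec2}), (i)$\Rightarrow$(i)$'$ and (ii)$\Rightarrow$(ii)$'$; since $\ell^n_2$ is $L_2$ of the counting measure on $n$ points, (ii)$\Rightarrow$(i) and (ii)$'$$\Rightarrow$(i)$'$. To close the cycle I plan two substantial arguments: a \emph{change of density} yielding (ii)$'$$\Rightarrow$(ii) (and, in a finite-dimensional variant, supplying (i)$'$$\Rightarrow$(i)), and a \emph{finite-dimensional approximation} by conditional expectations yielding (i)$\Rightarrow$(ii).

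\emph{Change of density.} Given $T\colon L_2(\mu)\to L_2(\mu')$ with $\|T\|_{\text{reg}}\le 1$ and $\|T\|\le\vp$, set $S=|T|$; then $\|S\|_{L_2(\mu)\to L_2(\mu')}\le 1$. A Perron--Frobenius / converse-Schur argument---elementary in finite dimensions by applying the spectral theorem to $S^*S$, and obtainable in general by approximating $S$ by its compressions to finite sub-$\sigma$-algebras, perturbing slightly to secure strict positivity, and extracting a weak limit---produces $f\in L_2(\mu)_+$ and $g\in L_2(\mu')_+$, strictly positive a.e.\ on the relevant supports, with $Sf\le g$ and $S^*g\le f$ a.e. Set $d\widetilde\mu:=f^2\,d\mu$ and $d\widetilde\mu':=g^2\,d\mu'$. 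The multiplications $h\mapsto h/f$ and $k\mapsto k/g$ are isometric isomorphisms $L_2(\mu)\to L_2(\widetilde\mu)$ and $L_2(\mu')\to L_2(\widetilde\mu')$, and the same formulas define isometries of the $X$-valued spaces. Hence the conjugated operator $\widetilde T(h):=T(fh)/g$ satisfies $\|\widetilde T\|=\|T\|\le\vp$ and $\|\widetilde T_X\|=\|T_X\|$. A direct kernel computation shows that $Sf\le g$ and $S^*g\le f$ are precisely the conditions that $\widetilde T$ be contractive on $L_\infty\to L_\infty$ and on $L_1(\widetilde\mu)\to L_1(\widetilde\mu')$, respectively, so $\widetilde T$ is fully contractive. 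Applying (ii)$'$ to $\widetilde T$ gives $\|T_X\|=\|\widetilde T_X\|\le\delta$.

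\emph{Finite-dimensional approximation.} Given $T$ satisfying the hypotheses of (ii), choose finite sub-$\sigma$-algebras $\mathcal A\subset\Sigma$ and $\mathcal A'\subset\Sigma'$ with $n,n'$ atoms. The conditional expectations $E_{\mathcal A},E_{\mathcal A'}$ are positive contractions on every $L_q$, so the compression $T_{\mathcal A,\mathcal A'}:=E_{\mathcal A'}TE_{\mathcal A}$ inherits $\|T_{\mathcal A,\mathcal A'}\|_{\text{reg}}\le 1$ and $\|T_{\mathcal A,\mathcal A'}\|\le\vp$. In the orthonormal basis $\{\mathbf 1_{A_i}/\sqrt{\mu(A_i)}\}$ of $L_2(\mathcal A,\mu)$ (and similarly on the codomain), the compression is represented by a matrix; this identification is $L_2$- and $L_2(X)$-isometric and preserves the regular norm, since $|\cdot|$ commutes with the entry-wise scalar rescaling $a_{ij}\mapsto a_{ij}\sqrt{\mu'(A'_j)/\mu(A_i)}$. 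Hence (i) applies to this matrix and yields $\|(T_{\mathcal A,\mathcal A'})_X\|_{L_2(\mathcal A,\mu;X)\to L_2(\mathcal A',\mu';X)}\le\delta$. Letting $\mathcal A,\mathcal A'$ increase to generate $\Sigma,\Sigma'$, martingale convergence in $L_2(X)$ gives $(T_{\mathcal A,\mathcal A'})_Xf\to T_Xf$ for every $f\in L_2(\mu;X)$, whence $\|T_X\|\le\delta$. The remaining implication (i)$'$$\Rightarrow$(i) is obtained similarly: apply the finite-dimensional change of density to a matrix $T$ with $\|T\|_{\text{reg}}\le 1$ on $\ell^n_2$ to produce a fully contractive $\widetilde T$ on $L_q(\mu,\mu')$ for probability measures on $\{1,\ldots,n\}$, then realize $\widetilde T$ (after an arbitrarily small rational approximation of the atom weights and a common refinement) as the restriction to block-constant vectors of a fully contractive matrix on a larger counting space $\ell^N_2$, to which (i)$'$ applies and whose bound transfers back isometrically.

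\emph{Main obstacle.} The most delicate point is producing the pair $(f,g)$ in the change-of-density argument for a general positive operator on $L_2$: such $S$ need not be compact or irreducible, so naive Perron--Frobenius fails. The standard remedy---compact approximation of $S$ by $E_{\mathcal B'}SE_{\mathcal B}$, a small positive perturbation to secure strict positivity, and a weak-compactness passage to the limit---is routine but technical. The block-duplication step for (i)$'$$\Rightarrow$(i) likewise requires a harmless rational-approximation of the atom weights. All other ingredients (the kernel computation, the isometric rescalings, the preservation of $\|\cdot\|_{\text{reg}}$, and the martingale convergence of conditional expectations) are entirely standard.
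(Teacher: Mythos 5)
Your proof is correct in substance and rests on exactly the two ingredients the paper uses: a change of density via Perron--Frobenius, and discretization via conditional expectations. Where you diverge is in the routing, and the paper's routing is tighter. The paper establishes (i)$\Leftrightarrow$(ii) and (i)$'$$\Leftrightarrow$(ii)$'$ by discretization, notes (i)$\Rightarrow$(i)$'$, (ii)$\Rightarrow$(ii)$'$ are trivial, and then proves the single nontrivial implication (ii)$'$$\Rightarrow$(i). The point of aiming at (ii)$'$$\Rightarrow$(i) is that the Perron--Frobenius change of density is carried out on a \emph{finite matrix} (the hypothesis of (i)) and produces a fully contractive operator between \emph{weighted} finite discrete $L_2$-spaces; since (ii)$'$ is stated for arbitrary measure spaces, it accepts this output directly. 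You, by contrast, carry out the change of density twice. The first instance, (ii)$'$$\Rightarrow$(ii), requires an \emph{infinite-dimensional} Perron--Frobenius producing a strictly positive $f\in L_2(\mu)_+$, $g\in L_2(\mu')_+$ with $Sf\le g$, $S^*g\le f$ for a general positive contraction $S$. You flag this (rightly) as the delicate point; in fact it is also \emph{redundant}: with your other implications in hand, (i)$'$$\Rightarrow$(i)$\Rightarrow$(ii)$\Rightarrow$(ii)$'$$\Rightarrow$(i)$'$ already closes the cycle, so (ii)$'$$\Rightarrow$(ii) follows for free and you should drop it rather than attempt the weak-limit argument. The second instance, your (i)$'$$\Rightarrow$(i), then has to land back in (i)$'$, which insists on counting measure, forcing the rational-approximation/block-duplication detour; the paper avoids this entirely by routing through (ii)$'$, whose statement already allows weighted discrete measures. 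Finally, a small imprecision: the positivity of the leading eigenvector of $S^*S$ (or $SS^*$) is a Perron--Frobenius fact about matrices with nonnegative (after perturbation, strictly positive) entries, not a consequence of the spectral theorem; the spectral theorem alone does not give an entrywise positive eigenvector.
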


\begin{proof} 
By discretization arguments one can show rather easily that (i) $\Leftrightarrow$ (ii) and (i)$'$ $\Leftrightarrow$ (ii)$'$. Moreover (i) $\Rightarrow$ (i)$'$ and (ii) $\Rightarrow$ (ii)$'$ are trivial, since fully contractive implies regular. Therefore, it suffices to show (ii)$'$ $\Rightarrow$ (i). Assume (ii)$'$. It suffices to consider $T = [a_{ij}]$ with $\|T\colon \ \ell^n_2\to \ell^n_2\| \le \vp$
and such that $\|T\colon \ \ell^n_2\to \ell^n_2\|_{\text{reg}}< 1$, so that $S = [|a_{ij}|]$ also has norm $<1$ on $\ell^n_2$. Without loss of generality, we may assume (by density) that $|a_{ij}|>0$ for all $i,j$ and actually that $\|S\colon \ \ell^n_2\to \ell^n_2\| = 1$. By Perron--Frobenius, there is $\xi\in \ell^n_2$ with $\xi_i>0$ for all $i$ such that $SS^*\xi = \xi$. Let then $\eta = S^*\xi$. We have then a fortiori:
\[
S\eta \le \xi \quad (\text{resp. } S^*\xi\le \eta).
\]
This implies that $[\xi^{-1}_i|a_{ij}| \eta_j]$ (resp.\ $[\xi_i|a_{ij}|\eta^{-1}_j]$) is a contraction on $\ell^n_\infty$ (resp.\ $\ell^n_1$). Let $\mu$ (resp.\ $\mu'$) denote the measure $\sum \eta^2_j\delta_j$ (resp. $\sum \xi^2_i\delta_i$) on $\{1,\ldots, n\}$. Then, we find that the kernel $K(i,j) = \xi^{-1}_i a_{ij}\eta^{-1}_j$ defines a fully contractive operator $\widehat T$ from $L_1(\mu)$ to $L_1(\mu')$. More precisely, $\widehat T$ is defined by
\begin{equation}
\widehat Tf = \int K(s,t)f(t)d\mu(t)\tag*{$\forall f\in L_1(\mu)$}
\end{equation}
or equivalently
\begin{equation}\label{comeq2.1}
(\widehat Tf)_i = \sum\nolimits_j K(i,j) f(j)\eta^2_j = \sum\nolimits_j \xi^{-1}_i a_{ij}\eta_j.
\end{equation}
Assuming (ii)$'$, this implies
\[
\|\widehat T_X\colon \ L_2(\mu;X)\to L_2(\mu';X)\|\le \delta.
\]
More explicitly, for any $x = (x_1,\ldots, x_n)$ in $L_2(\mu;X)$ we have by \eqref{comeq2.1}
\[
\left(\sum\nolimits_i \xi^2_i\left\|\sum\nolimits_j\xi^{-1}_i a_{ij}\eta_jx_j\right\|^2 \right)^{1/2} \le \delta\left(\sum \eta^2_j\|x_j\|^2\right)^{1/2}
\]
Therefore (replacing $(x_j)$ by $(\eta^{-1}_jx_j)$) we conclude $\|T_X\colon \ \ell^n_2(X) \to \ell^n_2(X)\|\le \delta$.
\end{proof}

In view of V.~Lafforgue's question mentioned above it is natural to introduce the following ``modulus'' associated to any Banach space $X$
\[
\Delta_X(\vp) = \sup\{\|T_X\|\}
\]
where the supremum runs over all pairs of measure spaces $(\Omega,\mu)$, $(\Omega',\mu')$ and all operators $T\colon \ L_2(\mu)\to L_2(\mu')$ such that $\|T\|_{\text{reg}}\le 1$ and $\|T\|\le\vp$.

By the preceding statement, $\Delta_X(\vp)$ reduces to:
\begin{equation}\label{comeq2.2}
\Delta_X(\vp) = \sup_{n\ge 1} \sup \{\|T_X\|\mid T\in B(\ell^n_2)\||T|\|\le 1, \|T\|\le \vp\}.
\end{equation}
Moreover, we also have
\begin{equation}\label{comeq2.3}
\Delta_X(\vp) = \sup_{n\ge 1} \sup\{\|T_X\|\mid T\in B(\ell^n_2) \text{ fully contractive with } \|T\|\le \vp\}.
\end{equation}
We propose the following terminology:

\begin{defn}
A Banach space $X$ is  ``curved'' if $\Delta_X(\vp_0)<1$ for some $\vp_0<1$. \\
We say that $X$ is ``fully curved'' if $\Delta_X(\vp)<1$ for any $\vp<1$.\\
We say that $X$ is ``uniformly curved'' if $\Delta_X(\vp)\to 0$ when $\vp\to 0$.

\end{defn}
\n Equivalently, $X$ is   curved iff there is $\vp_0>0$ such that $\Delta_X(\vp)<1$ for all $\vp<\vp_0$.  

\n If $X$ is $C$-isomorphic to $Y$, then $C^{-1}\Delta_Y(\vp)\le \Delta_X(\vp)\le C\Delta_Y(\vp)$, so that $X$ is uniformly curved iff $Y$ also is.

\n Trivially, any Hilbert space $H$ is ``fully curved'' and 
``uniformly curved'' since $\Delta_H(\vp)=\vp$ for any $0<\vp<1$.

\n Every finite dimensional space is ``curved,'' since  $\Delta_X(\vp)\le (\dim(X))^{1/2}\vp$ for all $0<\vp<1$. Indeed,
it is well known that any $d$-dimensional space $X$ is $\sqrt{d}$-isomorphic to 
$\ell_2^d$ (see e.g. \cite[p.16]{P00}).

\n However, there are simple examples of $2$-dimensional
spaces that are not fully curved: for instance $\ell_1^2$ or $\ell_\infty^2$, since these are not ``uniformly nonsquare".

\n Recall that $X$ is called {\it uniformly nonsquare} if there is a number $\delta<1$ such that for any pair $x,y$ in the unit ball we have either $\|2^{-1}(x+y)\|\le\delta$ or $\|2^{-1}(x-y)\|\le \delta$.

\n By interpolation (see below),  if $1<p<\infty$, any $L_p$-space   is ``fully curved'' and 
``uniformly curved''.

 The operator that V.~Lafforgue had in mind initially is the operator ${\tau}\colon \ \ell^2_2\to \ell^2_2$ defined by ${\tau}(x,y) = (2^{-1}(x+y)$, $2^{-1}(x-y))$. This is clearly fully contractive but since ${\tau}$ is equal to $2^{-1/2}\times$ (rotation by $\pi/4$) we have $\|{\tau}\| = 2^{-1/2}<1$.

This shows $X$ fully curved $\Rightarrow X$ uniformly nonsquare.  Indeed, if $\Delta_X(2^{-1/2}) <1$ then $X$ is uniformly nonsquare since, using  ${\tau}_X$,   we find, for any pair $x,y$ in the unit ball
\[
(2^{-2}\|x+y\|^2 + 2^{-2}\|x-y\|^2)^{1/2} \le \Delta_X(2^{-1/2}) (\|x\|^2 + \|y\|^2)^{1/2}
\]
and hence
\[
\min\{\|2^{-1}(x+y)\|, \|2^{-1}(x-y)\|\} \le \Delta_X(2^{-1/2})<1.
\]

 Modulo the beautiful results of James and Enflo (\cite{Ja,E}) on ``super-reflexivity,'' this observation shows that any uniformly curved Banach space is isomorphic to a uniformly convex one. 

 Recall that a Banach space is called {\it  uniformly convex} if $\delta_X(\vp)>0$ for any $0<\vp\le 2$ where
\[
\delta_X(\vp) = \inf\{1-\|2^{-1}(x+y)\|\mid \|x\| \le 1, \|y\|\le 1, \|x-y\|\ge \vp\}.
\]
In \cite{P2}, we proved that any uniformly convex space can be equivalently renormed so that its uniform convexity modulus satisfies $\delta_X(\vp)\ge C\vp^q$ $\forall \vp\in [0,2]$ for some
$q<\infty$ (and for some constant $C>0$).

Thus it is natural to raise the following:

\begin{prbl}\label{comprbl2.2}
Can any uniformly  curved  (or even merely curved) space be equivalently renormed
to be fully curved ? 
 \end{prbl}
\begin{prbl}\label{comprbl2.22}
\n If  $X$ is uniformly curved, is it true that
$\Delta_X(\vp) \in O(\vp^\alpha)$ for some
$\alpha>0$ ?
 \end{prbl}

By \cite{P1}, the answer is positive for Banach lattices.

We will prove below (see Corollary \ref{comcor4.3}) that a Banach space $X$ satisfies this iff it is isomorphic to a
subquotient of a  $\theta$-Hilbertian space for some $\theta>0$. This provides a rather strong motivation for the preceding question.

 A similar reasoning applied to the operator ${\tau}^{\otimes n}$ shows that if $\Delta_X(2^{-n/2})<1$ then $X$ does not contain almost isometrically $\ell^{2^n}_1$ (viewed here, exceptionally,  as a real Banach space). We say $X$ contains $E$ almost isometrically if for any $\vp>0$ there is a subspace $\widetilde E\subset X$ with Banach--Mazur distance $d(E,\widetilde E) < 1+\vp$.

By known results (see \cite{Ma}), this shows that $X$  curved $\Rightarrow X$ of type $p$ for some $p>1$. However, a stronger conclusion can be reached using the Hilbert transform, say in its simplest discrete matricial form. Fix $n\ge 1$. Let $\Gamma(n) = [\Gamma(n)(i,j)]$ be the Hilbert $n\times n$-matrix defined by
\[
\Gamma(n)(i,j) =  (n-(i+j))^{-1}\quad {\rm if} \ \ n-(i+j)\not= 0,
\] \centerline{ and $\Gamma(n)(i,j) =0$ otherwise.}
Then it is well known that the operator $\Gamma(n)\colon \ \ell^n_2\to \ell^n_2$ satisfies
\[
\|\Gamma(n)\| \le C\quad \text{and}\quad \|\Gamma(n)\|_{\text{reg}} \le C {\log}(n+1)
\]
for some  constant $C$ (independent of $n$). Thus, for any Banach space $X$, we have
\[
\|\Gamma(n)_X\|\le C {\log}(n+1) \Delta_X(( {\log}(n+1))^{-1}).
\]
Therefore, we find
\[
X \quad \text{uniformly curved}\  \Rightarrow \ \|\Gamma(n)_X\| \in o({\log}(n)).
\]
As observed in \cite{P0}, this implies that $X$ is super-reflexive and hence, by Enflo's theorem \cite{E}, isomorphic to a uniformly convex space.

It is natural to wonder about the converse:

\begin{prbl}\label{comprbl2.3}
Is is true that 
\begin{quote}
uniformly convex $\Rightarrow$  curved or fully curved ?
\end{quote}

It is true that
\begin{quote}
super-reflexive $\Rightarrow$ uniformly curved ?
\end{quote}
Note that, as already mentioned,  ``uniformly curved'' is obviously stable under isomorphism, while this is obviously false for ``curved'' spaces.
\end{prbl}

\section{Remarks on expanding graphs}\label{expan}

Many interesting examples of regular operators
with regular norm $1$ and small norm on $L_2$ can be 
found in the theory of contractive 
semi-groups 
on $L_p$ (e.g. those generated by the Laplacian
on  a Riemannian manifold) or in that of random walks on groups. In this section, we illustrate this  
by 
combinatorial Laplacians on   expanding graphs (cf. e.g. \cite{Lu}).

We refer the reader to the web site
http://kam.mff.cuni.cz/~matousek/metrop.ps
for a list of problems (in particular  Linial's   contribution) 
 related to  this section.
See also \cite{NPSS} for a similar theme. 

Let $G$ be a finite set equipped with a graph structure, so that $G$ is the vertex set and we give ourselves a symmetric set of edges $E\subset G\times G$.

We assume the graph $G$ $k$-regular, i.e.\ such that for each $x$ there are precisely $k$ vertices $y$ such that $(x,y)\in E$. Let $M_G\colon \ \ell_2(G)\to \ell_2(G)$ be the ``Markov operator'' defined by
\begin{equation}
 M^Gf(x) = \frac1k \sum_{(x,y)\in E} f(y).\tag*{$\forall f\in \ell_2(G)$}
\end{equation}
For simplicity, we consider only non-bipartite graphs  (equivalently $-k$ is not an eigenvalue of $M^G$). Let $Ef$ denote the average of $f$ over $G$, i.e.
\begin{equation}
Ef(x) = |G|^{-1} \sum_{y\in G} f(y)\tag*{$\forall f\in \ell_2(G)$}
\end{equation}
and let
\[
\ell^0_2(G) = \{f\in \ell_2(G)\mid Ef=0\}.
\]
We set
\[
 \vp(G) = \|M^G|_{\ell^0_2(G)}\|
\]
so that $\vp(G)$ is the smallest constant such that
\begin{equation}
\|M^Gf-Ef\|_2 \le \vp(G)\|f\|_2.\tag*{$\forall f\in \ell_2(G)$}
\end{equation}
A sequence of $k$-regular graphs $\{G(m)\}$ is called expanding if $\sup\limits_m \vp(G(m))<1$ and $|G(m)|\to \infty$ when $m\to\infty$. In connection with the Baum--Connes conjecture (see \cite{Gro,Roe,KY}) it is of interest to understand in which Banach spaces $X$ we can embed coarsely, but uniformly over $m$, such a sequence $\{G(m)\}$ viewed as a sequence of metric spaces.

More precisely, we will say that $\{G(m)\}$ uniformly coarsely embeds in $X$ if there are a function $\rho\colon\ {\bb R}_+ \to {\bb R}_+$ such that $\lim\limits_{r\to\infty} \rho(r) = \infty$ and mappings $f_m\colon \ G(m)\to X$ such that
\begin{equation}\label{comseceq1}
\forall x,y\in G(m)\qquad \qquad \rho(d(x,y))\le \|f_m(x) - f_m(y)\| \le d(x,y).
\end{equation}
Here $d(x,y)$ denotes the geodesic distance on the graph. 
This notion is due to Gromov (see \cite[p. 211]{Gro}, where it is called
``uniform embedding"). 

Given a Banach space $X$, let us denote by $\vp(G,X)$ the $X$-valued version of $\vp(G,X)$, i.e.\ the smallest $\vp$ such that for any $f\colon \ G\to X$ with mean zero (i.e.\ $\sum f(x)=0$) we have
\[
\|M^Gf-Ef\|_{\ell_2(X)} \le \vp\|f\|_{\ell_2(X)},
\]
where $M^G$ and $E$ are defined as before but now for $X$-valued functions on $G$.

By a well known argument, it can be shown that if $ \sup_m \vp(G(m), X)<1$, then $X$ cannot contain $\{G(m)\}$ uniformly coarsely. This is the motivation behind V.~Lafforgue's question mentioned above. 

To explain this, consider again a finite $k$-regular graph $G$ as before.
 Let $\vp = \vp(G,X)$. Assume that $\vp<1$. Assume that there is $n\ge 1$ such that $\delta = 2\Delta_X(\vp^n/2)<1$. We have then for any $f\colon \ G\times G\to X$
\begin{equation}\label{comseceq2}
\left(|G|^{-1} \sum_{(x,y)\in G^2} \|f(x)-f(y)\|^2\right)^{1/2} \le 2(1-\delta)^{-1} \|f-(M^G)^nf\|_{\ell_2(X)}.
\end{equation}
Indeed, for any $n\ge 1$ we have
$
 \|(M^G-E)^n (1-E)\|_{B(\ell_2(G))}\le \vp^n
$
but also (note $M^G   E=E M^G  =E$)
\[
(M^G-E)^n(1-E) = (M^G)^n (1-E) = (M^G)^n-E
\]
and hence (since $\|M^G\|_{\text{reg}}$ and $\|E\|_{\text{reg}}$ are $\le 1$)
\[
\|(M^G-E)^n(1-E)\|_{\text{reg}}\le 2.
\]
Therefore
\[
\|(M^G)^n-E\|_{B(\ell_2(G;X))} \le 2\Delta_X(\vp^n/2) = \delta,
\]
and hence for any $f\colon \ G\to X$
\begin{align*}
\|f-Ef\|_{\ell_2(X)} &\le \|f-(M^G)^nf\|_{\ell_2(X)} + \|(M^G)^nf -Ef\|_{\ell_2(X)}\\
&\le \|f-(M^G)^nf\|_{\ell_2(X)} + \delta\|f-Ef\|_{\ell_2(X)}
\end{align*}
from which \eqref{comseceq2} follows immediately, using  the following classical and elementary inequality
\begin{equation}\label{comseceq3}
\left(|G|^{-2} \sum_{(x,y)\in G\times G} \|f(x)-f(y)\|^2\right)^{1/2} \le 2\left(|G|^{-1} \sum_{x\in G} \|f(x)-Ef\|^2\right)^{1/2}.
\end{equation}
 We now show that \eqref{comseceq2} is an obstruction to \eqref{comseceq1}. Indeed, 
 since $\|f_m(x)-f_m(y)\|\le 1$ if $(x,y)\in E$   we find easily
\[
 \|f_m-(M^G)^nf_m\|_{\ell_2(X)} \le |G|^{1/2}n
\]
but also for any $R>0$ if we set
\[
 E_R(m) = \{(x,y)\in G(m)^2\mid d(x,y)\ge R\}, 
\]
since $\|f(x) - f(y)\| \ge \rho(R)$ for any $(x,y)$ in $E_R$, we find
\[
 \rho(R) \left(\frac{|E_R(m)|}{|G(m)|^2}\right)^{1/2} \le 2(1-\delta)^{-1}n.
\]
But now, since $|G(m)|\to\infty$, for each fixed $R$ we have
\[
 |G(m)|^{-2} |E_R(m)|\to1
\]
and hence we obtain
\[
 \rho(R) \le 2(1-\delta)^{-1}n
\]
which contradicts the assumption that $\rho(R)$ is unbounded when $R\to\infty$. Thus we obtain (as observed by V. Lafforgue):

\begin{prop}
 If $X$ is uniformly curved, $X$ cannot contain uniformly coarsely an expanding sequence.
\end{prop}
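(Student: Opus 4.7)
The plan is to combine the uniform Poincaré-type estimate \eqref{comseceq2} with the obvious upper bound on $\|f_m-(M^{G(m)})^n f_m\|_{\ell_2(X)}$ coming from the Lipschitz bound $\|f_m(x)-f_m(y)\|\le d(x,y)$, and to play them against the lower bound on the left-hand side of \eqref{comseceq2} forced by the coarse embedding condition $\rho(d(x,y))\le \|f_m(x)-f_m(y)\|$. All the analytic ingredients are already assembled in the text preceding the statement; the work consists in selecting a single $n$ that operates simultaneously for every $G(m)$ and then letting $m$ and $R$ tend to infinity in the right order.

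First I would set $\varphi_0:=\sup_m \varphi(G(m))<1$, available from the definition of an expanding sequence. Because $M^{G(m)}$ and $E_m$ both have regular norm $\le 1$ while $(M^{G(m)}-E_m)^n(I-E_m)=(M^{G(m)})^n-E_m$, the operator $T_{m,n}:=(M^{G(m)})^n-E_m$ satisfies $\|T_{m,n}\|_{\text{reg}}\le 2$ and $\|T_{m,n}\|_{B(\ell_2(G(m)))}\le \varphi_0^n$ \emph{uniformly in $m$}. Applying the definition of $\Delta_X$ to $\tfrac12 T_{m,n}$ gives $\|(T_{m,n})_X\|\le 2\Delta_X(\varphi_0^n/2)$. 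Since $X$ is uniformly curved and $\varphi_0<1$, I can choose $n$ once and for all, independently of $m$, so that $\delta:=2\Delta_X(\varphi_0^n/2)<1$. The triangle inequality and the elementary bound \eqref{comseceq3} then deliver \eqref{comseceq2} uniformly in $m$.

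Substituting $f=f_m$ closes the argument. On the right of \eqref{comseceq2}, since $(M^{G(m)})^n f_m(x)$ is a convex average of $f_m(y)$ over $y$ with $d(x,y)\le n$, the Lipschitz upper bound gives $\|f_m(x)-(M^{G(m)})^n f_m(x)\|\le n$ pointwise, whence $\|f_m-(M^{G(m)})^n f_m\|_{\ell_2(X)}\le n\,|G(m)|^{1/2}$. On the left, restricting the double sum to $(x,y)\in E_R(m)$ and using $\|f_m(x)-f_m(y)\|\ge \rho(R)$ yields the lower bound $\rho(R)(|E_R(m)|/|G(m)|)^{1/2}$. Rearranging produces
\[
\rho(R)\le 2(1-\delta)^{-1}n\,\left(\frac{|G(m)|^2}{|E_R(m)|}\right)^{1/2}.
\]
Since for fixed $R$ the number of pairs at distance $<R$ in a $k$-regular graph on $|G(m)|$ vertices is at most $|G(m)|\cdot k^R$, which is negligible against $|G(m)|^2$, we have $|E_R(m)|/|G(m)|^2\to 1$ as $m\to\infty$. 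Letting first $m\to\infty$ and then $R\to\infty$ therefore bounds $\rho$, contradicting $\rho(r)\to\infty$.

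The only subtle point to emphasize is the \emph{uniformity} of $n$ across the sequence — everything else is routine once one has it — and that uniformity is precisely what the expander condition $\varphi_0<1$ provides, combined with the hypothesis that $\Delta_X(\varepsilon)\to 0$ as $\varepsilon\to 0$.
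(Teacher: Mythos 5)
Your proof is correct and follows the paper's argument essentially verbatim: the same operator $T_{m,n}=(M^{G(m)})^n-E_m$ with the same regular/operator norm bounds, the same choice of a single $n$ uniform in $m$ via uniform curvedness, the same Poincar\'e-type estimate \eqref{comseceq2}, and the same conclusion by letting $m\to\infty$ before $R\to\infty$. The only cosmetic differences are that you make the pointwise bound $\|f_m(x)-(M^{G(m)})^n f_m(x)\|\le n$ and the $k^R$ counting argument explicit, whereas the paper leaves both as ``easy'' remarks.
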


\begin{proof}
Indeed, if $\vp = \sup\limits_m \vp(G(m))<1$ and $X$ is uniformly curved, then $2\Delta_X(\vp^n/2)\to 0$ when $n\to\infty$ so we can always choose $n$ so that $\delta<1$.
\end{proof}

In \cite{La}, in answer to a question of Naor,  V. Lafforgue shows
 that certain specific expanding sequences do not embed uniformly coarsely even in any
Banach space with non trivial type. This is a  stronger statement
than the preceding one, since the class of spaces with non trivial type
is strictly larger than that of uniformly curved spaces (because
uniformly curved implies super-reflexive), but so far this is known only
for special expanding sequences. For example it is not known
either for the expanding sequence derived from a finite set of generators
and the family of finite quotient groups of $SL_3({\bb  Z})$, or for the ``Ramanujan graph" expanding sequences   (see  \cite{Lu}).

\begin{prbl} Characterize the Banach spaces $X$ for which there is
a function $d \colon \ (0,1]\to [0,2]$, with $d(\vp)$ tending to zero
when $\vp\to 0$, such that for any
finite graph $G$ as above we have
$$\vp(G,X) \le d (\vp(G)).$$
\end{prbl}
See \cite{Lam} for related results.

\begin{rk} Note that \eqref{comseceq2} implies that there is a constant $C$ such that
\begin{equation}
\left(|G|^{-1} \sum_{(x,y)\in G^2} \|f(x)-f(y)\|^2\right)^{1/2} \le C \|f-(M^G)f\|_{\ell_2(X)}.
\end{equation}
Indeed, since we have  $\|f- (M^G)^nf\| \le \sum_{k=0}^{n-1} \|(M^G)^k (f- (M^G)f)\|$
we can take $C=2n (1-\delta)^{-1}$.
A fortiori this implies
\begin{equation}\label{cot}
 |G|^{-2} \sum_{(x,y)\in G^2} \|f(x)-f(y)\|  \le C \sup_{d(x,y)=1}\{ \|f(x)-f(y)\|\} .
\end{equation}
The preceding (well known) argument clearly shows that  $X$ 
 cannot contain uniformly coarsely an expanding sequence if it satisfies
\eqref{cot} with the same $C$ when $G$ runs over that   sequence.
However, it may be that \eqref{cot} is satisfied by a much larger class of spaces than those of non trivial type.
Indeed, it is an open problem whether any space with nontrivial {\it cotype} satisfies 
\eqref{cot} with a fixed constant $C$ valid for any $G$ running in a given expanding sequence.
This would show that finite cotype is an obstruction to containing uniformly coarsely an expanding sequence.
Note that    the metric space $\ell_1$ 
with metric $(x,y)\mapsto \|x-y\|^{1/2}$ embeds isometrically into Hilbert space
and hence does not contain uniformly coarsely an expanding sequence (see \cite{Mat1}).
More generally, see the appendix of \cite{Oz} for the case of   Banach lattices  of finite cotype,
or spaces  with a unit ball uniformly homeomorphic to a subset of $\ell_2$.

 We refer to \cite{Ma} for a discussion of type and cotype and to \cite{Ost,Mat2} for a survey of recent work on coarse embeddability
between Banach spaces.

\end{rk}

\section{A duality operators/classes of Banach spaces}\label{comsec3}

In this section, we describe a ``duality'' (or a ``polarity'') between classes of Banach spaces on one hand and classes of operators on $L_p$ on the other hand. The general ideas underlying this are already implicitly in Kwapi\'en's remarkable papers \cite{Kw1,Kw2} and  Roberto Hernandez's thesis (cf.\ \cite{He}). See also \cite{Her,Buk1,Buk2,Vi}. 
In connection with the notion of $p$-complete boundedness
from  \cite{P1+}, we refer the interested reader  to \cite[Th 1.2.3.7 and Cor. 1.2.4.7]{Ju} for an even more general duality.
 
We seize the occasion to develop this theme more explicitly in this paper.

Fix $1\le p\le \infty$. Consider a class of operators ${\cl C}$ acting between $L_p$-spaces. A typical element of ${\cl C}$ is a (bounded) operator $T\colon \ L_p(\mu)\to L_p(\mu')$ where $(\Omega,{\cl A},\mu)$ and $(\Omega', {\cl A}', \mu')$ are measure spaces.
We can associate  to ${\cl C}$ the class ${\cl C}^{\circ}$ of all Banach spaces $X$ such that $T_X\colon \ L_p(\mu;X)\to L_p(\mu';X)$ is bounded for any $T$ in ${\cl C}$ (as above).

Conversely, given a class ${\cl B}$ of Banach spaces, we associate to it the class ${\cl B}^{\circ}$ of all operators $T$ between two $L_p$-spaces (as above) such that $T_X$ is bounded for any $X$ in ${\cl B}$. This immediately raises two interesting problems (the second one is essentially solved by Theorem \ref{comthm3.1} below):

\begin{prbl}\label{comprbl3.1}
Characterize ${\cl C}^{{\circ\circ}}$.
\end{prbl}

\begin{prbl}\label{comprbl3.2}
Characterize ${\cl B}^{{\circ\circ}}$.
\end{prbl}

\subsection{}\label{comsec3.1}

One can also formulate an \emph{isometric variant} of these problems:\ one defines ${\cl C}^{\circ}$ (resp.\ ${\cl B}^{\circ}$) as those $X$'s (resp.\ those $T$'s) such that $\|T_X\| = \|T\|$ for all $T$ in ${\cl C}$ (resp.\ for all $X$ in ${\cl B}$). In particular:
 \begin{prbl}\label{comprbl3.22}  Given 
an $n\times n$ matrix $a=[a_{ij}]$  of norm $1$ 
when acting on the $n$-dimensional Euclidean space $\ell_2^n$, characterize the class $\{a\}^{{\circ\circ}}$, i.e. the class of
all $n\times n$ matrices $b=[b_{ij}]$ such that $\|b_X\|\le 1$ for all $X$ such that $\|a_X\|\le 1$.
\end{prbl}

\begin{rem}\label{ultr}
Note that for any ${\cl C}$, the class of all $X$ such that $\|T_X\|\le 1$ for all
$T$ in ${\cl C}$ is stable under ultraproducts and $\ell_p$-sums. Moreover,
if $X$ belongs to the class, all subspaces  and all quotients of $X$ also do. 
\end{rem}
\subsection{}\label{comsec3.2}
We should immediately point out that the extreme cases $p=1$ and $p=\infty$ are trivial:\ In that case, ${\cl C}^{\circ}$ always contains \emph{all} Banach spaces and ${\cl B}^{\circ}$ all operators. In the isometric variant the same is  true.

In what precedes, we have allowed $\mu$ and $\mu'$ to vary. If we now fix $\mu$ and $\mu'$ and consider ${\cl C} \subset B(L_p(\mu), L_p(\mu'))$ we can define ${\cl C}^{\circ\circ}$ as the class of those $T$ in $B(L_p(\mu), L_p(\mu'))$ such that $T_X$ is bounded
for all $X$ in ${\cl C}^{\circ}$. In this framework, Problem \ref{comprbl3.1} is somewhat more natural. 

Nevertheless, in both forms, very little is known about Problem \ref{comprbl3.1} except some special cases as follows:

\subsection{}\label{comsec3.3}
If ${\cl C}$ is the class of all bounded operators between $L_p$-spaces, it is trivial that ${\cl C} = {\cl C}^{\circ\circ}$.

\subsection{}\label{comsec3.4}
If $p=2$ and ${\cl C} = \{{\cl F}\}$ where ${\cl F}$ denotes the Fourier transform acting on $L_2$ either on ${\bb Z}, {\bb T}$ or ${\bb R}$ then ${\cl C}^{\circ\circ}$ is the class of all bounded operators between $L_2$-spaces. In the isometric variant, the same holds. This is due to Kwapi\'en \cite{Kw1}.

\subsection{}\label{comsec3.5}
The preceding can also be reformulated in terms of type 2 and cotype 2 (see \cite{Ma} for these notions), using $\Omega = \{-1,1\}^{\bb N}$ equipped with its usual probability measure $\mu$ and denoting by $(\vp_n)$ the coordinates on $\Omega$. Let $T_1\colon \ \ell_2\to L_2(\Omega,\mu)$ (resp.\ $T_2\colon \ L_2(\Omega,\mu)\to \ell_2$) be the linear contractions defined by $T_1e_n = \vp_n$ and $T_2f = \Sigma e_n\int f\vp_n\ d\mu$. Then $\{T_1\}^{\circ}$ (resp.\ $\{T_2\}^{\circ}$) is the class of Banach spaces $X$ of type 2 (resp.\ such that $X^*$ is of type 2). Therefore,
$\{T_1,T_2\}^{\circ}$ is the class of $X$ such that $X$ and its dual are of type 2, i.e. are Hilbertian, and hence  $\{T_1,T_2\}^{\circ\circ}$ is the class of all bounded operators between $L_2$-spaces.

\subsection{}\label{comsec3.5bis} If $(\Omega,\mu)$ is as in   \ref{comsec3.5}, let $P:\ L_2(\mu)\to L_2(\mu)$ denote the orthogonal projection onto the closed span of the sequence of coordinate functions $\{\vp_n\}$. Then $\{P\}^{{\circ}}$ is the class of the so-called $K$-convex spaces, that coincides with the class of spaces that are of type
$p$ for some $p>1$, or equivalently do not contain $\ell_1^n$'s uniformly (see \cite{Ma} for all this).

Bourgain's papers \cite{Bo1,Bo2,Bo3} on the Hausdorff-Young inequality in the Banach space valued case
are also somewhat related to the same theme.

\subsection{}\label{comsec3.6}
If ${\cl C} = \{T\}$ where $T$ is the Hilbert transform on $L_p({\bb T})$ (or $L_p({\bb R})$), then ${\cl C}^{{\circ\circ}}$ contains all martingale transforms and a number of Fourier multipliers of H\"ormander type. This is due to Bourgain \cite{Bo}. Conversely if ${\cl C}$ is the class of all martingale transforms, then it was known before Bourgain's paper that ${\cl C}^{{\circ\circ}}$ contains the Hilbert transform and various singular integrals. This is due to Burkholder and McConnell (cf.\ \cite{BGS, Bu}). This leaves entirely open many interesting questions. For instance, it would be nice (perhaps not so hopeless ?) to have a description of $\{T\}^{{\circ\circ}}$ when $T$ is the Hilbert transform on $L_p({\bb T})$, say for $p=2$. See \cite{Fi,Pe} for results in this direction.

\subsection{}\label{comsec3.6bis}
In sharp contrast, Problem \ref{comprbl3.2} is in some sense already solved by the following theorem of Hernandez \cite{He}, extending Kwapi\'en's ideas in \cite{Kw2}.
Kwapie\'n proved that a Banach space $X$ is $C$-isomorphic to a subquotient
of $L_p$ iff $\|T_X\|\le C\|T\|$ for any operator on $L_p$. In particular, with the  
notation in Problem \ref{comprbl3.1},
taking $C=1$, this says that if ${\cl C}$ is the class of all operators on $L_p$,
and ${\cl B}$ is reduced to  the single space $\bb C$, so that  ${\cl C}={\cl B}^{{\circ}}$
then ${\cl C}^{{\circ}}={\cl B}^{{\circ\circ}}$ is the class of subquotients
of $L_p$.

\begin{thm}[\cite{He}]\label{comthm3.1}
With the above notation, ${\cl B}^{{\circ\circ}}$ is the class of all subspaces of quotients of ultraproducts   of 
direct sums in the $\ell_p$-sense of   finite families of spaces of ${\cl B}$, i.e. spaces of
the form $\left(\oplus \sum\limits_{i\in I} X_i\right)_p$ with $X_i\in {\cl B}$ for all $i$ in a finite set $I$.
\end{thm}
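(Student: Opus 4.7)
The inclusion ${\cl D}\subseteq {\cl B}^{{\circ\circ}}$ (where ${\cl D}$ is the stated class of subspaces of quotients of ultraproducts of finite $\ell_p$-sums of spaces in ${\cl B}$) is immediate from Remark \ref{ultr}: for any class of operators ${\cl C}$ the associated class of spaces ${\cl C}^{{\circ}}$ is stable under ultraproducts, $\ell_p$-sums, subspaces, and quotients; apply this to ${\cl C}={\cl B}^{{\circ}}$ and use the tautological inclusion ${\cl B}\subset {\cl B}^{{\circ\circ}}$. The real content lies in the reverse inclusion.

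For this, fix $Y\in {\cl B}^{{\circ\circ}}$. By standard ultraproduct/local-representation considerations it suffices to prove the following finite-dimensional statement: for every finite-dimensional subspace $F\subset Y$ and every $\vp>0$ there exist a finite set $I$ and spaces $X_i\in{\cl B}$ such that $F$ is $(1+\vp)$-isomorphic to a subspace of a quotient of $Z=(\oplus_{i\in I} X_i)_p$. Taking an ultraproduct of such $Z$'s along the directed set of pairs $(F,\vp)$ then realises $Y$ as a subspace of a quotient of a single ultraproduct of finite $\ell_p$-sums of ${\cl B}$-spaces, which is what we want.

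To establish the finite-dimensional statement, I would set $n=\dim F$, fix a biorthogonal basis $(e_j,e_j^*)$ of $F$, and introduce on $M_n\simeq B(\ell^n_p)$ the two norms
\[
\mu(T)=\|T_F\|_{B(\ell^n_p(F))},\qquad \nu(T)=\sup_{X\in{\cl B}}\|T_X\|_{B(\ell^n_p(X))}.
\]
Since $F\subset Y\in {\cl B}^{{\circ\circ}}$ one has $\mu\le \nu$, whence $\nu^*\le\mu^*$ on $M_n^*$. As in \S \ref{comsec1.8}, $\mu^*$ is an ``$\ell_p$-projective tensor norm through $F$'': its open unit ball consists of matrices $\varphi$ admitting a representation $\varphi_{ij}=\langle x_j,\eta_i\rangle$ with $\|x\|_{\ell^n_p(F)}\|\eta\|_{\ell^n_{p'}(F^*)}<1$. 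A bipolar computation identifies the unit ball of $\nu^*$ with the closed convex hull of analogous ``${\cl B}$-elementary'' matrices $\varphi_{ij}=\langle x_j,\eta_i\rangle$, $x\in\ell^n_p(X)$, $\eta\in\ell^n_{p'}(X^*)$, $X\in{\cl B}$, $\|x\|\|\eta\|\le 1$. Feeding $\nu^*\le\mu^*$ the $F$-elementary tensors built from the biorthogonal basis and from a finite $\vp$-dense subset of the unit sphere of $F$ yields, after averaging and rescaling, finitely many spaces $X_1,\ldots,X_N\in{\cl B}$ and vectors $x_j^{(k)}\in X_k$, $\eta_j^{(k)}\in X_k^*$ whose combined pairings approximate those of $F$ within $\vp$. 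Assembling $Z=(\oplus_k X_k)_p$, setting $v_j=(x_j^{(k)})_k\in Z$, and letting $N\subset Z$ be the common kernel of the functionals read off from the $\eta^{(k)}$, the images of $v_1,\ldots,v_n$ in $Z/N$ span a subspace $(1+\vp)$-isomorphic to $F$.

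The main obstacle is precisely this last packaging step: converting the dual inequality $\nu^*\le \mu^*$, which a priori only controls individual functionals, into a genuine two-sided norm bound on the map $F\hookrightarrow Z/N$ valid uniformly for \emph{all} $f\in F$ and with the two constants combining to $(1+\vp)$. This calls for a compactness/averaging argument to replace a continuum of ${\cl B}$-elementary witnesses by a single finite family, together with careful $\ell_p$-bookkeeping to merge them into one $\ell_p$-sum of ${\cl B}$-spaces. The scheme reproduces Kwapień's argument \cite{Kw2} in the special case ${\cl B}=\{{\bb C}\}$ (whose output is the class of subquotients of $L_p$), and is carried out in the stated generality by Hernandez \cite{He}.
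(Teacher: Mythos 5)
Your overall strategy is the right one and mirrors the paper's (which actually proves the isomorphic version, Theorem~\ref{comthm3.1bis}, from which the present statement follows with $C=1$): reduce to a finite-dimensional subspace $F$ by ultraproducts, dualize the inequality $\|T_F\|\le\sup_{X\in{\cl B}}\|T_X\|$, and try to read off a factorization. But the ``packaging step'' you flag as the main obstacle is indeed a genuine gap, and it is where essentially all the content of the proof lives; the proposal does not close it. There are two specific things missing. First, the move from $\nu^*\le\mu^*$ to a factorization is not just a ``bipolar computation followed by averaging'': what is actually needed is a Pietsch-type Hahn--Banach argument to show that if $|\mathrm{tr}(uv)|\le C$ for all $v$ with $\gamma_{\cl B}^*(v)\le 1$ then $u$ genuinely factors through a single ${\cl B}$-space with constant $\le C$. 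The paper achieves this through Lemma~\ref{rob2}: starting from an inequality that a priori only controls a continuum of elementary tensors, it uses the identity $s^{1/p}t^{1/p'}=\inf_{\omega>0}(\omega^ps/p+\omega^{-p'}t/p')$ and a Hahn--Banach separation to extract two probability measures $P,Q$ on $\{1,\dots,N\}$, producing a diagonal factorization $v^*=D'TD$ with $\|T_{\cl X}\|\le 1$; your ``finite $\vp$-dense subset plus averaging'' does not produce these diagonal densities and hence does not yield a single factorizing map.

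Second, and more structurally, you work directly with the norms $\mu,\nu$ on $M_n$ built from $F$, whereas the paper's crucial preliminary move is to embed $X$ into $\ell_\infty^N$ and $X^*$ into $\ell_\infty^N$ and to work with $u=J_1J_2^*\colon\ell_1^N\to\ell_\infty^N$, so that the duality is the clean trace pairing between $B(\ell_1^N,\ell_\infty^N)$ and $B(\ell_\infty^N,\ell_1^N)$. Without this embedding the dual objects you manipulate are not in a form where the diagonal-densities factorization is available, and the final step of turning $\gamma_{\cl B}(u)\le C$ into ``$X$ is $(1+\vp)^2 C$-isomorphic to a subquotient of $\ell_p^n({\cl X})$'' (done at the end of the proof of Theorem~\ref{comthm3.1bis} via the factorization $u=\alpha\beta$, the kernel $S_0$ of $\alpha_{|S}$, and the invertible map $w$ on $S/S_0$) has no analogue in your setup. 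In short: your first two reduction steps and the easy inclusion are correct and match the paper, but Lemma~\ref{rob2}, the $\ell_\infty^N$-embedding trick, and the subquotient extraction from the factorization of $u$ all need to be supplied, and none of them is routine.
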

Note that for $p=1$ (resp.\ $p=\infty$) every Banach space is a quotient of $\ell_1$ (resp.\ a subspace of $\ell_\infty$), so we obtain all Banach spaces, in agreement with \ref{comsec3.2} above.

To abbreviate, from now on we will say {\it  ``subquotient" } instead of subspace of a quotient. Note the 
following elementary fact: the class of all subquotients of a Banach space $X$ is identical to the class of  all quotient spaces of a subspace of $X$.  Therefore there is no need to further iterate
passage to subspaces and quotients.

The isomorphic version is as follows:
\begin{thm}[\cite{He}]\label{comthm3.1bis} Let $C\ge 1$ be a constant. 
The following properties of a Banach space  $X$ are equivalent:
\begin{itemize}
\item[\rm (i)] $X$ is $C$-isomorphic to a  subquotient of an ultraproduct   of 
direct sums in the $\ell_p$-sense of   finite families of spaces of ${\cl B}$, i.e. spaces of
the form $\left(\oplus \sum\limits_{i\in I} X_i\right)_p$ with $X_i\in {\cl B}$ for all $i$ in a finite set $I$.
\item[\rm (ii)] $\|T_X\|\le C$ for any $n$ and any $T\colon\ \ell_p^n\to \ell_p^n$ such that $\sup\{\|T_Y\| \ \mid \ Y \in {\cl B}\} \le 1$.
\end{itemize}

\end{thm}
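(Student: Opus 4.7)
I would treat the two directions separately.

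\emph{(i) $\Rightarrow$ (ii):} For any fixed $n$ the functor $\ell_p^n(\cdot)$ commutes with $\ell_p$-direct sums and with ultraproducts; moreover any $T\in B(\ell_p^n)$ acts on tuples by scalar coefficients, so $T_\bullet$ preserves $\ell_p^n$ of any subspace and descends to $\ell_p^n$ of any quotient. Chaining these facts: if $X$ is $C$-isomorphic to a subquotient of an ultraproduct of spaces $(\oplus_{i\in I_\alpha} Y_i^\alpha)_p$ with $Y_i^\alpha\in\cl{B}$, and if $\mu_{\cl{B}}(T):=\sup_{Y\in\cl{B}}\|T_Y\|\le 1$, then $\|T_X\|\le C\sup_{\alpha,i}\|T_{Y_i^\alpha}\|\le C$.

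\emph{(ii) $\Rightarrow$ (i):} By Remark \ref{ultr}, the class of subquotients of ultraproducts of $\ell_p$-sums of members of $\cl{B}$ is already closed under subquotients, ultraproducts and finite $\ell_p$-sums, so it suffices to prove the following local statement: for every finite-dimensional $E\subset X$ and every $\vp>0$ there exist $Y_1,\ldots,Y_N\in\cl{B}$ such that $E$ is $(C+\vp)$-isomorphic to a subquotient of $(\oplus_{i=1}^N Y_i)_p$. A standard ultraproduct along the net of pairs $(E,\vp)$ then realises $X$ itself as a $C$-subquotient of an ultraproduct of such finite $\ell_p$-sums.

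Fix such $E$ of dimension $n$. Since $T$ acts by scalar coefficients, $T_E$ is the restriction of $T_X$ to the coordinatewise subspace $\ell_p^n(E)\subset\ell_p^n(X)$, so hypothesis (ii) specialises to
\[
\|T_E\|\le C\,\mu_{\cl{B}}(T)\qquad \text{for every } T\in B(\ell_p^n).
\]
Expressing each side as a supremum of the bilinear pairing $\sum_{i,j}t_{ij}\langle\xi_i,\eta_j\rangle$ over Gram-type matrices $[\langle\xi_i,\eta_j\rangle]$ coming from unit vectors in $\ell_{p'}^n(Z^*)$ and $\ell_p^n(Z)$ (with $Z=E$ on the left, and $Z=Y\in\cl{B}$ on the right, after $\ell_p$-aggregation over finite families of $Y$'s), the displayed inequality says exactly that the convex set of Gram matrices arising from $E$ lies in $C$ times the $w^*$-closed convex hull of the analogous set arising from finite $\ell_p$-sums of $\cl{B}$-spaces. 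A bipolar / Hahn--Banach argument then converts the identity of $E$ (equivalently, a trace-duality representation of it) into a factorisation $E\stackrel{j}{\to} V\subset (\oplus_{i=1}^N Y_i)_p\stackrel{q}{\to}V/\ker q\simeq E$ with $\|j\|\,\|q\|\le C+\vp$.

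The main obstacle is exactly this last step: one must show that the abstract separation produced by Hahn--Banach can be genuinely realised as a subspace-of-a-quotient embedding inside a \emph{single} space $(\oplus_{i=1}^N Y_i)_p$, rather than an unassembled collection of rank-one factorisations through individual $\cl{B}$-spaces. This is the content of Kwapie\'n's original argument in the case $\cl{B}=\{\bb{C}\}$ (cf.~\cite{Kw2}), and of its extension to general $\cl{B}$ in \cite{He}, which we would transcribe here with $\cl{B}$ in place of $\{\bb{C}\}$.
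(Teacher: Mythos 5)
Your strategy matches the paper's: reduce to finite\hbox{-}dimensional $E\subset X$, express hypothesis (ii) as a bound $\|T_E\|\le C\,\mu_{\cl B}(T)$, dualize via trace pairing, run a Pietsch-style Hahn--Banach separation, and assemble the resulting factorisation by an ultraproduct over pairs $(E,\vp)$. The (i)~$\Rightarrow$~(ii) direction and the ultraproduct assembly are fine.

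The genuine gap is precisely the one you flag: passing from the abstract separation to a concrete $C$-isomorphism of $E$ onto a subquotient of a single $\ell_p$-sum. You defer this to ``transcribing Kwapie\'n's/Hernandez's argument,'' but that transcription \emph{is} the content of the paper's proof, so the statement remains unproved as written. Concretely, the paper does three things that your sketch omits. First, it collapses $\cl B$ to a single space $\cl X$ by taking its $\ell_p$-sum, so one only factors through $\ell_p^n(\cl X)$; this removes the ``finite families of $Y_i$'' bookkeeping from your bipolar argument. Second, rather than a generic ``trace-duality representation of $E$,'' it chooses a specific operator $u = J_1 J_2^\ast : \ell_1^N \to \ell_\infty^N$ with $J_1: X \to \ell_\infty^N$ and $J_2: X^\ast \to \ell_\infty^N$ almost-isometric, and proves (Lemmas~\ref{rob1},~\ref{rob2}) that $\gamma_{\cl B}(u)\le C$. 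The Pietsch step in Lemma~\ref{rob2} is not merely a polar argument; it uses the scalar identity $s^{1/p}t^{1/p'}=\inf_{\omega>0}(\omega^p s/p+\omega^{-p'}t/p')$ to decouple the $\ell_p$ and $\ell_{p'}$ constraints and produce the two probability measures $P,Q$ giving the diagonal factorisation $v^\ast = D'TD$. Third --- and this is what your ``main obstacle'' actually requires --- given a factorisation $u=\alpha\beta$ through $\ell_p^n(\cl X)$ with $\|\alpha\|\|\beta\|<C$, one must still build the isomorphism of $X$ onto a subquotient: the paper takes $S=\beta(\ell_1^N)$, $S_0=\ker\alpha|_S$, forms $\tilde\alpha:S/S_0\to\ell_\infty^N$ and $\tilde\beta:\ell_1^N\to S/S_0$, and checks that $w=J_1^{-1}\tilde\alpha:S/S_0\to X$ is invertible with $\|w\|\|w^{-1}\|\le(1+\vp)^2\|\alpha\|\|\beta\|$, using that $J_2^\ast$ is onto to invert $w$. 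None of this is automatic from a bipolar argument, and it is exactly where the $C$-isomorphism constant is won. As a referee I would say: correct identification of the route, but the proof is not complete until this last block is written out.
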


Since the detailed proof in \cite{He2}  might be difficult to access, we decided to outline one here.

\begin{proof}[First part of Proof of Theorem \ref{comthm3.1bis}]
(i) $\Rightarrow$ (ii) is obvious. Assume (ii). Since ${\cl B}$ is stable by $\ell_p$-sums we may reduce to the case when ${\cl B}$ is reduced to a single space, that is to say ${\cl B}=\{ \ell_p({\cl X})\}$, (just take  $ {\cl X} =  (\oplus\sum\limits_{Y\in {\cl B}}Y)_p )$. Then since $X$ embeds isometrically into an ultraproduct of its finite dimensional subspaces and since (ii) is inherited by all finite dimensional subspaces of $X$, we may reduce the proof of (ii) $\Rightarrow$ (i) to the case when $\dim(X)<\infty$. Thus we assume that $\dim(X)<\infty$. Fix $\vp>0$. There is an integer $N$ and an embedding $J_1\colon \ X\to \ell^N_\infty$ such that 
\begin{equation}\label{R0}
 \forall x\in X\qquad\qquad \|x\|(1+\vp)^{-1} \le \|J_1x\| \le \|x\|.
\end{equation}
Similarly there is (for $N$ large enough) an embedding $J_2\colon \ X^*\to\ell^N_\infty$ such that
\begin{equation}\label{R00} \forall \xi\in X^*\qquad\qquad
 \|\xi\|(1+\vp)^{-1} \le \|J_2\xi\| \le \|\xi\|.  
\end{equation}
Let $u = J_1J^*_2\colon \ \ell^N_1\to \ell^N_\infty$. Note $\|u\| \le \|J_1\| \|J_2\| \le 1$. Then we easily deduce from (ii) that for all $n$ and all $T\colon \ \ell^n_p\to \ell^n_p$ we have
\[
 \|T\otimes u\colon \ \ell^n_p(\ell^N_1) \to \ell^n_p(\ell^N_\infty)\|\le C\|T_{\cl X}\|.
\]
The proof will be completed easily using the next Lemma.
\end{proof}

\begin{lem}\label{rob1}
Consider a linear map $u\colon \ \ell^N_1\to \ell^N_\infty$. Let ${\cl X}$ be an $N$-dimensional Banach space and $1\le p \le \infty$.  Assume that for any $T\colon \ \ell^N_p\to \ell^N_p$ we have
\[
 \|T\otimes u\colon \ \ell^N_p(\ell^N_1) \to \ell^N_p(\ell^N_\infty)\|\le C\|T_{\cl X}\|.
\]
Let ${\cl B}$ be the class of all spaces of the form $\ell^n_p({\cl X})$ with $n\ge 1$. Then
\[
 \gamma_{\cl B}(u) \le C.
\]
\end{lem}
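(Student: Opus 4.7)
\textbf{Proof strategy for Lemma \ref{rob1}.} My plan is to establish the equivalence on $B(\ell^N_1,\ell^N_\infty)$ between $\gamma_{\cl B}(\cdot)$ and the tensor norm
\[
\nu(v)\;:=\;\sup_{T\ne 0}\;\frac{\|T\otimes v:\ell^N_p(\ell^N_1)\to\ell^N_p(\ell^N_\infty)\|}{\|T_{\cl X}\|}.
\]
The hypothesis reads $\nu(u)\le C$ and the conclusion is $\gamma_{\cl B}(u)\le C$. The direction $\nu\le\gamma_{\cl B}$ is a direct computation: for $v=u_1u_2$ through $Y=\ell^n_p({\cl X})$, write $T\otimes v=(I\otimes u_1)\circ(T\otimes I_Y)\circ(I\otimes u_2)$; the isometric reshuffling $\ell^N_p(Y)=\ell^{Nn}_p({\cl X})=\ell^n_p(\ell^N_p({\cl X}))$ identifies $T\otimes I_Y$ with $I_{\ell^n_p}\otimes T_{\cl X}$, of norm $\|T_{\cl X}\|$, whence $\|T\otimes v\|\le\|u_1\|\|T_{\cl X}\|\|u_2\|$ and $\nu(v)\le\gamma_{\cl B}(v)$.

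The substantive content is the reverse inequality $\gamma_{\cl B}\le\nu$, which I would prove by contradiction via a Hahn--Banach separation in the finite-dimensional space $B(\ell^N_1,\ell^N_\infty)$. The unit ball $K$ of $\gamma_{\cl B}$ is closed, convex and balanced (restricting to intermediate dimension $n\le N^2$ by compactness, and combining two factorizations by an $\ell_p$-direct sum of their intermediate spaces). Assuming $\gamma_{\cl B}(u)>\nu(u)$, separation yields a matrix $w$, paired via the trace, with $\gamma_{\cl B}^*(w)\le 1$ while $|\mathrm{tr}(uw)|>\nu(u)$. Unfolding the tensor pairing gives the identity
\[
\mathrm{tr}(u\cdot\xi^tT^t\eta)\;=\;\langle T\otimes u(\xi),\eta\rangle
\qquad(\xi\in\ell^N_p(\ell^N_1),\;\eta\in\ell^N_{p'}(\ell^N_1),\;T\in B(\ell^N_p)),
\]
so the hypothesis rewrites as $|\mathrm{tr}(u\cdot\xi^tT^t\eta)|\le\nu(u)\|T_{\cl X}\|\|\xi\|\|\eta\|$. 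A contradiction is produced as soon as $w$ is decomposed as a finite sum $w=\sum_i\xi_i^tT_i^t\eta_i$ with $\sum_i\|(T_i)_{\cl X}\|\|\xi_i\|\|\eta_i\|\le 1$, since applying the hypothesis termwise then gives $|\mathrm{tr}(uw)|\le\nu(u)$.

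The main obstacle is producing such a decomposition of $w$. The abstract bipolar theorem applied to $\mathcal S=\{\xi^tT^t\eta:\|T_{\cl X}\|\|\xi\|\|\eta\|\le 1\}$ only identifies the closed convex balanced hull of $\mathcal S$ with the polar of $\{\nu\le 1\}$, which through the easy direction $K\subseteq\{\nu\le 1\}$ yields merely the wrong-way inequality $\gamma_{\cl B}^*\le\|\cdot\|_\flat$. The reverse inequality must therefore be obtained by a more careful, constructive argument: for instance, by an extreme-point analysis of the $\gamma_{\cl B}^*$-unit ball in finite dimension, in which each extreme functional corresponds to a normalized extremal factorization through some $\ell^n_p({\cl X})$ whose attached optimality conditions, when unfolded, represent the functional as a single term $\xi^tT^t\eta\in\mathcal S$. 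This identification essentially inverts the easy-direction computation and is where the real work of the lemma lies; the ${\cl X}$-dependent compatibility of the $N$-dimensional factors plays a crucial role and is presumably the reason for imposing $\dim({\cl X})=N$ in the statement.
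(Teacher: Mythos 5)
Your reduction is on track and, up to the point where the real work begins, parallels the paper's argument; but you leave that work undone. You correctly identify that after the Hahn--Banach separation what is needed is a description of the unit ball of $\gamma^*_{\cl B}$ on $B(\ell^N_\infty,\ell^N_1)$ showing that any such functional decomposes into terms of the form $\xi^t T^t\eta$ with controlled products $\|T_{\cl X}\|\,\|\xi\|\,\|\eta\|$, and you correctly observe that the bipolar theorem alone cannot give this. But you then replace the proof of that fact with a gesture toward an ``extreme-point analysis'' and yourself declare it to be ``where the real work of the lemma lies.'' That is precisely the gap: the claim you defer is the content of the lemma, and identifying the extreme points of the $\gamma^*_{\cl B}$-ball with normalized single-term factorizations through $\ell^n_p({\cl X})$ is itself a statement of comparable depth, not obviously easier, and not carried out.

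The paper closes this gap with a separate, self-contained statement (Lemma \ref{rob2}) establishing that $\gamma^*_{\cl B}(v)\le 1$ forces a one-term factorization $v_{ji}=\mu_i T_{ij}\lambda_j$ with $\sum|\lambda_j|^p\le1$, $\sum|\mu_i|^{p'}\le1$ and $\|T_{\cl X}\|\le1$. Its proof is not via extreme points but via a Pietsch-type change-of-density argument: one tests $v$ against arbitrary pairs $(x_j)\subset\ell^n_p({\cl X})$, $(\xi_i)\subset\ell^n_{p'}({\cl X}^*)$, applies the arithmetic--geometric mean identity $s^{1/p}t^{1/p'}=\inf_{\omega>0}\bigl(\omega^ps/p+\omega^{-p'}t/p'\bigr)$ to linearize the bound, invokes Hahn--Banach/Pietsch to produce probability measures $P,Q$ on $\{1,\dots,N\}$, and then divides out the densities to obtain $T_{ij}=v_{ji}P(\{j\})^{-1/p}Q(\{i\})^{-1/p'}$ with $\|T_{\cl X}\|\le1$. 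Once that lemma is in hand, the final computation is exactly your step with the pairing identity, and is short. Without a proof of the dual characterization, your argument does not close.
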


To prove this we first need:

\begin{lem}\label{rob2}
Let ${\cl B}$ be as in the preceding Lemma. For $v\colon \ \ell^N_\infty\to \ell^N_1$ we set
\[
 \gamma^*_{\cl B}(v) = \sup\{|tr(uv)| \ \Big|\ u\colon \ \ell^N_1\to \ell^N_\infty,\  \gamma_{\cl B}(u)\le 1\}.
\]
Then $\gamma^*_{\cl B}(v) \le 1$ iff there are diagonal operators $D\colon \ \ell^N_\infty\to \ell^N_p$, $D'\colon \ \ell^N_p\to \ell^N_1$ with $\|D\| \le 1$, $\|D'\|\le 1$ and $T\colon \ \ell^N_P\to \ell^N_p$ with $\|T_{\cl X}\|\le 1$ such that
\begin{equation}\label{R1}
v^* = D'TD.
\end{equation}
Equivalently if we denote by $(\lambda_j)$ (resp.\ $\mu_i$) the diagonal coefficients of $D$ (resp.\ $D'$) we have
\[
 \gamma^*_{\cl B}(v) = \inf\left\{\left(\sum |\mu_j|^{p'}\right)^{1/p'} \|T_X\| \left(\sum|\lambda_j|^p\right)^{1/p}\right\}
\]
where the infimum runs over all possible factorizations of the form \eqref{R1}.
\end{lem}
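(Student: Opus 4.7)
The plan is to establish the equality of $\gamma^*_{\cl B}(v)$ with the infimum $\rho(v^*) := \inf \|D'\|\|T_{\cl X}\|\|D\|$ appearing in \eqref{R1}, by proving the two inequalities separately; the nontrivial direction uses a Hahn--Banach/bipolar argument that invokes Lemma \ref{rob1}.

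For $\gamma^*_{\cl B}(v)\le \rho(v^*)$, suppose $v^*=D'TD$ with $D=\text{diag}(\lambda_j)$, $D'=\text{diag}(\mu_i)$. For arbitrary $u\colon \ell^N_1\to \ell^N_\infty$, a direct matrix computation gives
\[
\text{tr}(uv) \,=\, \sum_{i,j}\mu_i t_{ij}\lambda_j u_{ij} \,=\, \langle (T\otimes u)(\xi),\,\eta\rangle,
\]
where $\xi=\sum_j \lambda_j\,e_j^{(p)}\otimes e_j^{(1)}\in \ell^N_p(\ell^N_1)$ has $\|\xi\|=\|\lambda\|_p=\|D\|$, and $\eta=\sum_i \mu_i\,e_i^{(p')}\otimes e_i^{(1)}\in \ell^N_{p'}(\ell^N_1)=(\ell^N_p(\ell^N_\infty))^*$ has $\|\eta\|=\|\mu\|_{p'}=\|D'\|$. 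By the factorization bound supplied by Lemma \ref{rob1}, $\|T\otimes u\colon \ell^N_p(\ell^N_1)\to\ell^N_p(\ell^N_\infty)\|\le \|T_{\cl X}\|\gamma_{\cl B}(u)$ (this holds for all $u$ factoring through some $\ell^n_p({\cl X})$), and the computation yields $|\text{tr}(uv)|\le \|D\|\|D'\|\|T_{\cl X}\|\gamma_{\cl B}(u)$. Taking the sup over $\gamma_{\cl B}(u)\le 1$ gives $\gamma^*_{\cl B}(v)\le \|D\|\|D'\|\|T_{\cl X}\|$, hence $\gamma^*_{\cl B}\le \rho$.

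For $\rho\le \gamma^*_{\cl B}$, use a bipolar argument. Let $K$ be the set of all $v^*$ admitting a factorization $D'TD$ with $\|D\|,\|D'\|,\|T_{\cl X}\|\le 1$, allowing the intermediate dimension $n$ to vary. By stacking block-diagonally --- given two factorizations $v_i^*=D_i'T_iD_i$, combining them as $D=(\theta^{1/p}D_1)\oplus((1-\theta)^{1/p}D_2)$, $D'=(\theta^{1/p'}D_1')\oplus((1-\theta)^{1/p'}D_2')$, $T=\text{diag}(T_1,T_2)$ --- one verifies $K$ is absolutely convex; it is compact in the finite-dimensional space of $N\times N$ matrices. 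By the first step $K\subseteq \{\gamma^*_{\cl B}(v)\le 1\}$; the reverse inclusion, via the bipolar theorem, reduces to: if $u\colon \ell^N_1\to \ell^N_\infty$ satisfies $|\text{tr}(uv)|\le 1$ for every $v^*\in K$, then $\gamma_{\cl B}(u)\le 1$.

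To prove this last implication---the main technical step---reformulate via trace duality: for diagonal $D,D'$ of norm $\le 1$ and $T$ with $\|T_{\cl X}\|\le 1$, the hypothesis becomes $|\text{tr}(T\cdot Du^*D')|\le 1$, so $Du^*D'\in B(\ell^n_p)$ lies in the unit ball of the predual of $(B(\ell^n_p),\|\cdot_{\cl X}\|)$. Following the pattern of \ref{comsec1.8}, this predual is a projective-type norm forcing $Du^*D'$ to admit a decomposition through ${\cl X}$-valued sequences. A Ky-Fan min-max applied to the compact convex sets $\{\|\lambda\|_p\le 1\}$, $\{\|\mu\|_{p'}\le 1\}$ and the hypothesis (convex in $T$ with $\|T_{\cl X}\|\le 1$) allows exchanging the suprema and, after absorbing the diagonals, yields vectors $\phi_j\in \ell^n_p({\cl X})$ and $\psi_i\in \ell^n_{p'}({\cl X}^*)$ with $u_{ij}=\langle \psi_i,\phi_j\rangle$ and $\sup_j\|\phi_j\|\cdot\sup_i\|\psi_i\|\le 1$. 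This is precisely a factorization $u=u_1u_2$ through $\ell^n_p({\cl X})$ of norm-product $\le 1$, i.e.\ $\gamma_{\cl B}(u)\le 1$. The main obstacle is this last step: extracting a single intermediate dimension $n$ and a single pair of factoring operators from an a priori $(D,D')$-dependent family of representations, which is overcome by the compactness/min-max argument together with the measurable selection principle of \ref{comsec1.10} to pass to a limit.
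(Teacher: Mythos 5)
Your first direction ($\gamma_{\cl B}^*(v)\le\rho(v^*)$) is correct, but the reference to Lemma~\ref{rob1} is wrong: what you actually use is the \emph{easy} converse --- that a factorization of $u$ through $\ell^n_p({\cl X})$ with cb‑constant $\le\gamma_{\cl B}(u)$ immediately yields $\|T\otimes u\|\le\|T_{\cl X}\|\gamma_{\cl B}(u)$ --- whereas Lemma~\ref{rob1} asserts the opposite implication, and moreover the paper \emph{derives} Lemma~\ref{rob1} from Lemma~\ref{rob2}, so quoting it here would make the argument circular. The claim you actually need is elementary, but you should justify it directly rather than attribute it to \ref{rob1}.

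The second direction is where the proposal genuinely falls short. Framing it as a bipolar argument is a legitimate alternative strategy, but your two key ingredients are not established. First, the absolute convexity of $K$: the block‑diagonal stacking enlarges the intermediate space from $\ell^N_p$ to $\ell^{2N}_p$, so the resulting $D,D'$ are no longer $N\times N$ diagonals as the Lemma requires. You implicitly redefine $K$ to allow variable intermediate dimension, which is a different set; you would then need to argue that its gauge coincides with the one over fixed dimension $N$ (or else prove a weaker statement than the Lemma). Second, and more seriously, the implication $u\in K^\circ\Rightarrow\gamma_{\cl B}(u)\le 1$ is precisely the hard content, and the sketch ``predual description following \ref{comsec1.8} $+$ Ky Fan min-max $+$ measurable selection'' does not deliver it. You obtain, for each choice of diagonals $D,D'$ of norm $\le 1$, a decomposition of the matrix $[\lambda_j\mu_i u_{ij}]$ through $\ell^n_p({\cl X})$-valued vectors, but you cannot ``absorb the diagonals'' to recover a single factorization of $u$: dividing the representing vectors by $\lambda_j$ or $\mu_i$ destroys the required bounds unless $\lambda,\mu$ are chosen in a very specific way. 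That specific choice is exactly the content of a Pietsch–Hahn–Banach separation. The paper's own proof does this directly: starting from $\gamma^*_{\cl B}(v)\le 1$, it obtains the inequality \eqref{R2} for all representations $u_{ij}=\langle\xi_i,x_j\rangle$, then uses the AM–GM identity \eqref{R3} to turn the product bound into a sum, and finally a Hahn–Banach separation to produce the probabilities $P,Q$ which furnish $\lambda,\mu$ and $T$. No bipolar argument and no appeal to a predual description is needed. Your outline needs to be replaced by this (or an equivalent) Pietsch‑style separation argument; as written there is a real gap at the heart of the proof.
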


\begin{proof}
Let $[u_{ij}]$ and $[v_{ij}]$ be the matrices associated to $u$ and $v$ so that
\[
 tr(uv) = \sum u_{ij}v_{ji}.
\]
Assume $\gamma^*_{\cl B}(v)\le 1$. Consider an $N$-tuple $x_1,\ldots, x_N$ (resp.\ $\xi_1,\ldots, \xi_N$) of elements of $\ell^n_p({\cl X})$ (resp.\ $\ell^n_p({\cl X}^*)$) for some integer $n\ge 1$. Assume $u_{ij} = \langle \xi_i,x_j\rangle$. Then $u = u^*_1u_2$ where $u_2e_j=x_j$ and $u^*_1e_i=\xi_i$ so that we have
\[
 \gamma_{\cl B}(u) \le \sup_j \|x_j\| \sup_i \|\xi_i\|,
\]
and hence
\begin{equation}\label{R2}
 \left|\sum v_{ji}\langle \xi_i,x_j\rangle\right| \le \sup_j \|x_j\| \sup_i\|\xi_i\|.
\end{equation}
Let us denote $x_j = (x_j(k))$ and $\xi_i = (\xi_i(k))$ so that $\|x_j\| = \left(\sum_k\|x_j(k)\|^p\right)^{1/p}$ and $\|\xi_i\| = \left(\sum_k\|\xi_i(k)\|^{p'}\right)^{1/p'}$. With this notation \eqref{R2} becomes
\[
 \left|\sum_k\sum_{ij} v_{ji}\langle\xi_i(k), x_j(k)\rangle\right| \le \sup_j \left(\sum\nolimits_k\|x_j(k)\|^p \right)^{1/p} \sup_i\left(\sum\nolimits_k\|\xi_i(k)\|^{p'}\right)^{1/p'}.
\]
We will use the elementary identity (variant of the arithmetic/geometric mean inequality)
\begin{equation}\label{R3}
 \forall s,t>0\qquad \qquad s^{\frac1p}t^{\frac1{p'}} = \inf_{\omega>0} \left(\frac{\omega^ps}p + \frac{\omega^{-p'}t}{p'}\right).
\end{equation}
This yields
\[
\left|\sum_k\sum_{ij} v_{ji} \langle\xi_i(k), x_j(k)\rangle\right| \le \frac1p \sup_j \sum_k \|x_j(k)\|^p + \frac1{p'} \sup_i \sum_k\|\xi_i(k)\|^{p'},
\]
and hence (note that the right hand side remains invariant if we replace, say, $(x_j(k))$ by $(\alpha_k x_j(k))$ with $|\alpha_k|=1$)
\begin{equation}\label{R4}
 \sum\nolimits_k\left|\sum_{ij} v_{ji} \langle\xi_i(k), x_j(k)\rangle\right|\le \frac1p \sup_j \sum\nolimits_k\|x_k(k)\|^p + \frac1{p'} \sup_i \sum\nolimits_k\|\xi_i(k)\|^{p'}.
\end{equation}
Then by a (by now routine) Pietsch style application of the Hahn--Banach theorem (see e.g. [~~~]) \eqref{R4} implies that there are probabilities $P$ and $Q$ on $[1,\ldots, N]$ such that the left side of \eqref{R4} is
\[
 \le \frac1p \int \sum\nolimits_k\|x_j(k)\|^p dP(j) + \frac1{p'} \int \sum\nolimits_k \|\xi_i(k)\|^{p'} dQ(i).
\]
 In particular for any $x_1,\ldots, x_N$ in ${\cl X}$ and any $\xi_1,\ldots, \xi_N$ in ${\cl X}^*$ we have
\[
\left|\sum v_{ji}\langle \xi_i,x_j\rangle\right| \le \frac1p \sum\|x_j\|^p P(\{j\}))^{1/p} \left(\sum \|\xi_i\|^{p'} Q(\{i\})\right)^{1/p'} + \frac1{p'}.
\]
Since $\langle\xi_i,x_j\rangle= \langle\omega^{-1}\xi_i, \omega x_j\rangle$ for all $\omega>0$ we can use \eqref{R3} again and we obtain
\[
 \left|\sum v_{ji}\langle \xi_i,x_j\rangle\right| \le \left(\sum \|x_j\|^p P(\{j\})\right)^{1/p} \left(\sum \|\xi_i\|^{p'} Q(\{i\})\right)^{1/p'}.
\]
Let then $T_{ij} = v_{ji}P(\{j\})^{-1/p} Q(\{i\})^{-1/p'}$. We obtain for all $x_j$ in ${\cl X}$ and $\xi_i$ in ${\cl X}^*$
\[
 \left|\sum T_{ij}\langle \xi_i,x_j\rangle\right| \le \left(\sum\|x_j\|^p\right)^{1/p} \left(\sum \|\xi_i\|^{p'}\right)^{1/p'},
\]
which means that $\|T_{\cl X}\|\le 1$. Letting $\lambda_j = P(\{j\})^{1/p}$ and $\mu_i = Q(\{i\})^{1/p'}$ we obtain $v_{ji} = \mu_iT_{ij}\lambda_j$, i.e.\ $v^* = D'TD$ with $\|D\| = \left(\sum|\lambda_j|^p\right)^{1/p}\le 1$ and $\|D'\| = \left(\sum |\mu_i|^{p'}\right)^{1/p'} \le 1$. This proves the ``only if'' part. The proof of the ``if part'' is easy to check by running the preceding argument in reverse.
\end{proof}

\begin{proof}[Proof of Lemma \ref{rob1}]
We claim that the assumption on $u$ implies that for any $v\colon \ \ell^N_\infty\to \ell^N_1$ with $\gamma^*_{\cl B}(v)\le 1$ we have $|tr(uv)|\le C$. Indeed by Lemma \ref{rob2} any such $v$ has a matrix $v_{ij}$ that can be written $v_{ji} = \mu_iT_{ij}\lambda_j$ with $\|T_{\cl X}\|\le 1$, $\sum|\lambda_j|^p \le 1$, $\sum|\mu_i|^{p'}\le 1$. We have then
\begin{align*}
 tr(uv) = \sum u_{ij}v_{ji} = \sum u_{ij}\mu_iT_{ij}\lambda_j=
  \langle (T\otimes u)(a),b\rangle
\end{align*}
where $a\in \ell^N_p\otimes \ell^N_1$ and $b \in \ell^N_{p'} \otimes \ell^N_\infty$ are defined by $a = \sum \lambda_j e_j\otimes e_j$ and $b = \sum \mu_ie_i\otimes e_i$.

But then we have by our assumption on $u$
\[
 |tr(uv)| = |\langle T\otimes u(a), b\rangle| \le \|T\otimes u\|\|a\|_{\ell^N_p(\ell^N_1)} \|b\|_{\ell^N_{p'}(\ell^N_\infty)} \le C\|T_{\cl X}\| \le C.
\]
By duality we conclude that
\[
\gamma_{\cl B}(u) = \sup\{|\langle u,v\rangle|\ \Big| \ \gamma_{{\cl B}^*}(v)\le 1\}\le C.\qquad \qed
\]
\renewcommand{\qed}{}\end{proof}

\begin{rem} The last equality  can be rewritten,
with the preceding notation,
 as
one about Schur multipliers, as follows:
$$\sup\left\{ \|[u_{ij}T_{ij}]\|_{B(\ell^n_p)} \mid\ \|T_{\cl X}\|_{B(\ell^n_p({\cl X}))}\le 1\right\}=
\inf\left \{ \sup_j\left(\sum\nolimits_{k}\|x_j(k)\|_{\cl X}^p \right)^{1/p}  \sup_i\left(\sum\nolimits_{k}\|\xi_i(k)\|_{{\cl X}^*}^{p'}\right)^{1/p'}  \right\}$$
 where the inf runs over all $x_1,\cdots,x_n$ in $\ell_p({\cl X})$
 and all $\xi_1,\cdots,\xi_n$ in $\ell_{p'}({\cl X}^*)=\ell_p({\cl X})^*$ such that
 $u_{ij}=\langle \xi_i,x_j\rangle$ for all $i,j=1,\cdots,n$.
Indeed, the left hand side is equal to $\sup\{|\langle u,v\rangle|\ \Big| \ \gamma_{{\cl B}^*}(v)\le 1\}$
and the right hand side to $\gamma_{{\cl B} }(u)$ with 
${\cl B}=\{ \ell_p({\cl X})  \}$.
\end{rem}

\begin{proof}[Proof of Theorem  \ref{comthm3.1bis}]
 By Lemma \ref{rob1} applied to $u = J_1J^*_2\colon \ \ell^N_1\to \ell^N_\infty$ we have $\gamma_{\cl B}(u) \le C$. Assume for simplicity $\gamma_{\cl B}(u) <C$. Then, for some integer $n$, we have a factorization $u= \alpha\beta$ with $\alpha\colon \ \ell^n_p({\cl X}) \to \ell^N_\infty$ and $\beta\colon \ \ell^N_1 \to \ell^n_p({\cl X})$ such that $\|\alpha\|\|\beta\|<C$. Let $S\subset \ell^n_p({\cl X})$ be the range of $\beta$ and let ${S_0}\subset S$ be the kernel of $\alpha_{|S}$. Let $q\colon \ S\to S/{S_0}$ denote the quotient map and let $\tilde\alpha\colon \ S/{S_0}\to \ell^N_\infty$ be the map defined by $\alpha_{|S}= \tilde\alpha q$. Also let 
 $\tilde\beta\colon\ \ell^N_1\to S/{S_0}$ be defined by $\tilde\beta(\cdot) = q\beta(\cdot)$. We have then $u = J_1J^*_2 = \tilde\alpha \tilde\beta$. But now $\tilde\alpha$ is injective and $\tilde\beta$ surjective. Consider the mapping $w\colon \ S/{S_0}\to X$ defined by $w(\cdot) = J^{-1}_1\tilde\alpha(\cdot)$. Recalling \eqref{R0} we find $\|w\| \le (1+\vp) \|\tilde\alpha\| \le (1+\vp)\|\alpha\|$. Moreover, for any $x$ in $X$ since $J^*_2\colon \ \ell^N_1\to X$ is onto, there is $y$ in $\ell^N_1$ such that $J^*_2y=x$, so that $w$ is invertible  and we have $w^{-1}x = \tilde\beta y$. Recalling \eqref{R00} again this gives us $\|w^{-1}\| \le \|\tilde\beta\| (1+\vp) \le \|\beta\| (1+\vp)$. Thus we conclude $\|w\|\|w^{-1}\| \le (1+\vp)^2 |\alpha\| \|\beta\|\le C(1+\vp)^2$. This shows that  $X$ is $C(1+\vp)^2$-isomorphic to
 $S/{S_0}$. Letting $\vp\to 0$, we find that $X$ is $C$-isomorphic to an ultraproduct of spaces
 of the form $S/{S_0}$, i.e. of subquotients of $\ell_p^n({\cl X})$ ($n\ge1$).
\end{proof}

\subsection{}\label{comsec3.7}
Actually, Hernandez considers more generally 
(instead of an identity operator) bounded linear maps $u\colon \ Z\to Y$ between Banach spaces that factor through a space $X$ that is a subquotient   of an ultraproduct of the spaces described   in Theorem \ref{comthm3.1bis} above and defines
\[
\gamma_{SQ{\cl B}}(u) = \inf\{\|u_1\| \|u_2\|\}
\]
where the infimum runs over all $u_2\colon\ Z\to X$ and $u_1\colon \ X\to Y$ 
such that $u=u_1u_2$ and over all such spaces $X$. In this broader framework, he shows that
\[
\gamma_{SQ{\cl B}}(u) = \sup\{\|T\otimes u\colon \ L_p(Z)\to L_p(Y)\|\}
\]
where the supremum runs over all $T\colon \ L_p\to L_p$ (or $T\colon \ \ell^n_p\to \ell^n_p$) such that $\|T_X\|\le 1$ for any $X$ in ${\cl B}$.

In this framework, the ``duality" we are interested in becomes more transparent: given a class ${\cl C}$ of operators, say, from  $L_p(\mu)$  to $L_p(\mu')$,
we may introduce the dual class ${\cl C}^\dagger$ formed of all operators $u\colon \ Z\to Y$ between Banach spaces such
$$ \forall T\in {\cl C}\quad \|T\otimes u\colon \ L_p(Z)\to L_p(Y)\|\le 1.$$
Similarly, given a class ${\cl B}$  of operators $u$, we may introduce the dual class ${\cl B}^\dagger$ formed of all $T$ such that
$$ \forall u\in {\cl B}\quad \|T\otimes u\colon \ L_p(Z)\to L_p(Y)\|\le 1.$$
Here again, one can replace 1 by a fixed constant to treat the ``isomorphic" (as opposed to isometric) variant of this.
\subsection{}\label{comsec3.9}

More generally, let $1\le p\le \infty$. We recall for future reference that if ${\cl B}$ is a (non-void) class of Banach spaces that is stable under $\ell_p$-sums (i.e. $X,Y\in {\cl B}\Rightarrow X\oplus_p Y\in {\cl B}$) then the ``norm'' of factorization through a space belonging to ${\cl B}$ is indeed a norm. More precisely, let us define for any operator $u\colon\ E\to F$ acting between Banach spaces the norm
\[
 \gamma_{\cl B}(u) = \inf\{\|u_1\| \|u_2\|\}
\]
where the infimum runs over all $X$ in ${\cl B}$ and all factorizations $E \overset{\sst u_2}{\longrightarrow} X \overset{\sst u_1}{\longrightarrow} F$. Let $\Gamma_{\cl B}(E,F)$ denote the space of those $u$ that admit such a factorization. Then $\Gamma_{\cl B}(E,F)$ is a vector space and $\gamma_{\cl B}$ is a norm on it. See \cite{Kw2, Pie} for details.

\subsection{}\label{comsec3.10}

Note that $\|u\| \le \gamma_{\cl B}(u)$ and equality holds if $u$ has rank 1. Therefore we have for all $u\colon \ E\to F$ 
\[
 \|u\|\le \gamma_{\cl B}(u) \le N(u).
\]

\section{Complex interpolation of families of Banach spaces}\label{comsec4}

Let $D = \{z\in {\bb C}\mid |z|<1\}$. Consider a measurable family $\{\|~~\|_z\}$ of norms on ${\bb C}^n$ indexed by $z\in \partial D$. By measurable, we mean that $z\to \|x\|_z$ is measurable for any $x$ in ${\bb C}^n$. We will need to assume   that there are positive functions $k_1>0$ and $k_2>0$ such that their logarithms are in $L_1(m)$ on $\partial D$  equipped with its normalized Lebesgue measure $m$, and satisfying
\begin{equation}\label{compat}
{\forall z\in \partial D~\forall x\in {\bb C}^n}\quad
k_1(z)\|x\| \le \|x\|_z \le k_2(z)\|x\|,
\end{equation}
where $\|x\|$ denotes   the Euclidean norm of $x$ (here any fixed norm would do just as well),
sometimes denoted also below by  $\|x\|_{\ell_2^n}$.

Let $X(z) = ({\bb C}^n, \|~~\|_z)$. Following \cite{CCRS3} we say that $\{X(z)\mid z\in \partial D\}$ is a compatible family of Banach spaces. When this holds for constant functions $k_1>0$ and $k_2>0$, we will say that
the family is strongly compatible.

 By \cite{CCRS3} (see also \cite{CCRS1, CCRS2, CS, Sem}), the complex interpolation method can be extended to this setting and produces a family $\{X(z)\mid z\in D\}$ of which the original family appears as boundary values in a suitable sense (see also \cite{Roc1,Roc2,Roc3,Her} for further developments).

We need to recall how the space $X(0)$ is defined (cf.\ \cite{CCRS1,CCRS2}). Let $W_j$ be an outer function on $D$ admitting $k_j$ as its boundary values (recall that $W_j = \exp(u_j+i\tilde u_j)$ where $u_j$ is the Poisson integral of ${\log} k_j$). Let $0<p\le \infty$. We define the space $H^p_\#$ as formed of all analytic  functions $f$ such that $W_1f$ is in the classical ${\bb C}^n$-valued Hardy space $H^p(D,{\bb C}^n)$ with boundary values such that $z\to \|f(z)\|_z$ is in $L_p(m)$. We set $\|f\|_{H^p_\#} = (\int\limits_{\partial D} \|f(z)\|^p_z dm(z))^{1/p}$, with the usual convention for $p=\infty$. Then
\begin{equation}\label{comeq4.22}
\forall x\in {\bb C}^n\qquad\qquad \|x\|_{X(0)}  = \inf\{\|f\|_{H^\infty_\#}\}
~~~~~~~~~~~~~~~~~~~~~~~~~~~~~~~~~~
\end{equation}
where the infimum runs over all $f$ in $H^\infty_\#$ such that $f(0) = x$. 
Although it appears so at first glance, this definition does not depend on the choice of $k_1$   as long as we choose $k_1>0$ such that ${\log} k_1$ is in $L_1$
(this can be verified using \eqref{comeq4.23++} below).

An equivalent definition which makes this clear is as follows
(see below for more  on $N^+$):
\begin{equation}\label{comeq4.23}
\|x\|_{X(0)} = \inf\{\text{ess}\sup\|f(z)\|_z\}
\end{equation}
where the infimum runs over all functions
$f$ in the class ${N^+}(D,{\bb C}^n)$   such that $f(0) = x$. The latter class ${N^+}(D,{\bb C}^n)$ is defined
as that of all $f$ of the form $f(z) = \sum^k_1 f_jx_j$ with $x_j\in {\bb C}^n$, $f_j\in N^+$ $(k\ge 1)$ .

We now give some useful background on  the Nevanlinna class $N^+$ (see \cite[p. 25]{Du} and 
\cite[\S 4]{RR}). 

The class $N^+$ is the set of all analytic functions $f\colon \ D\to {\bb C}$ such that $\sup\limits_{r<1} \int {\log}^+|f(rz)| \ dm(z)<\infty$ (i.e.\ $f\in N$) and moreover such that, when $r\to 1$
\[
\int {\log}^+|f(rz)|\ dm(z)\to \int {\log}^+|f(z)| \ dm(z),
\]
where $(f(z))_{z\in\partial D}$ denote the non-tangential limit of $f$ (which exists a.e.\ as soon as $f\in N$).
Let $f_r(z)=f(rz)$. Equivalently, assuming $f\not \equiv 0$,   $f\in N^+$ iff the family $\{{\log}^+ |f_r|\mid 0<r<1\}$
is uniformly integrable on the unit circle (see \cite[p.65-66]{RR} for this specific fact).
In that case, we have
$$\forall \xi \in D\qquad\log | f(\xi)| \le \int_{\partial D} \log | f(z)|  d\mu^\xi(z)$$
where $\mu^\xi $ denotes the harmonic probability mesure on $\partial D$ for the point 
$\xi\in D$. Equivalently   $\mu^\xi$ is the probability  on $\partial D$ admitting the Poisson kernel of $\xi$
as density relative to Lebesgue measure. More explicitly
 if 
\[
 \xi = re^{it_0}\quad z=e^{it} \qquad (0<r<\infty,\  t_0,t\in[0,2\pi]) 
\]
then we have
\begin{equation}\label{poisson}
 d\mu^\xi(z)=\frac{1-|\xi|^2}{|z-\xi|^2}dm(z)=\frac{1-r^2}{1-2r\cos(t-t_0)+r^2} \ \frac {dt}{2\pi}.
\end{equation}

More generally,  for any norm on
$\bb C^n$ (writing the norm as a sup of linear functionals) and any $f$ in ${N^+}(D,{\bb C}^n)$, we have 
\begin{equation}\label{interpol3}\forall \xi \in D\qquad\log \| f(\xi)\| \le \int_{\partial D} \log \| f(z)\|  d\mu^\xi(z).\end{equation}
and hence 
\begin{equation}\label{interpol4}
\forall \xi \in D\qquad  \| f(\xi)\| \le {\rm ess}\sup_{\partial D}   \| f(z)\|   .
\end{equation}
It is known that $N^+$ is a vector space and even an algebra
(see \cite[p.65-66]{RR})  and $H^p\subset N^+ \subset N$ for any $0< p \le \infty$. Clearly, these inclusions remain valid for the corresponding spaces of ${\bb C}^n$-valued functions,
denoted by $H^p(D, {\bb C}^n),   N^+(D, {\bb C}^n), N(D, {\bb C}^n)$. In particular, we have 
(note that $W_1$ and its inverse are in $N^+$)  $H^\infty_\#\subset N^+(D, {\bb C}^n)$.

We will need several basic properties of the interpolation spaces.

\subsection{}\label{comsec4.0} 

Assume that  the   family $\{X(z)\mid z\in \partial D\}$ takes only two values, i.e. we have a pair $(X_0,X_1)$ and a measurable partition $\partial D=\partial_0 \cup \partial_1$ such that
$X(z)=X_0$ when $z\in \partial_0$ and $X(z)=X_1$ when $z\in \partial_1$.
Let  $\theta$ denote the normalized Haar measure of  $\partial_1$. Then   we recover the classical
(Calder\'on-Lions)  complex interpolation space $(X_0,X_1)_\theta$. Indeed, it is proved in
\cite[Cor. 5.1]{CCRS3} that the family $\{X(z)\mid z\in   D\}$ that extends and interpolates
the data on $\partial D$, is simply the one given by setting
$$X(z)=(X_0,X_1)_{\theta(z)}$$
where, for any $z$ inside $D$,  $\theta(z)$ is defined as 
the harmonic extension inside $D$ of the indicator function of $\partial_1$.
In particular, since $\theta(0)=\theta$, we have (isometrically)
 $$X(0)=(X_0,X_1)_{\theta}.$$

\subsection{}\label{comsec4.1}
For any analytic function $f$ in $H^\infty_\#$, or  in $N^+(D, {\bb C}^n)$,   and any $\xi\in D$ we have
$$\log \| f(\xi)\|_{X(\xi)} \le \int_{\partial D} \log \| f(z)\|_{X(z)} d\mu^\xi(z),$$
and the function $\xi\mapsto \log \| f(\xi)\|_{X(\xi)}$  is subharmonic in $D$. 
 In particular
\begin{equation}\label{interpol}
\log \| f(0)\|_{X(0)} \le \int_{\partial D} \log \| f(z)\|_{X(z)} dm(z).\end{equation}
This shows that for any $x\in {\bb C}^n$
\begin{equation}\label{comeq4.23++}
 \|x\|_{X(0)} = \inf\left\{\exp\left(\int_{\partial D} \log \| f(z)\|_{X(z)} dm(z)\right)\right\}
\end{equation}
where the infimum runs over all $f$ in $H^\infty_\#$ (or  in $N^+(D, {\bb C}^n)$) with $f(0)=x$.

One can deduce from this that if we replace $H^\infty_\#$ by $H^p_\#$ in \eqref{comeq4.22}
 ($0<p\le \infty$) the infimum remains the same, i.e.\ we have
\begin{equation}
\|x\|_{X(0)}  = \inf\{\|f\|_{H^p_\#}\mid f\in H^p_\# \quad f(0) = x\}.\tag*{$\forall x\in {\bb C}^n$}
\end{equation}
In particular, using say $p=2$, one can describe the dual $X(0)^*$ as the interpolation space associated to the dual family $\{X(z)^*\mid z\in \partial D\}$. More precisely, if we set $Y(z) = X(z)^*$ (note that this is still compatible), then we have
\[
X(0)^* = Y(0)\quad \text{isometrically.}
\]

\subsection{}\label{comsec4.1+} The fundamental interpolation principle for families
(inspired by Elias Stein's classical result) takes the following form:
Let $z\mapsto T(z)$ be a function in $N^+$ with values in the normed space $B( {\bb C}^n   )$ of
linear mappings on   $({\bb C}^n,\|\cdot\|)$.  Then
\begin{equation}
\label{interpol2}\log \|T(0)\|_{B(X(0))}\le \int \log \|T(z)\|_{B(X(z))} dm(z).\end{equation}
Indeed, for any $f \in N^+(D, {\bb C}^n)$, the function $z\mapsto T(z)f(z)$
is in $N^+(D, {\bb C}^n)$, and hence if $f(0)=x$ we deduce from \eqref{interpol}
$$\log \|T(0)x\|_{ X(0)}\le \int_{\partial D} \log \| T(z)f(z)\|_{X(z)} dm(z)\le \int_{\partial D} \log \| T(z) \|_{X(z)} dm(z) +\log \|f\|_{H^\infty_\#} ,$$ from which \eqref{interpol2} is immediate.

\subsection{}\label{comsec4.2}
Let $z\to\gamma(z)>0$ be a positive function on $\partial D$ such that ${\log} \gamma$ is in $L_1(m)$. Then there is an outer function $F$ on $D$ in the Nevanlinna class $N^+$ admitting $\gamma$  as (nontangential) boundary values. Let $\{X(z)\mid z\in \partial D\}$ be a compatible family, with norms $\|~~\|_z$. Let $Y(z)$ be ${\bb C}^n$ equipped with the norm
\[
x\to |F(z)|\|x\|_z.
\]
Then $Y(0)$ is isometric to $X(0)$, and actually:
\begin{equation}\label{comeq4.3}
\forall x\in {\bb C}^n\qquad \qquad \|x\|_{Y(0)} = |F(0)|\|x\|_{X(0)}.~~~~~~~~~~~~~~~~~~~~~~~~~~~~~~~~~~~
\end{equation}
Indeed, note that $\{Y(z)\}$ is a compatible family and if $W_1,W_2$ are associated to $k_1,k_2$ as above, then $W_1F$ and $W_2F$ play the same r\^ole for the family $\{Y(z)\}$. Let $H^\infty_\#\{X\}$ and $H^\infty_\#\{Y\}$ denote the corresponding spaces $H^\infty_\#$ as defined above. It is then a trivial exercise to check that
\[
f\in H^\infty_\#\{X\} \Leftrightarrow fF^{-1}\in H^\infty_\#\{Y\}.
\]
By \eqref{comeq4.22}, this clearly implies \eqref{comeq4.3}.$\hfill\square$

\subsection{}\label{comsec4.3}
In particular, this shows that we can always reduce to the case when $k_1\equiv 1$ (resp.\ $k_2\equiv 1$) simply by choosing $F=W^{-1}_1$ (resp.\ $F=W^{-1}_2$) in \ref{comsec4.2}. 

\subsection{}
We will say that two compatible families $\{X(z)\mid z\in\partial D\}$ and $\{Y(z)\mid z\in\partial D\}$ of $n$-dimensional spaces are ``equivalent'' if there is an {\it invertible} analytic function $F\colon \ D\to M_n$ in the Nevanlinna class $N^+$ (i.e.\ with each entry in $N^+$) such that
\begin{equation}
\|F(z)x\|_{Y(z)} = \|x\|_{X(z)}.\tag*{$\forall z\in\partial D$}
\end{equation}
Here, ``invertible'' means that $F(z)$ is invertible for all $z$ in $D$, and that $z\to F(z)^{-1}$ is also in $N^+$. 
Recall that $N^+$ is an algebra. Then, using \eqref{comeq4.23},   we have clearly:\ $X(0)\simeq Y(0)$ isometrically and $F(0)\colon \ X(0)\to Y(0)$ is an isometric isomorphism.

The following lemma shows that on the subset $C\subset\partial D$ where $X(z)$ is Hilbertian, we may assume after passing to an equivalent family that $X(z)=\ell^n_2$ for (almost) all $z$ in $C$.

\begin{lem}\label{comlem4.1}
Any compatible family $\{X(z)\mid z\in\partial D\}$ of $n$-dimensional Banach spaces is equivalent to a family $\{Y(z)\mid z\in\partial D\}$ such that
\[
 Y(z) = \ell^n_2
\]
for (almost) all $z$ such that $X(z)$ is Hilbertian.
\end{lem}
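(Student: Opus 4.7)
The plan is to construct the equivalence by means of a matrix-valued Szeg\H{o}-type outer factorization. Let $C\subset \partial D$ be the measurable set on which $X(z)$ is Hilbertian. For $z\in C$ encode the Hilbertian structure by the unique positive definite matrix $A(z)\in M_n(\bb C)$ with $\|x\|_{X(z)}^2 = \langle A(z)x,x\rangle$, and set $A(z)=I$ for $z\in\partial D\setminus C$. The compatibility assumption \eqref{compat} forces the spectrum of $A(z)$ into $[k_1(z)^2,k_2(z)^2]$ on $C$, so $2n\log k_1\le \log\det A\le 2n\log k_2$ on $C$ (and $\log\det A=0$ off $C$); hence $\log\det A\in L^1(\partial D,m)$.

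Next I would invoke a matrix outer factorization to produce an analytic $F\colon D\to M_n$ with $F(z)^*F(z)=A(z)$ for a.e. $z\in\partial D$, with $\det F$ scalar outer, and with both $F$ and $F^{-1}$ in $N^+(D,M_n)$. Since $A$ need not be integrable, one first introduces the scalar weight $\phi=(1+\operatorname{tr}A)^{-1}$, which satisfies $\phi A\in L^\infty(M_n)$ and, using $\log k_2\in L^1$, also $\log \phi\in L^1$. The classical Wiener--Masani theorem then yields a matrix $H^2$-outer $F_0$ with $F_0^*F_0=\phi A$ a.e., and one sets $F=\psi F_0$, where $\psi$ is the scalar outer function with $|\psi|=\phi^{-1/2}$ on $\partial D$. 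A direct calculation gives $F^*F=A$ a.e., and, using that $N^+$ is an algebra combined with Cramer's rule $F_0^{-1}=(\det F_0)^{-1}\operatorname{adj}(F_0)$ (where $\det F_0$ is scalar outer), one checks $F,F^{-1}\in N^+(D,M_n)$; moreover $F(z)$ is invertible at every $z\in D$.

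Defining the family $\{Y(z)\}$ by the prescription $\|F(z)x\|_{Y(z)}=\|x\|_{X(z)}$ then gives an equivalence in the sense of the preceding paragraph. For $z\in C$ one computes
\[
\|F(z)x\|_{\ell^n_2}^2 = \langle F(z)^*F(z)x,x\rangle = \langle A(z)x,x\rangle = \|x\|_{X(z)}^2,
\]
so $\|\cdot\|_{Y(z)}$ coincides with the Euclidean norm, i.e.\ $Y(z)=\ell^n_2$ a.e. on $C$, as required.

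The main obstacle is securing the simultaneous membership of $F$ and $F^{-1}$ in $N^+$: the Wiener--Masani factorization is typically stated under $A\in L^1(M_n)$, which is not automatic in our setting, which is why the scalar renormalization by $\phi$ is essential. Verifying that the recomposition $F=\psi F_0$ preserves Nevanlinna integrability on both the analytic side and the inverse side (via outerness of $\det F_0$ and of $\psi$) is the delicate step; everything else is a routine assembly.
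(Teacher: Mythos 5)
Your proof is correct and follows essentially the same strategy as the paper: encode the Hilbertian norms on $C$ by a positive matrix-valued function, extend it by the identity off $C$, verify the Szeg\H{o} condition $\log\det\in L^1$ from the compatibility bounds, and produce the equivalence via an outer matrix factorization $F$ with $F,F^{-1}\in N^+$ (so that $Y(z)=\ell_2^n$ a.e.\ on $C$). The only variation is how boundedness is secured before invoking the factorization theorem: the paper first applies the normalization of \ref{comsec4.3} to arrange $k_2\equiv 1$, so that $\varphi\le I$ and Helson's bounded matricial Szeg\H{o} theorem applies directly, whereas you keep general $k_2$, renormalize by the scalar weight $\phi=(1+\operatorname{tr}A)^{-1}$ so Wiener--Masani applies, and then multiply back by an outer square root of $\phi^{-1}$ --- a slightly longer but equally valid route to the same factorization.
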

 
\begin{proof}
By \ref{comsec4.3}, we may assume that the compatibility condition has the form $k_1\|x\| \le \|x\|_{X(z)}\le \|x\|$ $\forall x\in {\bb C}^n$, with ${\log} k_1$ in $L_1$.

Let $C\subset\partial D$ be the set of $z$'s such that $X(z)$ is (isometrically) Hilbertian. Then there is clearly a measurable function $\varphi\colon \ C\to (M_n)_+$ such that
\begin{equation}
 \|x\|^2_{X(z)} = \langle\varphi(z)x,x\rangle\tag*{$\forall x\in {\bb C}^n$}
\end{equation}
where $\langle \cdot,\cdot\rangle$ is the usual scalar product in ${\bb C}^n$. Let us extend $\varphi$ arbitrarily outside $C$, say by setting $\varphi(z) = I$ for all $z$ outside $C$. By the compatibility assumption, we have clearly $k_1(\cdot)I \le \varphi(\cdot)\le I$. By the classical matricial Szeg\"o theorem (cf.\ e.g.\ \cite{Hel}) there is a bounded outer function $F\colon \ D\to M_n$ such that $|F(z)|^2 = F(z)^* F(z) = \varphi(z)$ on $\partial D$. We now define $Y(z)$ by setting
\begin{equation}
 \|x\|_{Y(z)} = \|F(z)^{-1}x\|_{X(z)}.\tag*{$\forall x\in {\bb C}^n$}
\end{equation}
 Note that for any $z$ in $C$ we have
\[
 \|x\|^2_{Y(z)} = \langle \varphi(z) F(z)^{-1}x, F(z)^{-1}x\rangle = \|x\|^2_{\ell^n_2}.
\]
This proves the lemma.
\end{proof}

\subsection{}\label{comsec4.4}  Let $\{X(z)\mid z\in\partial D\}$ be a 
compatible family. Let $\|\cdot\|_{\xi}$ denote the norm in ${X(\xi)}$
($\xi\in \bar D$).
For any 
${\bb C}^n$-valued analytic function $f$ on $D$,  
the function $\xi\mapsto \log \| f(\xi)\|_{\xi}$  is subharmonic in $D$. 
A fortiori, by Jensen, the function $\xi\mapsto   \| f(\xi)\|_{ \xi}$  is subharmonic in $D$. 
Any norm valued function on $D$ with this property is called subharmonic in \cite{CS}.
Now suppose for any $x\in {\bb C}^n$ the function $x\mapsto \|x\|_{z}$ is continuous and bounded
on $\partial D$. Then for any $z\in \partial D$ and any $x\in {\bb C}^n$
$$\limsup_{\xi\to z} \|x\|_\xi \le \|x\|_{z}.$$
It turns out that the norm valued function $\|\cdot\|_\xi$ is the largest
among all the subharmonic ones  on $D$ satisfying this. Coifman and Semmes \cite{CS,Sem}
used this characterization to develop an interpolation method
on domains on ${\bb C}^d$ with $d>1$. 
Independently Slodkowski \cite{Slo1,Slo2} introduced several
new interpolation methods. The possible consequences of these ideas for
the geometry of Banach spaces have yet to be investigated.

\section{$\pmb{\theta}$-Hilbertian spaces}\label{comsec4bis}

\begin{defn}\label{thetahilb}
\begin{itemize}
\item We will say that a finite dimensional Banach space $X$ is $\theta$-Euclidean if $X$ is isometric to the complex interpolation space $X(0)$ associated to a compatible family $\{X(z)\mid z\in \partial D\}$ of Banach spaces such that $X(z)$ is Hilbertian for all $z$ in a subset of $\partial D$ of (normalized) Haar measure $\ge \theta$. 

\item If this holds simply for all $z$ in $J_\theta = \{e^{2\pi it}\mid 0<t<\theta\}$ then we say that $X$ is arcwise $\theta$-Euclidean.

\item We will say that a Banach space is $\theta$-Hilbertian (resp. arcwise $\theta$-Hilbertian)
if it is isometric to an ultraproduct of a family of
$\theta$-Euclidean (resp. arcwise $\theta$-Euclidean) finite dimensional  spaces.
 \end{itemize}\end{defn}

The preceding terminology is different from the one in our previous paper \cite{P1}. There we called $\theta$-Hilbertian the Banach spaces that can be written as $(X_0,X_1)_\theta$ for some interpolation pair of Banach spaces with $X_1$ Hilbertian.
We prefer to change this: we will call these spaces strictly
$\theta$-Hilbertian.

We suspect that there are $\theta$-Hilbertian spaces that are not strictly
$\theta$-Hilbertian (perhaps even not quotients of subspaces of ultraproducts
of  strictly
$\theta$-Hilbertian spaces), but we have no example yet. This amounts to show that in
\eqref{comeq4.0} below we cannot  restrict $X$ to be strictly
$\theta$-Hilbertian.

Let $Z,Y$ be Banach spaces. Recall that we denote by $\Gamma_H(Z,Y)$ the set of operators $u\colon \ Z\to Y$ that factorize through a Hilbert space, i.e.\ there are bounded operators $u_1\colon \ H\to Y$, $u_2\colon \ Z\to H$ such that $u=u_1u_2$. We denote
\[
\gamma_H(u) = \inf\{\|u_1\| \ \|u_2\|\}
\]
where the infimum runs over all such factorizations.

Similarly, we denote by $\Gamma_{\theta H}(Z,Y)$ (resp.  $\Gamma_{  \widetilde{ \theta H }(Z,Y)}$) the set of $T$'s that factor through a $\theta$-Hilbertian space (resp. arcwise $\theta$-Hilbertian) and we denote
\[
\gamma_{\theta H}(u) = \inf\{\|u_1\| \|u_2\|\}\quad ({\rm resp.}\   \gamma_{   \widetilde{ \theta H }}(u)= \inf\{\|u_1\| \|u_2\|)
\]
where the infimum runs over all factorizations $u=u_1u_2$ with $u_1\colon \ X\to Y$, $u_2\colon \ Z\to X$ and with $X$  $\theta$-Hilbertian (resp. arcwise $\theta$-Hilbertian). Note that, since  ``$\theta$-Hilbertian" (resp. arcwise $\theta$-Hilbertian) is stable under $\ell_2$-sums, this is a norm (see \ref{comsec3.9}).

%

\begin{lem}\label{comlem5.10}
 Fix $n\ge 1$ and $0<\theta<1$. Let $E,F$ be $n$-dimensional Banach spaces. Then for any linear map $u\colon \ E\to F$ we have
\[
 \gamma_{\theta H}(u) \le \gamma_{   \widetilde{ \theta H }}(u)\le  \|u\|_{(B(E,F), \Gamma_H(E,F))_\theta}.
\]
More precisely, if $\|u\|_{(B(E,F), \Gamma_H(E,F))_\theta} \le 1$ and if $u$ is a linear isomorphism, then $u$ admits a factorization $u=u_1u_2$ with $\|u_2\colon \ E\to X\|\le 1$ and $\|u_1\colon \ X\to F\|\le 1$ where $X=X(0)$ for a compatible family $\{X(z)\mid z\in\partial D\}$ such that $X(z)\simeq \ell^n_2$ $\forall z\in J_\theta$ and $X(z)\simeq F$ $\forall z\notin J_\theta$. (We also have $X=Y(0)$ for a family $\{Y(z)\}$ such that $Y(z)\simeq \ell^n_2$ $\forall z\in J_\theta$ and $Y(z)\simeq E$ $\forall z\notin J_\theta$.)
\end{lem}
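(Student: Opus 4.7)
The first inequality $\gamma_{\theta H}(u)\le \gamma_{\widetilde{\theta H}}(u)$ is immediate from Definition~\ref{thetahilb}, since $J_\theta\subset\partial D$ has normalized Haar measure exactly $\theta$, so any arcwise $\theta$-Euclidean space is in particular $\theta$-Euclidean and the same passes to ultraproducts. For the second inequality I fix $\epsilon>0$ and assume $\|u\|_{(B(E,F),\Gamma_H(E,F))_\theta}<1$. Translating the standard strip formulation of complex interpolation to the disc as in \S\ref{comsec4.0} (so that the level $\theta$ goes to $0\in D$ and the boundary component of mass $\theta$ to the arc $J_\theta$), I obtain a bounded analytic $M_n$-valued function $U$ on $D$ with $U(0)=u$, $\|U(z)\|_{B(E,F)}\le 1$ a.e.\ on $\partial D\setminus J_\theta$ and $\gamma_H(U(z))\le 1$ a.e.\ on $J_\theta$.

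The crucial step is to factor $U$ pointwise on $J_\theta$: the bound $\gamma_H(U(z))\le 1$ produces, for a.e.\ $z\in J_\theta$, maps $A(z)\colon \ell_2^n\to F$ and $B(z)\colon E\to \ell_2^n$ with $U(z)=A(z)B(z)$ and $\|A(z)\|,\|B(z)\|\le 1$; a measurable selection as in \S\ref{comsec1.10} makes $A(\cdot)$ and $B(\cdot)$ measurable. Because $u=U(0)$ is invertible, $\det U$ is a nonzero bounded analytic function on $D$, so $\log|\det U|\in L_1(\partial D)$. Combining this with the pointwise identity $\det U(z)=\det A(z)\det B(z)$ and with $|\det A(z)|,|\det B(z)|\le 1$, I conclude that $\log|\det A(z)|\in L_1(J_\theta)$, and the crude bound $\|A(z)^{-1}\|\le |\det A(z)|^{-1}\|A(z)\|^{n-1}$ gives $\log\|A(z)^{-1}\|\in L_1$.

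Now define a compatible family of $n$-dimensional norms on $\mathbb{C}^n$ by
\[
\|y\|_{X(z)}:=\|y\|_F \ \ (z\in\partial D\setminus J_\theta),\qquad \|y\|_{X(z)}:=\|A(z)^{-1}y\|_{\ell_2^n}\ \ (z\in J_\theta),
\]
so that $X(z)=F$ off $J_\theta$ and $X(z)\simeq \ell_2^n$ on $J_\theta$; the $L_1$-bound above gives the compatibility hypothesis \eqref{compat}. Set $X:=X(0)$, which is arcwise $\theta$-Euclidean by construction. The desired factorization is $u_2:=u\colon E\to X$ and $u_1:=\mathrm{id}_{\mathbb{C}^n}\colon X\to F$. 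For $u_1$, the inequality $\|y\|_F\le \|y\|_{X(z)}$ is trivial off $J_\theta$ and on $J_\theta$ follows from $\|y\|_F=\|A(z)A(z)^{-1}y\|_F\le \|A(z)^{-1}y\|_{\ell_2^n}$; applying \eqref{interpol4} with the constant norm $\|\cdot\|_F$ yields $\|y\|_F\le \|y\|_X$. For $u_2$, I test \eqref{comeq4.23} with $f(z):=U(z)e\in N^+(D,\mathbb{C}^n)$: $f(0)=u(e)$, and $\|f(z)\|_{X(z)}\le \|e\|_E$ a.e., as one checks separately off $J_\theta$ (where it equals $\|U(z)e\|_F$) and on $J_\theta$ (where $A(z)^{-1}U(z)e=B(z)e$). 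Sending $\epsilon\to 0$ gives $\gamma_{\widetilde{\theta H}}(u)\le \|u\|_{(B(E,F),\Gamma_H(E,F))_\theta}$.

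The parallel description $X=Y(0)$ follows from the same construction with the roles of $E$ and $F$ swapped, or more directly by transporting $\{X(z)\}$ via the constant invertible intertwiner $u^{-1}\in N^+(D,M_n)$: the equivalent family $\|y\|_{Y(z)}:=\|uy\|_{X(z)}$ satisfies $Y(z)\simeq E$ off $J_\theta$ (with constants governed by $\|u\|\,\|u^{-1}\|$) and $Y(z)\simeq \ell_2^n$ on $J_\theta$, and by the equivalence principle recalled in \S\ref{comsec4} one has $X(0)\simeq Y(0)$ isometrically. The main technical obstacle throughout is verifying \eqref{compat} for $\{X(z)\}$: the proof hinges on combining measurable selection of the factorizations $U(z)=A(z)B(z)$ with the non-vanishing of $\det U$ furnished by the invertibility of $u$, and this is exactly where the refined second half of the statement requires $u$ to be a linear isomorphism.
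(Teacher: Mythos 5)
Your proof is essentially correct and follows the same route as the paper's: translate the interpolation bound into a bounded analytic $U$ on $D$ with $U(0)=u$, measurably select a Hilbertian factorization $U(z)=A(z)B(z)$ on $J_\theta$, use the invertibility of $u$ and Jensen's theorem on $\det U$ (together with the adjugate bound) to get the $L_1$-integrability needed for \eqref{compat}, then define the interpolated family $X(z)$ and read off the factorization. The only structural difference is cosmetic: you place the norm on the ``$F$-side'' (taking $\|y\|_{X(z)}=\|y\|_F$ off $J_\theta$ and $\|A(z)^{-1}y\|_{\ell_2^n}$ on $J_\theta$, so that $u_2=u$ and $u_1=\mathrm{id}$), whereas the paper places it on the ``$E$-side'' (taking $\|x\|_{X(z)}=\|u(z)x\|_F$ off $J_\theta$ and $\|u_2(z)x\|_{\ell_2^n}$ on $J_\theta$, so that $u_2=\mathrm{id}$ and $u_1=u$). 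The two families are related by the invertible analytic change of variables $U(z)$ and hence give isometric $X(0)$, so the choice is a matter of taste.

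One small defect concerns the parallel ``$Y$-family''. Transporting by the constant $u$ as you propose gives $\|y\|_{Y(z)}=\|uy\|_F$ off $J_\theta$, and this space is isometric to $F$ (via $u$), not to $E$; the constants $\|u\|\,\|u^{-1}\|$ you invoke are precisely the obstruction, since there is in general no isometry between $E$ and $F$ to transport by. The paper's proof instead sets $\|x\|_{Y(z)}=\|x\|_E$ off $J_\theta$ directly (keeping the $u_2(z)$-norm on $J_\theta$) and observes separately that $u$ factors through the resulting $Y(0)$; note that this $Y(0)$ is in general a different space from $X(0)$, so the parenthetical ``$X=Y(0)$'' in the statement should be read as asserting a second factorization rather than a literal equality of spaces. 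This does not affect the main conclusion or the subsequent use of the lemma.
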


\begin{proof}
Since linear isomorphisms are dense in $B(E,F)$, it clearly suffices to prove the second (``more precise'') assertion.   Assume $\|u\|_{(B(E,F), \Gamma_H(E,F))_\theta}\le 1$. 
As is well known, by conformal equivalence, we may use the unit disc instead of the strip to define
complex interpolation. We have then $u = u(0)$ for some function $z\mapsto u(z)$ in $H^\infty(D,B(E,F))$ such that ${\rm ess}\sup_{z\not\in J_\theta} \|u(z)\|_{B(E,F)}\le 1$ and ${\rm ess}\sup_{z\in J_\theta} \|u(z)\|_{\Gamma_H(E,F)}\le 1$. Therefore, (using a measurable selection, see \ref{comsec1.10}) we can find bounded measurable functions $z\to u_2(z)\in B(E,\ell^n_2)$ and $z\to u_1(z)\in B(\ell^n_2, F)$ such that $u(z) =u_1(z)u_2(z)$ and ${\rm ess}\sup_{z\in J_\theta} \|u_1(z)\|\le 1$, ${\rm ess}\sup_{z\in J_\theta} \|u_2(z)\|\le 1$. 

We may
view our operators as $n\times n$ matrices
(with respect to fixed bases) and define
their determinants as those of the corresponding matrices.
Note that since $z \mapsto \det(u(z))$ is analytic, bounded and non identically zero, we must have $  \det(u(z))\not = 0 $ a.e. on $\partial D$ and (by Jensen, see e.g. \cite{Du,Ga})
$\log |\det(u(z))|$ must be in $L_1(\partial D)$. Moreover, the classical formula for the inverse of a matrix shows (since $u$ is bounded on $D$) that there is a constant $c$ such that
$\|   u(z)^{-1}\|\le c |\det(u(z))|^{-1}$ a.e. on $\partial D$. Therefore, if $\varphi(z)=\|   u(z)^{-1}\|^{-1}$, we have
$$\int_D \log \varphi >-\infty$$
and since $\varphi(z)\le \|   u(z)\|$, we obtain
$\log \varphi \in L_1(\partial D)$.  Note that we have
$$\det(u(z))=\det(u_1(z))\det(u_2(z))$$ and hence since $u_1,u_2$ are bounded on $\partial D$, there are constants $c_1,c_2$ such that 
$ |\det(u(z))|\le c_j  |\det(u_j(z))|$ a.e. on $\partial D$ for $j=1,2$. Thus letting
$\varphi_j=\|   u_j(z)^{-1}\|^{-1}$, the same argument
yields
$$\log \varphi_j \in L_1(\partial D).$$

We then define for any $x$ in $E$ 
\begin{align}
 \|x\|_{X(z)} &= \|u(z)x\|_F, \|x\|_{Y(z)} = \|x\|_E\tag*{$\forall z\not\in J_\theta$}\\
\|x\|_{X(z)} &= \|x\|_{Y(z)} = \|u_2(z)x\|_{\ell^n_2}.\tag*{$\forall z\in J_\theta$}
\end{align}
From the preceding observations on $\varphi_1$ and 
$\varphi_2$ it is easy to deduce that  \eqref{compat}
holds with $\log(k_1)$ and $\log(k_2)$ in $L_1(\partial D)$. Therefore $(X(z))$ and $(Y(z))$ are compatible families.
Then $\forall z\in\partial D$
\[
 \|u(z)x\|_F \le \|x\|_{X(z)} \le \|x\|_E.
\]
By consideration of the constant function equal to $x$ we find 
 \[
  \|x\|_{X(0)}\le \|x\|_E.
 \]
Then, for any analytic $f\colon\ D\to E$ in $H^\infty_\#$ with $f(0)=x$ we can write
by \eqref{interpol4}
\[
\|ux\|_F  = \|u(0)f(0)\|_F\le {\text{ess}}\sup_{z\in \partial D}\|u(z)f(z)\|_F  \le {\text{ess}}\sup_{z\in \partial D}\| f(z)\|_{X(z)}   ,
 \]
 thus, taking the infimum over all possible $f$'s, we conclude
 \[
\|ux\|_F\le \|x\|_{X(0)}\le \|x\|_E.
 \]
 
This clearly shows that $u$ factors through $X(0)$ with constant of factorization $\le 1$. Similarly, $u$ factors through $Y(0)$ with factorization constant $\le 1$.
\end{proof}

\begin{thm}\label{comthm5.7}
Let $0<\theta<1$. For any $n\ge 1$ we have   isometric identities
\begin{equation}\label{comeq5.13}
(B(\ell^n_1,\ell^n_\infty), \Gamma_H(\ell^n_1, \ell^n_\infty))_\theta = \Gamma_{\theta H}(\ell^n_1,\ell^n_\infty)= \Gamma_{ \widetilde{\theta H}}(\ell^n_1,\ell^n_\infty).
\end{equation}
\end{thm}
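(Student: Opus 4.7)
The inequality $\gamma_{\theta H}(u)\le \gamma_{\widetilde{\theta H}}(u)$ is immediate from the inclusion of the two classes (arcwise $\theta$-Euclidean spaces form a subclass of $\theta$-Euclidean ones, and this passes to ultraproducts), and $\gamma_{\widetilde{\theta H}}(u)\le \|u\|_{(B(\ell_1^n,\ell_\infty^n),\Gamma_H(\ell_1^n,\ell_\infty^n))_\theta}$ is precisely the content of Lemma \ref{comlem5.10} applied to $E=\ell_1^n$, $F=\ell_\infty^n$. Therefore the whole statement reduces to showing that, for every $u\colon \ell_1^n\to\ell_\infty^n$,
\[
\|u\|_{(B(\ell_1^n,\ell_\infty^n),\Gamma_H(\ell_1^n,\ell_\infty^n))_\theta}\le \gamma_{\theta H}(u).
\]
Fix $\epsilon>0$ and a factorization $u=u_1u_2$ with $u_2\colon \ell_1^n\to X$, $u_1\colon X\to\ell_\infty^n$, $\|u_1\|\,\|u_2\|\le (1+\epsilon)\gamma_{\theta H}(u)$, where $X$ is $\theta$-Hilbertian.

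As a first step I would reduce to the case where $X$ is itself a \emph{$\theta$-Euclidean} finite-dimensional space. Write $X=\prod_{\cl U} X^\alpha$ with each $X^\alpha$ $\theta$-Euclidean, and set $V=u_2(\ell_1^n)$, which is finite-dimensional. By the standard ultraproduct representation of finite-dimensional subspaces there exist, for $\cl U$-almost every $\alpha$, $(1+\epsilon)$-isomorphisms $\varphi^\alpha\colon V\to V^\alpha\subset X^\alpha$. Setting $u_2^\alpha=\varphi^\alpha u_2$ and extending $u_1(\varphi^\alpha)^{-1}\colon V^\alpha\to\ell_\infty^n$ to all of $X^\alpha$ via the injectivity of $\ell_\infty^n$ produces a factorization $u=u_1^\alpha u_2^\alpha$ through the $\theta$-Euclidean space $X^\alpha$ with product of norms at most $(1+\epsilon)^2\|u_1\|\,\|u_2\|$. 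Thus I may assume $X=X(0)$ for a compatible family $\{X(z)\mid z\in\partial D\}$ of $m$-dimensional spaces such that $X(z)$ is Hilbertian on a measurable set $C\subset\partial D$ with $m(C)\ge\theta$.

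The heart of the argument is to construct an admissible interpolation function. Using the very definition \eqref{comeq4.22} of $X(0)$ applied to each basis vector, lift the columns $u_2(e_j)\in X(0)$ to $g_j\in H^\infty_\#(X)$ with $g_j(0)=u_2(e_j)$ and $\|g_j\|_{H^\infty_\#}\le(1+\epsilon)\|u_2\|$. Then $\tilde u_2(z)(x)=\sum_j x_jg_j(z)$ is analytic in $z$, satisfies $\tilde u_2(0)=u_2$, and, since the norm of an operator out of $\ell_1^n$ is the maximum of its column norms, we have $\|\tilde u_2(z)\colon \ell_1^n\to X(z)\|\le(1+\epsilon)\|u_2\|$ a.e. For $u_1$ I apply the same lifting to $u_1^*\colon \ell_1^n\to X(0)^*=X^*(0)$ (using the duality of families, \ref{comsec4.1}) and transpose, obtaining an analytic $\tilde u_1(z)\colon X(z)\to\ell_\infty^n$ with $\tilde u_1(0)=u_1$ and $\|\tilde u_1(z)\|\le(1+\epsilon)\|u_1\|$ a.e. The product $\tilde u(z)=\tilde u_1(z)\tilde u_2(z)$ is then an analytic $B(\ell_1^n,\ell_\infty^n)$-valued function with $\tilde u(0)=u$ and, a.e.\ in $z$, a factorization through $X(z)$ of norm $\le(1+\epsilon)^2\|u_1\|\,\|u_2\|$.

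To conclude, pick any measurable $\partial_1\subset C$ of Lebesgue measure exactly $\theta$ and let $\partial_0=\partial D\setminus\partial_1$. On $\partial_1$ the space $X(z)$ is Hilbertian so $\gamma_H(\tilde u(z))\le(1+\epsilon)^2\|u_1\|\,\|u_2\|$; on $\partial_0$ one has the weaker bound $\|\tilde u(z)\|_{B(\ell_1^n,\ell_\infty^n)}\le(1+\epsilon)^2\|u_1\|\,\|u_2\|$ (automatic on $\partial_0\cap C$ since $\|\cdot\|_B\le\|\cdot\|_{\Gamma_H}$, and given on $\partial D\setminus C$). By the identification \ref{comsec4.0} of the Calder\'on interpolation of a pair as a two-valued family interpolation, $\tilde u$ is therefore an admissible function for $(B(\ell_1^n,\ell_\infty^n),\Gamma_H(\ell_1^n,\ell_\infty^n))_\theta$, giving $\|u\|_{(B,\Gamma_H)_\theta}\le(1+\epsilon)^2\|u_1\|\,\|u_2\|$. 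Letting $\epsilon\to 0$ and infimizing over factorizations completes the proof. The main obstacle is the simultaneous analytic lifting of both $u_1$ and $u_2$ so that the product $\tilde u_1(z)\tilde u_2(z)$ inherits the correct factorization bound a.e.; this is what forces the passage through the dual family for $u_1$ and is the reason the endpoints $\ell_1^n,\ell_\infty^n$ (rather than general $L_p$) enter so cleanly, via the max-of-columns structure.
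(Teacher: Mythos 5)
Your proof is correct and follows essentially the same route as the paper's: reduce to a finite-dimensional $\theta$-Euclidean $X=X(0)$, lift $h_j=u_2e_j\in X(0)$ and $k_i=u_1^*e_i\in X(0)^*$ to analytic families $h_j(z),k_i(z)$ (your $\tilde u_2,\tilde u_1$ are just the operator-valued packaging of these), form the entrywise analytic $F_{ij}(z)=\langle k_i(z),h_j(z)\rangle$, and conclude via \ref{comsec4.0}. The extra care you take over the ultraproduct reduction to the $\theta$-Euclidean case and over replacing $C$ by a subset of measure exactly $\theta$ is a welcome bit of detail the paper leaves implicit, but not a different argument.
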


\begin{proof}
Let $u\colon \ \ell^n_1\to\ell^n_\infty$ be such that $\gamma_{\theta H}(u)<1$, i.e.\ there is $X$ $\theta$-Hilbertian and $u_1\colon \ X\to \ell^n_\infty$, $u_2\colon \ \ell^n_1\to X$ with $\|u_1\|<1$, $\|u_2\|<1$ such that $u = u_1u_2$. By compactness, we may as well assume that $X$ is finite dimensional and $\theta$-Euclidean. Let then $h_j = u_2e_j\in X $ and $k_i = u^*_1e_i\in X^*$. We have $u_{ij} = \langle k_i,h_j\rangle$, and 
\[
\sup\|h_j\|_X < 1,\qquad \sup\|k_i\|_{X^*} < 1.
\]
Writing $X = X(0)$ for a compatible family such that $X(z)$ is Hilbertian on a set of measure $\ge \theta$ we find analytic functions $h_i(z)$, $k_j(z)$ in $N^+(D, {\bb C}^n)$ such that
\[
\text{ess~sup}\|h_j(z)\|_z < 1,\qquad \text{ess~sup}\|k_i(z)\|^*_z<1
\]
and $h_j(0) = h_j$, $k_i(0) = k_i$.
This gives us a matrix valued function $F_{ij}(z) = \langle k_i(z), h_j(z)\rangle$ in $N^+$  with associated operator $F(z)\colon\ \ell^n_1\to\ell^n_\infty$ such that $\gamma_H(F)<1$ on a set of measure $\ge\theta$ and $\|F\|\le 1$ otherwise. (Indeed, $\|F\| = \sup\limits_{ij}|F_{ij}|$ $\le \sup\limits_i \|k_i\|^*_z  \sup\limits_j \|h_j\|_z)$. 
By   \ref{comsec4.0}, this shows that
\[ 
\|F(0)\|_{(B(\ell^n_1,\ell^n_\infty), \Gamma_H(\ell^n_1,\ell^n_\infty))_\theta} \le 1,
\]
and since $F(0) = u$, we have proved $\|u\|_{(B(\ell^n_1,\ell^n_\infty), \Gamma_H(\ell^n_1, \ell^n_\infty))_\theta}\le \gamma_{\theta H}(u)$. Conversely, by   Lemma \ref{comlem5.10}, we have 
$ \gamma_{ \widetilde{\theta H}}(u)\le \|u\|_{(B(\ell^n_1,\ell^n_\infty), \Gamma_H(\ell^n_1, \ell^n_\infty))_\theta} $.
Since we have trivially $   \gamma_{ {\theta H}}(u)\le \gamma_{ \widetilde{\theta H}}(u)$, this proves \eqref{comeq5.13}.
\end{proof}

To supplement the preceding result, the next proposition gives a simple description of the norm in the dual of the space $(B(\ell^n_1,\ell^n_\infty)$, $\Gamma_H(\ell^n_1,\ell^n_\infty))_\theta$. By the duality property of complex interpolation (cf.\ \cite[p.~98]{BL}) the latter space coincides isometrically with $(B(\ell^n_1,\ell^n_\infty)^*, \Gamma_H(\ell^n_1,\ell^n_\infty)^*)_\theta$. We will use the classical duality between linear maps $u\colon \ \ell^n_1\to\ell^n_\infty$ and linear maps $v\colon \ \ell^n_\infty \to \ell^n_1$ defined by
\[
 \langle v,u\rangle = tr(uv) = tr(vu).
\]
With this duality, it is classical (see \S \ref{comsec1} above) that we have isometric identities
\[
 B(\ell^n_1,\ell^n_\infty)^* = N(\ell^n_\infty,\ell^n_1) = \ell^{n^2}_1
\]
and
\[
 \Gamma_H(\ell^n_1,\ell^n_\infty)^* = \Gamma^*_H(\ell^n_\infty,\ell^n_1)
\]
where $\Gamma^*_H(\ell^n_\infty,\ell^n_1)$ denotes
$B  (\ell^n_\infty,\ell^n_1) $ equipped with the norm
$\gamma^*_H(.)$ defined by
  (see \ref{comsec1.9})
\[
 {\gamma^*_H(v)} = \inf\left\{\left(\sum|\lambda_i|^2\right)^{1/2} \|a\|_{B(\ell^n_2)} \left(\sum|\mu_j|^2\right)^{1/2}\right\}
\]
with the infimum running over all factorizations of $v$ of the form
\begin{equation}\label{comeq5.16}
 \forall i,j=1,\ldots, n\qquad\qquad v_{ij} = \lambda_ia_{ij}\mu_j.~~~~~~~~~~~~~~~~~~
\end{equation}

\begin{pro}\label{compro5.9}
Fix $n\ge 1$ and $0<\theta<1$. Consider $v\colon \ \ell^n_\infty\to \ell^n_1$. Then $v$ belongs to the (closed) unit ball of $(N(\ell^n_\infty,\ell^n_1), \Gamma^*_H(\ell^n_\infty,\ell^n_1))_\theta$ iff there are $\lambda,\mu$ in the unit ball of $\ell^n_2$ and $a$ with $\|a\|_{(B_r(\ell^n_2), B(\ell^n_2))_\theta}\le 1$ such that $v_{ij} = \lambda_ia_{ij}\mu_j$ for all $i,j=1,\ldots, n$. 
\end{pro}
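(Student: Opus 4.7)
The statement is the ``polar dual'' of Theorem \ref{comthm5.7}: Prop \ref{compro7.2}(i)--(ii) describe the unit balls of $\Gamma^*_H$ and $N$ respectively by the same sandwich factorization $v_{ij}=\lambda_i a_{ij}\mu_j$ with $\|\lambda\|_2,\|\mu\|_2\le 1$, differing only in whether the middle factor $a$ is constrained in $B(\ell^n_2)$ or in $B_r(\ell^n_2)$, so the claim is that this description interpolates in $a$. The plan is to prove both directions by an analytic construction parallel to the proof of Theorem \ref{comthm5.7}.

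For the ``if'' direction, I would take a factorization $v_{ij}=\lambda_i a_{ij}\mu_j$ as in the statement and, using \ref{comsec4.0}, pick an analytic $z\mapsto a(z)$ on $D$ with $a(0)=a$, $\|a(z)\|_{B_r}\le 1$ on the $(1-\theta)$-arc $A_0$ and $\|a(z)\|_{B}\le 1$ on the $\theta$-arc $A_1$. Setting $v(z)_{ij}=\lambda_i a(z)_{ij}\mu_j$, one obtains $N(v(z))=\sum_{ij}|\lambda_i|\,|a(z)_{ij}|\,|\mu_j|\le \|\lambda\|_2\,\|\,|a(z)|\,\|_{B(\ell^n_2)}\,\|\mu\|_2\le 1$ on $A_0$ by a one-line Cauchy--Schwarz against $|a(z)|$, while $\gamma^*_H(v(z))\le 1$ on $A_1$ follows directly from Prop \ref{compro7.2}(i). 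This yields $\|v\|_{(N,\Gamma^*_H)_\theta}\le 1$.

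The substantive direction is ``only if.'' I would write $v=f(0)$ for an analytic $f:D\to M_n$ with $N(f(z))\le 1$ on $A_0$ and $\gamma^*_H(f(z))\le 1$ on $A_1$. First, build measurable nonnegative boundary data $\lambda_i(z),\mu_j(z)$: on $A_0$ use the explicit choice from the proof of Prop \ref{compro7.2}(ii), $\lambda_i(z)=(\sum_j|f(z)_{ij}|)^{1/2}$ and $\mu_j(z)=(\sum_i|f(z)_{ij}|)^{1/2}$, so that $\sum_i\lambda_i^2=\sum_j\mu_j^2=N(f(z))\le 1$; on $A_1$, measurable selection (\ref{comsec1.10}) applied to Prop \ref{compro7.2}(i) yields measurable $\lambda_i(z),\mu_j(z),a(z)_{ij}$ with $f(z)_{ij}=\lambda_i(z)a(z)_{ij}\mu_j(z)$, $\|\lambda(z)\|_2,\|\mu(z)\|_2\le 1$ and $\|a(z)\|_B\le 1$. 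The key step is to lift the $\lambda_i,\mu_j$ to analytic functions on $D$: let $\Lambda_i,M_j$ be outer functions with boundary moduli $\lambda_i,\mu_j$. From $|\Lambda_i(0)|=\exp\int\log\lambda_i\,dm$ and Jensen's inequality, $\sum_i|\Lambda_i(0)|^2\le\int\sum_i\lambda_i(z)^2\,dm(z)\le 1$ and similarly for $M_j$. Define $\tilde a(z)_{ij}=f(z)_{ij}/(\Lambda_i(z)M_j(z))$: since outer functions are non-vanishing on $D$ and $N^+$ is an algebra closed under division by its outer elements, $\tilde a\in N^+(D,M_n)$. On $A_0$, $|\tilde a(z)_{ij}|=|f(z)_{ij}|/(\lambda_i\mu_j)$, so $\|\tilde a(z)\|_{B_r}\le 1$ by the Schur-type estimate given in the proof of Prop \ref{compro7.2}(ii); on $A_1$, $\tilde a(z)$ differs from $a(z)$ only by left and right diagonal unitary multiplications (the unimodular phases $\lambda_i/\Lambda_i$ and $\mu_j/M_j$ on $\partial D$), hence $\|\tilde a(z)\|_B\le 1$. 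Applying the subharmonicity bound \eqref{interpol} to the compatible family $X(z)=B_r$ on $A_0$, $X(z)=B$ on $A_1$, and invoking \ref{comsec4.0}, I get $\|\tilde a(0)\|_{(B_r,B)_\theta}\le 1$. Taking $\lambda=(\Lambda_i(0))_i$, $\mu=(M_j(0))_j$ and $a=\tilde a(0)$ furnishes the required factorization.

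The main obstacle is the nondegeneracy needed to form the outer functions ($\log\lambda_i,\log\mu_j\in L^1(\partial D)$), which can fail if some $\lambda_i$ or $\mu_j$ vanishes on a set of positive measure. I would handle this by a routine approximation: perturb $f$ by a small bounded-analytic matrix with all boundary entries nonzero, run the argument for the perturbation, and pass to the limit using the finite-dimensional compactness of the unit balls involved.
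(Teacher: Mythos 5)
Your proof is essentially the paper's proof. Both directions rest on Proposition~\ref{compro7.2}; the ``if'' direction is a standard interpolation of the multiplication map $a\mapsto[\lambda_ia_{ij}\mu_j]$ (you instead estimate the boundary norms of $v(z)=[\lambda_ia(z)_{ij}\mu_j]$ directly, which amounts to the same thing), and the ``only if'' direction uses measurable selection, matrix/scalar outer functions, and division by the outer factors, exactly as in the paper.

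The one place where you diverge is the handling of degeneracy. The paper perturbs the boundary factorization data itself --- replacing $\lambda_i(z),\mu_j(z)$ by $\vp+|\lambda_i(z)|,\vp+|\mu_j(z)|$ and simultaneously adjusting $a_{ij}(z)$ by multiplying with the contractive diagonals $\lambda_i(|\lambda_i|+\vp)^{-1}$ and $\mu_j(|\mu_j|+\vp)^{-1}$ so that the $B_r$ and $B$ bounds are preserved. This immediately gives $\vp\le\lambda_i\le 1$, so $\log\lambda_i$ is bounded and a fortiori in $L^1$, and the closed-ball conclusion follows from the ``open ball'' set-up plus compactness. You instead propose to perturb the analytic function $f$ to have non-vanishing entries on $\partial D$ and re-run the argument. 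This can be made to work, but it is not a purely ``routine approximation'': you still owe an argument that non-vanishing entries of $f$ force $\log\lambda_i,\log\mu_j\in L^1(\partial D)$. On $A_1$ this follows from the factorization inequality $\lambda_i(z)\mu_j(z)\ge|f_{ij}(z)|$ together with $\mu_j\le 1$, hence $\lambda_i(z)\ge|f_{ii}(z)|$ with $\log|f_{ii}|\in L^1$ by Jensen for nonzero $N^+$ functions; on $A_0$ it follows from the explicit formula you chose. You also need to renormalize the perturbed $f$ so the boundary bounds remain $\le 1$, and then pass to the limit. None of this is wrong, but the paper's perturbation of the boundary data is both shorter and self-contained, and it bypasses the dependence on the non-vanishing of matrix entries altogether. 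I'd recommend adopting that cleaner fix.
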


\begin{proof}
We first prove the ``if part.'' Fix $\lambda,\mu$ in the unit ball of $\ell^n_2$. By Proposition \ref{compro7.2}, the linear mapping $a\to [\lambda_ia_{ij}\mu_j]$ has norm $\le 1$ simultaneously from $B_r(\ell^n_2)$ to $N(\ell^n_\infty,\ell^n_1)$ and from $B(\ell^n_2)$ to $\Gamma^*_H(\ell^n_\infty, \ell^n_1)$. Therefore, by interpolation it also has norm $\le 1$ from $(B_r(\ell^n_2), B(\ell^n_2))_\theta$ to $(N(\ell^n_\infty,\ell^n_1), \Gamma^*_H(\ell^n_\infty,\ell^n_1))_\theta$. This proves the ``if part.''

 Conversely, assume that $v$ is in the open unit ball of $(N(\ell^n_\infty,\ell^n_1), \Gamma^*_H(\ell^n_\infty, \ell^n_1))_\theta$. \\
Let $S = \{z\in {\bb C}\mid 0 < \text{Re}(z) < 1\}$
be the classical ``unit strip", and let
$\partial_j= \{z\in {\bb C}\mid  \text{Re}(z) =j\}$, $j=0,1$. By the definition of complex interpolation,  by Proposition \ref{compro7.2} and a routine measurable selection argument (see \ref{comsec1.10}),
 we can write $v_{ij} = v_{ij}(\theta)$ where $v_{ij}(\cdot)$ is a bounded analytic matrix valued function defined on the strip $S$  admitting a factorization of the form $v_{ij}(z) = \lambda_i(z) a_{ij}(z) \mu_j(z)$ where
\[
 {\rm ess}\sup_{z\in\partial_0\cup \partial_1} \max\left\{\sum|\lambda_j(z)|^2,\sum |\mu_j(z)|^2\right\}<1
\]
together with
\begin{equation}\label{comeq5.17}
 {\rm ess}\sup_{z\in\partial_0} \|(a_{ij}(z))\|_{B_r(\ell^n_2)} < 1
\end{equation}
and
\begin{equation}\label{comeq5.18}
 {\rm ess}\sup_{z\in\partial_1} \|(a_{ij}(z))\|_{B(\ell^n_2)} < 1.
\end{equation}
Replacing $\lambda_i(z)$ and $\mu_j(z)$ respectively by $\vp + |\lambda_i(z)|$ and $\vp+|\mu_j(z)|$ with $\vp>0$ chosen small enough we may as well assume that $|\lambda_i(\cdot)|$ and $|\mu_j(\cdot)|$ are all bounded below. Then there are bounded outer functions $F_i,G_j$ on $S$ such that
\[
 |F_i(z)| = |\lambda_i(z)|\quad \text{and}\quad |G_j(z)| = |\mu_j(z)|.
\]
We may without loss of generality (since the bounds \eqref{comeq5.17}, \eqref{comeq5.18} are preserved in this change) assume that $\lambda_i(z) = F_i(z)$ and $G_j(z) = \mu_j(z)$. But then
\[
 a_{ij}(z) = F_i(z)^{-1} v_{ij}(z) G_j(z)^{-1}
\]
must be analytic on $S$ and hence by \eqref{comeq5.17} and \eqref{comeq5.18} we have
\[
 \|(a_{ij}(\theta))\|_{(B_r(\ell^n_2), B(\ell^n_2))_\theta} \le 1.
\]
Thus we obtain
\[
 v_{ij} = \lambda_ia_{ij}\mu_j
\]
with $\lambda_i = F_i(\theta), \mu_j= G_j(\theta)$, $a_{ij} = a_{ij}(\theta)$.

Note that $\lambda,\mu$ are in the unit ball of $\ell^n_2$ by the maximum principle applied to $(F_i)$ and $(G_j)$. This proves the ``only if part'' except that we replaced the closed unit ball by the open one. An elementary compactness argument completes the proof.
\end{proof}

The main result of this section is perhaps the following.

\begin{thm}\label{comthm5.1}
Let ${\cl H}(\theta,n)$ be the set of $n$-dimensional arcwise $\theta$-Hilbertian Banach spaces. Let $0<\theta<1$. Consider the complex interpolation space $B(\theta,n) = (B_r(\ell^n_2), B(\ell^n_2))_\theta$. Then for any $T$ in $B(\ell^n_2)$ we have
\begin{equation}\label{comeq4.0}
\|T\|_{B(\theta,n)} = \sup_{X\in {\cl H}(\theta,n)} \|T_X\|_{B(\ell^n_2(X))}. 
\end{equation}
Moreover, we will show that equality still holds if we restrict the supremum to those $X$ that can be written $X = X(0)$ for a compatible family such that $X(z) \simeq \ell^n_2$ $\forall z\in J_\theta$ and $X(z)\simeq \ell^n_\infty$ $\forall z\notin J_\theta$, where $\simeq$ means here isometric. 
\end{thm}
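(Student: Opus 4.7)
The plan is to prove the two inequalities separately. The direction $\|T\|_{B(\theta,n)}\ge \sup_X\|T_X\|$ is routine and follows from the interpolation principle \eqref{interpol2} applied to the family $Y(z)=\ell^n_2(X(z))$. The reverse inequality requires extracting an appropriate $X$ from a near-extremal element of the dual $B(\theta,n)^*$, which I will do by combining the factorization of dual elements in Proposition \ref{compro7.3} with an outer-function argument, Theorem \ref{comthm5.7}, and the ``more precise'' form of Lemma \ref{comlem5.10}; the ``moreover'' clause will fall out of the shape of $X$ produced by Lemma \ref{comlem5.10}.

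\textbf{Direction $\ge$.} Since an ultraproduct of $n$-dimensional spaces is again $n$-dimensional and $\|T_X\|$ passes to such ultraproducts, I reduce to $X$ arcwise $\theta$-Euclidean. By Lemma \ref{comlem4.1} I may arrange $X=X(0)$ for a compatible family with $X(z)=\ell^n_2$ on $J_\theta$ and $X(z)$ an arbitrary Banach space off $J_\theta$. Given $\vp>0$, choose an analytic $T(\cdot)\colon D\to M_n$ with $T(0)=T$, $\|T(z)\|_{B(\ell^n_2)}\le\|T\|_{B(\theta,n)}+\vp$ on $J_\theta$, and $\|T(z)\|_{B_r(\ell^n_2)}\le\|T\|_{B(\theta,n)}+\vp$ off $J_\theta$. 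On $J_\theta$ the space $\ell^n_2(X(z))$ is Hilbertian, so $T(z)\otimes I$ has norm $\|T(z)\|_{B(\ell^n_2)}$; off $J_\theta$, by \ref{comsec1.2}, $T(z)\otimes I$ acts on $\ell^n_2(X(z))$ with norm at most $\|T(z)\|_{B_r(\ell^n_2)}$. Applying \eqref{interpol2} to $Y(z)=\ell^n_2(X(z))$ and $z\mapsto T(z)\otimes I$ gives $\|T_X\|\le \|T\|_{B(\theta,n)}+\vp$, and sending $\vp\to 0$ completes this direction.

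\textbf{Direction $\le$.} By the duality theorem for complex interpolation in finite dimensions, $B(\theta,n)^*=(B_r(\ell^n_2)^*,B(\ell^n_2)^*)_\theta$, so $\|T\|_{B(\theta,n)}=\sup|\operatorname{tr}(T\varphi)|$ over $\varphi$ in the dual unit ball. Fix $\vp>0$ and a near-maximizer $\varphi$, and pick an analytic $F\colon D\to M_n$ with $F(0)=\varphi$ whose boundary values are bounded by $1+\vp$ in $B_r(\ell^n_2)^*$ off $J_\theta$ and in $B(\ell^n_2)^*$ on $J_\theta$. By Proposition \ref{compro7.3} combined with a measurable selection (see \ref{comsec1.10}), write $F(z)_{ij}=\lambda_i(z)v_{ij}(z)\mu_j(z)$ with $\|\lambda(z)\|_{\ell^n_2},\|\mu(z)\|_{\ell^n_2}\le 1+\vp$, and with $\|v(z)\|_{B(\ell^n_1,\ell^n_\infty)}\le 1+\vp$ off $J_\theta$, $\gamma_H(v(z))\le 1+\vp$ on $J_\theta$. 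Applying the outer-function trick of \ref{comsec4.2}, after a tiny perturbation ensuring $\log|\lambda_i|,\log|\mu_j|\in L_1$, replace $\lambda_i,\mu_j$ by outer functions $A_i,C_j$ of the same boundary moduli; then $v'(z)=D_{A(z)}^{-1}F(z)D_{C(z)}^{-1}\in N^+(D,M_n)$ differs from $v(z)$ on $\partial D$ only by entry-wise multiplication by diagonal unimodular factors. Since both $\|\cdot\|_{B(\ell^n_1,\ell^n_\infty)}$ and $\gamma_H(\cdot)$ are invariant under diagonal unitary sandwich, $v'$ inherits the same boundary bounds, hence $\|v'(0)\|_{(B(\ell^n_1,\ell^n_\infty),\Gamma_H(\ell^n_1,\ell^n_\infty))_\theta}\le 1+\vp$.

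Theorem \ref{comthm5.7} together with the ``more precise'' part of Lemma \ref{comlem5.10} (using density to perturb $v'(0)$ to an isomorphism) now yields a factorization $v'(0)=v_1v_2$ with $\|v_1\colon X\to\ell^n_\infty\|\,\|v_2\colon\ell^n_1\to X\|\le 1+O(\vp)$ and $X=X(0)$ for a compatible family with $X(z)=\ell^n_2$ on $J_\theta$ and $X(z)=\ell^n_\infty$ off $J_\theta$ --- exactly the restricted class in the ``moreover'' statement. Setting $x_j=C_j(0)v_2(e_j)\in X$ and $y_i=A_i(0)v_1^*(e_i)\in X^*$, Jensen's inequality applied to the outer functions gives $\sum_i|A_i(0)|^2,\sum_j|C_j(0)|^2\le 1+O(\vp)$, so $x\in\ell^n_2(X)$ and $y\in\ell^n_2(X^*)$ have norms $\le 1+O(\vp)$. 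A direct computation gives $\varphi_{ij}=y_i(x_j)$ and $\operatorname{tr}(T\varphi^t)=\langle T_Xx,y\rangle$, whence $|\operatorname{tr}(T\varphi)|\le(1+O(\vp))\|T_X\|$; sending $\vp\to 0$ yields the desired inequality with $X$ of the restricted form, completing both the main identity and the ``moreover'' statement. The main obstacle I expect is the outer-function upgrade that turns a pointwise measurable factorization of $F(z)$ into an analytic diagonal sandwich $F=D_A\,v'\,D_C$: the unitary-invariance of both relevant matrix norms on the boundary is what keeps the boundary bounds intact and allows Theorem \ref{comthm5.7} to be applied to $v'(0)$.
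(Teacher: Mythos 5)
Your proof follows essentially the same route as the paper's: the easy inequality via the interpolation principle \eqref{interpol2} for the family $\ell^n_2(X(z))$, and the converse via duality, Proposition~\ref{compro7.3} with a measurable selection, the outer-function reduction $v'(z)=D_{A(z)}^{-1}F(z)D_{C(z)}^{-1}$ (exactly the paper's $V_{ij}=F_i^{-1}\varphi_{ij}G_j^{-1}$), and Theorem~\ref{comthm5.7}. Your explicit appeal to the ``more precise'' part of Lemma~\ref{comlem5.10} to pin down the shape of $X$ for the ``moreover'' clause is a point the paper leaves implicit, but otherwise the argument is the same; the small inconsistency between $\operatorname{tr}(T\varphi)$ and $\operatorname{tr}(T\varphi^t)$ is just a harmless convention mismatch, since the dual ball is transpose-invariant.
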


\begin{proof}
On one hand, we have for any Banach space $X$
\[
\|T_X\| \le \|T\|_{B_r(\ell^n_2)}
\]
and on the other hand if $X$ is Hilbertian
\[
\|T_X\|\le \|T\|_{B(\ell^n_2)}.
\]
Therefore, if 
$X$ is arcwise $\theta$-Euclidean (resp. $X\in {\cl H}(\theta,n)$), the inequality $\|T_X\|_{B(\ell^n_2(X))} \le \|T\|_{B(\theta,n)}$ is a direct consequence of the interpolation principle \ref{comsec4.1+} (resp. and Remark \ref{ultr}). We skip the easy details.

To prove the converse, we will use duality. We claim that if $\varphi$ in $B(\ell^n_2)$ is in the open unit ball of $B(\theta,n)^*$  and is   invertible, then there is a  space $X$ in ${\cl H}(\theta,n)$ such that
\begin{equation}\label{comeq4.1}
|\varphi(T)| \le \|T_X\|_{B(\ell^n_2(X))} 
\end{equation}
for any $T$ in $B(\ell^n_2)$. 

This obviously implies that the supremum of $|\varphi(T)|$ over such $\varphi$'s is $\le \sup\limits_{X\in {\cl H}(\theta,n)} \|T_X\|$ and hence, by Hahn--Banach and the density of invertible matrices,  equality follows in \eqref{comeq4.0}. 

We now proceed to prove this claim. Let $\varphi$ be as above. Note that (cf. \cite[p.~98]{BL})
\begin{equation}\label{comeq4.2}
B(\theta,n)^* = (B_r(\ell^n_2)^*, B(\ell^n_2)^*)_\theta.
\end{equation}
The norm in $B(\ell^n_2)^*$ is of course the trace class norm. 

Since $\|\varphi\|_{B(\theta,n)^*} < 1$, there is a bounded analytic function $\varphi\colon \ S\to M_n$ such that $\varphi(0) = \varphi$ and
\[
 \max\{{\rm ess}\sup_{z\in\partial_0} \|\varphi(z)\|_{B_r(\ell^n_2)^*}, {\rm ess}\sup_{z\in\partial_1} \|\varphi(z)\|_{B(\ell^n_2)^*}\}<1.
\]
By Proposition \ref{compro7.3} and a routine measurable selection argument (see \ref{comsec1.10}), we can find measurable functions $\lambda(z)$ and $\mu(z)$ on $\partial S$ such that
\begin{equation}\label{comeq5.20}
 {\rm ess}\sup_{z\in\partial S} \max\left\{\sum|\lambda_i(z)|^2, \sum|\mu_j(z)|^2\right\}<1
\end{equation}
and a measurable function $v\colon \ z\to B(\ell^n_1, \ell^n_\infty)$ such that
\begin{equation}\label{comeq5.21}
 \max\{{\rm ess}\sup_{z\in\partial_0}\|v(z)\|_{B(\ell^n_1,\ell^n_\infty)}, {\rm ess}\sup_{z\in\partial_1}\|v(z)\|_{\Gamma_H (\ell^n_1,\ell^n_\infty)}\}<1,
\end{equation}
satisfying
\begin{equation}\label{comeq5.22}
 \varphi_{ij}(z) = \lambda_i(z) v_{ij}(z) \mu_j(z).
\end{equation}
We may replace $(\lambda_i)$ and $(\mu_j)$ by $(|\lambda_i|+\vp)$ and $(|\mu_j|+\vp)$ where $\vp>0$ is chosen small enough so that \eqref{comeq5.20} still holds. If we modify $v_{ij}(z)$ so that \eqref{comeq5.22} still holds (i.e.\ replace $v_{ij}(z)$ by $\lambda_i(|\lambda_i|+\vp)^{-1} v_{ij}\mu_j(|\mu_j|+\vp)^{-1}$), then \eqref{comeq5.21} is preserved.

But now there are bounded outer functions $(F_i)$ and $(G_j)$ on $S$ such that $|F_i| = \lambda_i$ and $|G_j| = \mu_j$ on $\partial S$. Therefore if we define
\[
 V_{ij}(z) = F_i(z)^{-1} \varphi_{ij}(z) G_j(z)^{-1}
\]
then $z\to V(z)$ is analytic on $S$ and satisfies
\[
 \max\{{\rm ess}\sup_{z\in\partial_0} \|V(z)\|_{B(\ell^n_1,\ell^n_\infty)}, {\rm ess}\sup_{z\in\partial_1}\|V(z)\|_{\Gamma_H (\ell^n_1,\ell^n_\infty)} \}< 1.
\]
Therefore, on the one hand
\begin{equation}\label{V(0)}
 \|V(0)\|_{(B(\ell^n_1,\ell^n_\infty), \Gamma_H(\ell^n_1,\ell^n_\infty))_\theta} < 1,
\end{equation}
 on the other hand by the maximum principle 
\[
 \max\left\{\sum|F_i(0)|^2, \sum|G_j(0)|^2\right\}<1,
\]
and finally $\varphi_{ij} = \varphi_{ij}(0) = F_i(0) V_{ij}(0)G_j(0)$. Since $\varphi$ is invertible $V(0)$ is also invertible. By \eqref{V(0)} and Theorem \ref{comthm5.7}, there is a space $X$ in ${\cl H}(\theta,n)$ and elements $x_i\in X^*$, $y_j\in X$ with $\|x_i\|<1$, $\|y_j\|<1$ such that
\[
 V_{ij}(0) = \langle x_i,y_j\rangle.
\]
Let $x'_i= F_i(0)x_i$ and $y'_j = G_j(0)y_j$. Then we find
\[
 \varphi(T) = \sum \varphi_{ij}T_{ij} = \sum\nolimits_i \left\langle x'_i, \sum\nolimits_j T_{ij}y'_j\right\rangle
\]
and hence as announced
\[
 |\varphi(T)| \le \|T_X\|\left(\sum\|x'_i\|^2 \sum\|y'_j\|^2\right)^{1/2} \le \|T_X\|. 
\]
\end{proof}

\begin{rem}\label{comrem5.10}
 Consider $\varphi$ in $B(\ell^n_2)$. It follows from the preceding proof that
\[
\|\varphi\|_{B(\theta,n)^*}  = \inf\left\{\left(\sum|\lambda_i|^2\right)^{1/2} \|v\|_{(B(\ell^n_1,\ell^n_\infty), \Gamma_H(\ell^n_1,\ell^n_\infty))_\theta} \left(\sum|\mu_j|^2\right)^{1/2}\right\}
\]
where the infimum runs over all factorizations of $\varphi$ of the form $\varphi = \lambda_iv_{ij}\mu_j$. Indeed, $\le$ follows immediately from Proposition \ref{compro7.3} by interpolation and the converse is proved above (since we may reduce by density to the case when $\varphi$ is invertible).
\end{rem}

\begin{cor}\label{comcor4.2}
Let $(\Omega, {\cl A}, \mu)$, $(\Omega', {\cl A}', \mu')$ be a pair of measure spaces. Let $B_r = B_r(L_2(\mu), L_2(\mu'))$ and $B = B(L_2(\mu), L_2(\mu'))$. Fix $0<\theta<1$. Then the space $(B_r,B)^\theta$ consists of those $T$ in $B$ such that $T_X$ is bounded for any arcwise $\theta$-Hilbertian Banach space $X$. Moreover,
\begin{equation}\label{comeq4.8}
\|T\|_{(B_r,B)^\theta} = \sup_{n, X\in {\cl H}(\theta,n)} \|T_X\| = \sup\|T_X\|
\end{equation}
where the last supremum runs over all arcwise $\theta$-Hilbertian Banach spaces $X$.
\end{cor}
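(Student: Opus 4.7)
The proof splits into two inequalities.

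For the inequality $\sup_X \|T_X\| \le \|T\|_{(B_r,B)^\theta}$, fix $M > \|T\|_{(B_r,B)^\theta}$ and choose an analytic extension $f\colon D \to B_r + B$ realizing the upper interpolation norm, i.e.\ $f(0) = T$, with $\|f(z)\|_{B_r} < M$ a.e.\ on $\partial D \setminus J_\theta$ and $\|f(z)\|_B < M$ a.e.\ on $J_\theta$. Let $X = X(0)$ be arcwise $\theta$-Euclidean from a compatible family $\{X(z)\}$; by Lemma~\ref{comlem4.1} we may assume $X(z) \simeq \ell^n_2$ isometrically on $J_\theta$. The families $\{L_2(\mu; X(z))\}$ and $\{L_2(\mu'; X(z))\}$ are compatible, with interpolation spaces $L_2(\mu; X(0))$ and $L_2(\mu'; X(0))$. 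For $z \in J_\theta$ one has $\|f(z)_{X(z)}\|_{B(L_2(X(z)))} = \|f(z)\|_B < M$ since tensoring with a Hilbert space preserves the operator norm on $L_2$; for $z \in \partial D \setminus J_\theta$, regularity gives $\|f(z)_{X(z)}\| \le \|f(z)\|_{B_r} < M$ for any Banach space $X(z)$. The family interpolation principle~\ref{comsec4.1+} applied to $z \mapsto f(z) \otimes id_{X(z)}$ then yields $\|T_X\| = \|f(0)_{X(0)}\| \le M$. Letting $M \downarrow \|T\|_{(B_r,B)^\theta}$ and invoking Remark~\ref{ultr} to take ultraproducts extends this to all arcwise $\theta$-Hilbertian $X$.

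For the reverse inequality $\|T\|_{(B_r,B)^\theta} \le C := \sup_X \|T_X\|$, the plan is to discretize $T$ by conditional expectations and invoke the finite-dimensional Theorem~\ref{comthm5.1}. Fix finite measurable partitions $\pi$ of $\Omega$ and $\pi'$ of $\Omega'$ into sets of positive measure, let $E_\pi$, $E_{\pi'}$ be the associated $L_2$-conditional expectations, and set $T^\pi := E_{\pi'} T E_\pi$. Being positive contractions, $E_\pi$ and $E_{\pi'}$ have regular norm $1$ and their $Y$-amplifications are contractions on $L_2(\cdot; Y)$ for every Banach space $Y$. Identifying $T^\pi$ with an $n \times n$ matrix on $\ell^n_2$ via the normalized indicator bases (taking a common $n$ after refinement if necessary), the sublattice nature of the conditional ranges yields $\|T^\pi\|_{B_r(\ell^n_2)} = \|T^\pi\|_{B_r}$ and $\|T^\pi\|_{B(\ell^n_2)} = \|T^\pi\|_B$, whence
\[
\|T^\pi\|_{(B_r,B)^\theta} \;=\; \|T^\pi\|_{(B_r(\ell^n_2),\, B(\ell^n_2))_\theta}.
\]
By Theorem~\ref{comthm5.1} the right side equals $\sup_{Y \in \mathcal{H}(\theta, n)} \|T^\pi_Y\|$, and since $T^\pi_Y = (E_{\pi'})_Y T_Y (E_\pi)_Y$ with $(E_\pi)_Y$, $(E_{\pi'})_Y$ contractions, $\|T^\pi_Y\| \le \|T_Y\| \le C$. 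Thus $\|T^\pi\|_{(B_r,B)^\theta} \le C$ uniformly in $\pi, \pi'$.

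The main obstacle is the final passage from the uniform bound on $T^\pi$ to $\|T\|_{(B_r,B)^\theta} \le C$. The cleanest route is via duality: \S~\ref{comsec1.6} and \S~\ref{comsec1.8} identify $L_2(\mu) \widehat\otimes_r L_2(\mu')$ and $L_2(\mu) \widehat\otimes L_2(\mu')$ as natural preduals of $B_r$ and $B$, so the standard duality theorem for complex interpolation (cf.\ \cite{BL}) gives an isometric identification
\[
(B_r,B)^\theta \;\simeq\; \bigl((L_2 \widehat\otimes_r L_2,\; L_2 \widehat\otimes L_2)_\theta\bigr)^*.
\]
The closed $C$-ball of $(B_r,B)^\theta$ is thereby weak-$*$ compact. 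As $\pi, \pi'$ refine, $T^\pi \to T$ in the strong operator topology, hence also weak-$*$ against every element of $L_2 \otimes L_2$, which is norm-dense in the lower-$\theta$ interpolation of the preduals. The weak-$*$ limit must then lie in the $C$-ball, giving $\|T\|_{(B_r,B)^\theta} \le C$. The technical point that requires care is verifying the predual duality above together with the density of finite-rank tensors in the interpolated predual; once this is in hand, the corollary follows.
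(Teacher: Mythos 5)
Your proof follows essentially the same route as the paper's: reduce to the finite-dimensional setting via conditional expectations and then invoke Theorem~\ref{comthm5.1}. The differences are in the details of the reduction and the limiting step, and they are worth noting. For the limiting step, where the paper simply cites \cite[p.~229]{KPS} to assert $\|T\|_{(B_r,B)^\theta}=\sup_{{\cl B},{\cl B}'}\|{\bb E}^{{\cl B}'}T{\bb E}^{\cl B}\|_{(B_r,B)_\theta}$, you instead exploit the explicit preduals $L_2\widehat\otimes_r L_2$ and $L_2\widehat\otimes L_2$ from \S\ref{comsec1.6}--\ref{comsec1.8} together with the Calder\'on duality theorem $((A_0,A_1)_\theta)^* = (A_0^*,A_1^*)^\theta$ and weak-$*$ compactness of the dual ball. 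This is a valid and more self-contained argument; since $L_2\otimes L_2$ is dense in the interpolated predual and $T^\pi\to T$ in SOT, a weak-$*$ cluster point $S$ of $(T^\pi)$ in the $C$-ball must agree with $T$ on $L_2\otimes L_2$, forcing $T=S$. Your treatment of the weights via normalized indicator bases is also slightly cleaner than the paper's ``approximate weights by rationals'' device; since the map $(b_i)\mapsto \sum b_i\, 1_{A_i}/\sqrt{\mu(A_i)}$ is a positive isometry, both the operator and regular norms transport correctly.

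One point deserves a caveat. In your first inequality you apply the family interpolation principle \ref{comsec4.1+} directly to $z\mapsto f(z)\otimes id_{X(z)}$ acting between the infinite-dimensional families $\{L_2(\mu;X(z))\}$ and $\{L_2(\mu';X(z))\}$, but the machinery of \S\ref{comsec4} is developed only for $n$-dimensional families. Also, the characterization of $\|T\|_{(B_r,B)^\theta}<M$ via a bounded analytic $f$ with $\|f\|_{B_j}<M$ a.e.\ on the two boundary arcs is not the literal definition of the upper method and requires the weak-$*$ analytic/dual-couple formulation to justify. Neither issue is a genuine obstruction --- the cleanest fix is to prove the first inequality for the compressions $T^\pi$ (where \ref{comsec4.1+} applies as stated, exactly as in Theorem~\ref{comthm5.1}) and then take the supremum over $\pi,\pi'$ using $\|(T^\pi)_X\|\to\|T_X\|$, which is the reduction the paper itself uses --- but as written your direct argument leans on an extension of \S\ref{comsec4} that is not in the text.
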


\begin{proof}
By routine arguments, we can reduce this to the case when $\mu,\mu'$ are probabilities. Then it is easy to see (e.g.\ using \cite[p. 229]{KPS}) that
\[
\|T\|_{(B_r,B)^\theta} = \sup \|{\bb E}^{{\cl B}'} T{\bb E}^{\cl B}\|_{(B_r,B)_\theta}
\]
where the supremum runs over all finite $\sigma$-subalgebras ${\cl B}\subset {\cl A}$, ${\cl B}' \subset {\cl A}'$. Similarly $\|T_X\| = \sup\limits_{{\cl B}, {\cl B}'} \|({\bb E}^{{\cl B'}}T{\bb E}^{\cl B})_X\|$. This reduces \eqref{comeq4.8} to the case of finite measure spaces which can be deduced from Theorem \ref{comthm5.1} by elementary arguments
(approximate the weights by rational numbers, then consider the span
of suitable blocks on the canonical basis of $\ell_2^n$).
\end{proof}

\begin{cor}\label{comcor4.3} Assume that a Banach space $X$
satisfies $\Delta_X(\vp)\in O(\vp^\alpha)$ for some $\alpha>0$. Then, for any $\theta<\alpha$,  $X$ is 
isomorphic to a subquotient
of an arcwise $\theta$-Hilbertian space.
\end{cor}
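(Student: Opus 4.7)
The plan is to use Hernandez's Theorem~\ref{comthm3.1bis} to reduce the conclusion to a uniform interpolation-norm bound, and then to derive that bound by combining the hypothesis with a subharmonic argument in the disc together with the reiteration theorem for complex interpolation.

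First, I would take ${\cl B}$ to be the class of arcwise $\theta$-Hilbertian Banach spaces; this class is stable under $\ell_2$-sums (by direct-summing the boundary families of $\theta$-Euclidean pieces) and under ultraproducts (by definition). Theorem~\ref{comthm3.1bis} with $p=2$ then says: $X$ is $C$-isomorphic to a subquotient of an arcwise $\theta$-Hilbertian space iff there exists $C>0$ such that
\[
\|T_X\|\le C\,\sup\{\|T_Y\|\mid Y \in {\cl B}\}
\]
for every $n$ and every $T\colon \ell_2^n\to\ell_2^n$. By Corollary~\ref{comcor4.2} the right-hand supremum equals $\|T\|_{(B_r(\ell_2^n),B(\ell_2^n))_\theta}$, so the task reduces to producing a constant $C=C(K,\alpha,\theta)$ such that $\|T_X\|\le C\|T\|_{(B_r,B)_\theta}$ uniformly in $n$.

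The hypothesis $\Delta_X(\vp)\le K\vp^\alpha$, after scaling, yields the pointwise Calder\'on-type estimate
\[
\|S_X\|\le K\,\|S\|_{B_r}^{1-\alpha}\,\|S\|_B^\alpha \qquad \forall\, S\in B_r.
\]
I would first upgrade this to an interpolation-norm bound at parameter $\alpha$. Given $T$ with $\|T\|_{(B_r,B)_\alpha}<1$, pick an admissible analytic $F\colon D\to M_n$ with $F(0)=T$, $\|F(z)\|_{B_r}\le 1$ on the arc $\partial_0$ of measure $1-\alpha$ and $\|F(z)\|_B\le 1$ on the arc $\partial_1$ of measure $\alpha$. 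Applying the Calder\'on bound to $F(z)$ gives $\log\|F(z)_X\|\le 0$ on $\partial_0$ and $\log\|F(z)_X\|\le \log K+(1-\alpha)\log\|F(z)\|_{B_r}$ on $\partial_1$. The delicate step is to control $\int_{\partial_1}\log\|F(z)\|_{B_r}\,dm$ uniformly in $n$: I would do this by multiplying $F$ by an outer scalar function $h$ on $D$ with $|h(z)|=\min(1,\|F(z)\|_{B_r}^{-1})$, so that $\tilde F=hF$ has regular norm $\le 1$ on all of $\partial D$ (hence by the maximum principle on $D$), then applying the Calder\'on estimate to $\tilde F(0)=h(0)T$ and unwinding the scaling factor $h(0)$. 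The outcome is
\[
\|T_X\|\le C_1(K,\alpha)\,\|T\|_{(B_r,B)_\alpha}.
\]

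Finally, the reiteration theorem $(B_r,B)_\alpha=\bigl((B_r,B)_\theta,B\bigr)_{(\alpha-\theta)/(1-\theta)}$ gives
\[
\|T\|_{(B_r,B)_\alpha}\le \|T\|_{(B_r,B)_\theta}^{(1-\alpha)/(1-\theta)}\,\|T\|_B^{(\alpha-\theta)/(1-\theta)}.
\]
The two exponents sum to $1$, and since $B_r\hookrightarrow B$ continuously we have $\|T\|_B\le\|T\|_{(B_r,B)_\theta}$, so the product collapses to $\|T_X\|\le C\|T\|_{(B_r,B)_\theta}$, completing the Hernandez criterion. The strict inequality $\theta<\alpha$ is exactly what makes $(\alpha-\theta)/(1-\theta)>0$, permitting the absorption of the $\|T\|_B$-factor; at $\theta=\alpha$ the exponent vanishes and the argument collapses, which is why the conclusion is stated only for $\theta<\alpha$. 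The main obstacle is the intermediate \emph{upgrade} step from the pointwise Calder\'on-type estimate to the interpolation-norm estimate at parameter $\alpha$, where the dimension-free control of $\int_{\partial_1}\log\|F(z)\|_{B_r}\,dm$ through the outer function $h$ is the delicate technical input.
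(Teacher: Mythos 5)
Your reduction via Theorem~\ref{comthm3.1bis} and Theorem~\ref{comthm5.1} (equivalently Corollary~\ref{comcor4.2}) to the uniform bound $\|T_X\|\le C\|T\|_{(B_r(\ell^n_2),B(\ell^n_2))_\theta}$ is correct and matches the paper's reduction, and the pointwise Calder\'on-type estimate $\|S_X\|\le K\|S\|_{B_r}^{1-\alpha}\|S\|_B^\alpha$ is derived correctly. The reiteration/nesting step at the end (for $\theta<\alpha$, for the ordered couple $B_r\subset B$) is also sound, though it is essentially trivial: the genuine content of your argument is entirely concentrated in the ``upgrade'' step $\|T_X\|\le C_1\|T\|_{(B_r,B)_\alpha}$.

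That step, however, does not go through, and as stated it would even prove something stronger than the corollary. Take $F$ admissible for $[B_r,B]_\alpha$ with $F(0)=T$, $\|F\|_{B_r}\le 1$ on $\partial_0$, $\|F\|_B\le 1$ on $\partial_1$, and let $h$ be the outer function with $|h(z)|=\min(1,\|F(z)\|_{B_r}^{-1})$. Your scheme yields $\|T_X\|\le K/|h(0)|$ with $1/|h(0)|=\exp\bigl(\int_{\partial D}\log^+\|F(z)\|_{B_r}\,dm\bigr)$. But the definition of the complex interpolation norm places no constraint whatsoever on $\|F(z)\|_{B_r}$ for $z\in\partial_1$ --- an admissible $F$ is only controlled in the $B$-norm there. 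In finite dimension one only gets $\|F(z)\|_{B_r}\le c_n\|F(z)\|_B$ with $c_n$ growing in $n$, so the bound $\|T_X\|\le K\,c_n^{\alpha}$ is dimension-dependent, precisely what the application forbids. Put differently, $\|T_X\|\le C_1\|T\|_{[B_r,B]_\alpha}$ would, via Theorem~\ref{comthm3.1bis}, show $X$ is a subquotient of an \emph{$\alpha$}-Hilbertian space, i.e.\ the endpoint case $\theta=\alpha$; the corollary is deliberately stated only for $\theta<\alpha$, and that endpoint is not known to hold.

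The paper avoids this by routing through \emph{real} interpolation: the pointwise estimate $\|T_X\|\le K\|T\|_B^\alpha\|T\|_{B_r}^{1-\alpha}$ is, by the classical characterization of the class $\mathcal C_J(\alpha)$ (equivalently, the minimality of the J-method space $(B_r,B)_{\alpha,1}$, see \cite[p.~58]{BL}), precisely the statement $\|T_X\|\le C'\|T\|_{(B_r,B)_{\alpha,1}}$. Then, since $B_r\subset B$, one has $[B_r,B]_\theta\subset (B_r,B)_{\theta,\infty}\subset (B_r,B)_{\alpha,1}$ for every $\theta<\alpha$ (see \cite[p.~102]{BL}), which gives the required dimension-free bound. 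The strict inequality $\theta<\alpha$ enters in the real-interpolation embedding $(B_r,B)_{\theta,\infty}\subset(B_r,B)_{\alpha,1}$, not in a reiteration identity for the complex method.
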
\begin{proof}
By Theorem \ref{comthm5.1} and Theorem \ref{comthm3.1} (with $p=2$),
it suffices to show that there is a constant $C$
such that, for any $n$ and any $T\in B(\ell_2^n)$, we have
\begin{equation}\label{desired}
\|T_X\|\le C\|T\|_{B(\theta,n)}.\end{equation}
Our assumption implies by homogeneity that for any such $T$
$$\|T_X\|\le C\|T\|^\alpha \|T\|^{1-\alpha}_{\text{reg}}.$$
By a classical   real interpolation result (see \cite[p. 58]{BL}),
this can be rewritten as
$$\|T_X\|\le C'\|T\|_{(B_r(\ell_2^n),B(\ell_2^n))_{\alpha,1}}.$$
Therefore, the conclusion follows from the general fact (see \cite[p. 102]{BL})
that whenever we have an inclusion $B_0\subset B_1$,
there is a constant $C^{''}$ (depending only on 
$\alpha$ and $\theta$) such that for any
$\theta<\alpha$ and any $T$ in $B_0$
$$\|T\|_{(B_0,B_1)_{\alpha,1}} \le C^{''}\|T\|_{(B_0,B_1)_{\theta}}.$$
This yields the announced \eqref{desired}.
\end{proof}

 Recalling Theorem \ref{comthm3.1bis}, 
the next statement is now a consequence of Theorem
\ref{comthm5.1}. 

\begin{thm}\label{comthm5.6}
Let $C\ge 1$ be a constant. The following are equivalent:
\begin{itemize}
\item[\rm (i)] $X$ is $C$-isomorphic to a quotient of a subspace of an arcwise $\theta$-Hilbertian space.
\item[\rm (ii)] $\|T_X\|\le C$ for any $n$ and any $T$ such that $\|T\|_{(B_r(\ell^n_2), B(\ell^n_2))_\theta}\le 1$.
\end{itemize}
\end{thm}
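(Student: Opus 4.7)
The plan is to derive Theorem \ref{comthm5.6} by combining Theorem \ref{comthm5.1} with Hernandez's duality Theorem \ref{comthm3.1bis} applied at exponent $p=2$ and with $\mathcal{B}$ taken to be the class of all arcwise $\theta$-Hilbertian Banach spaces. Once this is set up, (i) is the ``structural'' side and (ii) is the ``operator'' side of the duality, and the only thing to verify is that the two sides match up with what Theorem \ref{comthm3.1bis} actually says.

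First I would check the stability properties of $\mathcal{B}$ that make the statement of Theorem \ref{comthm3.1bis}(i) collapse to ``subquotient of a space in $\mathcal{B}$''. Stability under ultraproducts is built into Definition \ref{thetahilb}. For $\ell_2$-sums, if $\{X_\alpha(z)\mid z\in\partial D\}$ are compatible families of $n_\alpha$-dimensional spaces which are Hilbertian on $J_\theta$ and such that $X_\alpha = X_\alpha(0)$ is arcwise $\theta$-Euclidean, then the componentwise family $\bigl\{ (\oplus_{\alpha\in I} X_\alpha(z))_2\bigr\}$ is still compatible, still Hilbertian on $J_\theta$, and its interpolation at $0$ is $(\oplus_{\alpha\in I} X_\alpha)_2$ (a standard fact for the $\ell_p$-sum functor under complex interpolation). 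Hence finite $\ell_2$-sums of arcwise $\theta$-Euclidean spaces are arcwise $\theta$-Euclidean, and then, passing to an ultraproduct, finite $\ell_2$-sums of arcwise $\theta$-Hilbertian spaces are arcwise $\theta$-Hilbertian. Combined with ultraproduct stability, this shows that the class in Theorem \ref{comthm3.1bis}(i) is exactly the class of subquotients of arcwise $\theta$-Hilbertian spaces.

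Next I would identify, for each $T\in B(\ell_2^n)$, the quantity $\sup\{\|T_Y\| : Y\in\mathcal{B}\}$ with the interpolation norm $\|T\|_{(B_r(\ell_2^n),B(\ell_2^n))_\theta}$. The inequality $\le$ is the interpolation principle \ref{comsec4.1+}: for a finite-dimensional arcwise $\theta$-Euclidean $Y = Y(0)$ coming from a family $\{Y(z)\}$, one has $\|T_{Y(z)}\| \le \|T\|_{B_r(\ell_2^n)}$ for $z\notin J_\theta$ and $\|T_{Y(z)}\| \le \|T\|_{B(\ell_2^n)}$ for $z\in J_\theta$ (Hilbertian), and interpolating along the family (together with the obvious transfer of bounds through ultraproducts, cf.\ Remark \ref{ultr}) yields the claim for general $Y\in \mathcal{B}$. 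The reverse inequality $\ge$ is precisely the content of Theorem \ref{comthm5.1}, which asserts exactly this identity for finite-dimensional arcwise $\theta$-Hilbertian $Y$.

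Finally I would just invoke Theorem \ref{comthm3.1bis}: condition (ii) of Theorem \ref{comthm5.6} says $\|T_X\|\le C$ whenever the interpolation norm of $T$ is $\le 1$, which by the previous paragraph is the same as saying $\|T_X\|\le C$ whenever $\sup_{Y\in\mathcal{B}}\|T_Y\|\le 1$; by Theorem \ref{comthm3.1bis} this is equivalent to $X$ being $C$-isomorphic to a subquotient of an ultraproduct of $\ell_2$-sums of spaces in $\mathcal{B}$, which by the first step is the same as being $C$-isomorphic to a subquotient of an arcwise $\theta$-Hilbertian space, i.e.\ (i). The main (and really only) non-bookkeeping step is the identification of $\sup_{Y\in\mathcal{B}}\|T_Y\|$ with the complex interpolation norm, but that identification has already been carried out in Theorem \ref{comthm5.1}, so here it is simply a matter of assembling the pieces.
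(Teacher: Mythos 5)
Your proof is correct and follows essentially the same route as the paper: reduce (ii) via Theorem \ref{comthm5.1} to the statement $\|T_X\|\le C$ whenever $\sup_{Y\in{\cl B}}\|T_Y\|\le 1$, and then feed this into Hernandez's Theorem \ref{comthm3.1bis} at $p=2$. The only notable difference is your choice of ${\cl B}$: you take ${\cl B}$ to be the class of all arcwise $\theta$-Hilbertian spaces, whereas the paper applies Theorem \ref{comthm3.1bis} to the class of arcwise $\theta$-Euclidean spaces (the finite-dimensional ones). The paper's choice is slightly cleaner because condition (i) of Theorem \ref{comthm3.1bis} then reads ``subquotient of an ultraproduct of $\ell_2$-sums of arcwise $\theta$-Euclidean spaces,'' and a finite $\ell_2$-sum of arcwise $\theta$-Euclidean spaces is again arcwise $\theta$-Euclidean (interpolation commutes with finite $\ell_2$-sums, and the arc $J_\theta$ is the same for every member of the family), so this is directly an ultraproduct of arcwise $\theta$-Euclidean spaces, i.e.\ arcwise $\theta$-Hilbertian, with no iterated-ultraproduct bookkeeping; the corresponding $\sup$ in (ii) is unchanged because ultraproducts do not increase it. With your choice you must also justify that a finite $\ell_2$-sum of ultraproducts is an ultraproduct of $\ell_2$-sums, and that an ultraproduct of ultraproducts is an ultraproduct — both standard but worth flagging, since you invoke them only implicitly. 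These are bookkeeping points; the substantive steps (Theorem \ref{comthm5.1} for the norm identity, Theorem \ref{comthm3.1bis} for the structural dichotomy) are exactly the ones the paper uses.
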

\begin{proof} 
Just observe that the class of arcwise $\theta$-Euclidean (or that of $\theta$-Hilbertian) spaces is stable
by $\ell_2$-sums and apply Theorem \ref{comthm3.1bis} to the class
${\cl B}$ formed of all the $\theta$-Euclidean spaces.
\end{proof}

\section{Arcwise versus not arcwise}\label{comsec6}

At the time of this writing, we do not see (although we suspect there is) a \emph{direct} way to reduce the arcwise notions of $\theta$-Euclidean or $\theta$-Hilbertian to the non-arcwise ones. However, it follows a posteriori from our main result that any $\theta$-Hilbertian space is a subquotient
(or equivalently a quotient of a subspace)  of an arcwise one.

Indeed, consider a $\theta$-Euclidean family $\{X(z)\mid z\in\partial D\}$,  i.e.\ one such that 
\[
m(\{z\in\partial D\mid X(z) \text{ Hilbertian}\})\ge \theta.
\]
Let $\alpha(z)$ be the indicator function of the set $\Gamma_1 = \{z\in \partial D\mid X(z)$ is Hilbertian$\}$, and let $\Gamma_0 = \partial D\backslash \Gamma_1$. We now let
\[
B_0 = B_r(\ell^n_2)\qquad B_1 = B(\ell^n_2),
\]
and consider the family
\[
\beta(z) = B_{\alpha(z)}\qquad z\in \partial D.
\]
Note that $B_{\alpha(z)} = B_1$ on $\Gamma_1$ and $B_{\alpha(z)} =B_0$ on $\Gamma_0$. By \cite[Cor. 5.1]{CCRS3}, we have isometrically
\begin{equation}\label{comeq6.1}
\beta(0) = (B_0,B_1)_{\alpha(0)}
\end{equation}
where $\alpha(0) = \int \alpha(z)\ dm(z) = m(\Gamma_1)\ge \theta$. Consider now $T$ in the open unit ball of $B(\theta,n)$. By \eqref{comeq6.1}, we have
\[
\|T\|_{\beta(0)}< 1 .
\]
Therefore, there is an analytic function $T(\cdot)$ in the space $H^\infty_\#$ relative to
the family $\{\beta(z)\mid z\in\partial D\}$ satisfying
$\text{ess}\sup_{\partial D} \| T(z)\|_{Ê\beta(z)}<1$ and $T(0)=T$.
Let $Y(z)=\ell^n_2(X(z))$. Clearly (see \cite{CCRS2, Her})
we have $Y(0)=\ell^n_2(X(0))$ isometrically. 
Recall $\partial D= \Gamma_0\cup \Gamma_1$. Note that for any $z\in {\Gamma_j}$ ($j=0,1$) we have
$\beta(z)=B_j$ and hence  $\|T(z)\|_{B_j}\le 1$. Therefore  
\[
\text{ess}\sup_{z\in \partial D }\|T(z)\otimes id_{X(z)}\colon \ \ell^n_2(X(z)) \to \ell^n_2(X(z))\| \le 1.
\]
By the interpolation principle \ref{comsec4.1+},   we must have
\[
\|T(0)\otimes id_{X(0)}\colon \ \ell^n_2(X(0)) \to \ell^n_2(X(0))\| \le 1,
\]
so that  since $T=T(0)$ and $X=X(0)$ we obtain $\|T_X\| \le 1$.  By homogeneity, this shows
\begin{equation}\label{comeq6.2} \forall T\in {B(\theta,n)} \qquad 
\|T_X\| \le \|T\|_{B(\theta,n)} 
\end{equation}
for all $\theta$-Euclidean spaces $X$ (and not only the arcwise ones as in Theorem \ref{comthm5.1}). But, since the class of $X$ satisfying \eqref{comeq6.2} is trivially stable under ultraproducts, subspaces and quotients, we find that \eqref{comeq6.2} holds for all quotient of subspaces of $\theta$-Hilbertian spaces. By Theorem \ref{comthm5.6}, this establishes the following:

\begin{thm}\label{comthm6.1}
Any $\theta$-Hilbertian space is a quotient of a subspace (or equivalently is a subquotient)  of an arcwise $\theta$-Hilbertian space.
\end{thm}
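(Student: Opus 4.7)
My plan is to verify the norm inequality
\begin{equation*}
\|T_X\| \le \|T\|_{B(\theta,n)} \quad \text{for all } T \in B(\ell^n_2)
\end{equation*}
for every $\theta$-Hilbertian space $X$, and then invoke Theorem~\ref{comthm5.6} to conclude that $X$ is a subquotient of an arcwise $\theta$-Hilbertian space. The bound is stable under ultraproducts (and under restriction of $X$ to the relevant $n$-dimensional subspaces), so by the definition of $\theta$-Hilbertian it suffices to establish it when $X = X(0)$ is a $\theta$-Euclidean space arising from a compatible family $\{X(z) \mid z\in\partial D\}$ of $n$-dimensional spaces whose Hilbertian set $\Gamma_1 \subset \partial D$ has $m(\Gamma_1) \ge \theta$.

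The core idea is to set up a parallel \emph{scalar} family of operator spaces mirroring the Hilbertian/non-Hilbertian dichotomy of $\{X(z)\}$. Put $B_0 = B_r(\ell^n_2)$, $B_1 = B(\ell^n_2)$, and let $\alpha$ be the indicator of $\Gamma_1$. Define $\beta(z) = B_{\alpha(z)}$ on $\partial D$. Since this family takes only two values, \S\ref{comsec4.0} (Coifman--Cwikel--Rochberg--Sagher--Weiss) identifies the interior space isometrically as $\beta(0) = (B_0,B_1)_{\alpha(0)}$, where $\alpha(0) = m(\Gamma_1) \ge \theta$. Because the inclusion $B_0 \subset B_1$ is norm-decreasing, the interpolation functor is monotone in the parameter, so $\|T\|_{\beta(0)} \le \|T\|_{B(\theta,n)}$. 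Thus for $T$ in the open unit ball of $B(\theta,n)$ we may select, by a routine measurable selection (\ref{comsec1.10}), an $N^+$-valued analytic lift $z\mapsto T(z)$ with $T(0)=T$ and $\mathrm{ess\,sup}_{z\in\partial D}\|T(z)\|_{\beta(z)}<1$.

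Now set $Y(z) = \ell^n_2(X(z))$; by the standard behavior of interpolation with respect to $\ell_2$-valued vectorization (see \cite{CCRS2,Her}), $Y(0) = \ell^n_2(X(0)) = \ell^n_2(X)$ isometrically. On $\Gamma_0 = \partial D\setminus \Gamma_1$ one has $\|T(z)\|_{B_r(\ell^n_2)}<1$, which by the very definition of the regular norm gives $\|T(z)\otimes \mathrm{id}_{X(z)}\|_{B(Y(z))}\le 1$; on $\Gamma_1$ one has $\|T(z)\|_{B(\ell^n_2)}<1$ and $X(z)$ is Hilbertian, so again $\|T(z)\otimes \mathrm{id}_{X(z)}\|_{B(Y(z))}\le 1$. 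The interpolation principle \ref{comsec4.1+} applied to the $B(Y(z))$-valued analytic function $z\mapsto T(z)\otimes \mathrm{id}_{X(z)}$ then yields $\|T \otimes \mathrm{id}_X\|_{B(Y(0))} \le 1$, i.e.\ $\|T_X\|\le 1$. Homogeneity gives the desired inequality, and Theorem~\ref{comthm5.6} finishes the proof.

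The step I expect to require the most care is the rigorous construction of the analytic lift $T(z)$ and the verification that the ``interior'' Poisson-integral identification $\beta(0) = (B_0,B_1)_{\alpha(0)}$ really permits choosing $T(z)\in B(\ell^n_2)$ with the required pointwise bounds on $\partial D$; this is really a matter of unpacking the $H^\infty_\#$-definition together with a measurable selection argument for the two elementary norms $\|\cdot\|_{B_0}$ and $\|\cdot\|_{B_1}$. Everything else is a routine application of the interpolation principle and the definitions.
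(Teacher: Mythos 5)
Your proof is correct and essentially identical to the paper's argument in \S\ref{comsec6}: the same scalar two-valued family $\beta(z) = B_{\alpha(z)}$ with $\beta(0) = (B_r(\ell^n_2), B(\ell^n_2))_{\alpha(0)}$ via \ref{comsec4.0}, the same vectorization $Y(z) = \ell^n_2(X(z))$, the same use of the interpolation principle \ref{comsec4.1+}, and the same closing appeal to Theorem \ref{comthm5.6}. The only inessential departure is your appeal to measurable selection (\ref{comsec1.10}) to produce the analytic lift of $T$, which is not actually needed here since the existence of such a lift is precisely what $\|T\|_{\beta(0)}<1$ means by definition of the interpolation norm for the (directly given) family $\{\beta(z)\}$.
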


\begin{rem}\label{comrem6.1+}
In particular,    since a  quotient  of a subspace of a quotient  of a subspace
is again a quotient  of a subspace,
  this shows that the properties in Theorem \ref{comthm5.6}
are equivalent to \begin{itemize}
\item[\rm (i)'] $X$ is $C$-isomorphic to a quotient of a subspace (or equivalently to a subquotient) of a  $\theta$-Hilbertian space.
   \end{itemize}
   Moreover, this shows that Corollary \ref{comcor4.2} also holds with
   ``$\theta$-Hilbertian" instead of  ``arcwise $\theta$-Hilbertian" everywhere.
\end{rem}
\section{Fourier and Schur multipliers}\label{comsec55}

Let $G$ be a locally compact Abelian group. Let $M(G)$ be the classical Banach space of complex (Radon) measures on $G$ equipped with the total variation norm:\ $\|\mu\|_{M(G)} = |\mu|(G)$. We view as usual $L_1(G)$ as (isometric to) a subspace of $M(G)$. Recall that the Fourier transform ${\cl F}$ takes $M(G)$ to $C_b(\widehat G)$ and $L_1(G)$ to the subspace of $C_b(\widehat G)$ of functions tending to zero at $\infty$. It is traditional to introduce the space $PM(G)$ of pseudo-measures on $G$ formally as the set ${\cl F}^{-1}(L_\infty(\widehat G))$ of inverse Fourier transforms, so that $\mu\in PM(G)$ iff 
\[
\hat\mu \in L_\infty(\widehat G)\quad \text{and}\quad \|\mu\|_{PM(G)} \overset{\sst\text{def}}{=} \|\hat\mu\|_\infty.
\]
Then the multiplication by $\hat \mu$ is a bounded operator on $L_2(\widehat G)$ (``Fourier multiplier''), so that the convolution operator $[\mu]\colon \ f\to f*\mu$ is bounded on $L_2(G)$ and
\begin{equation}\label{comeq5.8}
\|[\mu]\colon \ L_2(G)\to L_2(G)\| = \|\mu\|_{PM(G)}.
\end{equation}
Similarly, it is not hard to see that $[\mu]$ is regular iff $\mu\in M(G)$ and we have
\begin{equation}\label{comeq5.9}
\|[\mu]\colon \ L_2(G)\to L_2(G)\|_{\text{reg}}  =\|\mu\|_{M(G)}.
\end{equation}
Note that with this notation we have a canonical inclusion
\[
M(G) \subset PM(G).
\]

\begin{thm}\label{comthm5.3}
Let $0<\theta<1$. The space $(M(G), PM(G))^\theta$ consists of the pseudo-measures $\mu$ such that $[\mu]_X$ is bounded on $L_2(G;X)$ for any $\theta$-Hilbertian Banach space $X$ and we have
\[
\|\mu\|_{(M(G),PM(G))^\theta} = \sup_X\|[\mu]_X\colon \ L_2(X)\to L_2(X)\|
\]
where the supremum runs over all such $X$'s. The space $(L_1(G), PM(G))_\theta$ coincides isometrically with the closure of $L_1(G)\subset M(G)$ in this space.
\end{thm}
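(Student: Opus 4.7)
By \eqref{comeq5.8} and \eqref{comeq5.9}, the convolution map $\mu\mapsto[\mu]$ provides an isometric embedding of the compatible pair $(M(G),PM(G))$ into $(B_r(L_2(G)),B(L_2(G)))$. Functoriality of complex interpolation yields
\[
\|[\mu]\|_{(B_r(L_2(G)),B(L_2(G)))^\theta}\le \|\mu\|_{(M(G),PM(G))^\theta}.
\]
For the reverse inequality, the plan is to construct a single map $\Pi$ that is simultaneously a contractive projection from $B(L_2(G))$ onto $PM(G)$ and from $B_r(L_2(G))$ onto $M(G)$, which is possible because $G$ is abelian, hence amenable.

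\textbf{Averaging projection.} Fix a translation-invariant mean $m$ on $L^\infty(G)$ and define, for $T\in B(L_2(G))$,
\[
\langle \Pi(T)f,g\rangle\;=\;m_s\bigl(\langle T\lambda_sf,\lambda_sg\rangle\bigr)\qquad (f,g\in L_2(G)),
\]
where $\lambda_s$ is translation by $s$. Then $\Pi(T)$ is bounded with $\|\Pi(T)\|\le\|T\|$ and commutes with all translations, so $\Pi(T)=[\mu']$ for some $\mu'\in PM(G)$ with $\|\mu'\|_{PM}\le\|T\|$. If $T$ is regular, the same invariant-mean averaging, carried out simultaneously for every $X$-valued extension $T_X$, yields $\|\Pi(T)_X\|\le\|T_X\|$; since $\Pi(T)$ is translation-invariant and regular, it has the form $[\mu']$ with $\mu'\in M(G)$ and $\|\mu'\|_M=\|\Pi(T)\|_{\text{reg}}\le \|T\|_{\text{reg}}$ (by the $X$-valued characterization \ref{comsec1.2} of $\|\cdot\|_{\text{reg}}$). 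Moreover $\Pi$ fixes every convolution operator $[\mu]$. Interpolating $\Pi$ thus gives a contractive retraction from $(B_r,B)^\theta$ onto $(M,PM)^\theta$, whence the reverse inequality $\|\mu\|_{(M,PM)^\theta}\le \|[\mu]\|_{(B_r,B)^\theta}$. Combining the two inequalities with Corollary \ref{comcor4.2} (and Remark \ref{comrem6.1+}, which permits ``$\theta$-Hilbertian'' in place of ``arcwise $\theta$-Hilbertian'') identifies the common value with $\sup_X\|[\mu]_X\colon L_2(G;X)\to L_2(G;X)\|$, the supremum running over all $\theta$-Hilbertian~$X$.

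\textbf{The $L_1$ assertion.} By \cite{Be}, for any compatible pair $(A_0,A_1)$ the lower method $(A_0,A_1)_\theta$ is the closure of $A_0\cap A_1$ in $(A_0,A_1)^\theta$, and the two norms agree on $A_0\cap A_1$. Applied to $(L_1,PM)$, this gives $(L_1,PM)_\theta=\overline{L_1\cap PM}^{\,(L_1,PM)^\theta}$. To identify this with $\overline{L_1}^{\,(M,PM)^\theta}$ it suffices to show (i) the $(L_1,PM)^\theta$-norm and the $(M,PM)^\theta$-norm coincide on $L_1\cap PM$, and (ii) $L_1\cap PM$ is dense in $L_1$ for the $(M,PM)^\theta$-topology. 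For (ii) one picks an approximate identity $(\rho_n)\subset L_1$ with $\|\rho_n\|_{L_1}=1$ and $\|\hat\rho_n\|_\infty\le 1$ (e.g.\ Fej\'er-type kernels on $G$); then for $\mu\in L_1$ one has $\mu*\rho_n\in L_1\cap PM$ and $\mu*\rho_n\to\mu$ in $L_1$, hence in $(M,PM)^\theta$. For (i), only the non-trivial direction $\|\mu\|_{(L_1,PM)^\theta}\le \|\mu\|_{(M,PM)^\theta}$ requires work: given an admissible $F$ for the $(M,PM)^\theta$-norm with $F(\theta)=\mu$, the function $z\mapsto F(z)*\rho_n$ is admissible for the $(L_1,PM)^\theta$-norm since $\|F(z)*\rho_n\|_{L_1}\le\|F(z)\|_M\|\rho_n\|_{L_1}$ and $\|F(z)*\rho_n\|_{PM}\le\|F(z)\|_{PM}\|\hat\rho_n\|_\infty$; this yields $\|\mu*\rho_n\|_{(L_1,PM)^\theta}\le\|\mu\|_{(M,PM)^\theta}$. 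The standard log-convexity bound $\|x\|_{(L_1,PM)^\theta}\le\|x\|_{L_1}^{1-\theta}\|x\|_{PM}^\theta$ on $L_1\cap PM$ then allows one to pass to the limit $\mu*\rho_n\to\mu$ inside $(L_1,PM)^\theta$, yielding (i).

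\textbf{Expected obstacle.} The delicate point is the simultaneous preservation of both norms by the averaging projection $\Pi$: bounding $\|\Pi(T)\|$ on $L_2(G)$ is routine, but obtaining $\|\Pi(T)\|_{\text{reg}}\le\|T\|_{\text{reg}}$ requires genuinely using the characterization of the regular norm as $\sup_X\|T_X\|_{B(L_2(G;X))}$ together with the fact that the invariant-mean averaging is isometric on translations of every $L_2(G;X)$.
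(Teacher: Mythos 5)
Your proposal follows essentially the same route as the paper's own proof: (a) use \eqref{comeq5.8}--\eqref{comeq5.9} to embed the couple $(M(G),PM(G))$ isometrically into $(B_r(L_2(G)),B(L_2(G)))$; (b) exploit amenability to build an invariant-mean averaging projection that is simultaneously contractive on $B(L_2(G))$ onto $PM(G)$ and on $B_r(L_2(G))$ onto $M(G)$, so that the interpolation norm of the ambient couple restricts correctly to the subcouple; (c) invoke Corollary \ref{comcor4.2} and Remark \ref{comrem6.1+} to identify the result with the supremum over $\theta$-Hilbertian $X$; and (d) combine Bergh's theorem with a Fej\'er-kernel approximate identity to handle the $L_1$ statement. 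The paper records (b) very tersely (``It is easy to show (left to the reader) that $P(B_r(L_2(G)))=M(G)$'') and likewise compresses (d) (``it is easy to deduce''); you supply the verifications the paper omits, in particular the observation that the averaging projection commutes with $T\mapsto T_X$ so that its contractivity for the regular norm follows from the $X$-valued characterization of $\|\cdot\|_{\mathrm{reg}}$ in \ref{comsec1.2}, and a careful two-step argument (norm equality on $L_1\cap PM$, plus density) in place of the paper's single appeal to the net $\varphi_\alpha$. The organization of the $L_1$ step differs slightly — the paper applies Bergh to the pair $(M,PM)$ and then moves to $L_1$ via $\varphi_\alpha$, whereas you apply Bergh directly to $(L_1,PM)$ and compare closures — but the substance is identical and both arguments are correct.
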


\begin{proof}
By the amenability of $G$ it is well known that we have a norm 1 projection
\[
P\colon \ B(L_2(G))\to B(L_2(G))
\]
from $L_2(G)$ onto the subspace $\{[\mu]\mid \mu\in PM(G)\}$. This is analogous to the projection of $B(\ell_2)$ onto the space of diagonal matrices. Identifying $\mu$ with $[\mu]$ by \eqref{comeq5.8}, this gives us a contractive projection
\[
P\colon \ B(L_2(G))\to PM(G)\subset B(L_2(G)).
\]
Note that by \eqref{comeq5.9} we have an isometric embedding $M(G)\subset B_r(L_2(G))$. It is easy to show (left to the reader) that $P(B_r(L_2(G))) = M(G)$. By a well known argument, it follows that $(M(G), PM(G))^\theta$ can be isometrically identified with the subspace of $(B_r,B)^\theta$ that is the range of $\mu\to [\mu]$. 
From this the first assertion follows by Corollary \ref{comcor4.2} and Remark \ref{comrem6.1+}.

\n As for the second one,  by \cite{Be}, the closure of $M(G)$ in $(M(G), PM(G))^\theta$ can be identified with $(M(G), PM(G))_\theta$. Then using the fact that there is a net of maps $\varphi_\alpha \ \colon \ M(G)\to L_1(G)$ that are simultaneous contractions on $M(G)$ and on $PM(G)$ (think of convolution by the Fejer kernel on ${\bb T}$ as the model) and such that $\|\varphi_\alpha(\mu) - \mu\|_1\to 0$ for any $\mu$ in $L_1(G)$, it is easy to deduce that $(L_1(G), PM(G))_\theta$ is the closure of $L_1(G)$ in $(M(G), PM(G))_\theta$.
\end{proof}

\begin{rem}\label{comrem5.4}
It is easy to see that the preceding statement remains valid when $G$ is a compact (non-Abelian) group or an amenable group. When $G$ is locally compact amenable,
we replace $PM(G)$ by the von Neumann algebra of $G$, i.e. the von Neumann
algebra generated by the left convolutions by elements of $L_1(G)$, or equivalently
of $M(G)$. Thus we again have an inclusion $\mu \mapsto [\mu]$
from $M(G)$ to $PM(G)$, that we use for
``compatibility" of the interpolation pair and the preceding statement continues to hold.
\end{rem}

We can reformulate Theorem \ref{comthm5.7} as one about ``Schur multipliers.'' Let ${\cl M}$ denote the Banach space of all bounded Schur multipliers on $B(\ell_2)$, i.e.\ ${\cl M}$ consists of bi-infinite matrices $[\varphi_{ij}]$ such that the map
\[
M_\varphi\colon \ [a_{ij}]\to [\varphi_{ij}a_{ij}]
\]
is bounded on $B(\ell_2)$, with norm
$\|\varphi\|_{  {\cl M} }=\| M_\varphi\colon \ B(\ell_2)\to    B(\ell_2)\|$. Let
\[
{\cl M}[n] = \{\varphi\in {\cl M}\mid \varphi_{ij}=0\quad  \forall i>n\  \forall j>n\}.
\]
We obviously have natural inclusions
\begin{align*}
{\cl M} &\subset \ell_\infty({\bb N}\times {\bb N})\\
{\cl M}[n] &\subset \ell^{n^2}_\infty.
\end{align*}

\begin{cor}\label{comcor5.8}
Let $0<\theta<1$. Fix $n\ge 1$. We have isometrically 
$$(\ell^{n^2}_\infty, {\cl M}[n])_\theta\simeq \Gamma_{\theta H}(\ell^n_1, \ell^n_\infty).$$
 Consider an $n\times n$ complex matrix $\varphi = [\varphi_{ij}]$. Then $\varphi$ belongs to the closed unit-ball of $(\ell^{n^2}_\infty,{\cl M}[n])_\theta$ iff there are a $\theta$-Hilbertian space $X$,  $k_j$ in the closed unit ball  of $X$ and $h_i$ in that of $X^*$,       such that
\begin{equation}\label{comeq5.14}
\forall i,j\qquad\qquad \varphi_{ij} = \langle k_j, h_i\rangle.
~~~~~~~~~~~~~~~~~~~~~~~~~~~~~~~~~~~~~
\end{equation}
More generally, for any $\varphi = [\varphi_{ij}] \in \ell_\infty({\bb N}\times {\bb N})$ 
\begin{equation}\label{comeq5.15}
\|\varphi\|_{(\ell_\infty({\bb N}\times{\bb N}),{\cl M})^\theta} = \inf\{\sup_j\|k_j\|_X\sup_i\|h_i\|_{X^*} \}
\end{equation}
where the infimum runs over all $\theta$-Hilbertian spaces $X$ and all bounded sequences  $(k_j)$ in $X$ and $(h_i)$ in $X^*$ such that \eqref{comeq5.14} holds. 
\end{cor}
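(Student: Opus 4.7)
The plan is to derive Corollary \ref{comcor5.8} from Theorem \ref{comthm5.7} via two standard isometric identifications that recast Schur multipliers as operators $\ell^n_1\to\ell^n_\infty$. The first is trivial: $\ell^{n^2}_\infty\simeq B(\ell^n_1,\ell^n_\infty)$, since the operator norm of a matrix viewed as a map $\ell^n_1\to\ell^n_\infty$ is just $\max_{i,j}|a_{ij}|$. The second is the classical Grothendieck/Haagerup factorization of Schur multipliers: ${\cl M}[n]\simeq \Gamma_H(\ell^n_1,\ell^n_\infty)$ isometrically. Indeed, $\|\varphi\|_{\cl M}\le 1$ is equivalent to the existence of vectors $h_i,k_j$ in some Hilbert space $H$ with $\sup_i\|h_i\|\le 1$, $\sup_j\|k_j\|\le 1$ and $\varphi_{ij}=\langle k_j,h_i\rangle$; setting $u_2 e_j=k_j$ and $u_1^*e_i=h_i$ factors $\varphi$ as $\ell^n_1\xrightarrow{u_2} H\xrightarrow{u_1}\ell^n_\infty$ with $\|u_1\|=\sup_i\|h_i\|$ and $\|u_2\|=\sup_j\|k_j\|$, and conversely any such factorization produces the requisite vectors.

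Plugging these identifications into Theorem \ref{comthm5.7} immediately yields
\[
(\ell^{n^2}_\infty,{\cl M}[n])_\theta \;\simeq\; (B(\ell^n_1,\ell^n_\infty),\Gamma_H(\ell^n_1,\ell^n_\infty))_\theta \;\simeq\; \Gamma_{\theta H}(\ell^n_1,\ell^n_\infty).
\]
For the unit-ball characterization, I unpack $\gamma_{\theta H}(\varphi)\le 1$: this means $\varphi=u_1 u_2$ factors through a $\theta$-Hilbertian space $X$ with $\|u_1\|\|u_2\|\le 1$. Setting $k_j=u_2 e_j\in X$ and $h_i=u_1^*e_i\in X^*$ then gives exactly \eqref{comeq5.14} with $\sup_j\|k_j\|_X\le\|u_2\|$ and $\sup_i\|h_i\|_{X^*}\le\|u_1\|$. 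Conversely, sequences $(k_j)$, $(h_i)$ as in \eqref{comeq5.14} assemble to operators $u_2\colon\ell^n_1\to X$ and $u_1\colon X\to\ell^n_\infty$ which witness $\gamma_{\theta H}(\varphi)\le\sup_j\|k_j\|\sup_i\|h_i\|$.

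For the infinite formula \eqref{comeq5.15}, the plan is to reduce to the finite case by truncation. For each $N\ge 1$, the restriction map $P_N\colon\varphi\mapsto\varphi\cdot\mathbf{1}_{[1,N]^2}$ is a simultaneous contraction on $\ell_\infty(\bb N\times\bb N)$ and on ${\cl M}$ (restricting a Schur multiplier to a principal $N\times N$ block never increases its norm). By the interpolation principle, $P_N$ contracts $(\ell_\infty(\bb N\times\bb N),{\cl M})^\theta$ into $(\ell^{N^2}_\infty,{\cl M}[N])_\theta$. Applying the finite-dimensional part just proved to each $P_N\varphi$ produces a $\theta$-Hilbertian space $X_N$ and vectors $k^{(N)}_j\in X_N$, $h^{(N)}_i\in X_N^*$ realizing $\varphi_{ij}=\langle k^{(N)}_j,h^{(N)}_i\rangle$ (for $i,j\le N$) with $\sup_j\|k^{(N)}_j\|\cdot\sup_i\|h^{(N)}_i\|\le\|\varphi\|_{(\ell_\infty,{\cl M})^\theta}+\varepsilon$. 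Passing to an ultraproduct $X=\prod_{\cl U}X_N$ along a non-principal ultrafilter, which remains $\theta$-Hilbertian by Definition \ref{thetahilb}, assembles these into a single representation of $\varphi$ with the same bound. The reverse inequality follows by observing that any representation \eqref{comeq5.14} provides, via the associated factorization, a uniform control on each $\|P_N\varphi\|_{(\ell^{N^2}_\infty,{\cl M}[N])_\theta}$, and hence on $\|\varphi\|_{(\ell_\infty,{\cl M})^\theta}$ by a standard analytic-function/limit argument on the Calder\'on strip.

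The main obstacle is precisely this last step, where one must identify the upper interpolation norm on the non-separable pair $(\ell_\infty(\bb N\times\bb N),{\cl M})$ with the supremum of its truncated norms; the use of the upper method $(\cdot,\cdot)^\theta$ (rather than $(\cdot,\cdot)_\theta$) is essential here, since we cannot appeal to density of finite matrices in $\ell_\infty(\bb N\times\bb N)$. Everything else is a routine unpacking of definitions built on top of Theorem \ref{comthm5.7} and the classical Schur multiplier factorization.
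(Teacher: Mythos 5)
Your proof of the finite-dimensional assertion is exactly the paper's: identify $\ell^{n^2}_\infty \simeq B(\ell^n_1,\ell^n_\infty)$ and ${\cl M}[n]\simeq \Gamma_H(\ell^n_1,\ell^n_\infty)$ compatibly, invoke Theorem \ref{comthm5.7}, and unpack the definition of $\gamma_{\theta H}$ in terms of the factorization $u=u_1u_2$. The paper leaves the passage to \eqref{comeq5.15} ``to the reader,'' and your sketch of that step — truncating by the compatible contractive projections $P_N$, applying the finite case, and gluing the representations by an ultraproduct (licit since the class of $\theta$-Hilbertian spaces is closed under ultraproducts), with the one genuine technical point being that $\|\varphi\|_{(\ell_\infty,\cl M)^\theta}=\sup_N\|P_N\varphi\|_{(\ell^{N^2}_\infty,\cl M[N])_\theta}$ — is the argument the paper has in mind, as witnessed by the identical truncation step in the proof of Corollary \ref{comcor4.2} (with its reference to \cite[p.~229]{KPS}).
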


\begin{proof}
By a classical result, we know that ${\cl M}[n]\simeq \Gamma_H(\ell^n_1,\ell^n_\infty)$ isometrically (see e.g. \cite[Th. 5.1]{P4}); moreover it is obvious that $\ell^{n^2}_\infty\simeq B(\ell^n_1,\ell^n_\infty)$ isometrically, and ``compatibly'' with the preceding isomorphism. Therefore we may isometrically identify $(\ell^{n^2}_\infty, {\cl M}[n])_\theta$ with $(B(\ell^n_1,\ell^n_\infty), \Gamma_H(\ell^n_1,\ell^n_\infty))_\theta$, which, by Theorem \ref{comthm5.7} coincides with $\Gamma_{\theta H}(\ell^n_1, \ell^n_\infty)$. But now a mapping $\varphi\colon\ \ell^n_1\to \ell^n_\infty$ is in the unit ball of $\Gamma_{\theta H}(\ell^n_1,\ell^n_\infty)$ iff it satisfies the condition in Corollary \ref{comcor5.8}. This proves the first (finite dimensional) assertion. We leave the extension to the infinite dimensional case to the reader.
\end{proof}

\begin{cor}\label{cor5.8bis}
The norm   \eqref{comeq5.15} coincides with the norm of $(\varphi_{ij})$ acting as a Schur multiplier on $(B_r(\ell_2), B(\ell_2))_\theta$ or on $(B_r(\ell_2), B(\ell_2))^\theta$. Moreover, it is also equal to its norm as a multiplier from $(B_r(\ell_2), B(\ell_2))_\theta$ to $B(\ell_2)$.
\end{cor}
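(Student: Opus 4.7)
The plan is to show the coincidence of four quantities: $N_1 := \|\varphi\|_{(\ell_\infty({\bb N}\times{\bb N}), {\cl M})^\theta}$, $N_2 := \|M_\varphi\|_{B((B_r, B)_\theta)}$, $N_3 := \|M_\varphi\|_{B((B_r, B)^\theta)}$, and $N_4 := \|M_\varphi\colon (B_r, B)_\theta \to B\|$. Two inequalities are immediate: $N_4 \le N_2$, since $\|\cdot\|_B \le \|\cdot\|_{(B_r,B)_\theta}$; and $N_2 \le N_3$, since $M_\varphi$ preserves $B_r \cap B = B_r$ and hence, being continuous on $(B_r,B)^\theta$ with operator norm $N_3$, preserves the closure $(B_r,B)_\theta$ (by Bergh's theorem \cite{Be}) with the same norm. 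It therefore suffices to close the cycle by proving $N_3 \le N_1$ and $N_1 \le N_4$.

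For $N_3 \le N_1$, I would invoke the bilinear complex interpolation theorem applied to the Schur-product map $(\psi, T) \mapsto M_\psi(T)$. This bilinear map is contractive $\ell_\infty \times B_r \to B_r$ (Schur multipliers on $B_r$ have norm equal to the sup-norm of the multiplier, cf.~\ref{comsec1.3}) and contractive ${\cl M} \times B \to B$ (by the very definition of ${\cl M}$). Interpolation produces a contractive bilinear map $(\ell_\infty, {\cl M})^\theta \times (B_r, B)^\theta \to (B_r, B)^\theta$, giving $N_3 \le N_1$.

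For the reverse $N_1 \le N_4$, I would argue by duality in finite dimensions first. By complex-interpolation duality, $\bigl((\ell^{n^2}_\infty, {\cl M}[n])_\theta\bigr)^* = \bigl(\ell^{n^2}_1, \Gamma^*_H(\ell^n_\infty, \ell^n_1)\bigr)^\theta$, and in finite dimension the upper and lower methods coincide, so this dual equals $\bigl(\ell^{n^2}_1, \Gamma^*_H(\ell^n_\infty, \ell^n_1)\bigr)_\theta$. By Proposition \ref{compro5.9}, its unit ball is exactly the set of $v$'s admitting a factorization $v_{ij} = \lambda_i a_{ij} \mu_j$ with $\|\lambda\|_2, \|\mu\|_2 \le 1$ and $\|a\|_{B(\theta, n)} \le 1$. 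Computing the dual pairing
\[
\langle \varphi, v\rangle \;=\; \sum \varphi_{ij}\lambda_i a_{ij} \mu_j \;=\; \langle M_\varphi(a)\mu,\bar\lambda\rangle_{\ell^n_2},
\]
and taking the supremum first over $\lambda, \mu$ in the Euclidean unit ball and then over $a$ in the $B(\theta,n)$ unit ball, yields
\[
\|\varphi\|_{(\ell^{n^2}_\infty, {\cl M}[n])_\theta} \;=\; \sup_{\|a\|_{B(\theta,n)} \le 1} \|M_\varphi(a)\|_{B(\ell^n_2)},
\]
which is exactly the $n\times n$ version of $N_4$. Hence $N_1^{(n)} = N_4^{(n)}$ in finite dimensions.

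To pass to the infinite-dimensional case, I would use truncation combined with an ultraproduct. For each $n$, apply the finite-dim identity to the truncation $\varphi^{(n)} = [\varphi_{ij}\mathbf{1}_{i,j \le n}]$ to obtain a $\theta$-Hilbertian space $X_n$ and a factorization $\varphi^{(n)}_{ij} = \langle k_j^{(n)}, h_i^{(n)}\rangle$ with $\sup_j\|k_j^{(n)}\|_{X_n}\sup_i\|h_i^{(n)}\|_{X_n^*} \le N_4 + \vp$ (using that the corner projections are contractive on $\ell_\infty, {\cl M}, B_r, B$, so $N_4^{(n)} \le N_4$). An ultraproduct along a non-principal ultrafilter on ${\bb N}$ delivers a $\theta$-Hilbertian space $X_\omega$ (this class being closed under ultraproducts, see Remark \ref{ultr}) together with a $\theta$-Hilbertian factorization of $\varphi$ itself satisfying the same bound. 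By \eqref{comeq5.15} this gives $N_1 \le N_4 + \vp$, and letting $\vp \to 0$ closes the loop. The main obstacle is the duality identification via Proposition \ref{compro5.9}, which requires a careful passage between the $\varphi$-side (Schur-multiplier picture) and the $v$-side (factorization through $\ell^n_2$), plus the nontrivial fact that the dual of $(B_r(\ell^n_2), B(\ell^n_2))_\theta$ is described in the required way; once this algebraic bookkeeping is in place, the bilinear interpolation and ultraproduct steps are routine.
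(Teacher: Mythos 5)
Your proposal follows essentially the same route as the paper: the nontrivial inequality is obtained by dualizing against $v$ with $v_{ij}=\lambda_i a_{ij}\mu_j$ and feeding through Proposition~\ref{compro5.9}, while the remaining inequalities come from the elementary interpolation bounds $\|M_\psi\colon B_r\to B_r\|=\|\psi\|_\infty$ and $\|M_\psi\colon B\to B\|=\|\psi\|_{{\cl M}}$. The computation identifying $\langle\varphi,v\rangle$ with $\langle M_\varphi(a)\mu,\bar\lambda\rangle_{\ell^n_2}$ and the resulting identity $N_1^{(n)}=N_4^{(n)}$ is exactly the paper's argument read backwards, and your ultraproduct passage for $N_1\le N_4$ is a reasonable way to make precise what the paper only sketches (``ultraproduct argument and pointwise compactness'').

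One point to be careful about: for the step $N_3\le N_1$ you invoke a bilinear complex interpolation theorem applied to $(\psi,T)\mapsto M_\psi(T)$ landing in the \emph{upper} method $(B_r,B)^\theta$. Calder\'on's bilinear interpolation theorem is the statement for the lower method $(\cdot,\cdot)_\theta$; for $(\cdot,\cdot)^\theta$ there is no off-the-shelf bilinear result of the same generality, and the naive proof (take analytic representatives $g_\varphi, g_T\in\bar{{\cl F}}$ and multiply) does not obviously produce a $\bar{{\cl F}}$-function whose derivative at $\theta$ is $M_\varphi(T)$. In finite dimensions the two methods coincide, so your chain $N_1\le N_4\le N_2=N_3\le N_1$ is perfectly fine there, and this is all the paper actually proves directly; for the infinite-dimensional version of $N_3$ you should either deduce it from the finite-dimensional identity by the same kind of pointwise compactness the paper alludes to (rather than by an abstract bilinear theorem for $(\cdot,\cdot)^\theta$), or supply a justification for the upper-method bilinear interpolation. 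With that caveat noted, the argument is sound and essentially identical in substance to the paper's.
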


\begin{proof}
Assume that $(\varphi_{ij})$ is supported on $[1,\ldots,n]\times [1,\ldots, n]$. Note that $\|\varphi\|_{\cl M} = \|M_\varphi\colon \ B(\ell^n_2)\to B(\ell^n_2)\|$ and $\|\varphi\|_{\ell_\infty({\bb N}\times {\bb N})} = \|M_\varphi\colon \ B_r(\ell^n_2)\to B_r(\ell^n_2)\|$. Hence, if \eqref{comeq5.15} is $\le 1$, by interpolation we must have $\|M_\varphi\colon \ B(\theta,n)\to B(\theta,n)\|\le 1$. A fortiori, we have $\|M_\varphi\colon \ B(\theta,n)\to B(\ell^n_2)\|\le 1$ or equivalently for any $\lambda,\mu$ in $\ell^n_2$ and $a$ in $B(\ell^n_2)$
\[
\left|\sum \varphi_{ij} a_{ij}\lambda_i\mu_j\right|\le \|a\|_{B(\theta,n)} \|\lambda\|_{\ell^n_2} \|\mu\|_{\ell^n_2}.
\]
By Proposition \ref{compro5.9}, the latter implies that
\[
\left|\sum \varphi_{ij}v_{ij}\right| \le \|v\|_{(N(\ell^n_\infty,\ell^n_1), \Gamma^*_H(\ell^n_\infty, \ell^n_1))_\theta}
\]
and hence
\[
 \|\varphi\|_{(B(\ell^n_1,\ell^n_\infty), \Gamma_H(\ell^n_1, \ell^n_\infty))_\theta}\le 1.
\]
By Theorem \ref{comthm5.7}, this implies that \eqref{comeq5.15} is $\le 1$. The preceding chain of implications completes the proof when $(\varphi_{ij})$ is finitely supported. The general case is easy to deduce by an ultraproduct argument and pointwise compactness of the unit balls of the spaces of Schur multipliers under consideration. We skip the details.
\end{proof}

\begin{rem}\label{comrem5.20}
In answer to a question of Peller, it was proved
in \cite{Har} (see also \cite{Xu}) that, for any $1<p<\infty$, the interpolation space considered in   \eqref{comeq5.15}   {\it does not } contain    the space of Schur multipliers 
that are bounded on $S_p$. The corresponding fact for Fourier multipliers on $L_p(G)$ (with respect to Theorem \ref{comthm5.3})
has been   in the harmonic analysis folklore for a long time. 
\end{rem}

\section{A characterization of uniformly curved spaces}\label{comsec7}

We will now try to obtain some sort of ``geometric'' (or ``structural'') equivalent description of uniformly curved Banach spaces, i.e.\ those $X$'s such that $\Delta_X(\vp)\to 0$ when $\vp\to 0$.

The main point is the following.  This is sort of a real interpolation variant
of \eqref{comeq5.13}.

\begin{lem}\label{comlem7.1}
Let $X$ be a finite dimensional Banach space. Assume that for some $n\ge 1$, we have   linear maps $J\colon \ X\to \ell^n_\infty$ and   $Q\colon\ \ell^n_1\to X$ both with
norm $\le1$. Then, for any $\vp>0$, the composition $JQ\colon \ \ell^n_1\to \ell^n_\infty$ admits a decomposition $JQ = u_0+u_1$ with $u_j\colon \ \ell^n_1\to \ell^n_\infty$ such that:
\[
\gamma_H(u_1)\le 2\vp^{-1}\Delta_X(\vp)\quad \text{and}\quad \|u_0\| \le 2\Delta_X(\vp).
\]
\end{lem}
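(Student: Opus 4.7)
The plan is to prove the equivalent dual inequality
\[
\Bigl|\sum_{i,j}(JQ)_{ij}\varphi_{ij}\Bigr| \;\le\; 2\Delta_X(\vp)\|\varphi\|_N \;+\; 2\vp^{-1}\Delta_X(\vp)\gamma_H^*(\varphi) \qquad (\dagger)
\]
for every $n\times n$ matrix $\varphi$, and then to invoke a Hahn--Banach/support-function argument (using the identifications $B(\ell^n_1,\ell^n_\infty)^* = N(\ell^n_\infty,\ell^n_1)$ and $\Gamma_H(\ell^n_1,\ell^n_\infty)^* = \Gamma_H^*(\ell^n_\infty,\ell^n_1)$ recorded in Section~\ref{comsec1.9}) to conclude that $JQ$ lies in the closed convex set $2\Delta_X(\vp)B_{B(\ell^n_1,\ell^n_\infty)} + 2\vp^{-1}\Delta_X(\vp)B_{\Gamma_H(\ell^n_1,\ell^n_\infty)}$, which yields the asserted decomposition.

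The bridge between $\Delta_X(\vp)$ and $(\dagger)$ is the following pairing identity. For any matrix $T=[t_{ij}]$ on $\ell^n_2$ and any vectors $\xi,\eta\in\ell^n_2$, I set $z=(\xi_j Q e_j)_{j=1}^n\in\ell^n_2(X)$ and $w=(\eta_i J^* e^*_i)_{i=1}^n\in\ell^n_2(X^*)$. Because $\|Qe_j\|_X\le\|Q\|\le 1$ and $\|J^* e^*_i\|_{X^*}\le\|J\|\le 1$, one has $\|z\|_{\ell^n_2(X)}\le\|\xi\|_2$ and $\|w\|_{\ell^n_2(X^*)}\le\|\eta\|_2$, and a direct expansion gives
\[
\sum_{i,j}t_{ij}(JQ)_{ij}\xi_i\eta_j \;=\; \sum_{i,j}t_{ij}\langle Qe_j,\,J^* e^*_i\rangle\xi_i\eta_j \;=\; \langle T_X z,\,w\rangle.
\]
If $T$ additionally satisfies $\|T\|_{\mathrm{reg}}\le 1$ and $\|T\|_{B(\ell^n_2)}\le\vp$, then $\|T_X\|\le\Delta_X(\vp)$ by the very definition of $\Delta_X$, yielding the master estimate
\[
\Bigl|\sum_{i,j}t_{ij}(JQ)_{ij}\xi_i\eta_j\Bigr| \;\le\; \Delta_X(\vp)\,\|\xi\|_2\,\|\eta\|_2. \qquad (\star)
\]

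To deduce $(\dagger)$ from $(\star)$, I will factor $\varphi$ as a Schur product $\varphi_{ij}=\lambda_i T_{ij}\mu_j$ in which the common middle operator $T$ is admissible, i.e.\ lies (after rescaling) in $\mathcal{K}(\vp):=\{T:\|T\|_{\mathrm{reg}}\le 1,\,\|T\|_{B(\ell^n_2)}\le\vp\}$: plugging such a factorization into $(\star)$ with $\xi=\lambda,\eta=\mu$ and tracking the two rescaling factors produces $(\dagger)$ with the announced constants. Proposition~\ref{compro7.2}(ii) applied to $\|\varphi\|_N$ furnishes a representation of $\varphi$ with control of $\|T\|_{\mathrm{reg}}$ (via the Perron--Frobenius weights $\lambda'_i=(\sum_j|\varphi_{ij}|)^{1/2}$, $\mu'_j=(\sum_i|\varphi_{ij}|)^{1/2}$), while Proposition~\ref{compro7.2}(i) applied to $\gamma_H^*(\varphi)$ furnishes one with control of $\|T\|_{B(\ell^n_2)}$, but a priori the two systems of weights $(\lambda,\mu)$ produced by these results need not coincide. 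The main obstacle is thus to harmonize them into a single factorization; I expect this to be done by a change-of-density / Schur-test argument in the spirit of the proof of Proposition~\ref{compro2.1}, pre- and post-composing the middle operator with positive diagonal operators (whose regular and $B(\ell^n_2)$-norms both equal their $\ell^\infty$-norms) so as to equalize the two weight systems at the price of an overall multiplicative constant $\le 2$ in each of the two norm bounds. Inserting the resulting harmonized factorization into $(\star)$ then produces $(\dagger)$ and completes the proof.
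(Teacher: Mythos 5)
Your plan matches the paper's: dualize and prove
\[
|tr(vJQ)|\le 2\vp^{-1}\Delta_X(\vp)\,\gamma^*_H(v)+2\Delta_X(\vp)\,N(v)
\]
for all $v\colon\ell^n_\infty\to\ell^n_1$, using the master estimate that pairs a rescaled middle operator $T$ against $x_j=Qe_j$ and $x^*_i=J^*e_i$ and invokes $\|T_X\|\le\Delta_X(\vp)$. You correctly identify the crux: Proposition~\ref{compro7.2}(i) and (ii) produce two factorizations $v_{ij}=\lambda_i a_{ij}\mu_j$ (with $\|a\|_{B(\ell^n_2)}\le\vp$) and $v_{ij}=\lambda'_i b_{ij}\mu'_j$ (with $\|b\|_{\rm reg}\le 1$) with a priori different weight pairs, and you need a single factorization whose middle matrix satisfies both bounds simultaneously. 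However, you leave that step conjectural and suggest it should be done by a change-of-density/Schur-test argument in the spirit of Proposition~\ref{compro2.1}. The paper's device is considerably more elementary and you should not need anything of Perron--Frobenius flavor here: simply set $\xi_i=|\lambda_i|\vee|\lambda'_i|$, $\eta_j=|\mu_j|\vee|\mu'_j|$ and $t_{ij}=v_{ij}/(\xi_i\eta_j)$. Then $t$ is obtained from $a$ (and from $b$) by a Schur product with the rank-one matrix $[(\lambda_i/\xi_i)(\mu_j/\eta_j)]$ whose entries have modulus $\le 1$, hence $\|t\|\le\|a\|\le\vp$ and $\|t\|_{\rm reg}\le\|b\|_{\rm reg}\le 1$; the factor $2$ in the final estimate arises solely because $\|\xi\|_2^2\le\|\lambda\|_2^2+\|\lambda'\|_2^2\le 2$ (and likewise for $\eta$), not from any degradation of the norm bounds on $t$. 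With that one-line harmonization your argument closes and coincides with the paper's proof.
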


The proof combines duality with a change of density argument. By duality, our assertion is equivalent to the following:\ for any $v\colon \ \ell^n_\infty\to \ell^n_1$ we have
\begin{equation}\label{comeq7.1}
|tr(vJQ)|\le 2\vp^{-1}\Delta_X(\vp) \gamma^*_H(v) + 2\Delta_X(\vp)N(v),
\end{equation}
where $N(v)$ denotes the nuclear norm of $v$. To prove this we need to recall 
Proposition \ref{compro7.2}.

\begin{proof}[Proof of Lemma \ref{comlem7.1}]
It suffices to establish \eqref{comeq7.1} for any $v\colon \ \ell^n_\infty \to \ell^n_1$. Assume that $\vp^{-1}\gamma^*_H(v) + N(v)\le 1$. Then $N(v)\le 1$ and $\gamma^*_H(v)\le \vp$. By Proposition \ref{compro7.2}, we can write $v_{ij} = \lambda'_ib_{ij}\mu'_j$ and $v_{ij} = \lambda_ia_{ij}\mu_j$ with $\lambda,\mu,\lambda',\mu'$ in the unit ball of $\ell^n_2$ and with $\|b\|_{\text{reg}} \le 1$, $\|a\|_{B(\ell^n_2)}\le \vp$. Let $\xi_i= |\lambda'_i|\vee |\lambda_i|$ and $\eta_j = |\mu_j| \vee |\mu'_j|$. Then $2^{-1/2}\xi$ and $2^{-1/2}\eta$ are in the unit ball of $\ell^n_2$ and we can write $v_{ij} = \xi_i t_{ij}\eta_j$ with
\[
\|t\|\le \vp\quad \text{and}\quad \|t\|_{\text{reg}}\le 1.
\]
(Indeed, $t_{ij} = \lambda_i\xi^{-1}_i(a_{ij})\mu_j\eta^{-1}_j$ with $|\lambda_i\xi^{-1}_i|\le 1$ and $|\mu_j\eta^{-1}_j|\le 1$, so that $\|t\|\le \|a\|$ and similarly   $\|t\|_{\text{reg}} \le \|b\|_{\text{reg}}$).
Therefore, we have $\|t_X\|_{B(\ell^n_2(X))} \le \Delta_X(\vp)$. But now, if we denote
\[
Qe_i = x_i\in X,\qquad J^*e_i =x^*_i\in X^*
\]
we find
\[
tr(vJQ) = \sum_{ij} v_{ij}\langle x^*_i,x_j\rangle = \sum_{ij} t_{ij}\langle \xi_ix^*_i,\eta_jx_j\rangle
\]
and hence
\begin{align*}
|tr(vJQ)| &\le \|t_X\|\left(\sum\|\xi_ix^*_i\|^2\right)^{1/2} \left(\sum\|\eta_jx_j\|^2\right)^{1/2}\\
&\le 2\|t_X\|\le 2\Delta_X(\vp).
\end{align*}
Thus, by homogeneity we obtain \eqref{comeq7.1}.
\end{proof}

\begin{thm}\label{comthm7.3}
Let $X$ be a Banach space. The following properties are equivalent.
\begin{itemize}
\item[\rm (i)] $X$ is uniformly curved.
\item[\rm (ii)] For some index set $I$, there are a metric surjection $Q\colon \ \ell_1(I)\to X$ and an isometric embedding $J\colon \ X\to \ell_\infty(I)$ such that $JQ$ belongs to the closure of $\Gamma_H(\ell_1(I), \ell_\infty(I))$ in $B(\ell_1(I), \ell_\infty(I))$.
\item[\rm (iii)] For any $\delta>0$ there is a constant $C(\delta)$ satisfying the following:\ for any measure spaces $(\Omega,\mu)$, $(\Omega',\mu')$, for any maps $v\colon \ X\to L_\infty(\mu)$ and $v'\colon \ L_1(\mu')\to X$ and for any $\delta>0$ there is $u$ in $\Gamma_H(L_1(\mu'), L_\infty(\mu))$ with $\gamma_H(u)\le C(\delta)\|v\|\|v'\|$ and $\|u-vv'\|<\delta \|v\|\|v'\|$.
 \item[(iv)] For each $\delta>0$, there is $C(\delta)$ satisfying:\ there are a Hilbert space $H$ and functions $\varphi\colon \ X\to H$, $\psi\colon \ X^*\to H$ such that for all $(\xi,x)$ in $X^*\times X$ we have
\begin{gather*}
|\xi(x) - \langle\psi(\xi),\varphi(x)\rangle| \le \delta \|\xi\|\|x\|\\
\|\varphi(x)\| \le C(\delta)^{1/2}\|x\|\\
\|\psi(\xi)\| \le C(\delta)^{1/2}\|\xi\|.
\end{gather*}
\end{itemize}
\end{thm}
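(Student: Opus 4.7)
The plan is to close the cycle (i) $\Rightarrow$ (iii) $\Rightarrow$ (ii) $\Rightarrow$ (i), treating (ii) $\Leftrightarrow$ (iv) as a tautological side-step. The key observation for (ii) $\Leftrightarrow$ (iv) is that for $u\in B(\ell_1(I),\ell_\infty(I'))$ the operator norm equals the supremum of matrix entries, and $u\in\Gamma_H$ with $\gamma_H(u)\le C$ iff $u_{i',i}=\langle\psi_{i'},\varphi_i\rangle_H$ with $\sup\|\varphi_i\|,\sup\|\psi_{i'}\|\le\sqrt C$. Applied to the canonical $J\colon X\to\ell_\infty(B_{X^*})$ and $Q\colon\ell_1(B_X)\to X$, whose product has matrix entries $(JQ)_{\xi,x}=\xi(x)$, this translates (ii) directly into (iv) after extending $\varphi,\psi$ to $X,X^*$ by positive homogeneity; the specific choice of $J,Q$ is immaterial by the injectivity of $\ell_\infty$-spaces and the projectivity of $\ell_1$-spaces. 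Then (iii) $\Rightarrow$ (ii) is immediate on specializing $\mu,\mu'$ to counting measures on $B_{X^*},B_X$ with $v=J$, $v'=Q$.

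Next I would prove (ii) $\Rightarrow$ (i) by a Hilbert-tensor-vs-regular-tensor decomposition. Given $T\in B(\ell_2^n)$ with $\|T\|_{\text{reg}}\le 1$ and $\|T\|\le\varepsilon$, and $\delta>0$, pick $u\in\Gamma_H$ with $\|JQ-u\|<\delta$ and factor $u=ab$ through a Hilbert space $H$ with $\|a\|\|b\|\le 2\gamma_H(u)=:M$. For $x=(x_i)\in\ell_2^n(X)$ of unit norm, lift $x_i=Q(z_i)$ with $\sum\|z_i\|_{\ell_1}^2\le 1+\eta$. Using that $J$ is isometric,
\[
\|T_X(x)\|_{\ell_2^n(X)}=\|(JQ\otimes T)(z)\|_{\ell_2^n(\ell_\infty)}\le\|(u\otimes T)(z)\|+\|((JQ-u)\otimes T)(z)\|.
\]
The first term telescopes through $\ell_2^n(H)$, on which $T\otimes I_H$ has norm $\|T\|\le\varepsilon$, yielding the bound $M\varepsilon(1+\eta)^{1/2}$; the second, applied componentwise, is bounded by $\|JQ-u\|\cdot\|T\otimes I_{\ell_1}\|\cdot(1+\eta)^{1/2}\le\delta(1+\eta)^{1/2}$ because $\|T\|_{\text{reg}}\le 1$ forces $\|T\otimes I_{\ell_1}\|\le 1$. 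Hence $\Delta_X(\varepsilon)\le M(\delta)\varepsilon+\delta$, and choosing $\delta=\delta(\varepsilon)$ tending to zero slowly enough yields uniform curvature.

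The main step is (i) $\Rightarrow$ (iii), where the plan is to extend Lemma \ref{comlem7.1} from finite matrices to arbitrary measure spaces: for contractions $v\colon X\to L_\infty(\mu)$ and $v'\colon L_1(\mu')\to X$, $vv'$ decomposes as $u_0+u_1$ with $\gamma_H(u_1)\le 2\varepsilon^{-1}\Delta_X(\varepsilon)$ and $\|u_0\|\le 2\Delta_X(\varepsilon)$. Dually, this reduces to establishing the analogue of \eqref{comeq7.1} for every nuclear $w\colon L_\infty(\mu)\to L_1(\mu')$ with $\varepsilon^{-1}\gamma_H^*(w)+N(w)\le 1$: an $L_p$-version of Proposition \ref{compro7.2} lets one factor $w=\xi t\eta$ with $\xi,\eta$ multiplications by $L_2$-functions of norm $\le\sqrt 2$ and $t\colon L_2(\mu)\to L_2(\mu')$ satisfying $\|t\|\le\varepsilon$, $\|t\|_{\text{reg}}\le 1$, and the definition \eqref{comeq2.2} of $\Delta_X$ then yields $|\text{tr}(wvv')|\le 2\Delta_X(\varepsilon)$. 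With this extended lemma, (iii) follows on normalizing $\|v\|=\|v'\|=1$ and choosing $\varepsilon$ with $2\Delta_X(\varepsilon)<\delta$.

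The hard part will be this final extension: one must verify that both the Pietsch-style representation of Proposition \ref{compro7.2} and the Perron--Frobenius change-of-density step underlying Proposition \ref{compro2.1} survive the passage from $\ell_\infty^n,\ell_1^n$ to general $L_\infty,L_1$. A possibly cleaner substitute is to discretize directly: approximate $\mu,\mu'$ by finite $\sigma$-algebras, apply Lemma \ref{comlem7.1} verbatim to the conditional-expectation truncations $\bb E^{\cl B'}vv'\bb E^{\cl B}$, and pass to an ultralimit in the style of Corollary \ref{comcor4.2}.
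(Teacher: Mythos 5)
Your outline closes the same triangle of implications as the paper, but in the reverse order: the paper's proof goes (i)\,$\Rightarrow$\,(ii)\,$\Rightarrow$\,(iii), with (iii)\,$\Rightarrow$\,(ii) and (ii)\,$\Rightarrow$\,(i) closing the cycle, whereas you go (i)\,$\Rightarrow$\,(iii)\,$\Rightarrow$\,(ii)\,$\Rightarrow$\,(i). Your (ii)\,$\Rightarrow$\,(i) argument is, up to presentation, the same two-term estimate $\|T\otimes JQ\|\le\|T\|\,\gamma_H(u_\delta)+\|T\|_{\mathrm{reg}}\,\delta$ as in the paper, and your translation of (ii)\,$\Leftrightarrow$\,(iv) via matrix entries of $JQ$ and the characterization of $\gamma_H$ on $\ell_1\to\ell_\infty$ maps is correct and is what the paper does as well (with $I=B_{X^*}\times B_X$). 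The genuine divergence is your decision to attack (i)\,$\Rightarrow$\,(iii) head-on.

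This is where your proposal is thinnest, and where the paper's ordering pays off. The paper first proves (i)\,$\Rightarrow$\,(ii) \emph{concretely}: it builds a specific $Q\colon\ell_1(I)\to X$ from a net of $\varepsilon_\alpha$-nets in finite-dimensional subspaces $X_\alpha$, a specific $J\colon X\to\ell_\infty(I)$ from dual nets, applies Lemma~\ref{comlem7.1} to the finitely-supported truncations $\varphi_\alpha$ of the bilinear form of $JQ$ (which automatically factor through $X_\alpha$ with norm $\le1$), and takes a pointwise ultralimit of the decompositions $\varphi_\alpha=\varphi_\alpha^0+\varphi_\alpha^1$ on the canonical basis of $\ell_1(I)$. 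Only afterwards does it pass from $\ell_1(I),\ell_\infty(I)$ to arbitrary $L_1(\mu'),L_\infty(\mu)$ by the projectivity/injectivity of $\ell_1/\ell_\infty$ plus ultraproduct stability, with the constant $C(\delta)$ unchanged. Your ``cleaner substitute'' --- discretize $\mu,\mu'$, apply Lemma~\ref{comlem7.1} to the truncations of $vv'$, and take an ultralimit --- is really the same mechanism, but now the ultralimit must be taken over bilinear forms on a general $L_1$-space rather than on a set with a canonical basis, and you should verify explicitly that the classes $\{\gamma_H\le C\}$ and $\{\|\cdot\|\le\delta\}$ of bilinear forms are closed under such pointwise limits (this is true, using Hilbert-space ultraproducts, but it is a nontrivial check). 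Your first suggested route --- proving an $L_p$-version of Proposition~\ref{compro7.2} together with a measure-space Perron--Frobenius step --- is not how the paper proceeds, and you are right to flag it as uncertain: the change-of-density argument in Proposition~\ref{compro2.1} is genuinely matricial.

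In short: the cycle you propose is logically valid and the ingredients are the right ones, but your (i)\,$\Rightarrow$\,(iii) step bundles together what the paper deliberately keeps separate. The decoupling (i)\,$\Rightarrow$\,(ii) (net construction on $\ell_1(I)$, ultralimit on a basis) then (ii)\,$\Rightarrow$\,(iii) (lifting, extension, ultraproducts of $L_1$) is structurally cleaner and avoids the closure-of-$\gamma_H$ issue over general measure spaces; I would follow that order. Also note a small inefficiency: your (ii)\,$\Rightarrow$\,(i) introduces explicit liftings $z_i$ and a factor $(1+\eta)$, whereas the paper's observation that $\ell_2^n(X)$ is simultaneously a quotient of $\ell_2^n(\ell_1(I))$ and a subspace of $\ell_2^n(\ell_\infty(I))$ gives $\|T_X\|\le\|T\otimes JQ\|$ directly.
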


\begin{proof}
We first show (i) $\Rightarrow$ (ii). Assume (i). Let $(X_\alpha)$ be a directed family of finite dimensional subspaces of $X$ with $\ovl{\cup X_\alpha} = X$. Let $\vp_\alpha>0$ be chosen so that $\vp_\alpha\to 0$ when $\alpha\to\infty$ in the directed family. For each $\alpha$, we can choose a finite $\vp_\alpha$-net $(x_i)_{i\in I_\alpha}$ in the unit ball of $X_\alpha$. Let $I$ be the disjoint union of the sets $I_\alpha$ and let $Q\colon \ \ell_1(I)\to X$ be the map defined by $Qe_i = x_i$. Clearly $Q$ is a metric surjection onto $X$.

Enlarging the sets $I_\alpha$ if necessary we may also find a family $(x^*_i)_{i\in I_\alpha}$ in the unit ball of $X^*$ such that $x^*_{i|X_\alpha}$ is an $\vp_\alpha$-net in the unit ball of $X^*_\alpha$.

We then define $R\colon \ \ell_1(I)\to X^*$ again by $Re_i = x^*_i$ and let $J = R^*_{|X}\colon \ X\to \ell_\infty(I)$. Note that $J$ is an isometry.

Given any bilinear form $\varphi$ on $\ell_1(I) \times \ell_1(I)$, we will denote by $\gamma_H(\varphi)$ the $\gamma_H$-norm of the associated linear map from $\ell_1(I)$ to $\ell_\infty(I)$.
 Let $\varphi\colon \ \ell_1(I)\times \ell_1(I)\to {\bb C}$ be the bilinear form associated to $JQ\colon \ \ell_1(I)\to \ell_\infty(I)$. 
 
 Let $G_\alpha=\cup \{I_\beta\mid \beta\le \alpha\}$ so that $G_\alpha$ is an increasing
 family of finite subsets of $I$ with union $I$. Observe that, if we view  $\ell_1(G_\alpha)$
 and $\ell_\infty(G_\alpha)$  as subspaces of respectively $\ell_1(I)$
 and $\ell_\infty(I)$, we have $Q\ell_1(G_\alpha) \subset X_\alpha$ and 
 $J(X_\alpha   )\subset \ell_\infty(G_\alpha)$.
 
 Let $\varphi_\alpha\colon \ \ell_1(G_\alpha)\times \ell_1(G_\alpha)\to {\bb C}$ be the restriction of $\varphi$. By the preceding observation,  the associated linear map factors through $X_\alpha$.
 Choose  $\vp$   so that $2\Delta_X(\vp)<\delta$.
By Lemma  \ref{comlem7.1}, we have a decomposition 
\[
\varphi_\alpha = \varphi^0_\alpha + \varphi^1_\alpha
\]
with $\|\varphi^0_\alpha\|\le   \delta$ and $\gamma_H(\varphi^1_\alpha) \le    C$ where $C = 2\vp^{-1} \Delta_X(\vp)$.
Let $\psi_\alpha\colon \ \ell_1(I)\times \ell_1(I)\to {\bb C}$ be defined by $\psi_\alpha(x,y) = \varphi_\alpha(x_{|G_\alpha}, y_{|G_\alpha})$. Clearly $\psi_\alpha$ admits a similar decomposition $\psi_\alpha = \psi^0_\alpha + \psi^1_\alpha$, with the same bounds. Moreover for any $x,y$ in $\ell_1(I)$
\[
\psi_\alpha(x,y)\to \varphi(x,y).
\]
Let ${\cl U}$ be an ultrafilter refining the directed net of the $\alpha$'s and let 
\begin{align*}
\varphi^0(x,y) &= \lim_{\cl U} \psi^0_\alpha(x,y)\\
\varphi^1(x,y) &= \lim_{\cl U} \psi^1_\alpha(x,y) 
\end{align*}
Then $\varphi = \varphi^0+\varphi^1$, $\|\varphi^0\| \le \delta$ and $\gamma_H(\varphi^1) \le C$, so that (ii) holds.

This shows (i) $\Rightarrow$ (ii). 
Assume (ii). Clearly, this implies the following assertion. $\forall \delta>0$ $\exists u_\delta\in \Gamma_H(\ell_1(I), \ell_\infty(I))$ such that $\|u_\delta -JQ\|<\delta$. Let $C(\delta) = \gamma_H(u_\delta)$. Assume first that $L_1(\mu) = \ell_1(I_1)$ and $L_\infty(\mu') = \ell_\infty(I_2)$, relative to sets $I_1,I_2$. By the lifting (resp.\ extension) property of $\ell_1(I_1)$ (resp.\ $\ell_\infty(I_2)$) it is easy to deduce (iii) in that case from the preceding assertion. But now since $L_1$-spaces are stable under ultraproducts, one can easily obtain (iii) with the same bound $C(\delta)$ for arbitrary measure spaces.

We have trivially (iii) $\Rightarrow$ (ii). We will now show (ii) $\Rightarrow$ (i). Assume (ii). Let $u_\delta$ and $C(\delta)$ be as above. Consider $T\colon \ \ell^n_2\to \ell^n_2$ with $\|T\|_{\text{reg}}\le 1$ and $\|T\|\le \vp$. We have
\begin{align*}
\|T\otimes JQ\| &\le \|T\otimes u_\delta\| + \|T\otimes (JQ -u_\delta)\|\\
&\le \|T\| \gamma_H(u_\delta) + \|T\|_{\text{reg}} \delta\\
&\le \vp C(\delta) + \delta
\end{align*}
and hence passing to subspaces and quotients
\[
\|T_X\|\le \vp C(\delta) + \delta.
\]
Thus we see that
\[
\Delta_X(\delta C(\delta)^{-1}) \le 2\delta
\]
which shows that the (nondecreasing) function $\Delta_X(\vp)$ takes arbitrarily small values, i.e.\ (i) holds.
This shows the equivalence of the first three properties. We now turn to the equivalence with (iv).
Let $I = B_{X^*}\times B_X$ and let $Jy(\xi,x) = \xi(y)$ and $Q^*\eta(\xi,x) = \eta(x)$ for all $(\xi,x)$ in $I$, $y\in X$, $\eta\in X^*$. Then it is easy to see that (iv) implies (ii). Conversely assume (iii). Let $(\Omega,\mu)$ (resp.\ $(\Omega',\mu')$) be the unit sphere of $X$ (resp.\ $X^*$) equipped with the discrete counting measure. Let $J\colon \ \ell_1(\Omega)\to X$ and $Q\colon \ X\to\ell_\infty(\Omega')$ be defined by $Je_\omega=\omega$ and $Q^*e_{\omega'} = \omega'$. Then $Q$ is isometric and $J$ is a metric surjection. Assuming (iii), we can write $\|QJ-vv'\|<\delta$ for some $v'\colon \ \ell_1(\Omega)\to H$, $v\colon \ H\to\ell_\infty(\Omega')$ with $\|v'\|\le C(\delta)^{1/2}$, $\|v\|\le C(\delta)^{1/2}$. We then find $\forall(\xi,x)\in \Omega'\times\Omega$
\[
 \langle\omega',\omega\rangle = \langle QJ(e_\omega), e_{\omega'}\rangle = \langle v'(e_\omega), v^*(e_{\omega'})\rangle
\]
and hence if we set $\varphi(\omega) = v'(e_\omega)$ and $\psi(\omega') = v^*(e_{\omega'})$ we obtain the desired functions but restricted to the unit spheres. Extending $\varphi$ (resp.\ $\psi$) by homogeneity to functions on the whole of $X$ (resp.\ $X^*$) yields (iv).

\end{proof}

\section{Extension property of regular operators}\label{comsec8}

Let $(\Omega,\mu)$, $(\Omega,\mu')$ be measure spaces. Let $1< p<\infty$. In this section, we will consider a subspace $S_p \subset L_p(\mu)$ and a quotient $Q_p = L_p(\mu')/R_{p}$ where $R_{p} \subset L_{p}(\mu')$ is another closed subspace. We denote by $q\colon \ L_p(\mu')\to Q_p$ the quotient map. We seek to characterize the mappings $u\colon \ S_p\to Q_p$ that admit a regular ``extension'' to a mapping $\hat u\colon \ L_p(\mu) \to L_p(\mu')$. 
We will use the following suggestive notation (motivated  by $H_p$-space theory):\ for any Banach space $X$ we denote by $S_p(X)$   the closure in $L_p(X)$   of the (algebraic) tensor product $S_p\otimes X$,
and similarly for $R_{p}(X)$. We will denote by $Q_p(X)$ the quotient space $L_p(\mu';X)/R_{p}(X)$. 
Note that we have natural linear  inclusions (with dense range)
 $S_p\otimes X\subset S_p(X)$ 
 and $Q_p\otimes X\subset Q_p(X)$.
  Consider $u\colon \ S_p\to Q_p $ and let  $u\otimes I_{X}\colon\   S_p\otimes X\to    Q_p\otimes X$ be the associated linear map.
 When this map is bounded for the induced norms,
 it uniquely extends to a bounded mapping, denoted
 by $u_X  \colon \ S_p(X)\to Q_p(X)$. In that case, we will simply say that $u_X$ is bounded.

Let $T_{p'}\subset L_{p'}(\mu')$ denote the orthogonal of $R_p$, so that, equivalently $R_p=T_{p'}^{\bot}$.
We may view $u$ as a bilinear form $\varphi$ on $S_p\times T_{p'}$. The tensor product of $u$ and the duality map $X\times X^*\to {\bb C}$ defines a bilinear form
\[
\varphi_X\colon \ (S_p\otimes X) \times (T_{p'}\otimes X^*) \to {\bb C}.
\]
Assume for simplicity that $X$ is finite dimensional (or merely reflexive). In that case
$$Q_p(X)={T_{p'}(X^*)}^*,$$ so that $\varphi_X$ extends to a bounded bilinear form on $S_p(X) \times T_{p'}(X^*)$ iff $u_X  \colon \ S_p(X)\to Q_p(X)$
is bounded.

The following extends a result from \cite{P2}.
\begin{thm}\label{comthm8.1}
Let $u\colon \ S_p\to Q_p$ be as above. Fix $C\ge 1$. The following are equivalent.
\begin{itemize}
\item[\rm (i)] There is a regular operator $\tilde u\colon \ L_p(\mu)\to L_p(\mu')$ with $\|\tilde u\|_{\rm reg} \le C$ such that $u = q\tilde u_{|S_p}$.
\item[\rm (ii)] For any Banach space $X$, $u_X$ is bounded and $\|u_X\colon \ S_p(X) \to Q_p(X)\|\le C$. 
\item[\rm (iii)] Same as (ii) for any finite dimensional Banach space.
\end{itemize}
\end{thm}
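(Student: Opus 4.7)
The directions (i) $\Rightarrow$ (ii) and (ii) $\Rightarrow$ (iii) are straightforward. For (i) $\Rightarrow$ (ii), invoke the characterization of regular operators from \ref{comsec1.2}: if $\|\tilde u\|_{\mathrm{reg}}\le C$, then $\tilde u\otimes\mathrm{id}_X$ is bounded of norm $\le C$ as a map $L_p(\mu;X)\to L_p(\mu';X)$ for every Banach space $X$, and restricting its action to $S_p(X)$ and post-composing with the canonical contraction $L_p(\mu';X)\to Q_p(X)$ yields $u_X$ with the same bound. The implication (ii) $\Rightarrow$ (iii) is trivial.

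For the substantive direction (iii) $\Rightarrow$ (i), the plan is a Hahn--Banach duality argument based on the predual of $B_r(L_p(\mu),L_p(\mu'))$ described in \ref{comsec1.8}. Since $T_{p'}=R_p^\perp$ is identified with $Q_p^*$, the condition $q\tilde u|_{S_p}=u$ is equivalent to requiring that the bilinear form on $L_p(\mu)\times L_{p'}(\mu')$ associated with $\tilde u$ restricts on $S_p\times T_{p'}$ to the canonical bilinear form $(f,g)\mapsto\langle u(f),g\rangle$ induced by $u$. Via the duality $B_r(L_p(\mu),L_p(\mu'))=(L_p(\mu)\widehat\otimes_r L_{p'}(\mu'))^*$, producing $\tilde u$ with $\|\tilde u\|_{\mathrm{reg}}\le C$ amounts to extending the linear form $T\mapsto\langle u,T\rangle$ from $S_p\otimes T_{p'}$ to a linear functional $\ell$ on $L_p(\mu)\otimes L_{p'}(\mu')$ satisfying $|\ell(T)|\le C\,N_r(T)$, where $N_r$ is the regular tensor norm from \ref{comsec1.8}. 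By Hahn--Banach, such $\ell$ exists iff
\[
|\langle u,T\rangle|\le C\,N_r(T)\qquad\text{for all }T\in S_p\otimes T_{p'}.
\]

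To verify this inequality I would apply (iii) with $X=\ell^n_\infty$ (so $X^*=\ell^n_1$). Given a decomposition $T=\sum_{i\le n}f_i\otimes g_i$ with $f_i\in S_p$ and $g_i\in T_{p'}$, set $F=(f_i)\in S_p(\ell^n_\infty)$ and $G=(g_i)\in T_{p'}(\ell^n_1)$. Since $T_{p'}(\ell^n_1)=R_p(\ell^n_\infty)^\perp$ (as $R_p$ and $T_{p'}$ are orthogonal and $\ell^n_\infty$ is finite-dimensional), $G$ annihilates $R_p(\ell^n_\infty)$ and hence pairs naturally with $Q_p(\ell^n_\infty)=L_p(\mu';\ell^n_\infty)/R_p(\ell^n_\infty)$, yielding $\langle u,T\rangle=\sum_i\langle u(f_i),g_i\rangle=\langle u_{\ell^n_\infty}(F),G\rangle$. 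Combined with (iii), this gives $|\langle u,T\rangle|\le C\,\|\sup_i|f_i|\|_p\,\|\sum_i|g_i|\|_{p'}$, and taking the infimum over decompositions inside $S_p\otimes T_{p'}$ yields the desired bound but with a ``local'' variant of $N_r$ in which only decompositions of $T$ with $f_i\in S_p$ and $g_i\in T_{p'}$ are admitted.

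The main obstacle I foresee is reconciling this local bound with the ambient $N_r$ needed for Hahn--Banach, i.e., showing that $S_p\widehat\otimes_r T_{p'}$ embeds isometrically into $L_p(\mu)\widehat\otimes_r L_{p'}(\mu')$. My plan is a double-duality argument: for any Banach space $Y$, a regular operator on the ambient $L_p$-spaces passes contractively to a bounded operator between $S_p(Y)$ and $Q_p(Y)$ (by the reasoning of (i) $\Rightarrow$ (ii) above), so the pre-adjoint of the restriction-and-quotient map from $B_r(L_p(\mu),L_p(\mu'))$ into the appropriate space of bilinear forms on $S_p\times T_{p'}$ realises $S_p\otimes T_{p'}\hookrightarrow L_p(\mu)\otimes L_{p'}(\mu')$ isometrically for the respective regular tensor norms. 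Alternatively, one may directly refine any near-optimal ambient decomposition of $T$ into one inside $S_p\otimes T_{p'}$ via a lattice-theoretic projection, exploiting $T\in S_p\otimes T_{p'}$, that does not increase $\|\sup_i|f_i|\|_p$ or $\|\sum_i|g_i|\|_{p'}$. Once the inequality holds with the ambient $N_r$, Hahn--Banach produces $\ell$, hence the desired $\tilde u\in B_r(L_p(\mu),L_p(\mu'))$ with $\|\tilde u\|_{\mathrm{reg}}\le C$ and $q\tilde u|_{S_p}=u$, completing (i).
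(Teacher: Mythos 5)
Your outline of the easy directions (i)$\Rightarrow$(ii)$\Rightarrow$(iii) is correct, and you are right that the substantive implication (iii)$\Rightarrow$(i) should proceed via Hahn--Banach against the regular predual norm $N_r$ from \S\ref{comsec1.8}. You have also put your finger on exactly the right obstacle: what (iii) gives you directly is a bound over decompositions of $T$ \emph{inside} $S_p\otimes T_{p'}$, and one must show this controls the ambient $N_r(T)$. Unfortunately, neither of your two proposed repairs closes that gap.

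The ``double-duality argument'' is circular. Contractivity of the restriction-and-quotient map $\Phi\colon B_r(L_p(\mu),L_p(\mu'))\to\{\text{bilinear forms on }S_p\times T_{p'}\}$ only shows that the induced map $S_p\otimes T_{p'}\to L_p(\mu)\otimes L_{p'}(\mu')$ is \emph{contractive} for the respective $N_r$-norms, not isometric. The isometry is equivalent to $\Phi$ being a metric surjection, which is precisely the extension property you are trying to establish. The second suggestion --- ``refining'' a near-optimal ambient decomposition $T=\sum f_i\otimes g_i$ by projecting the $f_i$ into $S_p$ and the $g_i$ into $T_{p'}$ --- cannot work either: applying a projection factor-by-factor changes $T$ itself (one would need $\sum(f_i-P f_i)\otimes g_i=0$, which there is no reason to expect), and $T_{p'}$ is merely a closed subspace of $L_{p'}$, not an order ideal, so there is no lattice-compatible projection onto it.

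The step you are missing is to work with the $M_r$ formulation (arbitrary finite-dimensional coefficient space $Y$, not $\ell^n_\infty$) and to \emph{quotient the coefficient space $Y$} rather than touch the functions. Starting from $v(s,t)=\langle\xi(s),\eta(t)\rangle$ with $\xi\in L_p(\mu;Y)$, $\eta\in L_{p'}(\mu';Y^*)$ of norm $\le 1$, split $\xi=\xi_1+\xi_2$ along a linear complement of $S_p$. Since $v\in S_p\otimes L_{p'}$ one gets $\langle\xi_2(s),\eta(t)\rangle\equiv 0$ \emph{for free}; letting $Z\subset Y$ be the span of the vectors $\int x\,\xi_2\,d\mu$ and passing to $Y/Z$, the class $q_1(\xi)=q_1(\xi_1)$ automatically lies in $S_p(Y/Z)$, while $\eta$ lands in $Z^\perp=(Y/Z)^*$, without increasing norms. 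A second, symmetric quotient on the dual side then puts the pair in $S_p(\widetilde Y)\times T_{p'}(\widetilde Y^*)$ for a single finite-dimensional $\widetilde Y$, at which point (iii) is applied with $X=\widetilde Y$. This is why (iii) must be used for \emph{arbitrary} finite-dimensional $X$: the space $\widetilde Y$ produced by the quotient argument is not $\ell^n_\infty$, and restricting (iii) to $\ell^n_\infty$ as you propose is insufficient. Your treatment of the two easy implications and the Hahn--Banach reduction is sound; this quotient-on-$Y$ argument is the missing core of the proof.
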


\begin{proof}
(i) $\Rightarrow$ (ii) $\Rightarrow$ (iii) are obvious so it suffices to show (iii) $\Rightarrow$ (i). Assume (iii). Let $\varphi\colon \ S_p\times T_{p'}$ be the bilinear form associated to $u$. It suffices to show that for any $v\in S_p \otimes T_{p'}$ such that $v$ lies in the unit ball of $L_p \widehat\otimes_r L_{p'}$ (see \ref{comsec1.8} above) we have
\begin{equation}\label{comeq8.1}
|\langle\varphi,v\rangle|\le C.
\end{equation}
Indeed, if this holds, $\varphi$ will admit a Hahn--Banach extension $\widetilde\varphi$ with $\|\widetilde\varphi\|_{(L_p\widehat\otimes_r L_{p'})^*} \le C$ and the linear map $\tilde u$ associated to $\widetilde\varphi$ will satisfy (i).

Thus it suffices to show \eqref{comeq8.1}. Let $v\in S_p \otimes T_{p'}$ be such that $\|v\|_{L_p\widehat\otimes_r  L_{p'}} < 1$. Note that $v \in E\otimes F$ where $E\subset S_p$, $F\subset T_{p'}$ are finite dimensional subspaces, therefore we may as well assume, without loss of generality, that $S_p$ and $T_{p'}$ themselves are finite dimensional.

By \eqref{comsec1.7} there is a finite dimensional Banach space $Y$ and there are $\xi\in L_p(\mu;Y)$, $\eta\in L_{p'}(\mu';Y^*)$ with $\|\xi\|\le 1$, $\|\eta\| \le 1$ such that  
\begin{equation}\label{comeq8.2}
v(s,t) = \langle \xi(s),\eta(t)\rangle.
\end{equation}
We claim that we can replace $Y$ by a finite dimensional space $\widetilde Y$, and replace $\xi,\eta$ by $\tilde\xi, \tilde\eta$ so that $\tilde\xi$ (resp.\ $\tilde\eta$) is in the unit ball of $S_p(\widetilde Y)$ (resp.\ $T_{p'}(\widetilde Y^*)$), but so that we still have
\begin{equation}\label{comeq8.3}
v(s,t) = \langle\tilde\xi(s), \tilde\eta(t)\rangle.
\end{equation}
Let ${\cl S} \subset L_p(\mu)$ be any subspace supplementary to $S_p$ (recall that we assume $S_p$ finite dimensional). Clearly $\xi\in L_p\otimes Y$ can be written
\[
\xi = \xi_1 + \xi_2
\]
with $\xi_1\in S_p\otimes Y$ and $\xi_2\in {\cl S}\otimes Y$. But since $v(s,t) = \langle \xi(s), \eta(t)\rangle\in S_p \otimes L_{p'}$ and $\langle\xi_2(s), \eta(t)\rangle \in {\cl S}\otimes L_{p'}$ we must have
\begin{equation}\label{comeq8.4}
\langle\xi_2(s),\eta(t)\rangle = 0.
\end{equation}
Let $Z\subset Y$ be the closed span in $Y$ of all elements of the form $\int x(s) \xi_2(s)d\mu(s)$ with $x$ a scalar valued function  in $L_{p'}$.
Note that $\xi_2$ is $Z$-valued.  By \eqref{comeq8.4} $\langle z,\eta(t)\rangle = 0$ for any $z$ in $Z$, so that $\eta$ defines an element of $L_{p'}(Z^\bot)$ with $\|\eta\|\le 1$. Let $q_1\colon \ Y\to Y/Z$ be the quotient map. Let $\hat\xi = q_1(\xi)$. Then $\hat\xi$ is in the unit ball of $L_p(Y/Z)$ and \eqref{comeq8.2} is preserved:\ if we denote $\hat\eta$ the same as $\eta$ but viewed as an element of the unit ball of $L_{p'}(Z^\bot)$, we have
\begin{equation}\label{comeq8.5}
v(s,t) = \langle\hat\xi(s), \hat\eta(t)\rangle.
\end{equation}
The advantage of this is that now $\hat\xi\in S_p(Y/Z)$ (because, since $\xi_2$ takes its values in $Z$,  $ \hat\xi=q_1(\xi_1) $). Let $\widehat Y = Y/Z$. The only missing point is that $\hat\eta$ should be also in $T_{p'}(\widehat Y^*)$ instead of only in $L_{p'}(\widehat Y^*)$. To achieve this we repeat the preceding operations on $\hat\eta$: \ we consider a direct sum decomposition $L_{p'} = T_{p'} \oplus {\cl T}$ and a decomposition 
\[
\hat\eta = \eta_1+\eta_2
\]
with $\eta_1 \in T_{p'} (\widehat Y^*)$, $\eta_2 \in {\cl T}\otimes \widehat Y^*$. We introduce the subspace $W\subset \widehat Y^*$ spanned by all integrals of the form $\int y(t) \eta_2(t) d\mu'(t)$ with $y\in L_p(\mu')$. Let $r\colon \ \widehat Y^*\to\widehat Y^*/W$ be the quotient map, and let $\widetilde Y$ be such that $\widehat Y^*/W = (\widetilde Y)^*$. Then, we set $\tilde\eta = r(\hat\eta)$. Note that $\tilde\eta$ is in the unit ball of $T_{p'}(\widetilde Y^*)$. By \eqref{comeq8.5}, since $v \in S_p \otimes T_{p'} \subset L_p \otimes T_{p'}$ we must have $\langle\hat\xi, w\rangle = 0$ for any $w$ in $W$, and hence we find that $\hat\xi(t)\in W^\bot = \widetilde Y$, so that $\hat\xi$ can be identified with an element $\tilde\xi$ in the unit ball of $S_p(\widetilde Y)$, and we have \eqref{comeq8.3}. This proves the announced claim. From this claim, it is now easy to conclude:\ we have
\[
\langle \varphi,v\rangle = \varphi_{\widetilde Y} (\tilde\xi, \tilde\eta)
\]
and hence if (iii) holds
\[
|\langle \varphi,v\rangle|\le \|\varphi_{\widetilde Y}\| \|\tilde\xi\|_{S_p(\widetilde Y)} \|\tilde\eta\|_{T_{p'}(\widetilde Y^*)} \le \|\varphi_{\widetilde Y}\| = \|u_{\widetilde Y}\|\le C.
\]
This establishes \eqref{comeq8.1}.
\end{proof}

Consider subspaces $F\subset E\subset L_p(\mu)$ and $F'\subset E' \subset L_p(\mu')$. Let $E/F$ (resp.\ $E'/F'$) be the associated subquotient of $L_p(\mu)$ (resp.\ $L_p(\mu')$). Let $v\colon\ L_p(\mu)\to L_p(\mu')$ be an operator such that $v(E)\subset E'$ and $v(F) \subset F'$. Then $v$ defines  an operator $u\colon \ E/F\to E'/F'$ with $\|u\|\le \|v\|$. We call $u$ the ``compression'' of $v$ and say that $v$ is a  ``dilation'' of $u$. Given a Banach space $X$, we can equip $(E/F)\otimes X$ with the norm of the space $E(X)/F(X)$, we denote by $(E/F)[X]$ the completion of $(E/F)\otimes X$ for this norm; and similarly for $E'/F'$. Consider an operator $u\colon \ E/F\to E'/F'$. Assume that $u\otimes I_X\colon \ (E/F) \otimes X\to (E'/F')\otimes X$ is bounded for the norms of $E(X)/F(X)$ and $E'(X)/F'(X)$. In that case, we denote by $u_X\colon \ (E/F)[X]\to (E'/F')[X]$ the resulting operator and we simply say that $u_X$ is bounded.

\begin{cor}\label{comcor8.2}
Let $F\subset E\subset L_p(\mu)$ and $F'\subset E'\subset L_p(\mu')$ be as above. Let $C\ge 1$. The following properties of a linear map $u\colon \ E/F\to E'/F'$ as equivalent:
\begin{itemize}
\item[\rm (i)] There is a regular operator $\tilde u\colon \ L_p(\mu)\to L_p(\mu')$ dilating $u$ (in particular such that $\tilde u(E) \subset E'$ and $\tilde u(F)\subset F'$) with $\|\tilde u\|_{\text{reg}} \le C$.
\item[\rm (ii)] For any  Banach space $X$, $\|u_X\|\le C$.
\item[\rm (iii)] For any finite dimensional Banach space $X$, $\|u_X\|\le C$.
\end{itemize}
\end{cor}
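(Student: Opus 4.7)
\textbf{Proof proposal for Corollary \ref{comcor8.2}.}

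The implications (i) $\Rightarrow$ (ii) $\Rightarrow$ (iii) are routine. For (i) $\Rightarrow$ (ii), the regular dilation $\tilde u$ gives a bounded $\tilde u_X\colon L_p(\mu;X)\to L_p(\mu';X)$ of norm $\le \|\tilde u\|_{\mathrm{reg}}\le C$; since $\tilde u\otimes I_X$ maps $E\otimes X$ into $E'\otimes X$ and $F\otimes X$ into $F'\otimes X$ algebraically, continuity gives $\tilde u_X(E(X))\subset E'(X)$ and $\tilde u_X(F(X))\subset F'(X)$, so $\tilde u_X$ descends to a map of subquotients of norm $\le C$, which coincides with $u_X$.

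The substantive implication (iii) $\Rightarrow$ (i) will be reduced to Theorem \ref{comthm8.1}. The key idea is to enlarge the target from the subquotient $E'/F'$ to the quotient $L_p(\mu')/F'$, so that we are in the setting where Theorem \ref{comthm8.1} directly applies. Concretely, let $q\colon L_p(\mu')\to L_p(\mu')/F'$ be the quotient map, and note that since $F'\subset E'$ the natural inclusion $\iota\colon E'/F'\hookrightarrow L_p(\mu')/F'$ is a (well-defined) isometric embedding. Define
\[
\hat u\colon E\xrightarrow{\pi} E/F\xrightarrow{u} E'/F'\xrightarrow{\iota} L_p(\mu')/F',
\]
where $\pi$ is the quotient by $F$. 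Thus $\hat u$ is a linear map from the subspace $S_p:=E\subset L_p(\mu)$ to the quotient $Q_p:=L_p(\mu')/F'$, exactly the setup of Theorem \ref{comthm8.1}.

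Next I will verify that condition (iii) of Theorem \ref{comthm8.1} holds for $\hat u$ with the same constant $C$. For any finite dimensional $X$, the map $\hat u_X\colon E(X)\to L_p(\mu';X)/F'(X)$ factors as
\[
E(X)\xrightarrow{\pi_X} E(X)/F(X)=(E/F)[X]\xrightarrow{u_X} (E'/F')[X]=E'(X)/F'(X)\hookrightarrow L_p(\mu';X)/F'(X).
\]
The first map is a contraction, the middle map has norm $\le C$ by hypothesis (iii) of the corollary, and the last map is an isometric embedding (both quotient norms are computed against the same subspace $F'(X)$, so the quotient norm of an element of $E'(X)/F'(X)$ equals its norm as a coset in $L_p(\mu';X)/F'(X)$). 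Hence $\|\hat u_X\|\le C$.

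Applying Theorem \ref{comthm8.1} to $\hat u$ produces a regular $\tilde u\colon L_p(\mu)\to L_p(\mu')$ with $\|\tilde u\|_{\mathrm{reg}}\le C$ and $q\,\tilde u_{|E}=\hat u$. It remains to check that $\tilde u$ is a dilation of $u$ in the strong sense required by (i). For $x\in E$, the coset $\tilde u(x)+F'$ equals $\hat u(x)\in E'/F'$, so $\tilde u(x)\in E'+F'=E'$; this gives $\tilde u(E)\subset E'$. For $x\in F$ we have $\hat u(x)=\iota(u(\pi(x)))=0$, so $\tilde u(x)\in F'$, giving $\tilde u(F)\subset F'$. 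Finally, for any $x\in E$, the class of $\tilde u(x)$ in $E'/F'$ coincides with $\hat u(x)=u([x]_{E/F})$, so $\tilde u$ induces $u$ on subquotients, completing the proof.

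The only genuinely delicate point in this plan is the isometric character of the inclusion $(E'/F')[X]\hookrightarrow L_p(\mu';X)/F'(X)$; once this identification is in place the reduction to Theorem \ref{comthm8.1} is essentially formal, and all quantitative estimates are preserved with the same constant $C$.
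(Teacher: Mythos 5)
Your proof is correct and follows essentially the same route as the paper: define the lift $\hat u = juq\colon E\to L_p(\mu')/F'$, verify the hypothesis of Theorem \ref{comthm8.1} via the factorization through $u_X$, and then check $\tilde u(F)\subset F'$ and $\tilde u(E)\subset E'$ from $Q'\tilde u_{|E}=\hat u$ exactly as in the paper. The only addition is that you make explicit the isometric character of $(E'/F')[X]\hookrightarrow (L_p/F')[X]$, which the paper leaves tacit.
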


\begin{proof}
Let $q\colon \ E\to E/F$, $q'\colon \ E'\to E'/F'$ and $Q'\colon \ L_p(\mu')\to L_p(\mu')/F'$ be the quotient maps. Let $j\colon \ E'/F'\to L_p(\mu')/F'$ be the canonical inclusion. Assume (iii). Then, for any
finite dimensional $X$, $\|(juq)_X\colon \ E(X) \to (L_p/F')[X]\|\le C$, and hence by the theorem, $uq$ admits an ``extension'' $w\colon \ L_p\to L_{p'}$ with $\|w\|_{\text{reg}}\le C$ such that $Q'w_{|E}=juq$. Note that $Q'w(F) = 0$ and hence $w(F) \subset F'$. In addition, $Q'w(E) = ju(E/F)$ implies $w(E)+F' = Q^{\prime -1}(u(E/F)) \subset E'$, and hence (since $F'\subset E'$) $w(E)\subset E'$. It follows easily that $w$ dilates $u$. This shows that (iii) implies (i). The rest is obvious.
\end{proof}

\begin{rem}\label{comrem8.3}
Assume $p=2$. Consider a subspace $G\subset L_2(\mu)$. Note that there are a priori many inequivalent ways by which we can ``realize'' $G$ isometrically as a subquotient of $L_2(\mu)$:\ whenever we have $F\subset E \subset L_2(\mu)$ with $G = E\ominus F$ then we may identify $G$ with $E/F$ and study the ``regular operators $u\colon \ E/F\to Y$ where $Y = E'/F'$, i.e.\ the operators $u$ such that $u_X$ is bounded for all $X$. It is natural to set
\[
 \|u\|_{\text{reg}} = \sup\{\|u_X\|\mid \dim(X) <\infty\},
\]
and then the preceding statement says that $\|u\|_{\text{reg}} = \inf\|\tilde u\|_{\text{reg}}$ where the infimum runs over all dilations $\tilde u\colon \ L_2(\mu)\to L_2(\mu')$ of $u$. But we should emphasize that all this depends a priori strongly on the choice of the realization of $G$ as $E\ominus F$ !  Thus it is natural to ask what are the linear maps $u\colon\ G \to G'$ (here $G'$ is another subspace of $L_2$) for which there is a  choice of $E,F$ 
and $E',F'$ with $G = E\ominus F$ and $G' = E'\ominus F'$
that turns $u$ into a regular operator. The answer is simple: these are the maps
that are regular viewed as maps from the subspace $G$ to the quotient
$L_2/G'^\perp$. Indeed, it is easy to see that these are the extremal choices.
\end{rem}

\begin{rem}\label{comremk}
Let ${\cl B}$ be a class of Banach spaces stable by $\ell_p$-direct sum. For
any $u\colon \ S_p\to Q_p$ as in Theorem \ref{comthm8.1} we set
\[
\|u\|_{\cl B} = \sup\{\|u_X\|\mid X\in {\cl B}\}.
\]
Let $SQ({\cl B})$ denote the class of all subquotients of spaces
in ${\cl B}$. The proof of Theorem \ref{comthm8.1} shows that the following
are equivalent:
\begin{itemize}
\item[\rm (i)] $u$ admits an extension $\tilde u$ with $\|\tilde u\|_{\cl
B}\le C$,
\item[\rm (ii)] $\sup\{\|u_X\|\mid X\in SQ({\cl B})\}\le C$,
\item[\rm (iii)] $\sup\{\|u_X\|\mid X\in SQ({\cl B}), \dim (X) < \infty\}\le
C$.
\end{itemize}
A similar generalization holds in the situation of Corollary
\ref{comcor8.2}.

\end{rem}

\section{Generalizations}\label{comsec12}

In this section, we wish to give a very general statement describing in particular the interpolation space $(B(\ell^n_{p_0}), B(\ell^n_{p_1}))_\theta$ for $0<\theta<1$, $1\le p_0,p_1 \le \infty$. In order to formulate our result in full generality, we will need to introduce more specific notation. For simplicity, we first restrict to the finite dimensional case, so we fix an integer $n\ge 1$. 

The set of all $n$-dimensional Banach spaces $B_n$ is equipped with the Banach--Mazur distance $\delta(E,F) = {\log} d(E,F)$ where $d$ is defined by
\begin{equation}
d(E,F) = \inf\{\|u\| \|u^{-1}\|\}\tag*{$\forall E,F\in B_n$}
\end{equation}
where the infimum runs over all isomorphisms $u\colon \ E\to F$. By convention, we identify two elements  $E$ and $F$ of $B_n$   if $E,F$ are isometric. Then $(B_n,\delta)$ is a compact metric space. 

For any $z$ in $\partial D$, we give ourselves $p(z)$ in $[1,\infty]$ and a subset ${\cl B}(z) \subset B_n$. We will assume that $z\mapsto p(z)$ is measurable and also that $z\mapsto {\cl B}(z)$ is measurable in a suitable sense, explained below.

Let $\{Y_m\mid m\ge 1\}$ be a dense sequence in the compact metric space $B_n$. Given $Y$ in $B_n$ and a class ${\cl B}\subset B_n$, let us denote
\[
d(Y,{\cl B}) = \inf\{d(Y,X)\mid X\in {\cl B}\}.
\]
We will assume that 
\[
 z\mapsto {\cl B}(z)
\]
is measurable in the following sense:\ for any $Y$ in $B_n$, the function $z\mapsto d(Y,{\cl B}(z))$ is (Borel) measurable on $\partial D$. It is easy to check that for any (non-void) ${\cl B}\subset B_n$ and any map $u\colon \ E\to F$ between $n$-dimensional Banach spaces we have
\[
 \gamma_{\cl B}(u) = \inf_{m\ge 1} \gamma_{\{Y_m\}} (u) d(Y_m,{\cl B}).
\]
This shows that for any such $u$, the function $z\mapsto \gamma_{{\cl B}(z)}(u)$ is measurable on $\partial D$.

This, together with    \ref{comsec3.10}, shows that $\{\Gamma_{{\cl B}(z)}(E,F)\mid z\in\partial D\}$ forms a compatible family if $E,F$ are finite dimensional.

Let us say that ${\cl B}\subset B_n$ is an  $SQ(p)$-class if ${\cl B}$ contains all the $n$-dimensional spaces that are subquotients of ultraproducts of spaces of the form $\ell_p(\{X_i\mid i\in I\})$ with $X_i\in {\cl B}$ for all $i$ in $I$, $I$ being an arbitrary finite set. We assume, of course, that any space isometrically isomorphic to one in ${\cl B}$ is in ${\cl B}$ as well. Moreover, we assume that ${\cl B}$ is non-void, which boils down to assuming that ${\bb C}$ belongs to ${\cl B}$.

We assume that ${\cl B}(z)$ is an $SQ(p(z))$-class for all $z$ in $\partial D$.

For any $\xi$ in $D$, we can then define ${\cl B}(\xi)$ as the class formed of all $X$ in $B_n$ that can be written as $X = X(\xi)$ for some compatible family $\{X(z)\mid z\in \partial D\}$ such that $X(z) \in {\cl B}(z)$ for almost all $z$ in $\partial D$.

We denote by $p(\xi)$ the number defined by
\[
\frac1{p(\xi)} = \int\limits_{\partial D} \frac1{p(z)} d\mu^\xi(z)
\]
where $\mu^\xi$ is given by  \eqref{poisson}. Recall that, if $\xi=0$, $\mu^0$ is just normalized Lebesgue measure on $\partial D$. By conformal equivalence, we may always restrict ourselves, if we wish,  to the case $\xi=0$.

Let $\{X(z)\mid z\in \partial D\}$ be a compatible family of $n$-dimensional Banach spaces and let
\begin{equation}\label{rep0}L(z)=\ell_{p(z)}^n(X(z))\end{equation}
 for all $z\in \partial D$. Then let $L(\xi)$ 
and $X(\xi)$ be the interpolated families
defined for $\xi\in D$. By \cite{CCRS2,Her},  we have isometrically
\begin{equation}\label{rep1}
 L(\xi)=\ell_{p(\xi)}^n(X(\xi))\quad\forall \xi \in D.\end{equation}
In particular, when $X(z)=\bb C $ for all $z$, if we set $\ell(z)=\ell_{p(z)}^n$, then we find
\begin{equation}\label{rep2}
 \ell(\xi)=\ell_{p(\xi)}^n\quad\forall \xi \in D.\end{equation}

Given $1\le p\le\infty$ and an $SQ(p)$-class ${\cl B}\subset B_n$, we denote by $\beta(p,{\cl B})$ the space $B(\ell^n_p)$ equipped with the norm
\begin{equation}
 \|T\|_{\beta(p,{\cl B})} = \sup_{X\in {\cl B}} \|T_X\colon \ \ell^n_p(X)\to \ell^n_p(X)\|.\tag*{$\forall T\in B(\ell^n_p)$}
\end{equation}
Note that
\[
 \|T\|_{\beta(p,{\cl B})} = \sup_m\{\|T_{Y_m}\| /d(Y_m,{\cl B})\}.
\]
This shows that $z\mapsto \|T\|_{\beta(p(z), {\cl B}(z))}$ is measurable on $\partial D$ and hence (since $\|T\|\le \|T_X\|\le \|T\|_{\text{reg}}$) for all $T\colon\ \ell^n_p\to \ell^n_p$) that $\{\beta(p(z), {\cl B}(z))\mid z\in \partial D\}$ is a compatible family.

We can now state our main result.

\begin{thm}\label{comthm12.1}
 With the preceding notation, we set
\begin{equation}
 \beta(z) = \beta(p(z), {\cl B}(z)).\tag*{$\forall z\in \partial D$}
\end{equation}
Then, for all $\xi$ in $D$ we have an isometric identity $$\beta(\xi) = \beta(p(\xi), {\cl B}(\xi)).$$
\end{thm}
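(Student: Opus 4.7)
The plan is to prove two inequalities separately. For the upper bound $\|T\|_{\beta(p(\xi), {\cl B}(\xi))} \le \|T\|_{\beta(\xi)}$, I would pick any $X \in {\cl B}(\xi)$, write $X = X(\xi)$ for a compatible family with $X(z) \in {\cl B}(z)$ a.e., and form the family $L(z) = \ell^n_{p(z)}(X(z))$, which interpolates to $L(\xi) = \ell^n_{p(\xi)}(X)$ by \eqref{rep1}. For any analytic lift $T(\cdot)$ of $T$ in the family $\{\beta(z)\}$, the operator $T(z)$ acts on $L(z)$ with norm at most $\|T(z)\|_{\beta(z)}$, by the very definition of $\beta(z)$ applied to $X(z)\in{\cl B}(z)$. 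The interpolation principle \ref{comsec4.1+} in the family $\{L(z)\}$ gives $\log\|T_X\|_{B(L(\xi))} \le \int \log\|T(z)\|_{\beta(z)}\, d\mu^\xi(z)$, and infimizing over $T(\cdot)$ via \eqref{comeq4.23++} yields the bound.

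The reverse bound proceeds by duality. Fix $\varphi$ in the open unit ball of $\beta(\xi)^*$; by density we may assume $\varphi\in M_n$ is invertible. The duality of complex interpolation for families (cf.\ \ref{comsec4.1}) gives $\beta(\xi)^* = (\beta(z)^*)(\xi)$, so there is an analytic $\varphi(\cdot)\colon D\to M_n$ with $\varphi(\xi)=\varphi$ and $\textrm{ess\,sup}_{z\in\partial D}\|\varphi(z)\|_{\beta(z)^*} < 1$. The next ingredient is a factorization description of the dual unit ball of $\beta(p,{\cl B})$, generalizing Proposition \ref{compro7.3}: $\|\varphi\|_{\beta(p,{\cl B})^*}\le 1$ iff $\varphi_{ij}=\lambda_i v_{ij}\mu_j$ with $\lambda,\mu$ in the unit balls of $\ell^n_{p'},\ell^n_p$ respectively and $v\colon\ \ell^n_1\to\ell^n_\infty$ satisfying $\gamma_{{\cl B}}(v)\le 1$. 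The ``if'' part is an elementary computation using a factorization $v=v_1 v_2$ through $X\in{\cl B}$ to produce vectors $\mu_j v_2 e_j\in\ell^n_p(X)$ and $\lambda_i v_1^* e_i\in\ell^n_{p'}(X^*)$, while ``only if'' comes from Hahn--Banach combined with Lemma \ref{rob2}. A measurable selection (see \ref{comsec1.10}) then produces $\lambda(z),\mu(z),v(z)$ on $\partial D$, and the outer function argument from the proof of Theorem \ref{comthm5.1} replaces $|\lambda_i(z)|,|\mu_j(z)|$ by boundary values of analytic functions $F_i(z),G_j(z)$ on $D$, yielding an analytic $V(z) := F_i(z)^{-1}\varphi_{ij}(z)G_j(z)^{-1}$ on $D$ with $\gamma_{{\cl B}(z)}(V(z))<1$ a.e.\ on $\partial D$.

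The main obstacle is an analog of Theorem \ref{comthm5.7}: the boundary data $\gamma_{{\cl B}(z)}(V(z))\le 1$ a.e.\ should force $V(\xi)$ to factor through a space in ${\cl B}(\xi)$. The approach is to write each $V(z)=v_1(z)v_2(z)$ with $v_j(z)$ of norm $\le 1$ and $X(z)\in{\cl B}(z)$, using measurable selection on the compact space of normalized factorizations; then assemble a compatible family $\{X(z)\}$ whose nondegeneracy \eqref{compat} is verified by determinant estimates in the spirit of Lemma \ref{comlem5.10} (exploiting that $\det V(\cdot)$ is a nonzero bounded analytic function, so $\log|\det V|\in L_1$, which forces logarithmic integrability of $\|v_j(z)^{-1}\|^{-1}$). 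By the definition of ${\cl B}(\xi)$, the interpolated space $X := X(\xi)$ lies in ${\cl B}(\xi)$, and $v_1(\xi),v_2(\xi)$ provide a factorization of $V(\xi)$ through $X$, yielding vectors $x^*_i\in X^*, y_j\in X$ of norm $\le 1$. Setting $x'_i = F_i(\xi)x^*_i$, $y'_j = G_j(\xi)y_j$ and using $\sum|F_i(\xi)|^2,\sum|G_j(\xi)|^2\le 1$ by the maximum principle, we obtain
\[
|\varphi(T)| = \Bigl|\sum_i \bigl\langle x'_i,\textstyle\sum_j T_{ij} y'_j\bigr\rangle\Bigr| \le \|T_X\|_{B(\ell^n_{p(\xi)}(X))},
\]
and Hahn--Banach together with density of invertible elements completes the proof. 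The delicate technical point throughout is ensuring that the measurable selections can be arranged so the resulting family $\{X(z)\}$ is genuinely compatible in the sense of \eqref{compat}; I expect the dimensions of the factoring spaces to be controlled (one can first reduce to $\dim X(z)\le n^2$ or similar by Auerbach-type arguments) so that a single compact metric parameter space carries the selection.
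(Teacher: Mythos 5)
Your proposal is correct and follows essentially the same route as the paper. The easy inequality is indeed handled by the interpolation principle applied to the family $L(z)=\ell^n_{p(z)}(X(z))$ together with \eqref{rep1}, exactly as in the remark following Theorem~\ref{comthm12.1}; the hard inequality is handled by duality, the factorization description of $\beta(p,{\cl B})^*$ (which the paper states just after Lemma~\ref{comlem10.3}, with the roles of $p$ and $p'$ merely swapped relative to your phrasing), the matricial outer-function argument, and the key Lemma~\ref{comlem10.3} — which is precisely what you isolate as the ``main obstacle,'' and whose proof via measurable selection of factorizations plus determinant-based $\log$-integrability estimates for \eqref{compat} you reconstruct faithfully.
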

\begin{rk} The easier ``half" of this statement is the following claim:
for all $n\times n$-matrices $T$
$$\| T\|_{ \beta(p(\xi), {\cl B}(\xi)) }\le \| T\|_{\beta(\xi) }.$$
This follows essentially by   the interpolation property. Indeed, assume
$ \| T\|_{\beta(\xi) }<1$, so that we can write $T=T(\xi)$ for some
analytic function $T(.)$ in the unit ball of the space $H^\infty_\#$ relative to the family
$  \{\beta(p(z), {\cl B}(z))\mid z\in \partial D\} $. Then (at least on a   full measure
subset of $\partial D$) we have 
$\| T(z)\colon \ L(z) \to L(z)\|\le 1$ where $X(z)\in {\cl B}(z)$ is essentially arbitrary  and $L(z)$ is as in \eqref{rep0}
(note that for simplicity we write here $T(z)$ instead of $T(z)_{X(z)}$). By
interpolation 
this implies  $\| T(\xi)\colon \ L(\xi) \to L(\xi)\|\le 1$, and hence, using  \eqref{rep1},
since $T=T(\xi)$, we find
$\| T \colon \ \ell_{p(\xi)}^n(X(\xi)) \to \ell_{p(\xi)}^n(X(\xi))\|\le 1$, and since  
the compatible family $X(z)$ is arbitrary, we conclude that $\| T\|_{ \beta(p(\xi), {\cl B}(\xi)) }\le 1$,
which proves the   claim.
\end{rk}

We denote by $SQL_p$ the class of all subquotients of an (abstract) $L_p$-space. 
Then $SQL_p\cap B_n$ is obviously an $SQ(p)$-class (actually the smallest possible one),
formed of all $n$-dimensional  subquotients of $L_p$. Note that for either $p=1$ or
$p=\infty$, $B_n$ is entirely included in $SQL_p$.
\begin{cor}\label{comcor12.2}
Let $1\le p_0,p_1\le \infty$ and $0<\theta<1$. Let $p$ be defined by $p^{-1} = (1-\theta) p^{-1}_0 + \theta p^{-1}_1$. Then, the space $(B(\ell^n_{p_0})$, $B(\ell^n_{p_1}))_\theta$ coincides with the space $B(\ell^n_p)$ equipped with the norm $$\quad\|T\| = \sup\|T_X\|\leqno{\forall T\in B(\ell^n_p)} $$ where  the supremum runs over all $X$ that can be written as $X=X(0)$ for some compatible family of $n$-dimensional spaces $\{X(z)\mid z\in \partial D\}$ such that ${m}(\{z\in \partial D\mid X(z) \in SQL_{p_0}\}) = 1-\theta$ and ${m}(\{z\in \partial D \mid X(z) \in SQL_{p_1}\}) = \theta$. Let $\Omega(n)$ denote the class of all such spaces $X$. Then,
the space $(B(\ell_{p_0})$, $B(\ell_{p_1}))^\theta$
 can be identified with the subspace of $B(\ell_p)$ formed of all $T$ such that
 $\sup\{ \|T_X\|\mid X\in \Omega(n), n\ge 1\}<\infty$, equipped with the norm
 $$T\mapsto \sup\{ \|T_X\|\mid X\in \Omega(n), n\ge 1\},$$
 provided we make the convention that if either $p_0=\infty$ or $p_1=\infty$,
 then  $B(\ell_{p_0})$ or $B(\ell_{p_1})$ should be replaced by 
 $B(c_0,\ell_\infty)$.
\end{cor}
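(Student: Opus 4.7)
The plan is to deduce Corollary \ref{comcor12.2} from Theorem \ref{comthm12.1} by specializing to an appropriate two-valued family, so I first sketch how I would establish the theorem. The inequality $\|T\|_{\beta(p(\xi),{\cl B}(\xi))}\le \|T\|_{\beta(\xi)}$ is the easy half and is essentially the content of the Remark after the theorem statement: given an analytic $T(\cdot)$ in the unit ball of $H^\infty_\#$ relative to the family $\{\beta(p(z),{\cl B}(z))\}$ with $T(\xi)=T$ and any compatible family $X(z)\in{\cl B}(z)$, the interpolation principle \eqref{interpol2} applied to $L(z)=\ell^n_{p(z)}(X(z))$, combined with the identification $L(\xi)=\ell^n_{p(\xi)}(X(\xi))$ from \eqref{rep1}, yields $\|T_{X(\xi)}:\ell^n_{p(\xi)}(X(\xi))\to\ell^n_{p(\xi)}(X(\xi))\|\le 1$.

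For the converse I would mimic the duality strategy of Theorem \ref{comthm5.1}. By the duality theorem of \cite{BL}, $\beta(\xi)^*$ is isometric to the interpolation space at $\xi$ of the dual family $\{\beta(p(z),{\cl B}(z))^*\}$. The key lemma to establish, in analogy with Proposition \ref{compro5.9} and Lemma \ref{rob2}, is that an element $\varphi\in B(\ell^n_p)^*$ has norm $\le 1$ in $\beta(p,{\cl B})^*$ iff it admits a factorization
\[
\varphi_{ij}=\mu_i v_{ij}\lambda_j,\qquad \Bigl(\sum |\lambda_j|^{p}\Bigr)^{1/p}\le 1,\quad \Bigl(\sum |\mu_i|^{p'}\Bigr)^{1/p'}\le 1,\quad \gamma_{\cl B}^*(v)\le 1,
\]
obtained by running the arithmetic/geometric mean trick \eqref{R3} with the pair $(p,p')$ and dualizing Theorem \ref{comthm3.1bis}. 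Given any invertible $\varphi$ in the open unit ball of $\beta(\xi)^*$, use the definition of interpolation for families together with a measurable selection (\ref{comsec1.10}) to produce an analytic lift $\varphi(\cdot):D\to B(\ell^n_p)^*$ with $\varphi(\xi)=\varphi$, along with measurable functions $z\mapsto(\lambda(z),\mu(z),v(z))$ satisfying the above factorization and norm bounds pointwise a.e.\ on $\partial D$. After an $\varepsilon$-perturbation ensuring $|\lambda_j|,|\mu_i|>0$ everywhere, introduce bounded outer functions $F_j,G_i$ on $D$ with $|F_j|=|\lambda_j|$, $|G_i|=|\mu_i|$ on $\partial D$; then $V_{ij}(z)=G_i(z)^{-1}\varphi_{ij}(z)F_j(z)^{-1}$ is analytic on $D$ with boundary values satisfying $\gamma_{{\cl B}(z)}^*(V(z))<1$ a.e. The final step is the analogue of Theorem \ref{comthm5.7} (equivalently Lemma \ref{comlem5.10}) for general families: namely, that the interpolated $V(\xi)$ admits a factorization $V_{ij}(\xi)=\langle k_j,h_i\rangle$ with $h_i,k_j$ in the unit balls of $X^*,X$ respectively, for some $X\in{\cl B}(\xi)$. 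Reassembling gives
\[
|\varphi(T)|=\Bigl|\sum_i \Bigl\langle G_i(\xi)h_i,\sum_j T_{ij}F_j(\xi)k_j\Bigr\rangle\Bigr|\le \|T_X\|\Bigl(\sum |G_i(\xi)|^{p'}\Bigr)^{1/p'}\Bigl(\sum |F_j(\xi)|^{p}\Bigr)^{1/p}\le \|T\|_{\beta(p(\xi),{\cl B}(\xi))}
\]
by the natural pairing on $\ell^n_{p(\xi)}(X)\times\ell^n_{p(\xi)'}(X^*)$ and the maximum principle applied to $F_j,G_i$. Hahn--Banach and density of invertible matrices then finish the proof. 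The main technical obstacle is extending Theorem \ref{comthm5.7} to families indexed continuously by $\partial D$: the two-valued proof uses an explicit boundary decomposition that does not directly carry over, and one must instead invoke the interpolation principle \ref{comsec4.1+} directly on $\{\Gamma_{{\cl B}(z)}(\ell^n_1,\ell^n_\infty)\}$.

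For Corollary \ref{comcor12.2}, specialise to $\partial D=\partial_0\sqcup\partial_1$ with $m(\partial_1)=\theta$, and set $p(z)=p_j$ and ${\cl B}(z)=SQL_{p_j}\cap B_n$ on $\partial_j$, so that $p(0)^{-1}=(1-\theta)p_0^{-1}+\theta p_1^{-1}=p^{-1}$ and, by \S\ref{comsec4.0}, the family interpolation recovers the classical Calder\'on--Lions pair interpolation. I claim $\beta(p_j,SQL_{p_j}\cap B_n)=B(\ell^n_{p_j})$ isometrically: testing on ${\bb C}\in SQL_{p_j}$ gives $\|T\|$ as a lower bound, while for any $X\in SQL_{p_j}$, the space $\ell^n_{p_j}(X)$ is again a subquotient of an $L_{p_j}$-space, so Kwapie\'n's theorem (the ${\cl B}=\{{\bb C}\}$ case of Theorem \ref{comthm3.1}) yields $\|T_X\|\le\|T\|$. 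By definition ${\cl B}(0)=\Omega(n)$, so Theorem \ref{comthm12.1} gives the claimed description of $(B(\ell^n_{p_0}),B(\ell^n_{p_1}))_\theta$. The upper interpolation statement for infinite-dimensional $L_{p_0}$ and $L_{p_1}$ follows from the finite-dimensional case by approximating an operator by its compressions to finite $\sigma$-subalgebras via conditional expectations, exactly as in the proof of Corollary \ref{comcor4.2}. The $p_j=\infty$ convention is handled by replacing the non-compatible pair $(B(\ell_{p_0}),B(\ell_\infty))$ with $(B(\ell_{p_0}),B(c_0,\ell_\infty))$, which forms a genuine interpolation couple via the inclusion $B(c_0,\ell_\infty)\hookrightarrow B(L_1,L_\infty)$.
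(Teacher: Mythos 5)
Your proposal is correct and follows the route the paper itself takes (mirroring its proof of Corollary \ref{comcor10.3}): specialize Theorem \ref{comthm12.1} to the two-valued data $p(z)=p_j$, ${\cl B}(z)=SQL_{p_j}\cap B_n$ on $\partial_j$ with $m(\partial_1)=\theta$, invoke \ref{comsec4.0} to see that the interpolated family at $0$ is the Calder\'on--Lions space $(B(\ell^n_{p_0}),B(\ell^n_{p_1}))_\theta$, and use Kwapie\'n's isometric subquotient characterization to identify $\beta(p_j,SQL_{p_j}\cap B_n)=B(\ell^n_{p_j})$, with ${\cl B}(0)=\Omega(n)$ and $p(0)=p$ giving the claimed description; the infinite-dimensional upper interpolation statement is then obtained by compression through conditional expectations exactly as in Corollary \ref{comcor4.2}. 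One slip in your sketch of Theorem \ref{comthm12.1}: in the factorization lemma for the unit ball of $\beta(p,{\cl B})^*$ the condition should be $\gamma_{\cl B}(v)\le 1$ rather than $\gamma^*_{\cl B}(v)\le 1$ (compare the paper's displayed claim just before its proof sketch, and Proposition \ref{compro7.3}(i) for $p=2$); your subsequent step, factoring $V(\xi)$ through some $X\in{\cl B}(\xi)$ with $h_i,k_j$ in the unit balls of $X^*,X$, is exactly $\gamma_{{\cl B}(\xi)}(V(\xi))\le 1$, so the rest of the argument is consistent with the corrected statement.
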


\begin{cor}\label{comcor10.3}
Let $1<p<\infty$ and $0<\theta<1$. The unit ball of the space $(B_r(\ell^n_p), B(\ell^n_p))_\theta$ consists of all operators $T\colon \ \ell^n_p\to \ell^n_p$ such that $\|T_X\colon \ \ell^n_p(X)\to \ell^n_p(X)\|\le 1$ for all Banach spaces $X$ which can be written as $X=X(0)$ where $\{X(z)\mid z\in \partial D\}$ is a compatible family of $n$-dimensional Banach spaces such that $X(z)$ is a subquotient of $L_p$ for all $z$ in a subarc of $\partial D$ of measure $\ge \theta$. Let $SQ_p(\theta,n)$ denote the class formed of all such spaces. \\ Let $(\Omega,\mu)$, $(\Omega',\mu')$ be measure spaces. Then the unit ball of $(B_r(L_p(\mu), L_p(\mu'))$, $B(L_p(\mu), L_p(\mu')))^\theta$ consists of those $T$ in $B(L_p(\mu), L_p(\mu'))$ such that
\[
\sup\{\|T_X\|\mid n\ge 1, X\in SQ_p(\theta,n)\}\le 1.
\]
\end{cor}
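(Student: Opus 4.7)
The plan is to deduce the first (finite-dimensional) assertion from a direct application of Theorem~\ref{comthm12.1} to a two-valued data on $\partial D$, and to obtain the second assertion from it by the standard conditional-expectation reduction already used in Corollary~\ref{comcor4.2}.

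For the first assertion, I would take the constant exponent $p(z)\equiv p$ (so that $p(\xi)=p$ at every $\xi\in D$) and the two-valued family of classes
\[
{\cl B}(z)=SQL_p\cap B_n\text{ if }z\in J_\theta,\qquad {\cl B}(z)=B_n\text{ if }z\in\partial D\setminus J_\theta.
\]
Both are $SQ(p)$-classes (the second trivially; the first because ultraproducts of $\ell_p$-sums of subquotients of $L_p$ are still subquotients of $L_p$), and measurability of $z\mapsto{\cl B}(z)$ is automatic since the data is two-valued on measurable sets. The two values of $\beta(z)=\beta(p,{\cl B}(z))$ are then immediate: Kwapie\'n's theorem (recalled in \ref{comsec3.6bis}) gives $\sup_{X\in SQL_p}\|T_X\|=\|T\|_{B(\ell^n_p)}$, so $\beta(z)=B(\ell^n_p)$ on $J_\theta$; and by \ref{comsec1.2}, $\sup_{X\in B_n}\|T_X\|=\|T\|_{\mathrm{reg}}$, so $\beta(z)=B_r(\ell^n_p)$ off $J_\theta$. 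The two-value principle of \ref{comsec4.0} then identifies $\beta(0)$ with $(B_r(\ell^n_p),B(\ell^n_p))_\theta$, while Theorem~\ref{comthm12.1} identifies the same $\beta(0)$ with $B(\ell^n_p)$ normed by $T\mapsto\sup_{X\in{\cl B}(0)}\|T_X\|$. Matching the two descriptions yields the first claim once one knows that ${\cl B}(0)=SQ_p(\theta,n)$.

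That last identification is the only non-formal point. The inclusion ${\cl B}(0)\subset SQ_p(\theta,n)$ is automatic from the definitions. For the converse, given $X\in SQ_p(\theta,n)$ realized by a family $\{X(z)\}$ with $X(z)\in SQL_p$ on some arc $J'$ of measure $\ge\theta$, I would invoke the rotational invariance of the interpolation at the center of $D$: pulling the family back along $z\mapsto\omega z$ leaves $X(0)$ unchanged while rotating $J'$, so $\omega$ can be chosen making the rotated $J'$ contain $J_\theta$; then $X(z)\in SQL_p$ holds a.e.\ on $J_\theta$, placing $X$ in ${\cl B}(0)$. This is the step I expect to require the most care, since it is where the freedom of position of the arc in the definition of $SQ_p(\theta,n)$ must be reconciled with the fixed arc $J_\theta$ used in the invocation of Theorem~\ref{comthm12.1}.

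For the second, infinite-dimensional assertion, I would replay verbatim the argument of Corollary~\ref{comcor4.2}: for any $T\in B(L_p(\mu),L_p(\mu'))$ one has, by \cite{KPS},
\[
\|T\|_{(B_r,B)^\theta}=\sup_{{\cl B},{\cl B}'}\|E^{{\cl B}'}TE^{{\cl B}}\|_{(B_r,B)_\theta}\qquad\text{and}\qquad \sup_X\|T_X\|=\sup_{{\cl B},{\cl B}'}\sup_X\|(E^{{\cl B}'}TE^{{\cl B}})_X\|,
\]
the suprema running over finite $\sigma$-subalgebras ${\cl B}\subset{\cl A}$, ${\cl B}'\subset{\cl A}'$. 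Each such compression is a finite matrix which, after approximating the weights by rationals and passing to suitable blocks of the canonical basis of $\ell^n_p$, falls into the matricial setting already handled, and the two suprema therefore coincide.
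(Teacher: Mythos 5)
Your proof follows the same route as the paper's (a one-line application of Theorem~\ref{comthm12.1} to the two-valued data $p(z)\equiv p$ and a class that is $SQL_p\cap B_n$ on an arc and $B_n$ off it), and you correctly resolve the two points the paper glosses over. First, you put $SQL_p\cap B_n$ \emph{on} $J_\theta$ and $B_n$ off it; this is the choice consistent with \ref{comsec4.0} and with the corollary's statement (since $B(\ell^n_p)=\beta(p,SQL_p\cap B_n)$ must live on the set of measure~$\theta$ to produce $(B_r,B)_\theta$ rather than $(B_r,B)_{1-\theta}$), whereas the paper's printed proof has the two arcs transposed -- evidently a typo. Second, the rotational invariance of interpolation at the center, $X(\omega\,\cdot\,)(0)=X(0)$, is exactly the observation needed to reconcile the freely placed arc in the definition of $SQ_p(\theta,n)$ with the fixed $J_\theta$ demanded by ${\cl B}(0)$, a step the paper leaves implicit; and your second paragraph is precisely the conditional-expectation reduction from Corollary~\ref{comcor4.2} that the paper ``leaves to the reader'' (you should begin, as there, by first reducing to probability measures so that the conditional expectations are genuinely contractive, but that is the only missing word).
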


\begin{proof}
Recall $J_\theta= \{e^{2\pi it}\mid 0<t<\theta\}$. We apply Theorem 10.1 with $p(z)\equiv p$ and ${\cl B}(z) = B_n$ for $z$ in $J_\theta$ and ${\cl B}(z)$ equal to $B_n\cap SQL_p$ for all $z$ in $\partial D\backslash J_\theta$. The second assertion is left to the reader.
\end{proof}

\begin{cor}\label{comcor11.4}
Fix $0<\theta<1$. Consider a measurable partition $\partial D = J'_0\cup J'_0 \cup J_1$ with
\[
|J'_0| = (1-\theta)/2,\quad |J''_0| = (1-\theta)/2,\quad |J_1| = \theta.
\]
We set
\[
B(z) = \begin{cases}
B(\ell^n_1)&\text{if $z\in J'_0$}\\
B(\ell^n_\infty)&\text{if $z\in J''_0$}\\
B(\ell^n_2)&\text{if $z\in J_1$.}
       \end{cases}
\]
We have then isometrically
\[
B(0) \simeq (B_r(\ell^n_2), B(\ell^n_2))_\theta.
\]
\end{cor}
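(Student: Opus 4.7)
\medskip
\noindent\textbf{Proof plan.} The idea is to recognize this corollary as a direct application of Theorem \ref{comthm12.1} combined with Theorem \ref{comthm5.1} (extended via Remark \ref{comrem6.1+} to drop the ``arcwise'' adjective). The three-valued family $\{B(z)\}$ will be rewritten as the family $\{\beta(p(z),{\cl B}(z))\}$ of Theorem \ref{comthm12.1}, for suitable piecewise-constant choices of $p(z)$ and of the $SQ(p(z))$-class ${\cl B}(z)$.

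First I would set $p(z)=1$ on $J'_0$, $p(z)=\infty$ on $J''_0$, $p(z)=2$ on $J_1$, and ${\cl B}(z)=B_n$ (all $n$-dimensional Banach spaces) on $J'_0\cup J''_0$, while ${\cl B}(z)$ is the class of all $n$-dimensional Hilbert spaces on $J_1$. Each ${\cl B}(z)$ is manifestly an $SQ(p(z))$-class (every Banach space is a quotient of an $\ell_1$-space and a subspace of an $\ell_\infty$-space, and Hilbert spaces are stable under $\ell_2$-sums and subquotients), and the measurability hypotheses of Theorem \ref{comthm12.1} are trivial here. A short verification then shows $\beta(p(z),{\cl B}(z))=B(z)$ \emph{isometrically} on each piece: for $p\in\{1,\infty\}$ every operator on $\ell^n_p$ has regular norm equal to its operator norm, so $\sup_{X}\|T_X\|_{B(\ell^n_p(X))}=\|T\|_{B(\ell^n_p)}$; for $p=2$ and $X$ Hilbertian, $\ell^n_2(X)$ is Hilbertian and $\|T_X\|=\|T\|_{B(\ell^n_2)}$.

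Next I would compute
\[
p(0)^{-1}=\int_{\partial D}p(z)^{-1}\,dm(z)=\tfrac{1-\theta}{2}\cdot 1+\tfrac{1-\theta}{2}\cdot 0+\theta\cdot\tfrac12=\tfrac12,
\]
so $p(0)=2$. Theorem \ref{comthm12.1} then delivers
\[
B(0)=\beta(0)=\beta(2,{\cl B}(0)),
\]
where ${\cl B}(0)$ consists of all $X=X(0)$ obtained from compatible families $\{X(z)\}$ with $X(z)$ Hilbertian for $z\in J_1$ and arbitrary on $J'_0\cup J''_0$. Since $|J_1|=\theta$, this is precisely the class of $n$-dimensional $\theta$-Euclidean spaces in the sense of Definition \ref{thetahilb}.

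Finally, Theorem \ref{comthm5.1}, combined with Remark \ref{comrem6.1+} which removes the ``arcwise'' restriction, identifies
\[
\|T\|_{(B_r(\ell^n_2),B(\ell^n_2))_\theta}=\sup\{\|T_X\|\mid X\text{ $n$-dim.\ $\theta$-Hilbertian}\}.
\]
Because $n$-dimensional $\theta$-Hilbertian spaces are, by definition, ultraproducts of $n$-dimensional $\theta$-Euclidean spaces, and ultraproducts of a fixed finite dimension coincide with Banach--Mazur limits, the map $X\mapsto\|T_X\|$ being continuous in the Banach--Mazur distance makes the two suprema agree. Chaining the equalities yields the desired isometric identification $B(0)\simeq(B_r(\ell^n_2),B(\ell^n_2))_\theta$. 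The only step requiring any real care is the verification in Step~1 that the ``decorated'' norm $\beta(p(z),{\cl B}(z))$ really collapses to the plain operator norm on $B(\ell^n_{p(z)})$ on each of the three pieces; everything else is bookkeeping on top of the two main theorems already proved.
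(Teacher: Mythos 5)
Your proposal is correct and follows the same route as the paper: set $p(z)\in\{1,\infty,2\}$ piecewise with ${\cl B}(z)$ equal to all $n$-dimensional spaces on $J_0'\cup J_0''$ and to the Hilbertian ones on $J_1$, check $\beta(z)=B(\ell^n_{p(z)})$, apply Theorem \ref{comthm12.1} to get $\beta(0)=\beta(2,{\cl B}(0))$, and identify the result with $(B_r(\ell^n_2),B(\ell^n_2))_\theta$ via Theorem \ref{comthm5.1} and the removal of the arcwise restriction from \S\ref{comsec6}. You simply spell out more explicitly the bookkeeping (the identity $\beta(p(z),{\cl B}(z))=B(z)$, the computation $p(0)=2$, and the passage from the $\theta$-Euclidean class ${\cl B}(0)$ to $\theta$-Hilbertian spaces via Banach--Mazur continuity) that the paper's proof leaves implicit.
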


\begin{proof}
Observe that, in this setting, if ${\cl B}(z)$ consists of all $n$-dimensional Banach (resp.\ Hilbert) spaces for all $z$ in $J'_0\cup J''_0$ (resp.\ $z$ in $J_1$), and if $p(z) = 1$ on $J'_0$, $p(z) = \infty$ on $J''_0$ and $p(z)=2$ on $J_1$, then in the preceding Theorem we have $\beta(z) = B(\ell^n_{p(z)})$. Therefore, since ${\cl B}(0)$ is the class of all $n$-dimensional $\theta$-Hilbertian spaces we have
\begin{equation}
\|T\|_{\beta(0)} = \sup\|T_X\|_{B(\ell^n_2(X))}\tag*{$\forall T\in B(\ell^n_2)$}
\end{equation}
where the sup runs over all $n$-dimensional $\theta$-Hilbertian spaces. Thus we obtain the same as \eqref{comeq4.0}.
\end{proof}

The proof of Theorem \ref{comthm12.1} is similar to that of the above Theorem \ref{comthm5.1}. We will merely describe the main ingredients.

\begin{lem}\label{comlem10.3}
Given $n$-dimensional Banach spaces $E,F$, let $\gamma(z) = \Gamma_{{\cl B}(z)} (E,F)$. Then for all $v\colon \ E\to F$ we have
\begin{equation}
 \|v\|_{\Gamma_{{\cl B}(\xi)}(E,F)}\le \|v\|_{\gamma(\xi)}.\tag*{$\forall\xi\in D$}
\end{equation}
\end{lem}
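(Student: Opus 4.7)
The plan is to adapt the argument of Lemma \ref{comlem5.10} from the two-valued family $\{B(E,F),\Gamma_H(E,F)\}$ there to the general family $\{\gamma(z)\}_{z\in\partial D}$ here. By conformal equivalence I may assume $\xi=0$. Since invertible linear maps are dense in $B(E,F)$ and both sides of the claimed inequality depend continuously on $v$, I may further assume $v$ is a linear isomorphism; I then fix bases identifying $E\simeq F\simeq {\bb C}^n$ so that determinants are defined.

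Assume $\|v\|_{\gamma(0)}<1$. By \eqref{comeq4.22} there is an analytic function $z\mapsto v(z)$ with values in $B(E,F)$, bounded on $D$, with $v(0)=v$ and ${\rm ess}\sup_{z\in\partial D}\|v(z)\|_{\gamma(z)}<1$. Via a routine measurable selection (\ref{comsec1.10}) carried out over the Banach--Mazur compactum of $n$-dimensional spaces, for a.e.\ $z\in\partial D$ I select a factorization $v(z)=u_1(z)u_2(z)$ with $X(z)\in{\cl B}(z)$ realised on the fixed underlying vector space ${\bb C}^n$, with $u_2(z)\colon E\to X(z)$, $u_1(z)\colon X(z)\to F$, and $\|u_1(z)\|,\|u_2(z)\|\le 1$, all measurable in $z$. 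Exactly as in the proof of Lemma \ref{comlem5.10}, since $\det v(z)$ is a bounded analytic function on $D$ with $\det v(0)\ne 0$, one has $\log|\det v(z)|\in L_1(\partial D)$; combined with the identity $\det v(z)=\det u_1(z)\det u_2(z)$ in the chosen bases and the bound $\|u_j(z)\|\le 1$, this forces $u_1(z)$ and $u_2(z)$ to be invertible a.e.\ on $\partial D$ and yields $\log\|u_j(z)^{-1}\|\in L_1(\partial D)$ for $j=1,2$.

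I then define a family $\{Y(z)\mid z\in\partial D\}$ on ${\bb C}^n$ by $\|x\|_{Y(z)}:=\|u_2(z)x\|_{X(z)}$, so that $u_2(z)\colon Y(z)\to X(z)$ is an isometric isomorphism and $Y(z)\in{\cl B}(z)$. The bounds $\|u_2(z)^{-1}\|^{-1}\|x\|\le\|x\|_{Y(z)}\le\|u_2(z)\|\|x\|$ (with $\|\cdot\|$ the Euclidean norm on ${\bb C}^n$) together with the log-integrability established above make $\{Y(z)\}$ a compatible family in the sense of \eqref{compat}, so $X:=Y(0)\in{\cl B}(0)$ by the very definition of ${\cl B}(0)$. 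Set $v_2\colon E\to X$ equal to the identity of ${\bb C}^n$ and $v_1\colon X\to F$ equal to $v$, so that $v=v_1v_2$. Applying \eqref{interpol4} to the constant function $f(z)\equiv x$ in $N^+(D,{\bb C}^n)$ gives $\|v_2x\|_{Y(0)}\le{\rm ess}\sup_z\|u_2(z)x\|_{X(z)}\le\|x\|_E$. For $v_1$, given $y\in{\bb C}^n$ and any $g\in N^+(D,{\bb C}^n)$ with $g(0)=y$, the function $z\mapsto v(z)g(z)$ lies in $N^+(D,F)$ with value $v_1y$ at $0$ and satisfies $\|v(z)g(z)\|_F\le\|u_1(z)\|\|u_2(z)g(z)\|_{X(z)}\le\|g(z)\|_{Y(z)}$ on $\partial D$; applying \eqref{interpol4} in $F$ and taking the infimum over $g$ yields $\|v_1y\|_F\le\|y\|_{Y(0)}$. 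Therefore $v=v_1v_2$ with $\|v_1\|,\|v_2\|\le 1$, so $\|v\|_{\Gamma_{{\cl B}(0)}(E,F)}\le 1$.

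The main obstacle is producing a factorization of $v(z)$ that is simultaneously measurable in $z$ and yields compatibility bounds for $\{Y(z)\}$ with log-integrable boundary values; this is handled by the invertibility reduction and the determinant/Jensen argument from Lemma \ref{comlem5.10}, and is really the only subtle point—once it is in place, the subharmonicity of $\log\|\cdot\|$ encoded in \eqref{interpol4} mechanically delivers the norm bounds on $v_1$ and $v_2$.
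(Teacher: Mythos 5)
Your overall strategy matches the paper's proof exactly: write $v$ as a boundary function $v(z)$ with $\gamma_{{\cl B}(z)}(v(z))<1$, select a measurable factorization $v(z)=u_1(z)u_2(z)$ through $X(z)\in{\cl B}(z)$, pull back the $X(z)$-norm through $u_2(z)$ to define $\{Y(z)\}$, check compatibility of $\{Y(z)\}$, and conclude via the subharmonicity of $\log\|\cdot\|$. The paper's proof is structured the same way, and your treatment of the boundary/interior interpolation estimates at the end is correct.

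However, there is a genuine gap in the way you justify compatibility of $\{Y(z)\}$. You claim that ``exactly as in Lemma \ref{comlem5.10},'' the identity $\det v(z)=\det u_1(z)\det u_2(z)$ together with $\|u_j(z)\|\le 1$ yields $\log\|u_j(z)^{-1}\|\in L_1(\partial D)$. That argument from Lemma \ref{comlem5.10} relies crucially on the intermediate space being the \emph{fixed} space $\ell^n_2$: when $u_1(z)\colon\ell^n_2\to F$ and $u_2(z)\colon E\to\ell^n_2$ have operator norm $\le 1$ between fixed $n$-dimensional spaces, their matrix entries (and hence their determinants and cofactors) are bounded, which is what drives the estimate $\|u_j(z)^{-1}\|\le c\,|\det u_j(z)|^{-1}$ and the bound $|\det v(z)|\le c_j|\det u_j(z)|$. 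Here the intermediate space $X(z)$ varies with $z$, and the bounds $\|u_2(z)\|_{E\to X(z)}\le 1$, $\|u_1(z)\|_{X(z)\to F}\le 1$ give you no control whatsoever on the matrix entries of $u_1(z),u_2(z)$ relative to a fixed Euclidean reference (e.g.\ if $\|\cdot\|_{X(z)}$ degenerates, $u_2(z)$ can be an arbitrarily large matrix while still being contractive into $X(z)$), so the determinant argument for the individual factors does not transfer. The conclusion you want is nevertheless true, and the repair is short: either observe directly that $u_2(z)^{-1}=v(z)^{-1}u_1(z)$, so $\|u_2(z)^{-1}\|_{X(z)\to E}\le\|v(z)^{-1}\|_{F\to E}$ and the log-integrability of the latter follows from the determinant/Jensen argument applied to $v$ alone (between the fixed spaces $E,F$); or, as the paper does, skip $u_2(z)^{-1}$ entirely and derive the lower compatibility bound from $\|x\|_{Y(z)}\ge\|u_1(z)\|^{-1}\|v(z)x\|_F\ge\|v(z)x\|_F\ge\|v(z)^{-1}\|^{-1}\|x\|_E$, taking $k_1(z)=\|v(z)^{-1}\|^{-1}$ (up to the constant relating $\|\cdot\|_E$ to the Euclidean norm). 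With either fix, the rest of your argument goes through.
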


\begin{proof}
Assume $\|v\|_{\gamma(\xi)}<1$. By definition of $\gamma(\xi)$ (see \cite{CCRS3}) there is a bounded analytic function $z\to v(z)$ on $D$ such that $v(\xi)=v$ and $\underset{\sst z\in\partial D}{\text{ess sup}}\gamma_{{\cl B}(z)}(v(z)) < 1$. Then we can write (by measurable selection, see \ref{comsec1.10}) $v(z) = v_1(z)v_2(z)$ with $E \overset{\sst v_2(z)}{\hbox to 25pt{\rightarrowfill}} X(z)$ and $X(z) \overset{\sst v_1(z)}{\hbox to 25pt{\rightarrowfill}} F$ such that $\|v_1(z)\|<1$, $\|v_2(z)\|\le 1$ and $X(z) \in {\cl B}(z)$ a.e.

We now define a compatible family $\{Y(z)\mid z\in \partial D\}$ by setting $Y(z)=E$ equipped with the norm:
\begin{equation}
\|e\|_{Y(z)} = \|v_2(z)e\|_{X(z)} \le \|e\|.\tag*{$\forall e\in E$}
\end{equation}
We  have then
\begin{equation}\label{comeq10.2bis}
\|v(z)e\|_F = \|v_1(z)v_2(z)e\| \le \|e\|_{Y(z)}.
\end{equation}
By density, we may assume $v$ invertible, so that $v(\xi)$ is invertible and hence $v(z)$ is invertible a.e.\ on $\partial D$. Moreover, since $z\mapsto \det(v(z))$ is bounded and analytic on $D$ and $\ne 0$ at $\xi$, the function $z\mapsto \log|\det(v(z))|$ is in $L_1(\partial D)$. This implies that $z\to \log\|v(z)^{-1}\|$ is in $L_1(\partial D)$. We have
\[
k_1(z)\|e\|\le \|e\|_{Y(z)}
\]
with $k_1(z) = \|v(z)^{-1}\|^{-1}$. By the preceding observation, $k_1$ is in $L_1(\partial D)$, and thus $\{Y(z)\mid z\in\partial D\}$ is a compatible family. To conclude, applying the basic interpolation property to \eqref{comeq10.2bis} we find
\begin{equation}
\|v(\xi)e\|_F\le \|e\|_{Y(\xi)}\tag*{$\forall e\in E ~~\forall \xi\in D$}
\end{equation}
and also
\[
\|e\|_{Y(\xi)} \le \|e\|.
\]
Since $Y(z)$ and $X(z)$ are isometric for all $z$ in $\partial D$, $Y(\xi)$ is in ${\cl B}(\xi)$ and hence we conclude
\[
 \gamma_{{\cl B}(\xi)}(v) = \gamma_{{\cl B}(\xi)}(v(\xi))\le 1.
\]
By homogeneity (and by the density of invertibles) this completes the proof.
\end{proof}

Given a class ${\cl B}\subset B_n$, we denote by $\Gamma_{\cl B}(\ell^n_1,\ell^n_\infty)$ the space $B(\ell^n_1,\ell^n_\infty)$ equipped with the norm:
\begin{equation}
 \gamma_{\cl B}(u) = \inf\{\|u_1\| \|u_2\|\},\tag*{$\forall u\colon\ \ell^n_1\to\ell^n_\infty$}
\end{equation}
where the infimum runs over all factorizations of $u$ of the form $\ell^n_1 \overset{\sst u_2}{\longrightarrow} X \overset{\sst u_1}{\longrightarrow} \ell^n_\infty$ for some $X$ in ${\cl B}$. We denote by $\Gamma^*_{\cl B}(\ell^n_\infty,\ell^n_1)$ the dual space, i.e.\ the space $B(\ell^n_\infty, \ell^n_1)$ equipped with the dual norm $\gamma^*_{\cl B}$. Moreover, $\gamma^*_{\cl B}$ can be described as follows.
Assume that ${\cl B}\subset B_n$ is an $SQ(p)$-class. A linear map $v\colon \ \ell^n_\infty\to\ell^n_1$ satisfies $\gamma^*_{\cl B}(v)\le 1$ iff there are $\lambda,\mu$ in the unit ball of respectively $\ell^n_{p'}$ and $\ell^n_p$ and a linear map $a\colon \ \ell^n_p\to \ell^n_p$ such that $\|a\|_{\beta(p,{\cl B})}\le 1$ and $v_{ij} = \lambda_ia_{ij}\mu_j$ for all $i,j=1,\ldots, n$.

Let us set
\begin{equation}
\Gamma(z) = \Gamma_{{\cl B}(z)}(\ell^n_1,\ell^n_\infty)\quad \text{and}\quad \Gamma^*(z) = \Gamma^*_{{\cl B}(z)}(\ell^n_\infty, \ell^n_1).\tag*{$\forall z\in \partial D$}
\end{equation}
Then, for all $\xi$ in $D$, the space $\Gamma(\xi)$ coincides isometrically with $\Gamma_{{\cl B}(\xi)}(\ell^n_1, \ell^n_\infty)$. By duality, $\Gamma^*(\xi)$ coincides with $\Gamma^*_{{\cl B}(\xi)} (\ell^n_\infty, \ell^n_1)$. 
Similarly, a linear map $w\colon \ \ell^n_p \to \ell^n_p$ is in the unit ball of $\beta(p,{\cl B})^*$ iff it admits a factorization $w_{ij} = \lambda_ib_{ij}\mu_j$ for some $\lambda,\mu$ in the unit ball respectively of $\ell^n_p$ and $\ell^n_{p'}$ and some $b$ in the unit ball of $\Gamma_{\cl B}(\ell^n_1,\ell^n_\infty)$.

\begin{proof}[Sketch of Proof of Theorem \ref{comthm12.1}]
By the remark after Theorem \ref{comthm12.1}, it remains only to show
that $ \| T\|_{\beta(\xi) }\le \| T\|_{ \beta(p(\xi), {\cl B}(\xi)) }.$ Assume $\| T\|_{ \beta(p(\xi), {\cl B}(\xi)) }<1.$
Consider $w$ in the open unit ball of ${\beta(\xi) }^*$. It suffices to show that
$|\langle w,T\rangle| \le 1$. By what precedes, we can write $w_{ij} = \lambda_ib_{ij}\mu_j$
with $\lambda,\mu$ in the unit ball respectively of $\ell^n_{p(\xi)}$ and $\ell^n_{p(\xi)'}$ and some $b$ in the unit ball of $\Gamma_{{\cl B}(\xi)}(\ell^n_1,\ell^n_\infty)$. Then using \eqref{rep2} both for $p(\xi)$ and its conjugate
$p(\xi)'$, we can conclude by essentially  the same reasoning as above 
for Theorem  \ref{comthm5.1}.
\end{proof}

Note that if ${\cl B} =B_n$ then for any $a\colon \ \ell^n_p\to \ell^n_p$ with associated matrix $(a_{ij})$ we have
\[
 \|a\|_{\beta(p,{\cl B})} = \|a\|_{B_r(\ell^n_p)}=  \|[|a_{ij}|]\|_{B(\ell^n_p)}.
\]

Thus, Theorem \ref{comthm12.1}
generalizes on one hand the main result of \cite{P2} (which corresponds to the case $p_0=1, p_1=\infty$ and ${\cl B}(z)=B_n$ for all $z$), and on the other hand
Theorem \ref{comthm5.1} (which corresponds to the case $p_0= p_1=2$ with ${\cl B}(z)=B_n$ on a set of measure $1-\theta$ and ${\cl B}(z)=\{\ell_2^n\}$ on the complement).

 Theorem \ref{comthm12.1} can also be interpreted as the commutation of complex interpolation with
 the duality described at the beginning
 of  \S \ref{comsec3}. Given a class of $n$-dimensional  Banach spaces  $ {\cl B} $, the polar of  $ {\cl B} $ is the unit ball
 of a Banach space of operators on $\ell_p^n$. Let us denote
 the latter   Banach space by $ {\cl B} ^\bullet $.
  Then, taking say $p(z)$ constantly equal to $p$, 
 consider a measurable family of $SQ(p)$-classes
 of $n$-dimensional  Banach spaces  $\{{\cl B}(z)\mid z\in \partial D\}$,
 and for any $\xi\in D$ let ${\cl B}(\xi) $ be as above. 
  The preceding result then states that for all $\xi $ in $D$
 $${\cl B}(\xi)^\bullet ={\cl B} ^\bullet(\xi)$$
 where the right hand side means the result of complex interpolation applied to the family $  \{ {\cl B} ^\bullet(z)\mid z\in \partial D\}$.

\section{Operator space case}\label{comsec9}

This section is deliberately modeled on section \ref{comsec4bis}: the statement numbered as \ref{comsec9}.n in this section
corresponds to the one numbered
as \ref{comsec4bis}.n in section \ref{comsec4bis}.

In this section, we turn to operator spaces, i.e.\ closed subspaces $X\subset B(H)$ ($H$ Hilbert) equipped with an ``operator space structure,'' i.e.\ the sequence of normed spaces $(M_n(X), \|~~\|_{M_n(X)})$, where $\|\cdot\|_{M_n(X)}$ is simply the norm induced by $M_n(B(H))$. We refer to \cite{ER,P5} for general background on operator spaces, completely bounded (in short c.b.) maps, to \cite{P6} for notions on non-commutative vector-valued $L_p$-spaces used below, and to \cite{P3} for the non-commutative version of ``regular operators'' on the Hilbert--Schmidt class $S_2$ or on more general non-commutative $L_p$. We first give a presentation parallel to \S \ref{comsec4bis}.

Let $S_p$ denote the Schatten $p$-class $(1\le p<\infty)$. A mapping $T\colon \ S_p\to S_p$ is completely regular (\cite{P3}) iff it is a linear combination of bounded completely positive (c.p.\ in short) $u_j$, $(j=1,\ldots, 4)$ so that $u = u_1-u_2 + i(u_3-u_4)$. 
It is known (see \cite{P3}) that $u$ is {\it completely regular} iff the map $u_X = u\otimes Id_X$ is bounded on $S_p[X]$ for any operator space $X$. Equivalently, this holds iff the sequence of maps $u_{M_n}\colon \ S_p[M_n]\to S_p[M_n]$ is uniformly bounded. We define the corresponding norm by:
\[
\|u\|_{\text{reg}} = \sup_{n\ge 1}\|u_{M_n}\colon\ S_p[M_n]\to S_p[M_n]\|,
\]
and we denote by $B_r(S_p)$ the space of such maps. In analogy with the inclusion $B_r(\ell_2)\subset B(\ell_2)$ we have a norm one inclusion $B_r(S_p) \subset CB(S_p)$. Therefore, again we may consider the interpolation space $(B_r(S_p), CB(S_p))^\theta$. For simplicity, we concentrate first on the case $p=2$.  Unless we specify otherwise, $S_2$ or any non-commutative $L_2$-space is assumed equipped with its OH-operator space structure in the sense of \cite{P6}

Consider a compatible family of $N$-dimensional Banach spaces $\{X(z)\mid z\in\partial D\}$ as before. Assume that each $X(z)$ is equipped with an operator space structure,
such that, for each $n\ge 1$, the family  $\{M_n(X(z))\mid z\in \partial D\}$ is compatible.
Then we say that $\{X(z)\}$ is a compatible family of operator spaces. 

Let us set $M_n(X)(z) = M_n(X(z))$. We equip $X(0)$ (resp.\ $X(\xi)$ for $\xi\in D$) with an operator space structure by setting
\begin{equation}\label{comeq9.0}
 M_n(X(0)) = M_n(X)(0)
\end{equation}
(resp.\ $M_n(X(\xi)) = M_n(X)(\xi)$). That this is indeed an operator space structure follows from Ruan's fundamental theorem (see \cite[p. 33]{ER} \cite[p. 35]{P5}).
See \cite{P5,P6} for   complex interpolation of operator spaces. 

Now assume that $X(z)$ is completely isometric to $OH_n$ for all $z$ in a set of measure $\ge \theta$. Then any operator space that is completely isometric to $X(0)$ will be called $\theta$-0-Euclidean (arcwise if the set can be an arc), and any ultraproduct of (resp. arcwise) $\theta$-0-Euclidean spaces will be called (resp. arcwise) $\theta$-0-Hilbertian.

We should recall that, by the duality of operator spaces for any operator space $E$, we have 
\begin{align}\label{comeq9.2}
 CB(S^n_1,E) &= M_n(E)\\
\intertext{and}
\label{comeq9.3}
 CB(E,M_n) &= M_n(E^*)
\end{align}
completely isometrically. Given two operator spaces $E,F$, we will denote by $\Gamma_{OH}(E,F)$ the space of all linear maps $u\colon \ E\to F$ wich factor through an operator Hilbert space $H$ equipped with the o.s.\ structure of $OH$ in the sense of \cite{P6} or \cite{P5}. We equip this space with the (Banach space) norm
\[
 \gamma_{OH}(u) = \inf\{\|u_1\|_{cb} \|u_2\|_{cb}\}
\]
where the infimum runs over all factorizations $u_2\colon \ E\to H$, $u_1\colon \ H\to F$ with $u=u_1u_2$. More generally, given an operator space $X$ let us denote
\[
 \gamma_X(u) = \inf\{\|u_1\|_{cb} \|u_2\|_{cb}\}
\]
where the infimum runs over all possible factorizations of $u$ of the form $u=u_1u_2$ with c.b.\ maps $u_2\colon \ E\to X$ and $u_1\colon \ X\to F$. If there is no such factorization, we set $\gamma_X(u) = \infty$.

 Similarly, we will denote by $\Gamma_{\theta OH}(E,F)$ the space of maps that factor through a $\theta$-O-Hilbertian space $X$ and we again equip it with the norm $\gamma_{\theta OH}(u) = \inf\{\gamma_X(u)\}$, with the inf over all $\theta$-O-Hilbertian spaces $X$.
\begin{lm}
Assume $E=S^n_1$ and $F=M_n$, if $u\colon \ E\to F$ factors through an ultraproduct $X=\Pi X_i/{\cl U}$ of operator spaces, then \begin{equation}\label{comeq9.4}
 \gamma_X(u) =\lim_{\cl U} \gamma_{X_i}(u).
\end{equation}
\end{lm}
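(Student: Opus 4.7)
The plan is to prove the two inequalities separately. For the easy direction $\gamma_X(u)\le \lim_{\cl U}\gamma_{X_i}(u)$, I will take ultralimits of factorizations $u=u_1^{(i)}u_2^{(i)}$ through $X_i$. By \eqref{comeq9.2}, each $u_2^{(i)}\colon S^n_1\to X_i$ corresponds to an element of $M_n(X_i)$ of norm $\|u_2^{(i)}\|_{cb}$. Using the standard identification $M_n(\Pi X_i/{\cl U})=\Pi M_n(X_i)/{\cl U}$ completely isometrically (valid because $M_n$ is finite-dimensional), the ultralimit of these elements defines a map $u_2\colon S^n_1\to X$ with $\|u_2\|_{cb}\le \lim_{\cl U}\|u_2^{(i)}\|_{cb}$. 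For the map into $M_n$, because $M_n$ itself is finite-dimensional, I can define $u_1\colon X\to M_n$ pointwise by $u_1((x_i)_{\cl U})=\lim_{\cl U}u_1^{(i)}(x_i)$; well-definedness and boundedness are immediate from the uniform bound on $\|u_1^{(i)}\|_{cb}$, and complete boundedness with constant $\lim_{\cl U}\|u_1^{(i)}\|_{cb}$ follows by applying the same recipe at each matrix level. A direct check gives $u_1u_2=u$.

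For the harder direction $\lim_{\cl U}\gamma_{X_i}(u)\le \gamma_X(u)$, fix a factorization $u=u_1u_2$ through $X$ with $\|u_1\|_{cb}\|u_2\|_{cb}<\gamma_X(u)+\varepsilon$. Let $Y=\mathrm{span}\,u_2(S^n_1)\subset X$; this is a finite-dimensional subspace and we may replace $u_2$ by its corestriction $u_2\colon S^n_1\to Y$ and $u_1$ by its restriction $u_1|_Y\colon Y\to M_n$ without changing the c.b.\ norms. Picking a basis $\{y_1,\dots,y_N\}$ of $Y$ with representing families $(y_k^{(i)})_{\cl U}$ and setting $Y_i=\mathrm{span}(y_1^{(i)},\dots,y_N^{(i)})\subset X_i$ with the linear maps $\phi_i\colon Y\to Y_i$ given by $\phi_i(\sum c_ky_k)=\sum c_ky_k^{(i)}$, the standard fact that a finite-dimensional subspace of an operator-space ultraproduct is completely isometric to the ultraproduct of the $Y_i$'s gives $\lim_{\cl U}\|\phi_i\|_{cb}=\lim_{\cl U}\|\phi_i^{-1}\|_{cb}=1$. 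Then define $u_2^{(i)}=\phi_i\circ u_2\colon S^n_1\to Y_i\subset X_i$, which satisfies $\lim_{\cl U}\|u_2^{(i)}\|_{cb}=\|u_2\|_{cb}$, and define $v_i=u_1|_Y\circ\phi_i^{-1}\colon Y_i\to M_n$, for which $\lim_{\cl U}\|v_i\|_{cb}\le \|u_1\|_{cb}$.

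The last step is to extend each $v_i$ to a c.b.\ map $u_1^{(i)}\colon X_i\to M_n$ with the same c.b.\ norm. This is where I use the injectivity of $M_n=B(\ell_2^n)$ in the operator space category (a consequence of Arveson's extension theorem applied to the $2\times 2$ trick relating c.b.\ and c.p.\ maps into $B(H)$). The resulting factorization $u=u_1^{(i)}u_2^{(i)}$ through $X_i$ yields $\gamma_{X_i}(u)\le \|u_1^{(i)}\|_{cb}\|u_2^{(i)}\|_{cb}$, and passing to the ultralimit gives $\lim_{\cl U}\gamma_{X_i}(u)\le \|u_1\|_{cb}\|u_2\|_{cb}<\gamma_X(u)+\varepsilon$; letting $\varepsilon\to 0$ finishes the proof.

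I expect the main obstacle to be the extension step in the second half: without the injectivity of $M_n$ one could only control the norms on the finite-dimensional subspaces $Y_i\subset X_i$, not on all of $X_i$. The key observation is precisely that $M_n$ being injective lets us promote a map from a subspace of $X_i$ to a map from $X_i$ itself with no loss in c.b.\ norm. A secondary subtlety, handled above by the pointwise ultralimit construction, is that in the easy direction one cannot in general identify $(X^*)$ with $\Pi X_i^*/{\cl U}$, so one must exploit the finite-dimensionality of the target $M_n$ rather than duality.
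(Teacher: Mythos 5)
Your argument for the easy inequality $\gamma_X(u)\le\lim_{\cl U}\gamma_{X_i}(u)$ is fine and matches what the paper dismisses as ``trivial.'' The gap is in the hard direction, specifically at the assertion that the coordinate maps $\phi_i\colon Y\to Y_i$ satisfy $\lim_{\cl U}\|\phi_i\|_{cb}=\lim_{\cl U}\|\phi_i^{-1}\|_{cb}=1$. What you \emph{do} get from the identification $M_m\bigl(\Pi Y_i/{\cl U}\bigr)=\Pi M_m(Y_i)/{\cl U}$ is that for each \emph{fixed} $m$ and each $\xi\in M_m(Y)$ one has $\lim_{\cl U}\|(\mathrm{id}_m\otimes\phi_i)(\xi)\|=\|\xi\|$; combined with compactness of the unit ball of $M_m(Y)$, this gives $\lim_{\cl U}\|\phi_i^{(m)}\|=1$ for each $m$. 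But $\|\phi_i\|_{cb}=\sup_m\|\phi_i^{(m)}\|$, and you need $\lim_{\cl U}\sup_m\|\phi_i^{(m)}\|=1$, whereas the complete isometry $Y\simeq\Pi Y_i/{\cl U}$ only gives $\sup_m\lim_{\cl U}\|\phi_i^{(m)}\|=1$. Interchanging $\sup_m$ and $\lim_{\cl U}$ is not legitimate here: there is no uniform control on the matrix level, since the target spaces $Y_i\subset X_i$ sit inside operator spaces of unbounded ``Hilbert dimension,'' so Smith's lemma gives no bound, and the general failure of local reflexivity for operator spaces (which the paper itself flags in \S\ref{sec13}) is precisely the obstruction to promoting matrix-levelwise almost-isometries to a uniform cb estimate. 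So this step cannot be cited as a standard fact.

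The paper avoids the $\phi_i$'s entirely. It sets $Y_i=\mathrm{range}\,u_2(i)$ (where $u_2(i)$ are the ultraproduct representatives of $u_2$ obtained from $\Pi CB(S^n_1,X_i)/{\cl U}=CB(S^n_1,X)$), puts $Y=\Pi Y_i/{\cl U}\subset X$, and then exploits that the $Y_i$ have uniformly bounded finite dimension to upgrade the isometric inclusion $\Pi CB(Y_i,M_n)/{\cl U}\subset CB(Y,M_n)$ to an \emph{equality by dimension count}. This directly produces $v_1(i)\colon Y_i\to M_n$ with $\lim_{\cl U}\|v_1(i)\|_{cb}=\|u_1|_Y\|_{cb}$, after which injectivity of $M_n$ gives extensions $u_1(i)\colon X_i\to M_n$ as in your plan. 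The price is that $u_i:=u_1(i)u_2(i)$ only converges to $u$ pointwise (the compositions are taken ``entrywise'' in the ultraproduct and need not reproduce $u$ exactly), so the paper finishes with a perturbation argument: since $u_i\to u$ pointwise on the finite-dimensional space $S^n_1$, one constructs isomorphisms $w_i\colon u_i(S^n_1)\to u(S^n_1)$ with $\|w_i\|_{cb},\|w_i^{-1}\|_{cb}\to 1$, extends them to $\widetilde w_i\colon M_n\to M_n$ by injectivity, and writes $u=\widetilde w_iu_i$. Your proposal trades this perturbation step for the unjustified cb control on $\phi_i$; if you want to keep your structure, you would need to supply a proof (or reference) that the interchange of $\sup_m$ and $\lim_{\cl U}$ is valid in this setting, which I do not believe is available in general.
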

\begin{proof}
Indeed, by definition of ultraproducts for operator spaces $M_n(X) = \Pi M_n(X_i)/{\cl U}$ and hence by \eqref{comeq9.2} and \eqref{comeq9.3} we have an isometric identity
\begin{equation}\label{comeq9.5}
 \Pi CB(S^n_1,X_i)/{\cl U} = CB(S^n_1,X)
\end{equation}
and an isometric embedding
\begin{equation}\label{comeq9.6}
 \Pi CB(X_i,M_n)/{\cl U}\subset CB(X,M_n).
\end{equation}
If $\dim(X_i) = d <\infty$ for all $i$, then $\dim(X)=d$ and \eqref{comeq9.6} becomes an equality because both sides have the same dimension. Now assume $u=u_1u_2$ with c.b.\ maps $u_2\colon \ S^n_1\to X$ and $u_2\colon \ X\to M_n$. By \eqref{comeq9.5} we have maps $u_2(i)\colon \ S^n_1\to X_i$ corresponding to $u_2$ with $\lim\limits_{\cl U}\|u_2(i)\|_{cb} = \|u_2\|_{cb}$. Let $Y_i\subset X_i$ be the range of $u_2(i)$ and let $Y = \Pi Y_i/{\cl U}$. Clearly $Y\subset X$ and $\dim Y\le n$. Consider the restriction $v_1 = u_{1|Y}\colon \ Y\to M_n$. We have $\|v_1\|_{cb} = \|u_{1|Y}\|_{cb} \le \|u_1\|_{cb}$ and by the remark following \eqref{comeq9.6} there are maps $v_1(i)\colon \ Y_i\to M_n$ corresponding to $v_1$ such that $\lim\limits_{\cl U}\|v_1(i)\|_{cb} = \|v_1\|_{cb}$. Let $u_1(i)\colon \ X_i\to M_n$ be an extension of $v_1(i)$ with $\|u_1(i)\|_{cb} = \|v_1(i)\|_{cb}$. Let $u_i = u_1(i)u_2(i)$. Clearly, we have $u_i-u\to 0$ and $\lim\limits_{\cl U} \gamma_{X_i}(u_i) \le \gamma_X(u)$. By an elementary perturbation argument (see \cite[p.~69]{P5}), since $\|u_i(x) -u(x)\|\to 0$ for all $x$, there is an isomorphism $w_i\colon \ u_i(S^n_1) \to u(S^n_1)$ such that
\[
 \lim_{\cl U} \|w_i\|_{cb} = \lim_{\cl U} \|w^{-1}_i\|_{cb} = 1\quad \text{and} \quad u = w_iu_i.
\]
Let $\widetilde w_i\colon \ M_n\to M_n$ be an extension of $w_i$ with $\|\widetilde w_i\|_{cb} = \|w_i\|_{cb}$ so that $u = \widetilde w_iu_i$. Thus we obtain
\[
 \lim_{\cl U} \gamma_{X_i}(u) \le \lim_{\cl U}\|\widetilde w_i\|_{cb} \gamma_{X_i}(u_i) \le \gamma_X(u).
\]
The converse inequality being trivial, this completes the proof of \eqref{comeq9.4}.
 \end{proof}
 
\begin{lem}\label{comlem9.2}
 Fix $n\ge 1$ and $0<\theta<1$. Let $E,F$ be $n$-dimensional operator spaces. Consider a linear map $u\colon \ E\to F$ in the unit ball of $(CB(E,F), \Gamma_{OH}(E,F))_\theta$. Then
\[
 \gamma_{\theta OH}(u)\le 1.
\]
More precisely, if $u$ is a linear isomorphism, then $u$ admits a factorization $u=u_1u_2$ with $\|u_2\colon \ E\to X\|_{cb}\le 1$ and $\|u_1\colon \ X\to F\|_{cb}\le 1$ where $X =X(0)$ and $X=Y(0)$ where $\{X(z)\}$ an $\{Y(z)\}$ are compatible families of operator spaces such that $Y(z) = X(z)\simeq OH_n$ $\forall z\in J_\theta$ and
\[
 X(z) \simeq F,\qquad Y(z)\simeq E\qquad \forall z\notin J_\theta.
\]
\begin{proof}
The argument for  Lemma \ref{comlem5.10} extends without difficulty. We leave the details to the reader.
\end{proof}

\begin{thm}\label{comthm9.3}
 Fix $n\ge 1$ and $0<\theta<1$. We have an isometric identity
\[
 (CB(S^n_1,M_n), \Gamma_{OH}(S^n_1, M_n))_\theta = \Gamma_{\theta OH}(S^n_1,M_n).
\]
\end{thm}

\begin{proof}
Consider $u\colon\ S^n_1\to M_n$ with $\gamma_{\theta OH}(u)<1$. By \eqref{comeq9.4} we may assume that $u=u_1u_2$ with $u_2\colon \ X\to M_n$, $u_1\colon \ S^n_1\to X$ satisfying $\|u_1\|_{cb} \|u_2\|_{cb}<1$ with $X$ a (finite dimensional) $\theta$-O-Euclidean space. Assume $X=X(0)$ with $X(z)$ as in the definition of $\theta$-O-Euclidean. 
Let us define $CB( S^n_1  ,   X)(z)$ 
and $CB(X, M_n)(z)$ by setting 
$$CB( S^n_1  ,   X)(z)=CB( S^n_1  ,   X(z))
\ \ {\rm and} \ \ CB(X, M_n)(z)=CB(X(z), M_n).$$

Using \eqref{comeq9.2}, \eqref{comeq9.3}, and \eqref{comeq9.0} we obtain isometric identities
\begin{align*}
 CB(S^n_1,X) &= CB(S^n_1, X)(0)\\
CB(X,M_n) &= CB(X, M_n)(0)
\end{align*}
from which it is easy to derive bounded analytic functions $z\to u_1(z)\colon \ X(z)\to M_n$ and $z\to u_2(z)\colon \ S^n_1\to X(z)$ on $S$ such that $u = u_1(0) u_2(0)$ with
\[
 {\rm ess}\sup_{z\in \partial D}\|u_1(z)\|_{cb} \|u_2(z)\|_{cb}<1.
\]
By \ref{comsec4.0},
this clearly implies by \eqref{comeq9.2} and \eqref{comeq9.3}, that the norm of $u$ in
$
 (CB(S^n_1,M_n), \Gamma_{OH}(S^n_1,M_n))_\theta$ is $ < 1$.
Since the converse direction is a particular case of Lemma \ref{comlem9.2}, the proof is complete.
\end{proof}

\end{lem}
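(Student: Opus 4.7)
My plan is to run the argument of Lemma~\ref{comlem5.10} \emph{verbatim at each matrix level} $M_k$, lifting the Banach-space construction to the operator-space setting via the identity \eqref{comeq9.0} and Ruan's theorem: it suffices to build compatible families of norms on every $M_k(E)$ satisfying the required cb-estimates. By density of invertible maps in the unit ball I first reduce to $u$ invertible, and by conformal equivalence I work on $D$. The hypothesis produces an analytic function $z\mapsto u(z)\in H^\infty(D,CB(E,F))$ with $u(0)=u$, $\mathrm{ess\,sup}_{z\notin J_\theta}\|u(z)\|_{cb}<1$, and $\mathrm{ess\,sup}_{z\in J_\theta}\gamma_{OH}(u(z))<1$. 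A measurable selection (see \ref{comsec1.10}) then produces bounded measurable boundary factorizations $u(z)=u_1(z)u_2(z)$ on $J_\theta$ with $u_2(z)\colon E\to OH_n$, $u_1(z)\colon OH_n\to F$ and $\mathrm{ess\,sup}_{z\in J_\theta}\|u_1(z)\|_{cb}\|u_2(z)\|_{cb}<1$.

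On the common underlying vector space $E$, I would then define operator space structures by the pullback formulas
\begin{equation*}
\|a\|_{M_k(X(z))}=\begin{cases}\|(id_{M_k}\otimes u(z))a\|_{M_k(F)}&z\notin J_\theta,\\ \|(id_{M_k}\otimes u_2(z))a\|_{M_k(OH_n)}&z\in J_\theta,\end{cases}
\end{equation*}
with $Y(z)$ defined identically on $J_\theta$ but $\|a\|_{M_k(Y(z))}=\|a\|_{M_k(E)}$ for $z\notin J_\theta$. Exactly as in Lemma~\ref{comlem5.10}, the $\log|\det|$/Jensen argument applied to the analytic $M_n$-valued functions $z\mapsto u(z),u_1(z),u_2(z)$ gives $\log\|u(z)^{-1}\|$ and $\log\|u_j(z)^{-1}\|$ in $L_1(\partial D)$; since $E,F,OH_n$ are $n$-dimensional, the analogous bounds transfer (up to a factor depending only on $n$ and $k$) to the cb-inverses at every matrix level, so each family $\{M_k(X(z))\}_{z\in\partial D}$ and $\{M_k(Y(z))\}_{z\in\partial D}$ is compatible in the sense of \S\ref{comsec4}. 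By \eqref{comeq9.0} this endows $X(0)$ and $Y(0)$ with operator space structures, and the a.e.\ invertibility of $u(z)$ and $u_2(z)$ (again a consequence of the $L_1$-integrability of the log-inverse norms) yields the required complete isometries $X(z)\simeq F$ or $\simeq OH_n$ and $Y(z)\simeq E$ or $\simeq OH_n$ on the appropriate parts of $\partial D$.

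For the two factorization inequalities, I first observe that the identity of underlying vector spaces $id_E\colon E\to X(z)$ has $\|id_E\|_{cb}\le 1$ at every boundary $z$---this is $\|u(z)\|_{cb}\le 1$ off $J_\theta$ and $\|u_2(z)\|_{cb}\le 1$ on $J_\theta$---so the interpolation principle \ref{comsec4.1+} at each matrix level yields $\|id_E\colon E\to X(0)\|_{cb}\le 1$. For $u\colon X(0)\to F$, given $a\in M_k(X(0))$ I would choose (via \eqref{comeq4.22} for the family $\{M_k(X(z))\}$) a function $G\in H^\infty_\#$ with $G(0)=a$ and $\mathrm{ess\,sup}_z\|G(z)\|_{M_k(X(z))}$ close to $\|a\|_{M_k(X(0))}$, then note that $z\mapsto(id_{M_k}\otimes u(z))G(z)$ is an analytic $M_k(F)$-valued function whose boundary norm equals $\|G(z)\|_{M_k(X(z))}$ off $J_\theta$ and is bounded by $\|u_1(z)\|_{cb}\|G(z)\|_{M_k(X(z))}\le\|G(z)\|_{M_k(X(z))}$ on $J_\theta$, via the factorization $u(z)=u_1(z)u_2(z)$ and the definition of $X(z)$. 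Applying \eqref{interpol4} at $\xi=0$ and optimising over $G$ and $k$ gives $\|u\colon X(0)\to F\|_{cb}\le 1$, so $u=u\circ id_E$ is the desired factorization through $X=X(0)$; the $Y(0)$-version is identical after swapping the roles of $E$ and $F$ off $J_\theta$.

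The main (quite modest) obstacle is ensuring matrix-level compatibility of the pullback families; but for finite-dimensional $E,F$ this reduces to the scalar $L_1$-bound for $\log\|u(z)^{-1}\|$ already established in the proof of Lemma~\ref{comlem5.10}, so no genuinely new difficulty arises at the operator-space level.
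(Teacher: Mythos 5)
Your proposal is correct and is essentially the route the paper leaves implicit: lift the proof of Lemma~\ref{comlem5.10} to the matrix level $M_k$ uniformly in $k$, use \eqref{comeq9.0} and Ruan's theorem to endow $X(0)$ and $Y(0)$ with operator space structures, and check the cb-estimates boundary-wise before applying the interpolation principle. One small imprecision: the Jensen/$\log|\det|$ argument applies to $z\mapsto\det u(z)$ only, since $u(z)$ is the analytic object; the factors $u_1(z),u_2(z)$ produced by measurable selection are merely measurable, and (exactly as in the proof of Lemma~\ref{comlem5.10}) the $L_1$-bound for $\log\|u_j(z)^{-1}\|$ is deduced from $\log|\det u|\in L_1$ together with $\det u=\det u_1\det u_2$ and the boundedness of $u_1,u_2$, not from a Jensen argument applied to $u_1,u_2$ themselves. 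This does not affect the validity of the rest of the argument.
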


As before, if we are given a norm $\gamma$ on $B(E,F)$ (with $E,F$ finite dimensional) we define $\gamma^*(v)$ for any $v\colon \ F\to E$ by setting
\[
 \gamma^*(v) = \sup\{|tr(uv)|\mid u\colon \ E\to F , \ \gamma(u)\le 1\}.
\]
We denote by $N_0(v)$ the operator space version of the nuclear norm of   $v$.

We first need to extend Proposition \ref{compro7.2} to the operator space setting. \begin{prop}
 Consider $v\colon \ M_n\to S^n_1$.
\begin{itemize}
\item[\rm (i)] $\gamma^*_{OH}(v)\le 1$ iff there are $\lambda_1,\lambda_2,\mu_1,\mu_2$ in the unit ball of $S^n_4$ and $a\colon \ S^n_2\to S^n_2$ in the unit ball of $CB(S^n_2)$ (which, as a Banach space, coincides with $B(S^n_2)$) such that
\begin{equation}
 v(x) = \mu_1a(\lambda_1x\lambda_2)\mu_2.\tag*{$\forall x\in M_n$}
\end{equation}
\item[\rm (ii)] $N_0(v) \le 1$ iff there are $\lambda_1,\lambda_2,\mu_1,\mu_2$ in the unit ball of $S^n_4$ and $a\colon \ S^n_2\to S^n_2$ in the unit ball of $B_r(S^n_2)$ such that
\begin{equation}
 v(x) = \mu_1a(\lambda_1x \lambda_2)\mu_2.\tag*{$\forall x\in M_n$}
\end{equation}
\end{itemize}
\end{prop}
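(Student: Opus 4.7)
The plan is to mirror the classical Proposition \ref{compro7.2} in the operator-space category. Two structural facts drive the argument: (a) $S^n_2$ with its natural operator space structure is completely isometric to $OH_{n^2}$, so in particular $CB(S^n_2) = B(S^n_2)$ as Banach spaces (from \cite{P6}); and (b) non-commutative H\"older (see \cite{P3,P6}) gives that for $\lambda_1,\lambda_2 \in S^n_4$ the two-sided multiplication map $M_\lambda \colon x \mapsto \lambda_1 x \lambda_2$ is completely bounded from $M_n$ into $S^n_2$ with $\|M_\lambda\|_{cb} \le \|\lambda_1\|_{S^n_4}\|\lambda_2\|_{S^n_4}$, and similarly $M_\mu \colon y \mapsto \mu_1 y \mu_2$ is cb from $S^n_2$ into $S^n_1$ with $\|M_\mu\|_{cb} \le \|\mu_1\|_{S^n_4}\|\mu_2\|_{S^n_4}$. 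The completely regular analogues of these bounds are immediate from the same H\"older estimate.

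For the ``if'' direction of both (i) and (ii): given $v = M_\mu \circ a \circ M_\lambda$ with all four multiplier norms $\le 1$, $v$ factors through $S^n_2 = OH_{n^2}$ with cb factorisation constant $\le \|a\|_{cb}$ in case (i), and with completely regular factorisation constant $\le \|a\|_{\text{reg}}$ in case (ii). Testing against $u \colon S^n_1 \to M_n$ with $\gamma_{OH}(u) \le 1$ (resp.\ $\|u\|_{cb} \le 1$) and rearranging cyclically, the trace pairing $\text{tr}(uv)$ becomes $\text{tr}(a \cdot w)$ for some $w \in CB(S^n_2)$ (resp.\ $w \in B_r(S^n_2)$) of norm $\le 1$; this yields $|\text{tr}(uv)| \le 1$, hence $\gamma^*_{OH}(v) \le 1$ (resp.\ $N_0(v) \le 1$) by definition of the dual norm (resp.\ by duality with the operator-space projective tensor product).

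For the ``only if'' of (ii): identify $v \colon M_n \to S^n_1$ with an element $\tilde v \in S^n_1 \widehat\otimes S^n_1$ via the operator-space projective tensor product. By Effros--Ruan, $L_1(M) \widehat\otimes L_1(N) = L_1(M \bar\otimes N)$, so $\tilde v \in L_1(M_n \bar\otimes M_n) = S^{n^2}_1$ with $N_0(v) = \|\tilde v\|_1$. Apply the polar decomposition to write $\tilde v = \xi \eta$ with $\xi,\eta$ in the unit ball of $S^{n^2}_2 = L_2(M_n \bar\otimes M_n)$. Using the natural Hilbertian identification $L_2(M_n\bar\otimes M_n) \simeq S^n_2 \otimes_2 S^n_2$ and a further polar splitting within each tensor factor (producing $L_4\cdot L_4$ factorisations via $|\cdot|^{1/2}\cdot|\cdot|^{1/2}$), one extracts $\lambda_1,\lambda_2,\mu_1,\mu_2 \in S^n_4$ of norm $\le 1$ together with a residual operator $a$ on $S^n_2$ of regular norm $\le 1$ such that $v(x) = \mu_1 a(\lambda_1 x \lambda_2) \mu_2$.

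The ``only if'' of (i) is analogous but is the main obstacle. Starting from $\gamma^*_{OH}(v) \le 1$, we view this as a Hahn--Banach separation in the dual of $\Gamma_{OH}(S^n_1,M_n)$; using the operator-space identifications $CB(S^n_1,OH_N) \simeq M_n(OH_N)$ and its dual, together with the self-duality of $OH$, one realises $v$ through a bilinear representation whose outer factors are Hilbert--Schmidt elements of $L_2(M_n\bar\otimes M_n)$. Matching this dual factorisation with the explicit two-sided multiplication form then requires a polar/H\"older decomposition of those Hilbert--Schmidt factors into pairs in $S^n_4$, after which the central operator $a$ is constrained only by its cb-norm (equivalently, by its operator norm, since $CB(S^n_2) = B(S^n_2)$). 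The classical analogue of this matching step is essentially Proposition~5.4 of \cite{P4}; the subtlety here is to carry out the polar splittings in the non-commutative setting while preserving cb-bounds throughout, and this is the delicate point that distinguishes (i) from (ii).
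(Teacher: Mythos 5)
Your "if'' direction is correct in spirit (non-commutative H\"older for two-sided multiplications plus the regular/cb norm estimates), and it matches what one would want. The problem is the "only if'' directions, where you have a genuine gap.

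For (ii) "only if'', you propose to identify $v$ with $\tilde v\in S^{n^2}_1$ via Effros--Ruan and then apply polar decomposition, $\tilde v=\xi\eta$ with $\xi,\eta$ in the unit ball of $S^{n^2}_2$. But these $\xi,\eta$ are arbitrary Hilbert--Schmidt operators on $\ell^{n^2}_2$; they do not a priori have the tensor-factored form corresponding to two-sided multiplications $x\mapsto\lambda_1 x\lambda_2$ and $y\mapsto\mu_1 y\mu_2$ by $S^n_4$ elements, and your subsequent "further polar splitting within each tensor factor'' does not explain how to convert the abstract polar factors into that very special shape while keeping $a$ in the unit ball of $B_r(S^n_2)$. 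This is exactly the missing step. The paper takes a different route: it invokes the operator-space factorization of nuclear maps (Th.~5.9/Rem.~5.10 of \cite{P6}) to get $v(x)=\tilde v(axb)$ with $a,b\in S^n_2$, $\tilde v\colon S^n_1\to S^n_1$ cb with small norm, applies the same to $v^*$ (using $N_0(v^*)=N_0(v)$) to also get $v(x)=c\hat v(x)d$ with $\hat v\colon M_n\to M_n$ cb, and then the crucial step is a \emph{complex interpolation} argument: defining $A(x)=axb$, $B(x)=cxd$ and the analytic family $f(z)=B^{z-1}vA^{-z}$ on the strip, bounded in $CB(M_n)$ on $\partial_0$ and in $CB(S^n_1)$ on $\partial_1$, and evaluating at $z=1/2$ using the identity $(CB(M_n),CB(S^n_1))_{1/2}=B_r(S^n_2)$ from \cite[Cor.\,3.3]{P3}. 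This interpolation step is what produces the regular operator $a=f(1/2)$ and, via the square roots $\lambda_j=a^{1/2}$, $\mu_j=c^{1/2}$ etc., the $S^n_4$ multipliers. Nothing in your sketch plays this role.

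For (i) "only if'', you acknowledge yourself that it is "the main obstacle'' and "the delicate point''; what you write is an outline of a hoped-for argument rather than a proof. In the paper (i) is obtained by directly citing \cite[Th.\,6.1]{P7} and \cite[Th.\,6.8]{P6}, which give precisely the $OH$-factorization/duality identifications you would need; if you intend to argue from first principles you would have to reproduce those results, and that is not done here.
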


\begin{proof}
 \begin{itemize}
\item[(i)] This follows from \cite[Th 6.1]{P7} and \cite[Th 6.8]{P6}.
\item[(ii)] By \cite[Cor. 3.3]{P3} we have an isometric identity
\[
 (CB(M_n,M_n), CB(S^n_1,S^n_1))_{\frac12} = B_r(S^n_2,S^n_2).
\]  \end{itemize}
Now consider $v$ such that $N_0(v)<1$. Then there are $a,b$ in the open unit ball of $S^n_2$ such that $v(x) = \tilde v(ax b)$ for some $\tilde v\colon \ S^n_1\mapsto S^n_1$ with $\|\tilde v\|_{cb}<1$ (see e.g.\ \cite[Th. 5.9 and Rem 5.10]{P6}). We may as well assume, by perturbation, that $a,b$ are invertible and positive. Since $N_0(v^*) = N_0(v)$, applying the same argument to $v^*\colon \ M_n\to S^n_1$ and taking adjoints again, we obtain $c,d$ invertible and positive in the unit ball of $S^n_2$ such that
\[
 v(x) = c\hat v(x)d
\]
for some $\hat v\colon \ M_n\to M_n$ with $\|\hat v\|_{cb}\le 1$. Let $A(x) = ax b$, $B(x) = cx d$. Then, $B^{-1}v = \hat v$ and $\tilde v = vA^{-1}$ and hence
\[
\|B^{-1}v\colon \ M_n\to M_n\|_{cb}<1\quad \text{and}\quad \|vA^{-1}\colon \ S^n_1\to S^n_1\|_{cb} <1. 
\]
Consider then, again on the strip $S$, the analytic function $f\colon \ S\to B(M_n,M_n)$ defined by $f(z) = B^{z-1}vA^{-z}$. Since for any real $t$, $B^{it}$ and $A^{it}$ are isometric both on $M_n$ and $S^n_1$, we have
\[
 \|f(z)\colon \ M_n\to M_n\|_{cb} < 1\quad \text{for any}\quad z\quad \text{in}\quad \partial_0
\]
and $\|f(z)\colon \ S^n_1\to S^n_1\|_{cb} < 1$ for any $z$ in $\partial_1$. This implies $\|f(1/2)\colon \ S^n_2\to S^n_2\|_{\text{reg}}<1$. Thus if we set $a = f(1/2)$, we obtain
\[
 v = B^{1/2}a(A^{1/2}x) = \mu_1a(\lambda_1x \lambda_2)\mu_2
\]
where $\mu_1=c^{1/2}$, $\mu_2=d^{1/2}$, $\lambda_1 = a^{1/2}$, $\lambda_2 = b^{1/2}$. Conversely assume $v(x) = \mu_1a(\lambda_1x \lambda_2)\mu_2$ with $\lambda_1\lambda_2$, $\mu_1,\mu_2$ as in (ii) and $\|a\|_{\text{reg}}\le 1$. The mapping $x\to \lambda_1x\lambda_2$ from $M_n$ to $S^n_2$ corresponds to a tensor $T$ in the unit ball of $S^n_2[M^*_n]$ (see e.g. \cite[Th. 1.5]{P6}); the mapping $x\to a(\lambda_1x\lambda_2)$ corresponds to $(a\otimes id)(T)$ and hence is also in the unit ball of $S^n_2[M^*_n]$ (this follows from \cite[(2.1) and (2.2)]{P3} which can be checked easily using \cite[Lemma 5.4]{P6}), therefore the mapping $x\to \mu_1a(\lambda_1x\lambda_2)\mu_2$ is also in the latter ball (cf.\ e.g.\ \cite[Th. 1.5]{P6}). So we conclude that the tensor associated to $v$ is in the unit ball of $S^n_1[M^*_n] = S^n_1[S^n_1] = S^n_1 \otimes^{\wedge} S^n_1 = N_0(M_n,S^n_1)$.

\end{proof}

\begin{pro}\label{comlem9.5}
 Fix $n\ge 1$ and $0<\theta<1$. Consider $v\colon \ M_n\to S^n_1$. Then $v$ belongs to the unit ball of $(N_0(M_n,S^n_1), \Gamma^*_{OH}(M_n, S^n_1))_\theta$ iff there are $\lambda_1,\lambda_1,\mu_1,\mu_2$ in the unit ball of $S^n_4$ and $a\colon \ S^n_2\to S^n_2$ in the unit ball of $(B_r(S^n_2), CB(S^n_2))_\theta$ such that
\begin{equation}
 v(x) = \mu_1a(\lambda_1x\lambda_2)\mu_2.\tag*{$\forall x\in M_n$}
\end{equation}
\end{pro}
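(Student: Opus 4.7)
The plan is to parallel the proof of Proposition \ref{compro5.9}, replacing the scalar Proposition \ref{compro7.2} by its operator-space analogue (the preceding proposition), and the scalar outer functions by the matricial Szeg\H{o}--Helson outer factorization. For the \emph{``if'' direction,} fix $\lambda_1, \lambda_2, \mu_1, \mu_2$ in the unit ball of $S^n_4$: by parts (i) and (ii) of the preceding proposition, the linear map $\Phi\colon a \mapsto (x \mapsto \mu_1 a(\lambda_1 x \lambda_2)\mu_2)$ sends the unit ball of $B_r(S^n_2)$ into that of $N_0(M_n, S^n_1)$ and the unit ball of $CB(S^n_2)$ into that of $\Gamma^*_{OH}(M_n, S^n_1)$; complex interpolation applied to $\Phi$ then yields the desired contractivity from $(B_r(S^n_2), CB(S^n_2))_\theta$ into $(N_0, \Gamma^*_{OH})_\theta$.

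For the \emph{``only if'' direction,} suppose $v$ lies in the open unit ball of $(N_0, \Gamma^*_{OH})_\theta$. Using the definition of complex interpolation combined with the preceding proposition and a routine measurable selection argument (see \ref{comsec1.10}), one obtains a bounded analytic function $z \mapsto v(z)$ on the strip $S$ with $v(\theta) = v$ and a boundary factorization
\[v(z)(x) = \mu_1(z)\, \tilde a(z)\bigl(\lambda_1(z)\, x\, \lambda_2(z)\bigr)\, \mu_2(z),\]
where $\lambda_i(z), \mu_i(z)$ lie in the open $S^n_4$-unit ball, with $\tilde a(z)$ in the open unit ball of $B_r(S^n_2)$ on $\partial_0$ and in that of $CB(S^n_2)$ on $\partial_1$. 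I would then absorb polar unitary factors into $\tilde a(z)$, followed by the perturbation $\lambda_i \mapsto \lambda_i + \varepsilon I$, $\mu_i \mapsto \mu_i + \varepsilon I$ (absorbing the corrections $\lambda_i(\lambda_i + \varepsilon I)^{-1}$, etc., into $\tilde a$), so as to reduce to the case that each $\lambda_i, \mu_i$ is positive definite with singular values bounded uniformly below on $\partial S$, while preserving the bounds on $\tilde a$. Here one uses that left or right multiplication by a unitary matrix (respectively by a positive contraction) on $S^n_2$ is completely contractive in both $B_r$ and $CB$; for $B_r$, this follows from the identity $B_r(S^n_2) = (CB(M_n,M_n), CB(S^n_1,S^n_1))_{1/2}$ of \cite{P3}, since such multiplication is trivially contractive on both $CB(M_n,M_n)$ and $CB(S^n_1,S^n_1)$.

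The main step is then to apply the matricial Szeg\H{o}--Helson factorization on the strip $S$ to produce invertible outer functions $\Lambda_1, \Lambda_2, M_1, M_2 \in H^\infty(S; M_n)$ with boundary values $\Lambda_1^*\Lambda_1 = \lambda_1^2$, $\Lambda_2\Lambda_2^* = \lambda_2^2$, $M_1 M_1^* = \mu_1^2$, $M_2^* M_2 = \mu_2^2$. The asymmetric choice of left/right Szeg\H{o} ensures, via polar decomposition, that on $\partial S$ each of $\lambda_1\Lambda_1^{-1}$, $\Lambda_2^{-1}\lambda_2$, $M_1^{-1}\mu_1$, $\mu_2 M_2^{-1}$ is a unitary matrix, which I denote respectively by $U_1^*, U_2^*, V_1^*, V_2^*$. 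I then define
\[a(z)(y) := M_1(z)^{-1}\, v(z)\bigl(\Lambda_1(z)^{-1}\, y\, \Lambda_2(z)^{-1}\bigr)\, M_2(z)^{-1} \qquad (z \in S),\]
which is analytic in $z$ because the outer inverses are analytic, and for which direct substitution yields the boundary identity $a(z)(y) = V_1^*\, \tilde a(z)(U_1^*\, y\, U_2^*)\, V_2^*$. By the contractivity cited above, $\|a(z)\|_{B_r(S^n_2)} < 1$ on $\partial_0$ and $\|a(z)\|_{CB(S^n_2)} < 1$ on $\partial_1$, so $a(\theta)$ lies in the unit ball of $(B_r(S^n_2), CB(S^n_2))_\theta$. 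Evaluating the analytic identity at $z = \theta$ yields $v(x) = M_1(\theta)\, a(\theta)(\Lambda_1(\theta)\, x\, \Lambda_2(\theta))\, M_2(\theta)$, and the maximum principle applied to the $S^n_4$-valued bounded analytic $\Lambda_i, M_i$ delivers the bounds $\|\Lambda_i(\theta)\|_{S^n_4}, \|M_i(\theta)\|_{S^n_4} \le 1$, furnishing the required factorization.

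The hard part of the plan is the complete contractivity of unitary left/right multiplication in the regular norm on $S^n_2$, a non-trivial feature of the non-commutative setting that is what ultimately makes the whole approach go through; once granted via the interpolation description of $B_r(S^n_2)$ from \cite{P3}, the remaining technical subtlety is the careful asymmetric choice of left vs.\ right Szeg\H{o} factorization for each of $\Lambda_1, \Lambda_2, M_1, M_2$ so that all four polar correction factors on $\partial S$ become genuinely unitary rather than merely bounded.
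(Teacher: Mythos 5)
Your proof is correct and follows essentially the same route as the paper's (sketched) argument: reduce via perturbation to invertible positive diagonal factors, apply the matricial Szeg\H{o} theorem to produce analytic outer replacements, absorb the unitary correction factors into $a(z)$ using contractivity of one-sided multiplication in the regular and cb-norms, and conclude by analyticity and the maximum principle. You are in fact more careful than the paper's sketch on two points the paper leaves implicit: the asymmetric choice of left/right Szeg\H{o} factorization for $\Lambda_1,\Lambda_2,M_1,M_2$ so that the boundary correction factors are genuinely unitary, and the justification (via the interpolation description $B_r(S^n_2)=(CB(M_n),CB(S^n_1))_{1/2}$ from \cite{P3}) that absorbing those unitaries and the positive contractions coming from the $\vp I$-perturbation preserves the regular norm bounds.
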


\begin{proof}
The proof of Proposition \ref{compro5.9} can be easily generalized to the operator space case. We merely give a hint and let the reader check the details. Given $v$ in the open unit ball of $(N_0(M_n, S^n_1)$, $\Gamma^*_{OH}(M_n, S^n_1))_\theta$, by the preceding Proposition and a measurable selection (see \ref{comsec1.10}), we can find 
an analytic function $z\to v(z)$ such that $v(\theta)=v$ that is bounded on $S$ 
together with measurable functions on $\partial S$ $z\to \lambda_j(z)$, $z\to\mu_j(z)$ $(j=1,2)$ with values in the unit ball of $S^n_4$,  and 
$z\to a(z)$ 
such that $a_{|\partial_j}$ takes its values in the unit ball of $B_r(S_2^n)$ for $j=0$ and in that 
of $B(S_2^n)$ for $j=1$,  such that if we define $A(z)(x) = \mu_1(z) x\mu_2(z)$ and $B(z)x = \lambda_1(z) x\lambda_2(z)$, we have 
for all $z$ in $\partial$
\begin{equation}\label{comeq9.7}
v(z) = B(z)a(z)A(z).
\end{equation}
Choosing $\vp>0$ sufficiently small and replacing $\lambda_j$ and $\mu_j$ $(j=1,2)$ respectively by $\vp I+|\lambda_j|$ and $\vp I+|\mu_j|$, we may assume that $\lambda_j\ge \vp I$ and $\mu_j \ge \vp I$. By the matricial Szeg\"o theorem, there are bounded $M_n$-valued analytic functions $F_1,F_2,G_1,G_2$ such that $|F_j| = \lambda_j$ and $|G_j| =\mu_j$ on $\partial S$. We now replace $\lambda_j,\mu_j$ by $F_j,G_j$ and make the corresponding change of $a$ so that \eqref{comeq9.7} holds.   Then since $z\mapsto B(z)^{-1}$ and $z\mapsto A(z)^{-1}$ are analytic, the analyticity of $z\mapsto v(z)$ guarantees that of $z\mapsto a(z)$. It follows that $\|a(\theta)\|_{(B_r(S^n_2), B(S^n_2))_\theta}\le 1$. Since $v=v(\theta)$, and since $F_j(\theta)$, $G_j(\theta)$ are (by the maximum principle) in the unit ball of $S^n_4$, we conclude that $v=v(\theta) = B(\theta)a(\theta)A(\theta)$ can be factorized as stated in Proposition \ref{compro5.9}. This proves the ``only if part.'' The `if part'' can also be checked by a (simpler) adaptation of the corresponding part of the proof of Proposition \ref{compro5.9}.
\end{proof}

We can now state the analogue of Theorem \ref{comthm5.1} (recall that $  CB(S^n_2) =B(S^n_2)   $ isometrically) .

\begin{thm}\label{comthm9.4}
Let ${\cl O}{\cl H}(\theta,n)$ be the set of $n$-dimensional arcwise $\theta$-0-Hilbertian operator spaces. Let $0<\theta<1$. Consider $CB(\theta,n) =(B_r(S^n_2), B(S^n_2))_\theta$. Then, for any $T\colon \ S^n_2\to S^n_2$ we have
\begin{equation}\label{comeq9.1}
\|T\|_{CB(\theta,n)} = \sup_{X\in \cl{OH}(\theta,n)} \|T_X\|_{B(S^n_2[X])}= \sup_{X\in \cl{OH}(\theta,n)} \|T_X\|_{CB(S^n_2[X])}.
\end{equation}
Moreover, the supremum is unchanged if we restrict it to those $X=X(0)$ with $X(z) \simeq S^n_2$~~$\forall z\in J_\theta$ and $X(z)\simeq M_n$~~$\forall z\notin J_\theta$, where $\simeq$ means here completely isometric.
\end{thm}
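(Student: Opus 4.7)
The plan is to transcribe the proof of Theorem \ref{comthm5.1} into the operator space category, using the ingredients developed in this section: Theorem \ref{comthm9.3} in place of Theorem \ref{comthm5.7}, the Proposition preceding \ref{comlem9.5} in place of Proposition \ref{compro7.3}, Proposition \ref{comlem9.5} in place of Proposition \ref{compro5.9}, and replacing scalar factorizations $\varphi_{ij}=\lambda_i v_{ij}\mu_j$ by the ``amalgamated'' factorizations $\varphi(x)=\mu_1 v(\lambda_1x\lambda_2)\mu_2$ with $\lambda_i,\mu_i\in S_4^n$.

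For the easy direction, fix $X\in\cl{OH}(\theta,n)$, say $X=X(0)$ for a compatible family of $n$-dimensional operator spaces with $X(z)\simeq OH_n$ (completely isometrically) for all $z$ in an arc of measure $\theta$. Since $S_2^n[OH_n]$ is itself completely isometric to an OH-space, we have on that arc
\[
\|T_{X(z)}\|_{CB(S_2^n[X(z)])}\le \|T\|_{CB(S_2^n)},
\]
while on the complementary arc, by the very definition of the completely regular norm,
\[
\|T_{X(z)}\|_{B(S_2^n[X(z)])}\le \|T\|_{B_r(S_2^n)}.
\]
The operator-space identity $S_2^n[X(0)]=S_2^n[X](0)$ from \cite{P6} ensures that $\{S_2^n[X(z)]\}$ is itself a compatible family; the interpolation principle \ref{comsec4.1+} then yields $\|T_X\|_{B(S_2^n[X])}\le \|T\|_{CB(\theta,n)}$. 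Stability under ultraproducts (Remark \ref{ultr}, applied in the operator space setting) extends this to arcwise $\theta$-0-Hilbertian $X$, and a fortiori $\|T_X\|_{CB(S_2^n[X])}\le \|T\|_{CB(\theta,n)}$.

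For the nontrivial inequality I argue by duality. By \cite[p.~98]{BL} together with the operator-space identifications (via the trace pairing)
\[
B_r(S_2^n)^*\simeq N_0(M_n,S_1^n),\qquad CB(S_2^n)^*\simeq \Gamma^*_{OH}(M_n,S_1^n)
\]
stemming from \cite{P3,P6,P7}, one has isometrically
\[
CB(\theta,n)^*\simeq (N_0(M_n,S_1^n),\Gamma^*_{OH}(M_n,S_1^n))_\theta.
\]
Let $\varphi$ lie in the open unit ball of $CB(\theta,n)^*$, and (by density) assume the associated map $M_n\to S_1^n$ is invertible. Choose an analytic function $z\mapsto\varphi(z)$ on the strip $S$ with $\varphi(\theta)=\varphi$ whose boundary values have essential supremum $<1$ in $N_0(M_n,S_1^n)$ on $\partial_0$ and in $\Gamma^*_{OH}(M_n,S_1^n)$ on $\partial_1$. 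Applying parts (ii) and (i) of the Proposition preceding \ref{comlem9.5} together with a measurable selection (see \ref{comsec1.10}), we get measurable functions $\lambda_i(\cdot),\mu_i(\cdot)$ on $\partial S$ with values in the unit ball of $S_4^n$, and a measurable $v(\cdot)\colon M_n\to S_1^n$ such that
\[
\varphi(z)(x)=\mu_1(z)\,v(z)\bigl(\lambda_1(z)\,x\,\lambda_2(z)\bigr)\,\mu_2(z),
\]
with $\text{ess\,sup}_{\partial_0}\|v(z)\|_{B_r(\text{dual})}<1$ and $\text{ess\,sup}_{\partial_1}\gamma_H^{cb}(v(z))<1$. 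Perturbing $\lambda_i,\mu_i$ by $\varepsilon I$ and using the matricial Szeg\"o theorem (as in the proof of Lemma \ref{comlem4.1} and Proposition \ref{comlem9.5}), we may replace them by bounded $M_n$-valued outer functions; setting $V(z)=G_1(z)^{-1}\varphi(z)(\cdot)G_2(z)^{-1}$ with appropriate outer $G_j$ then produces an analytic function $z\mapsto V(z)$ whose boundary values satisfy the boundedness hypotheses of Theorem \ref{comthm9.3} (viewed through the trace-dual identification of $v$ with a map $S_1^n\to M_n$). That theorem yields a factorization
\[
V(\theta)=u_1 u_2,\qquad u_2\colon S_1^n\to X,\ u_1\colon X\to M_n,\quad \|u_1\|_{cb}\|u_2\|_{cb}\le 1
\]
through an arcwise $\theta$-O-Hilbertian space $X\in\cl{OH}(\theta,n)$ (more precisely, one of the restricted form described in the final sentence of the statement).

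The final step is to read off the estimate $|\varphi(T)|\le \|T_X\|_{B(S_2^n[X])}$. Using the factorization above and the maximum principle to ensure $F_j(\theta),G_j(\theta)$ are in the unit ball of $S_4^n$, one expresses $\varphi(T)$ as a trace $\text{tr}(T_X(\xi)\cdot\eta)$ for explicit elements $\xi\in S_2^n[X]$, $\eta\in S_2^n[X^*]$ with $\|\xi\|\le 1$, $\|\eta\|\le 1$, obtained by pairing the $S_4^n$-factors via the H\"older inequality $S_4\cdot S_4\subset S_2$. Hahn--Banach then gives $\|T\|_{CB(\theta,n)}\le \sup_{X\in\cl{OH}(\theta,n)}\|T_X\|_{B(S_2^n[X])}$, and combined with the easy direction (which in fact gave the stronger CB-bound) this forces equality throughout \eqref{comeq9.1}. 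The main obstacle is the verification of the outer-function normalization in the non-commutative setting, i.e.\ checking that after replacing the $\lambda_i,\mu_i$ by the appropriate outer boundary values of matrix-valued analytic functions, $z\mapsto V(z)$ remains analytic with the required boundary bounds; this is precisely the argument already carried out in the proof of Proposition \ref{comlem9.5}, so no new technical ingredient is needed.
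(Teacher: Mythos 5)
Your overall plan — transcribe the argument of Theorem \ref{comthm5.1} into the operator-space setting using Theorem \ref{comthm9.3}, the operator-space versions of Propositions \ref{compro7.2}/\ref{compro7.3}/\ref{compro5.9}, and the ``amalgamated'' factorizations with $S_4^n$-coefficients — is the same as the paper's, and the duality half is sound. But the easy direction has a genuine logical error that leaves the \emph{second} equality in \eqref{comeq9.1} unproved.

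On the arc you correctly obtain a CB-bound (via the observation that $S_2^n[OH_n]$ is OH, which plays the role of the citation to \cite{P7}). On the complementary arc, however, you only invoke the \emph{operator-norm} estimate $\|T_{X(z)}\|_{B(S_2^n[X(z)])}\le \|T\|_{B_r(S_2^n)}$, so the interpolation principle \ref{comsec4.1+} can only yield the $B$-norm bound $\|T_X\|_{B(S_2^n[X])}\le \|T\|_{CB(\theta,n)}$. You then assert ``a fortiori'' that $\|T_X\|_{CB(S_2^n[X])}\le \|T\|_{CB(\theta,n)}$, but this implication goes the wrong way: the CB-norm \emph{dominates} the operator norm, so a bound on the latter does not deliver a bound on the former. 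Combined with the duality direction (which bounds $\|T\|_{CB(\theta,n)}$ by $\sup_X\|T_X\|_{B(S_2^n[X])}$), your argument establishes only the first equality in \eqref{comeq9.1}; the identity with the CB-supremum is not obtained. The paper closes this exactly by using the stronger fact \cite[(2.1)]{P3} that for \emph{any} operator space $X$ one has $\|T_X\|_{CB(S_2^n[X])}\le \|T\|_{B_r(S_2^n)}$ — a CB-bound, not merely an operator-norm bound — on the complementary arc, and the equality of CB- and operator norms for $T_X$ when $X$ is OH \cite{P7} on the arc; interpolating these CB-bounds across the boundary gives $\|T_X\|_{CB(S_2^n[X])}\le \|T\|_{CB(\theta,n)}$ for $\theta$-O-Hilbertian $X$, which is what you need. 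Replacing your complementary-arc estimate by the CB-bound from \cite[(2.1)]{P3}, and replacing the erroneous ``a fortiori'' by an honest interpolation of CB-norms (e.g.\ by interpolating the families $\{M_m(S_2^n[X(z)])\}_{z\in\partial D}$ uniformly in $m$), would repair the gap; as written, the proof does not establish the rightmost identity of \eqref{comeq9.1}.
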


\begin{proof}
The proof of Theorem \ref{comthm5.1} can be adapted to the operator space framework in the style of the preceding proof. This yields the first equality
in \eqref{comeq9.1}. But on one hand,
if $X$ is $OH$ of any dimension, the cb-norm 
of $T_X$ is equal to its norm on $  S^n_2[X] $ (cf. \cite{P7}), and on the other hand, if $X$ is any operator space, the cb-norm of  $T_X$ on $  S^n_2[X] $ is
$\le \|T\|_{ B_r(S^n_2) }$ (cf. \cite[(2.1)]{P3}), therefore by interpolation,
if $X$ is $\theta$-O-Hilbertian, we have  
$$\|T_X\|_{CB(S^n_2[X])}\le \|T\|_{ CB(\theta,n) }.$$
This yields the other equality in  \eqref{comeq9.1}. 
We leave the remaining details to the reader.
\end{proof}
\begin{prop}
In the situation of the preceding theorem, assume that $T$ is a Schur multiplier, i.e. $T=M_\varphi$, as in Corollary \ref{comcor5.8} above. 
Then \eqref{comeq9.1} and   \eqref{comeq5.15} are equal. 
\end{prop}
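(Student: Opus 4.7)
The plan is to adapt the argument for Corollary \ref{cor5.8bis} to the operator-space setting, leveraging the machinery already assembled in \S \ref{comsec9}. Both \eqref{comeq9.1} and \eqref{comeq5.15} are given by interpolation of norms in spaces whose endpoints coincide (up to the appropriate reparametrization) when $T=M_\varphi$ is a Schur multiplier, so the task is to translate the duality/factorization chain between them.

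First I would obtain the easy direction. Suppose $\varphi_{ij}=\langle k_j,h_i\rangle$ with $k_j$ in the unit ball of a $\theta$-Hilbertian Banach space $X$ and $h_i$ in the unit ball of $X^\ast$. By Remark \ref{comrem6.1+} together with the passage from Banach $\theta$-Hilbertian spaces to $\theta$-O-Hilbertian operator spaces (by endowing $X$ with the operator space structure inherited from the defining family of $OH_n$'s, so that $X\in\cl{OH}(\theta,n)$ after an obvious ultraproduct/subquotient step), this factorization induces a matching factorization of $M_\varphi$ in $CB(\theta,n)$. Applied to $T_X$ acting on $S_2^n[X]$ via $e_{ij}\otimes x\mapsto \langle h_i,\cdot\rangle\cdot k_j\otimes x$, Theorem \ref{comthm9.4} yields $\|M_\varphi\|_{CB(\theta,n)}\le \sup_j\|k_j\|_X\sup_i\|h_i\|_{X^\ast}$, so $\eqref{comeq9.1}\le\eqref{comeq5.15}$.

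For the reverse inequality I would go through duality, imitating the proof of Proposition \ref{compro5.9} in the operator space setting. The dual of $CB(\theta,n)$ is, by the classical interpolation duality \cite[p.~98]{BL}, the space $(B_r(S_2^n)^\ast,B(S_2^n)^\ast)_\theta$. The o.s. analog of Proposition \ref{compro7.3} (which, for Schur multipliers, collapses the factorization $v=\mu_1 a(\lambda_1\cdot\lambda_2)\mu_2$ of Proposition \ref{comlem9.5} into the product of diagonal pieces) identifies elements $w$ of the unit ball of this dual, whose pairing against $M_\varphi$ is nonzero, with triples consisting of vectors $\lambda,\mu$ in the unit ball of $\ell_2^n$ and a scalar $n\times n$ matrix $b$ in the unit ball of $(B(\ell^n_1,\ell^n_\infty),\Gamma_H(\ell^n_1,\ell^n_\infty))_\theta$ such that the Schur symbol of $w$ has entries $\lambda_i b_{ij}\mu_j$. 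Running the argument in reverse, a pairing $|\langle w,M_\varphi\rangle|$ close to $\|M_\varphi\|_{CB(\theta,n)}$ produces a factorization $\varphi_{ij}=\langle k_j,h_i\rangle$ in some $X\in\mathcal{H}(\theta,n)$ via Theorem \ref{comthm5.7}, yielding the bound $\eqref{comeq9.1}\ge\eqref{comeq5.15}$.

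The main obstacle, and the step I would treat most carefully, is the reduction of the abstract duality in $CB(\theta,n)$ to the rank-one diagonal form above: one must verify that when $T$ is a Schur multiplier, the dual elements witnessing its norm can be averaged (via the natural conditional expectation onto diagonal-in-matrix-units elements) without increasing the dual norm in either endpoint $B_r(S_2^n)^\ast$ or $B(S_2^n)^\ast$. This is the o.s. counterpart of the well-known fact, used implicitly in Corollary \ref{cor5.8bis}, that averaging over the compact group of diagonal unitaries is a completely contractive projection onto the Schur multiplier subspace of $B(S_2^n)$ and of $B_r(S_2^n)$. Once this projection is in hand, the two interpolation norms for Schur multipliers coincide because they are the interpolation of the same pair of restricted norms, and the identification with \eqref{comeq5.15} follows from Corollary \ref{cor5.8bis}.
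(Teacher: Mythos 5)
The paper's own proof takes a genuinely different and much shorter route; it uses no factorization or duality at all for this statement. It observes that the Schur-multiplier subspace is simultaneously contractively complemented in $B(S^n_2)$ and in $B_r(S^n_2)$, identifies the two restricted endpoint norms on that subspace --- $\|M_\varphi\|_{B(S^n_2)}=\|\varphi\|_{\ell^{n^2}_\infty}$, which is immediate, and $\|M_\varphi\|_{B_r(S^n_2)}=\|\varphi\|_{{\cl M}[n]}$, which is the non-trivial point and is proved via the formula $(CB(S^n_\infty),CB(S^n_1))_\theta=B_r(S^n_p)$ of \cite[Cor.~3.3]{P3} --- and then concludes by interpolation. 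Your plan instead mimics the factorization/duality scheme of Theorem \ref{comthm9.4} and Proposition \ref{compro5.9}, which is considerably more involved than what the paper does.

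Beyond that, two steps in your plan are not sound as written. First, the ``easy direction'' does not follow from the stated data: a factorization $\varphi_{ij}=\langle k_j,h_i\rangle$ through a $\theta$-Hilbertian $X$ factorizes the operator $\varphi\colon\ell^n_1\to\ell^n_\infty$, not the multiplier $M_\varphi\colon S^n_2\to S^n_2$; the displayed rule $e_{ij}\otimes x\mapsto \langle h_i,\cdot\rangle\cdot k_j\otimes x$ is not a description of $(M_\varphi)_X$ on $S^n_2[X]$ (that map sends $e_{ij}\otimes x$ to $\varphi_{ij}\,e_{ij}\otimes x$), and the bound $\|M_\varphi\|_{CB(\theta,n)}\le\sup_j\|k_j\|_X\sup_i\|h_i\|_{X^*}$ does not drop out of Theorem \ref{comthm9.4} this way. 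Second, the concluding assertion that ``the two interpolation norms for Schur multipliers coincide because they are the interpolation of the same pair of restricted norms'' is precisely what needs to be proved: once the contractive projection is in hand (and here you have correctly identified the right tool) one must actually compute the restricted endpoint norms, and the hard one, $\|M_\varphi\|_{B_r(S^n_2)}=\|\varphi\|_{{\cl M}[n]}$, is absent from your plan. Note also that the pair of endpoint values on $(B_r(S^n_2),B(S^n_2))$ comes out to $({\cl M}[n],\ell^{n^2}_\infty)$, while the pair arising in Corollary \ref{cor5.8bis} from the multiplier action of $M_\varphi$ on $(B_r(\ell^n_2),B(\ell^n_2))$ is $(\ell^{n^2}_\infty,{\cl M}[n])$; the two are reversed, so even granting the projection the reduction to Corollary \ref{cor5.8bis} needs more care than your plan supplies.
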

\begin{proof} Clearly, there is a simultaneously contractive projection
 on $ B(S^n_2)  $  and on $ B_r(S^n_2)  $ onto the subspace formed of all the Schur multipliers.  Obviously, on one hand
 $ \|  M_\varphi\|_{B(S^n_2)}=\| \varphi\|_{\ell_\infty^{n^2}},$
 and on the other hand we claim that
 $$  \|  M_\varphi\|_{B_r(S^n_2)}=\| \varphi\|_{{\cl M}[n]}.$$
 From the latter claim, the result becomes clear by general
 interpolation arguments, so it remains only to check this claim.
 We will actually show that, for any $1\le p\le \infty$, we have
 $$  \|  M_\varphi\|_{B_r(S^n_p)}=\| \varphi\|_{{\cl M}[n]}.$$
 To verify this, recall from \cite[Cor. 3.3]{P3}, that, if $p^{-1}=\theta$, 
 $$(CB(S^n_\infty),   CB(S^n_1))_\theta =
 B_r(S^n_p).$$
 But here again the Schur multipliers are simultaneously contractively complemented for the pair $(CB(S^n_\infty),   CB(S^n_1))$, therefore the Schur multipliers in
 $B_r(S^n_p)$ can be identified isometrically with the
 space $(Y_0,Y_1)_\theta$, where $Y_0$
 (resp. $Y_1$) denotes the subspace of $ CB(S^n_\infty)  $
 (resp. $CB(S^n_1)$)  formed by    the Schur multipliers, equipped
 with the induced norm. 
 The proof is then concluded by observing that
 for all $\varphi$ we have
 $$\| M_\varphi   \| _{Y_0}=\| M_\varphi   \| _{CB(S^n_\infty)}
 =\| \varphi\|_{{\cl M}[n]} \quad
 {\rm and}  \quad
  \| M_\varphi   \| _{Y_1}=\| M_\varphi   \| _{CB(S^n_1)}
 =\| \varphi\|_{{\cl M}[n]}.$$
\end{proof}

The next three statements are parallel to those of Section \ref{comsec4bis} and the proofs there are easy to adapt.

\begin{rem}\label{comrem9.5}
 Consider $\varphi$ in $B(S^n_2)$. We have
\[
 \|\varphi\|_{CB(\theta,n)^*} = \inf\{\|\lambda_1\|_{S^n_4} \|\lambda_2\|_{S^n_4} \|\mu_1\|_{S^n_4} \|\mu_2\|_{S^n_4} \|v\|_{(CB(S^n_1,M_n), \Gamma_{OH}(S^n_1,M_n))_\theta}\}
\]
where the infimum runs over all factorizations of $\varphi$ of the form $\varphi(x) = \mu_1v(\lambda_1x \lambda_2)\mu_2$, $\forall x\in S^n_2$.
\end{rem}

Let $({\cl M},\tau), ({\cl M}',\tau')$ be a pair of semifinite hyperfinite von Neumann algebras equipped with semifinite, normal faithful traces. For any operator space $E$, the operator space $L_p[\tau;E]$ was defined in \cite[Chapter 3]{P6}, and the regular operators from $L_2(\tau)$ to $L_2(\tau')$ were introduced in \cite{P3}. We denote by $B_r(L_2(\tau), L_2(\tau'))$ the normed space of regular maps $T\colon \ L_2(\tau)\to L_2(\tau')$ (this is denoted by $B_r(L_2(\tau), L_2(\tau'))$ in \cite{P3}). We have then

\begin{cor}\label{comcor9.6}
Let $B_r=B_r(L_2(\tau),L_2(\tau'))$ and $B = B(L_2(\tau), L_2(\tau'))$. Then the space $(B_r,B)^\theta$ consists of those $T$ in $B$ such that $T_X$ is bounded for any $\theta$-O-Hilbertian operator space $X$. Moreover,
\[
 \|T\|_{(B_r,B)^\theta} = \sup_{n,X\in OH(\theta,n)} \|T_X\| = \sup\|T_X\|_{cb}=\sup\|T_X\|
\]
where the last two sup run over all arcwise $\theta$-O-Hilbertian operator spaces $X$.  
\end{cor}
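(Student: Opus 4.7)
The plan is to mirror the proof of Corollary \ref{comcor4.2}, replacing finite sub-$\sigma$-algebras with finite-dimensional $*$-subalgebras of $\mathcal{M}$ and $\mathcal{M}'$ supplied by hyperfiniteness. Specifically, I would choose increasing nets of finite-dimensional $*$-subalgebras $\mathcal{M}_\alpha \nearrow \mathcal{M}$ and $\mathcal{M}'_\beta \nearrow \mathcal{M}'$ with trace-preserving normal conditional expectations $\mathcal{E}_\alpha\colon \mathcal{M} \to \mathcal{M}_\alpha$, $\mathcal{E}'_\beta\colon \mathcal{M}' \to \mathcal{M}'_\beta$. Each $\mathcal{E}_\alpha$ extends to a complete contraction on $L_2(\tau)$ with its OH structure (cf.\ \cite{P6}), and being c.p.\ and trace-preserving, it is completely regular of regular norm $\le 1$ (cf.\ \cite{P3}). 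Consequently $T \mapsto \mathcal{E}'_\beta T \mathcal{E}_\alpha$ is simultaneously contractive on $B_r$ and on $B$, so by interpolation a contraction on $(B_r,B)^\theta$; together with the strong convergence $\mathcal{E}_\alpha \to Id$, $\mathcal{E}'_\beta \to Id$ (and analogous statements for the extensions $T_X$), this should give
\[
\|T\|_{(B_r,B)^\theta} = \sup_{\alpha,\beta} \|\mathcal{E}'_\beta T \mathcal{E}_\alpha\|_{(B_r,B)^\theta},
\]
and similarly $\|T_X\| = \sup_{\alpha,\beta} \|(\mathcal{E}'_\beta T \mathcal{E}_\alpha)_X\|$ for every operator space $X$, and the same for the cb norm.

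Each truncation $\mathcal{E}'_\beta T \mathcal{E}_\alpha$ acts between finite-dimensional non-commutative $L_2$-spaces, which are direct sums $\bigoplus_i S^{n_i}_2$ with rescaled traces. After approximating the weights by rational numbers and embedding diagonally into some $S^N_2$, the problem reduces to the matricial setting of Theorem \ref{comthm9.4}, which yields the equalities
\[
\|\mathcal{E}'_\beta T \mathcal{E}_\alpha\|_{CB(\theta,N)} = \sup_{X \in \mathcal{OH}(\theta,N)} \|(\mathcal{E}'_\beta T \mathcal{E}_\alpha)_X\| = \sup_{X \in \mathcal{OH}(\theta,N)} \|(\mathcal{E}'_\beta T \mathcal{E}_\alpha)_X\|_{cb}.
\]
Since each truncation lies in $B_r \cap B$, the ``upper'' interpolation norm $(B_r,B)^\theta$ coincides with the ``lower'' $(B_r,B)_\theta$ on such elements by \cite{Be}. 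Taking the supremum over $\alpha,\beta$ then produces the desired identities. For the passage to infinite-dimensional $X$, the inequality $\sup\|T_X\| \le \sup\|T_X\|_{cb}$ is trivial, while the reverse direction follows from the finite-dimensional equality in Theorem \ref{comthm9.4} combined with a standard ultraproduct/approximation argument, using that the class of arcwise $\theta$-O-Hilbertian spaces is stable under ultraproducts.

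The main obstacle will be verifying that the conditional expectations behave as simultaneous complete contractions for the OH-structure on $L_2$ and for the completely-regular norm: the first is essentially built into the construction of non-commutative vector-valued $L_p$-spaces in \cite{P6}, while the second is a consequence of complete positivity and trace preservation as in \cite{P3}. A secondary subtlety is handling arbitrary trace weights in the finite-dimensional reduction; rational approximation and block-diagonal embedding, as in the commutative analogue of Corollary \ref{comcor4.2}, should suffice.
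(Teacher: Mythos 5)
Your proposal follows exactly the route the paper itself indicates (it states that these operator-space statements are ``parallel to those of Section \ref{comsec4bis} and the proofs there are easy to adapt''): replace finite $\sigma$-subalgebras with finite-dimensional $*$-subalgebras supplied by hyperfiniteness, observe that the trace-preserving conditional expectations are simultaneously complete contractions on $B_r$ and $B$ so that truncation is contractive for the interpolated norms, and reduce via rational weights and block-diagonal embedding to Theorem \ref{comthm9.4}. This is correct and is essentially the paper's argument; the only cosmetic remark is that for the truncated (finite-dimensional) operators the identity $(B_r,B)^\theta=(B_r,B)_\theta$ is immediate from finite dimensionality rather than needing the full strength of \cite{Be}.
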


Let $X$ be an operator space. For any $\vp>0$, we denote by $\Delta^0_X(\vp)$ the smallest number $\delta$ such that for any $n$ and any $T\colon \ S^n_2\to S^n_2$ with $\|T\|_{\text{reg}}\le 1$ and $\|T\|\le \vp$ we have $\|T_X\|\le \delta$.

\begin{cor}\label{comcor9.7}
Assume that an operator space $X$ satisfies $\Delta^0_X(\vp)\in O(\vp^\alpha)$ for some $\alpha>0$. Then for any $0<\theta<\alpha$, $X$ is completely isomorphic to a subquotient of a $\theta$-O-Hilbertian space.
\end{cor}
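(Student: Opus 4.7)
The proof plan follows the template of Corollary~\ref{comcor4.3}, transposed to the operator-space setting. The starting observation is that, by applying the hypothesis $\Delta^0_X(\vp)\le K\vp^{\alpha}$ to $T/\|T\|_{\text{reg}}$ and rescaling, one obtains the bilinear estimate
\[
\|T_X\|_{B(S_2^n[X])}\le K\,\|T\|^{\alpha}\|T\|_{\text{reg}}^{1-\alpha}\qquad\forall n\ge 1,\ \forall T\in B(S_2^n).
\]
This is precisely the bound that makes $T\mapsto T_X$ continuous from the real interpolation space $(B_r(S_2^n),B(S_2^n))_{\alpha,\infty}$ into $B(S_2^n[X])$, cf.\ \cite[p.~58]{BL}. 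Combining with the classical embedding of complex into real interpolation \cite[p.~102]{BL}, we obtain, for any $\theta<\alpha$, a constant $C=C(\alpha,\theta)$ such that
\[
\|T_X\|\le CK\,\|T\|_{(B_r(S_2^n),B(S_2^n))_\theta}=CK\,\|T\|_{CB(\theta,n)}.
\]

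By Theorem~\ref{comthm9.4}, $\|T\|_{CB(\theta,n)}=\sup_Y\|T_Y\|=\sup_Y\|T_Y\|_{cb}$, where $Y$ ranges over $n$-dimensional arcwise $\theta$-O-Hilbertian spaces, so the preceding bound reads
\[
\|T_X\|\le CK\sup_{Y\in\cl{OH}(\theta,n)}\|T_Y\|_{cb}.
\]
The conclusion will then follow from an operator-space version of the Hernandez--Kwapie\'n theorem (the analogue of Theorem~\ref{comthm3.1bis}) applied to the class $\cl B$ of $\theta$-O-Hilbertian operator spaces: since this class is stable under $S_2$-direct sums and ultraproducts, the abstract duality principle asserts that a bound of this form at the cb level forces $X$ to be completely isomorphic to a subquotient of an ultraproduct of $S_2$-direct sums of $\theta$-O-Hilbertian spaces, and the stability of $\cl B$ makes that latter object itself $\theta$-O-Hilbertian.

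The main obstacle is the upgrade from the operator-norm estimate $\|T_X\|\le CK\|T\|_{CB(\theta,n)}$ to its cb counterpart $\|T_X\|_{cb}\le CK\|T\|_{CB(\theta,n)}$, which is what actually feeds into the operator-space Hernandez principle. The natural way to achieve this is to re-run the whole real/complex interpolation argument with $X$ replaced by $X\otimes_{\min} M_N$ for each $N$: using the identification $S_2^n[X\otimes_{\min}M_N]\simeq S_2^n[X][M_N]$ together with $\|T\otimes id_{M_N}\|_{\text{reg}}=\|T\|_{\text{reg}}$, one reduces the hypothesis applied to the amplifications $X\otimes_{\min} M_N$ back to the original hypothesis on $X$ (with the same constant $K$), so that the preceding chain of inequalities holds uniformly in $N$; taking the supremum over $N$ produces the required cb bound. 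Once this is in place, invoking the operator-space Hernandez principle concludes the proof.
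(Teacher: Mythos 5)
Your first three steps—the bilinear estimate $\|T_X\|\le K\|T\|^\alpha\|T\|^{1-\alpha}_{\mathrm{reg}}$, the passage to a real-interpolation bound, and the complex-into-real embedding—are exactly the paper's route (it is the same chain as the proof of Corollary~\ref{comcor4.3}, which the paper invokes by analogy for \ref{comcor9.7}). Two remarks, one minor and one more substantial.

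Minor: the bilinear estimate gives continuity from the \emph{$J$-method} space $(B_r(S^n_2),B(S^n_2))_{\alpha,1}$, not from $(B_r(S^n_2),B(S^n_2))_{\alpha,\infty}$; it is the $(\alpha,1)$ space that then embeds $(B_0,B_1)_\theta$ for $\theta<\alpha$. Just replace $\infty$ by $1$.

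More substantial: your entire final paragraph about ``the main obstacle'' is a red herring, and the fix you propose is both unnecessary and problematic. Theorem~\ref{comthm9.9} is stated so that condition (ii) is the plain operator-norm bound $\|T_X\|\le C$ for $T$ in the unit ball of $(B_r(S^n_2),B(S^n_2))_\theta$, for all $n$; its conclusion (i) nevertheless yields a \emph{complete} $C$-isomorphism. The cb-level information is already encoded in the quantifier over $n$: in the operator-space Hernandez principle the formula for $\gamma_{\theta OH}(u)$ (stated in the remark after Theorem~\ref{comthm9.9}) is a supremum over $n$ of \emph{operator norms} of $T\otimes u$ on $S^n_2[\,\cdot\,]$, not of cb-norms, and the $S^n_2$-amplification does the work of matricial amplification. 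So once you have $\|T_X\|\le CK\|T\|_{CB(\theta,n)}$ for all $n$, you may apply Theorem~\ref{comthm9.9} directly and you are done; you do not need $\|T_X\|_{cb}\le CK\|T\|_{CB(\theta,n)}$. As for the proposed workaround with $X\otimes_{\min}M_N$: the quantity $\Delta^0_X(\vp)$ controls only $\|T_X\|$, so passing to $\|T_{M_N(X)}\|$ (which is precisely $\|T_X\otimes id_{M_N}\|$ under the relevant Fubini identification) would require a cb bound on $T_X$—exactly what you were trying to establish. The reduction you claim, that the hypothesis on $X\otimes_{\min}M_N$ follows with the same constant from that on $X$, is not justified and appears circular. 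Drop the paragraph and cite Theorem~\ref{comthm9.9} at the point where you currently invoke the ``abstract duality principle.''
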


\begin{thm}\label{comthm9.9}
 Let $C\ge 1$ be a constant. The following properties of an operator space $X$ are equivalent:
\begin{itemize}
\item[\rm (i)] There is an operator space $Y$ that is a quotient of a subspace of an arcwise $\theta$-O-Hilbertian space and a complete isomorphism $w\colon \ X\to Y$ such that $\|w\|_{cb}\|w^{-1}\|_{cb}\le C$.
\item[\rm (ii)] $\|T_X\|\le C$ for any $n$ and any $T$ such that $\|T\|_{(B_r(S^n_2), B(S^n_2))_\theta}\le 1$. 
\end{itemize}

\end{thm}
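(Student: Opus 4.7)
My plan is to follow the same strategy as the proof of Theorem \ref{comthm5.6} in the commutative case, replacing each ingredient by its operator space analogue. The key inputs are Theorem \ref{comthm9.4}, the stability of the relevant class under $OH$-direct sums, and an operator space version of Hernandez's Theorem \ref{comthm3.1bis}. First I would check that the class of arcwise $\theta$-O-Euclidean spaces is stable under direct sums with $OH$ structure: given $X_j = X_j(0)$ ($j=1,2$) with compatible families $\{X_j(z)\}$ such that $X_j(z)$ is completely isometric to $OH$ for $z\in J_\theta$, the family $\{X_1(z)\oplus_{OH} X_2(z)\mid z\in\partial D\}$ remains compatible and equals $OH$ on $J_\theta$, so $X_1\oplus_{OH} X_2$ is again arcwise $\theta$-O-Euclidean. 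The use of a common arc $J_\theta$ for the two families is essential here, which is why the arcwise version is well-adapted to the argument.

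The implication (i)$\Rightarrow$(ii) I expect to be immediate from Theorem \ref{comthm9.4}: if $\|T\|_{(B_r(S^n_2), B(S^n_2))_\theta}\le 1$ and $Y$ is any arcwise $\theta$-O-Hilbertian space, then $\|T_Y\|_{cb}\le 1$; passing to a subspace and then to a quotient preserves this bound in the operator space sense, and composing with the complete isomorphism $w$ with $\|w\|_{cb}\|w^{-1}\|_{cb}\le C$ gives $\|T_X\|_{cb}\le C$.

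For the converse (ii)$\Rightarrow$(i), I would invoke the operator space analogue of Theorem \ref{comthm3.1bis} at the value $p=2$ with $S_2$ in place of $\ell_2$: for any class ${\cl B}$ of operator spaces stable under finite $OH$-direct sums, a space $X$ is $C$-completely isomorphic to a subquotient of an ultraproduct of $OH$-sums of members of ${\cl B}$ iff $\|T_X\|_{cb}\le C$ for every $n\ge 1$ and every $T\colon S^n_2\to S^n_2$ satisfying $\sup_{Y\in{\cl B}}\|T_Y\|_{cb}\le 1$. Applying this to the class ${\cl B}$ of arcwise $\theta$-O-Euclidean spaces, and rewriting the right-hand supremum as $\|T\|_{(B_r(S^n_2),B(S^n_2))_\theta}$ thanks to Theorem \ref{comthm9.4}, the hypothesis (ii) is precisely what is needed, and the resulting structural conclusion is (i), since ultraproducts of arcwise $\theta$-O-Euclidean spaces are arcwise $\theta$-O-Hilbertian by definition.

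The principal obstacle is to verify the operator space version of Hernandez's theorem that underlies the (ii)$\Rightarrow$(i) step. Its proof should parallel the commutative argument of Lemmas \ref{rob1}--\ref{rob2}, using the operator space duality between $CB(S^n_1,M_n)$ and $N_0(M_n,S^n_1)$ recalled in \S\ref{comsec9} in place of scalar trace duality, replacing scalar diagonal operators by the left/right multiplications by elements of $S^n_4$ exactly as they arise in Proposition \ref{comlem9.5}, and concluding with a Hahn--Banach argument in the operator space framework. All the prerequisites (factorization through $OH$, operator space projective tensor product, non-commutative $L_2$-spaces) are available from \cite{P6,P7}, so the adaptation, while requiring some care, is essentially routine.
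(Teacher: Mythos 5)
Your proposal follows essentially the same route as the paper: the author explicitly models \S\ref{comsec9} on \S\ref{comsec4bis}, leaves Theorem \ref{comthm9.9} without a written-out proof because it parallels Theorem \ref{comthm5.6}, and the remark following Theorem \ref{comthm9.9} records precisely the operator-space Hernandez-type identity you identify as the key ingredient, attributed to ``a simple adaptation of \cite{He}.'' Your decomposition into (i)$\Rightarrow$(ii) via Theorem \ref{comthm9.4}, stability under $OH$-sums, and (ii)$\Rightarrow$(i) via the operator-space analogue of Theorem \ref{comthm3.1bis} is exactly the intended argument.
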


\begin{rk}
 Let $u\colon \ Z\to Y$ be a linear map between operator spaces. Then
\[
\gamma_{\theta OH}(u) = \sup_n\sup\{\|T\otimes u\colon \ S^n_2\lbrack Z\rbrack \to S^n_2\lbrack Y\rbrack\|\}
\]
where the second sup  runs over all $T\colon \ S^n_2\to S^n_2$ such that $\sup\{\|T_X\|\mid X\  \theta$-O-Hilbertian$\}\le 1$. This is easy to check by a simple adaptation
of \cite{He}.
\end{rk}

The $cb$-distance $d_{cb}(E,F)$ between two operator spaces is defined by:
\[
 d_{cb}(E,F) = \inf\{\|u\|_{cb}\|u^{-1}\|_{cb}\}
\]
where the infimum runs over all complete isomorphisms $u\colon \ E\to F$. If there exists no such $u$, we set $d_{cb}(E,F)=\infty$.

\begin{rk}
An operator space may be Hilbertian as a Banach space while being $\theta$-O-Hilbertian for no $0<\theta<1$. The simplest examples are the spaces $R = \overline{\text{span}}[e_{1n}] \subset B(\ell_2)$ and $C = \overline{\text{span}}[e_{n1}] \subset B(\ell_2)$. The fact that they are not $\theta$-O-Hilbertian follows from the observation any $n$-dimensional $\theta$-O-Euclidean space $E_n$ must satisfy
\begin{equation}\label{comeq9.19}
 d_{cb}(E_{n},OH_n) \le(\sqrt n)^\theta
\end{equation}
while it is known that
\begin{equation}\label{comeq9.20}
d_{cb}(R_n, OH_n) = d_{cb}(C_n, OH_n) = \sqrt n.
\end{equation}
We refer the reader  \cite[p. 219]{P5} for \eqref{comeq9.20}, and finally to \cite[Th. 6.9]{P6} and a simple interpolation argument for \eqref{comeq9.19}
\end{rk}

\begin{rk} There is no significant difficulty to extend the results of
\S \ref{comsec8} to the operator space setting but we choose to  skip the details.
\end{rk}

\section{Generalizations (Operator space case)}\label{sec13}

In this section, we briefly describe the extension of the results in \S \ref{comsec12} to the operator space case.
The extension of statement \ref{comsec12}.n will be numbered \ref{sec13}.n. All the background for this extension can be found in \cite[chapter 7]{P6} to which we refer the reader. The main difference from the Banach space case is the lack of local reflexity of (general) operator spaces:\ in the Banach case, any operator $u\colon \ E\to F$ such that $\gamma_{SQL_p}(u)<1$ can be factorized with constant $<1$ through a subspace of quotient of $\ell^m_p$ for some $m$ finite, but in the operator space analogue, for $u\colon \ E\to F$ with $\gamma_{SQOL_p}(u)<1$ we can only assert that there is a subquotient of an ultraproduct of the family $\{S^m_p\mid m\ge 1\}$, through which $u$ factors,   with $cb$-norms $<1$. However, if $E=S^n_1$ (or merely a quotient of $S^n_1$) and $F=M_n$ (or merely a subspace of $M_n$), then we can replace this ultraproduct by $S^m_p$ for some finite $m$. See \cite[pp.~81--82]{P6} (and also \cite{Ju}) for clarifications.

Let $\delta_{cb}(E,F) = {\log} d_{cb}(E,F)$. Let $OS_n$ denote the set of all $n$-dimensional  operator spaces, where, by convention, we identify two spaces if they are completely isometric. Then $(OS_n, \delta_{cb})$ is a complete (non-separable) metric space, see e.g.\ \cite[chapter 21]{P5}. For any subset ${\cl B} \subset \bigcup_n OS_n$  and any linear mapping $u\colon \ E\to F$ between finite dimensional operator spaces, we set
\[
 \gamma_{c{\cl B}}(u) = \inf\{\|u_1\|_{cb} \|u_2\|_{cb}\}
\]
where the infimum runs over all $X$ in ${\cl B}$ and all possible factorizations
$
 E \overset{\sst u_2}{\longrightarrow} X \overset{\sst u_1}{\longrightarrow} F $  of $u$ through  $X$.
We will say that ${\cl B}$ is an $SQO(p)$ class if it contains all the subquotients of ultraproducts of spaces of the form $\ell_p(\{S_p[X_i]\mid i\in I\})$ with $X_i\in {\cl B}$ for all $i$ in $I, I$ being an arbitrary finite set. We also always assume ${\bb C}\in {\cl B}$. Then it is easy to check that $u\to \gamma_{c{\cl B}}(u)$ is a norm on $CB(E,F)$ (say with $E,F$ finite dimensional). We denote by $\Gamma_{c{\cl B}}(E,F)$ the resulting normed space. We will denote by $SQOL_p$ the class of all subquotients of ultraproducts of $\{S^m_p\mid m\ge 1\}$. Clearly this is an example of $SQO(p)$-class.

For any $z$ in $\partial D$, we give ourselves $p(z)$ in $[1,\infty]$ and a subset ${\cl B}(z) \subset OS_n$. We will assume $z\to p(z)$ Borel measurable and $z\mapsto {\cl B}(z)$ ``measurable'' in the following sense:\ for any operator $u\colon \ E\to F$ between two finite dimensional operator spaces, the mapping $z\mapsto \gamma_{{\cl B}(z)}(u)$ is measurable on $\partial D$.
We assume that ${\cl B}(z)$ is an $SQO(p(z))$-class for any $z$ in $\partial D$.

We then define $p(\xi)$ and ${\cl B}(\xi)$, for $\xi$ in $D$, exactly as in \S \ref{comsec12}. Given $1\le p\le \infty$ and an $SQO(p)$-class ${\cl B} \subset OS_{n^2}$, we denote by  $c\beta(p,{\cl B})$ the space $CB(S^n_p)$ equipped with the norm
\begin{equation}
 \|T\|_{c\beta(p,{\cl B})} = \sup_{X\in {\cl B}} \|T_X\colon \ S^n_p[X] \to S^n_p[X]\|_{cb}.\tag*{$\forall T\in CB(S^n_p)$}
\end{equation}
We prefer not to worry about measurability questions here, so we will assume that $z\mapsto {\cl B}(z)$ is chosen so that $z\mapsto \|T\|_{c\beta(p(z), {\cl B}(z))}$ is measurable for any $T$ in $CB(S^n_p)$. Then if we set
\[
 c\beta(z) = c\beta(p(z), {\cl B}(z))
\]
the family $\{c\beta(z)\mid z\in \partial D\}$ is compatible, and hence extends, by complex interpolation, to a family $\{c\beta(\xi)\mid \xi\in\ovl D\}$. The next statement identifies $c\beta(\xi)$.

\begin{thm}\label{thm17.1}
 For any $\xi$ in $D$ we have a completely  isometric identity $c\beta(\xi) = c\beta(p(\xi), {\cl B}(\xi))$.
\end{thm}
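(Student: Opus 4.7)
The strategy will parallel that of Theorem \ref{comthm12.1}, carried out at the operator-space (c.b.) level. It splits into a ``direct'' half (the inequality $\|T\|_{c\beta(p(\xi),{\cl B}(\xi))}\le\|T\|_{c\beta(\xi)}$) and a ``dual'' half (the reverse inequality), and everything must be done with $M_k$-valued versions in order to upgrade from isometric to completely isometric.

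First, I would set up the key interpolation identity for operator-space $L_p$. Given a compatible family $\{X(z)\mid z\in\partial D\}$ of operator spaces with $X(z)\in{\cl B}(z)$ a.e., let $L(z)=S^n_{p(z)}[X(z)]$. By \cite[Chapter 7]{P6} (the operator-space version of \eqref{rep1}) we have completely isometrically
\[
L(\xi)=S^n_{p(\xi)}[X(\xi)]\qquad \forall\xi\in D,
\]
and moreover $M_k(L(\xi))=S^n_{p(\xi)}[M_k(X(\xi))]$, etc. With this in hand, the direct inequality follows by Stein-type interpolation: if $T\in M_k(c\beta(\xi))$ has norm $<1$, a measurable selection (see \ref{comsec1.10}) together with the definition of $M_k$ applied to an $H^\infty_\#$-representative $T(\cdot)$ yields, for a.e.\ $z\in\partial D$, a bound $\|T(z)\colon L(z)\to M_k(L(z))\|_{cb}<1$ uniformly in the choice of $X(z)\in{\cl B}(z)$; applying the basic interpolation principle \ref{comsec4.1+} (in its operator-space form, i.e.\ to the compatible family of $cb$-norms) and using the identification of $L(\xi)$ above gives $\|T\|_{M_k(c\beta(p(\xi),{\cl B}(\xi)))}<1$.

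For the reverse inequality, I would use duality together with the operator-space analogue of Proposition \ref{compro5.9}. The starting point is the operator-space factorization: a linear map $v\colon S^n_p\to S^n_{p'}$ has $\gamma^*_{c{\cl B}}(v)\le 1$ iff it can be written $v(x)=\mu_1 a(\lambda_1 x\lambda_2)\mu_2$ with $\lambda_1,\lambda_2,\mu_1,\mu_2$ in the unit ball of $S^n_{2p'}$ (the appropriate analogue of $\ell^n_{p'}$ in the $S_p$ context) and $a$ in the unit ball of $c\beta(p,{\cl B})^*{}^{{\circ}}$, i.e.\ with $\|a\|_{c\beta(p,{\cl B})}\le 1$. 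This is proved exactly as Proposition \ref{comlem9.5}, but with the class $OH$ replaced by ${\cl B}$, using the complete-boundedness factorization through $\ell_p(\{S_p[X_i]\})$ spaces which is built into the definition of an $SQO(p)$-class. By general complex-interpolation duality (\cite[p.~98]{BL}) one then has the identity $(c\beta(\xi))^*=(c\beta(z)^*)(\xi)$; taking an element $w$ in the open unit ball of $M_k(c\beta(\xi)^*)$, the matricial Szeg\H{o} argument (as in the proof of Proposition \ref{comlem9.5}) produces bounded analytic matrix-valued outer functions $F_1,F_2,G_1,G_2$ on $D$ with prescribed moduli $\lambda_j,\mu_j$ on $\partial D$, and an analytic $a(\cdot)$ such that $w(z)=\mu_1(z)a(z)(\lambda_1(z)\cdot\lambda_2(z))\mu_2(z)$ a.e.\ on $\partial D$ with $\|a(z)\|_{c\beta(z)^*}<1$. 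Specialising at $z=\xi$ and pairing with $T$ in the unit ball of $c\beta(p(\xi),{\cl B}(\xi))$ yields $|\langle w,T\rangle|\le 1$, proving the reverse inequality.

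The main obstacle, as signalled at the start of \S\ref{sec13}, is the failure of local reflexivity at the operator-space level: in the Banach setting one could approximate an ultraproduct factorization by a factorization through a finite-dimensional $\ell^m_p$, but here one is forced to keep ultraproducts of the $S^m_p[X_i]$ in the picture. This costs nothing for the ``direct'' half (interpolation passes to ultraproducts), but in the ``dual'' half the factorization of $w$ must be engineered so that the intermediate operator $a(z)$ lands in the predual of the full $SQO(p(z))$-class (ultraproducts included), not just in its finite-dimensional part; this is what forces the measurable-selection step to be carried out on the ultraproduct level, exactly as in the argument for \cite[Theorem~1.2.3.7]{Ju}. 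Once this point is handled, tensoring the whole argument with $M_k$ (which is harmless because $M_k(S^n_p[X])=S^n_p[M_k(X)]$ completely isometrically) yields the complete-isometric statement.
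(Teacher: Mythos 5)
Your plan matches the route the paper itself indicates: the paper gives no detailed proof of Theorem \ref{thm17.1}, stating only that the results of \S\ref{comsec12} ``extend with routine modifications'' using the machinery of \cite[chapter~7]{P6}, and your two halves (the easy half via the interpolation identity $S^n_{p(\xi)}[X(\xi)]=L(\xi)$ plus the Stein-type principle, the hard half via duality and the analogue of Proposition \ref{comlem9.5}) together with the remark on local reflexivity reproduce exactly that outline. Two small technical corrections: the factorized map should be $v\colon M_n\to S^n_1$ (not $v\colon S^n_p\to S^n_{p'}$) with $\lambda_1,\lambda_2$ in the unit ball of $S^n_{2p}$ and $\mu_1,\mu_2$ in that of $S^n_{2p'}$, mirroring the $S^n_4$'s of Proposition \ref{comlem9.5} when $p=2$; and the identity ``$M_k(S^n_p[X])=S^n_p[M_k(X)]$'' is not literally correct as stated --- the complete isometry is obtained by running the whole argument at the $S^m_p$-levels (the analogue of $M_k$-levels for the $CB(S^n_p)$-norm) using the formula $S^n_p[S^m_p[X]]=S^{nm}_p[X]$ from \cite{P6}, not by a naive commutation of $M_k$ with $S^n_p[\,\cdot\,]$.
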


\begin{cor}\label{cor17.2}
 Let $1\le p_0,p_1\le \infty$ and $0<\theta<1$. Define $p$ by $p^{-1} = (1-\theta) p^{-1}_0 + \theta p^{-1}_1$. Then the space $(CB(S^n_{p_0}), CB(S^n_{p_1}))_\theta$ coincides with the space $CB(S^n_p)$ equipped with the norm
\begin{equation}
\|T\| = \sup\{\|T_X\colon \ S^n_p[X] \to S^n_p[X]\|\tag*{$\forall T\in CB(S^n_p)$}
\end{equation}
where the supremum runs over all $X$ that can be written as $X=X(0)$ for some compatible family of $n^2$-dimensional operator spaces $\{X(z)\mid z\in \partial D\}$ such that $m(\{z\in \partial D\mid X(z)\in SQOL_{p_0}\}) = 1-\theta$ and $m(\{z\in\partial D\mid X(z)\in SQOL_{p_1}\}) = \theta$. Let $\Omega_o(n^2)$ denote the class of all such spaces $X$. Then the space $(CB(S_{p_0}), CB(S_{p_1}))^\theta$ can be identified with the subspace of $CB(S_p)$  formed of all $T$ such that
\begin{equation}\label{eq17.1}
T\mapsto \sup\{\|T_X\colon \ S_p[X] \to S_p[X]\|\mid X\in\Omega_o(n^2), n\ge 1\}
\end{equation}
is finite, equipped with the norm \eqref{eq17.1}, provided we make the convention that if either $p_0=\infty$ or $p_1=\infty$, $CB(S_{p_0})$ or $CB(S_{p_1})$ are replaced by $CB(S_\infty, B(\ell_2))$.
\end{cor}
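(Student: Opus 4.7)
The plan is to specialize Theorem \ref{thm17.1} to a two-valued boundary family. Fix a measurable partition $\partial D = \partial_0 \cup \partial_1$ with $m(\partial_0) = 1-\theta$ and $m(\partial_1) = \theta$, set $p(z) = p_j$ on $\partial_j$, and let ${\cl B}(z) = SQOL_{p_j} \cap OS_{n^2}$ on $\partial_j$; this is an $SQO(p_j)$-class, and the measurability hypothesis needed for Theorem \ref{thm17.1} is automatic since both $p(\cdot)$ and ${\cl B}(\cdot)$ take only two values on a measurable partition. First I would verify the isometric identity $c\beta(z) = CB(S^n_{p(z)})$ for almost every $z$: for $X \in SQOL_{p}$, $\|T_X\|_{cb}$ on $S^n_p[X]$ is dominated by $\|T\|_{CB(S^n_p)}$ (the operator space analogue of the standard subquotient inequality for $L_p$-valued maps), and conversely the supremum over the class recovers the full cb-norm by testing on $S^m_p$-valued spaces embedded into $OS_{n^2}$ via ultraproducts, in the spirit of Hernandez's Theorem \ref{comthm3.1}.

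Once this identification is in place, two-valued interpolation of families (paragraph \ref{comsec4.0}) gives $c\beta(0) = (CB(S^n_{p_0}), CB(S^n_{p_1}))_\theta$, while Theorem \ref{thm17.1} asserts $c\beta(0) = c\beta(p(0), {\cl B}(0))$. The harmonic-mean formula for $p(0)$ yields $p(0)^{-1} = (1-\theta)p_0^{-1} + \theta p_1^{-1} = p^{-1}$, and unwinding the definition of ${\cl B}(\xi)$ at $\xi = 0$ (from \S \ref{comsec12}) shows that it consists exactly of the spaces $X = X(0)$ appearing in the corollary's class $\Omega_o(n^2)$. Combining these identifications yields the first (finite-dimensional) claim, once one verifies that the cb-norm in Theorem \ref{thm17.1} matches the ordinary operator norm in the statement of the corollary: for $X \in \Omega_o(n^2)$, the $OH$-factor on a positive-measure boundary arc forces $\|T_X\|_{cb} = \|T_X\|$ by interpolation with the ordinary operator-norm estimate on the complementary arc, mirroring the argument used in Theorem \ref{comthm9.4}.

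The infinite-dimensional statement then follows the template of Corollary \ref{comcor4.2}. Conditional expectations $\bb{E}_n : B(\ell_2) \to M_n$ associated to an exhausting sequence of finite matrix blocks act as simultaneous contractions on both $CB(S_{p_0})$ and $CB(S_{p_1})$ (via the corresponding Schur multiplier compression on $S_p$), reducing the computation to the finite-dimensional identity already established, and the Gagliardo-completion description of $(\cdot,\cdot)^\theta$ lets one pass to the supremum over $n$. The degenerate cases $p_0 = \infty$ or $p_1 = \infty$ are handled by the stated convention of replacing $CB(S_\infty)$ by $CB(c_0, B(\ell_2))$, exactly as in the commutative Corollary \ref{comcor12.2}.

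The main obstacle will be the first step: correctly reconciling the dimensional restriction $OS_{n^2}$ appearing in Theorem \ref{thm17.1} with the full class $SQOL_p$, and the accompanying verification that the ordinary operator norm and the cb-norm agree on $\Omega_o(n^2)$. The rest of the argument is structural, but this reconciliation requires a careful interpolation/ultraproduct manipulation in the operator space setting where local reflexivity can fail.
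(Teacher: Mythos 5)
The overall route is right and matches the intended derivation: specialize Theorem \ref{thm17.1} to the two-valued family $p(z)=p_j$, ${\cl B}(z)=SQOL_{p_j}\cap OS_{n^2}$ on a partition $\partial_0\cup\partial_1$ of the required measures, verify $c\beta(z)=CB(S^n_{p(z)})$, use paragraph \ref{comsec4.0} to turn the two-valued family into the Calder\'on pair, compute $p(0)=p$ from the harmonic-mean formula, identify ${\cl B}(0)$ with $\Omega_o(n^2)$, and then pass to the infinite-dimensional assertion by compressing to finite blocks as in Corollary \ref{comcor4.2}. The paper itself offers no written proof, and this is exactly what it expects the reader to supply.

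However, the sentence intended to reconcile the cb-norm appearing in Theorem \ref{thm17.1} with the plain operator norm written in the corollary is not correct as stated. You appeal to ``the $OH$-factor on a positive-measure boundary arc'' of a space $X\in\Omega_o(n^2)$; but the boundary data for $\Omega_o(n^2)$ lies in $SQOL_{p_0}$ and $SQOL_{p_1}$, and there is no $OH$-factor unless one of $p_0,p_1$ equals $2$ (the $OH$ argument of Theorem \ref{comthm9.4} is specific to the $p=2$/$\theta$-$O$-Hilbertian situation). What actually works, and is the genuine analogue of the proof of Theorem \ref{comthm9.4}, is to estimate the cb-norm directly on each boundary arc: for $z\in\partial_j$ with $X(z)\in SQOL_{p_j}$, the subquotient/ultraproduct extension principle for $S_{p_j}[\,\cdot\,]$ gives $\|T_{X(z)}\|_{cb}\le\|T\|_{CB(S^n_{p_j})}$, and interpolating these two bounds gives $\|T_{X(0)}\|_{cb}\le\|T\|_{(CB(S^n_{p_0}),CB(S^n_{p_1}))_\theta}$, so the supremum over the cb-norms is already controlled by the interpolated norm. (Alternatively, the norm in the corollary is probably best read as the cb-norm, exactly as in the definition of $c\beta(p,{\cl B})$, which removes the issue entirely.) A second, minor imprecision: $(\cdot,\cdot)^\theta$ is Calder\'on's upper complex method, not a Gagliardo completion; the actual reduction uses simultaneous contractive compressions onto finite matrix corners, as in Corollary \ref{comcor4.2}. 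With these two corrections your argument is complete.
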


\begin{cor}\label{cor17.3}
Let $1<p<\infty$ and $0<\theta<1$. The unit ball of $(B_r(S^n_p), CB(S^n_p))_\theta$ consists of all operators $T\colon \ S^n_p\to S^n_p$ such that $\|T_X\colon \ S^n_p[X]\to S^n_p[X]\|\le 1$ for all operator spaces $X$ that can be written as $X=X(0)$ where $\{X(z)\mid z\in\partial D\}$ is a compatible family of $n^2$-dimensional operator spaces such that $X(z)$ is a subquotient of an ultraproduct of $\{S^m_p\mid m\ge 1\}$ for all $z$ in a subarc of measure $\ge \theta$. Let $SQO_p(\theta,n^2)$ be the class of all such spaces. \\ Let $(M,\tau)$ and $(M',\tau')$ be hyperfinite von Neumann algebras equipped with semifinite, normal faithful traces. Then the unit ball of 
\[
 B_r(L_p(\tau), L_p(\tau')), CB(L_p(\tau), L_p(\tau')))^\theta
\]
consists of those $T$ in $CB(L_p(\tau), L_p(\tau'))$ such that 
\begin{equation}\label{eq17.2}
  \sup\{\|T_X\colon \ L_p(\tau;X)\to L_p(\tau'; X)\|\mid X\in SQO_p(\theta,n)\ n\ge 1 \}\le 1.
\end{equation}\end{cor}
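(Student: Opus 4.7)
The plan is to derive both assertions from Theorem \ref{thm17.1}, following the template provided by Corollary \ref{comcor10.3} in the commutative setting. For the finite-dimensional part, I would apply Theorem \ref{thm17.1} with $p(z)\equiv p$ and with the following two-valued family: fix an arc $J_\theta\subset\partial D$ of normalized length $\theta$, set ${\cl B}(z)=SQOL_p\cap OS_{n^2}$ for $z\in J_\theta$, and ${\cl B}(z)=OS_{n^2}$ for $z\notin J_\theta$. Both classes are $SQO(p)$-classes by inspection, the measurability requirement is trivial since the family is piecewise constant, and the compatibility of $\{c\beta(z)\}$ is immediate.

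With this choice, the boundary data identify as follows. For $z\notin J_\theta$, $c\beta(z)=B_r(S^n_p)$ by the very definition of complete regularity recalled at the start of \S \ref{comsec9}, where the supremum over all operator spaces $X$ of $\|T_X\colon S^n_p[X]\to S^n_p[X]\|$ gives precisely the regular norm. For $z\in J_\theta$, $c\beta(z)=CB(S^n_p)$, which uses the standard fact (see \cite[Chapter 7]{P6}) that the cb-norm on $S^n_p$ in its natural non-commutative $L_p$ operator space structure equals $\sup\{\|T_X\colon S^n_p[X]\to S^n_p[X]\|_{cb}\mid X\in SQOL_p\}$, together with a finite-dimensional approximation reducing $X$ to dimension at most $n^2$. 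Thus, by the boundary description of complex interpolation for piecewise constant families recalled in \ref{comsec4.0},
\[
 c\beta(0)=(B_r(S^n_p),CB(S^n_p))_\theta.
\]
On the other hand, Theorem \ref{thm17.1} identifies $c\beta(0)$ with $c\beta(p,{\cl B}(0))$, and the interpolated class ${\cl B}(0)$ is, by construction, exactly the class $SQO_p(\theta,n^2)$ of the statement. Hence the first assertion follows.

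For the second, infinite-dimensional, assertion the plan is to reduce to the matricial case just settled, in direct analogy with the proof of Corollary \ref{comcor4.2}. Using the hyperfiniteness of $({\cl M},\tau)$ and $({\cl M}',\tau')$, choose increasing nets of finite-dimensional matricial $*$-subalgebras with weak-$*$ dense union; the associated trace-preserving conditional expectations $E_\alpha$ on $L_p(\tau)$ and $E'_\beta$ on $L_p(\tau')$ are simultaneously completely contractive on $B_r(\cdot)$ and $CB(\cdot)$ (the regularity side is classical; the cb side follows from complete positivity and trace-preservation). By the standard reduction for the upper interpolation functor (cf.\ the proof of Corollary \ref{comcor4.2} and \cite[p.~229]{KPS}),
\[
 \|T\|_{(B_r,CB)^\theta}=\sup_{\alpha,\beta}\|E'_\beta T E_\alpha\|_{(B_r,CB)^\theta},
\]
and since $X$-valued extensions commute with the tensorisations $E_\alpha\otimes id_X$, $E'_\beta\otimes id_X$, the quantity in \eqref{eq17.2} enjoys the same approximation. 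The compressions $E'_\beta T E_\alpha$ are matricial, so the first part applies.

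The main obstacle I expect is the identification $c\beta(p,{\cl B})=CB(S^n_p)$ for ${\cl B}=SQOL_p\cap OS_{n^2}$: this really rests on the theory of non-commutative vector-valued $L_p$ spaces from \cite{P6}, together with the finite-dimensional approximation argument needed to pass from arbitrary subquotients of ultraproducts of $S^m_p$'s to elements of $OS_{n^2}$. A secondary (but routine) technical point is to ensure, when invoking Theorem \ref{thm17.1}, that the caveats at the beginning of \S \ref{sec13} concerning the lack of local reflexivity in the operator space setting are properly respected in the matricial case $E=S^n_1$, $F=M_n$.
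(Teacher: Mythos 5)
Your proposal is correct, and it takes the approach the paper intends: §\ref{sec13} explicitly says the statements are obtained by ``briefly describing'' the operator-space extension of §\ref{comsec12}, and no actual proof of Corollary~\ref{cor17.3} is supplied in the text, so the right template is the commutative Corollary~\ref{comcor10.3} (two-valued piecewise-constant family fed into Theorem~\ref{thm17.1}, then a conditional-expectation reduction à la Corollary~\ref{comcor4.2} for the infinite-dimensional part). That is exactly what you do.

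Two remarks worth recording. First, your assignment ${\cl B}(z)=SQOL_p\cap OS_{n^2}$ on $J_\theta$ and ${\cl B}(z)=OS_{n^2}$ off $J_\theta$ is the one consistent with the convention of \ref{comsec4.0} (where $\theta$ is the mass of the set carrying the \emph{second} space of the pair): with it, $c\beta(0)=(B_r(S^n_p),CB(S^n_p))_\theta$ and ${\cl B}(0)$ matches $SQO_p(\theta,n^2)$. The paper's own proof of Corollary~\ref{comcor10.3} appears to state the opposite assignment (``${\cl B}(z)=B_n$ for $z\in J_\theta$''), which under the convention of \ref{comsec4.0} would produce $(B,B_r)_\theta$ rather than $(B_r,B)_\theta$; your version is the corrected one. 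Second, for the identification $c\beta(p,SQOL_p\cap OS_{n^2})=CB(S^n_p)$ you should be slightly more explicit than ``reduce $X$ to dimension at most $n^2$'': since ${\cl B}$ is confined to $OS_{n^2}$, the space ${\bb C}$ is not literally in ${\cl B}$ for $n>1$, so the lower bound $\sup_{X\in{\cl B}}\|T_X\|_{cb}\ge\|T\|_{cb}$ comes from taking $X=S^n_p$ (or any $n^2$-dimensional subquotient of $L_p$), and the upper bound from the Fubini-type inequality $\|T_X\|_{cb}\le\|T\|_{cb}$ for all $X\in SQOL_p$ from \cite[Ch.~7]{P6}. Your caveat about local reflexivity and the $E=S^n_1$, $F=M_n$ situation is exactly what the opening of §\ref{sec13} flags, so you are well aware of the one genuine subtlety.
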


\begin{cor}\label{comcor12.4bis}
Fix $0<\theta<1$. Consider a measurable partition $\partial D = J'_0\cup J'_0 \cup J_1$ with
\[
|J'_0| = (1-\theta)/2,\quad |J''_0| = (1-\theta)/2,\quad |J_1| = \theta.
\]
We set
\[
B(z) = \begin{cases}
CB(S^n_1)&\text{if $z\in J'_0$}\\
CB(M_n)&\text{if $z\in J''_0$}\\
B(S^n_2)&\text{if $z\in J_1$.}
       \end{cases}
\]
We have then isometrically
\[
B(0) \simeq (B_r(S^n_2), B(S^n_2))_\theta.
\]
\end{cor}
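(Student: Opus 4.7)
The plan is to deduce this as the direct operator space analogue of Corollary \ref{comcor11.4}, by applying Theorem \ref{thm17.1} to a judiciously chosen piecewise constant family $\{c\beta(p(z),\mathcal{B}(z))\mid z\in\partial D\}$ and then invoking Theorem \ref{comthm9.4} to identify the interpolated norm at the center with $(B_r(S^n_2),B(S^n_2))_\theta$.

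Concretely, I would set $p(z)=1$ on $J'_0$, $p(z)=\infty$ on $J''_0$, $p(z)=2$ on $J_1$, and choose $\mathcal{B}(z)$ to be the class of all $n$-dimensional operator spaces on $J'_0\cup J''_0$ (trivially an $SQO(1)$- and $SQO(\infty)$-class) and $\mathcal{B}(z)=\{OH_m\mid m\ge 1\}$ on $J_1$ (an $SQO(2)$-class, since $S_2[OH]=OH$, $\ell_2$-sums and ultraproducts of $OH$'s are $OH$, and subquotients of $OH$ remain $OH$). With these data one checks $c\beta(z)=B(z)$ for a.e.\ $z$: on $J'_0\cup J''_0$ this is the very definition of the $cb$-norm, while on $J_1$ the identification $c\beta(2,\{OH\})=B(S^n_2)$ follows from $S^n_2[OH]=OH$ together with $CB(OH)=B(OH)$ isometrically. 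Since the family is piecewise constant, the measurability requirements of Theorem \ref{thm17.1} are automatic.

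Computing the harmonic extension at $\xi=0$ yields
\[
 p(0)^{-1}=\tfrac{1-\theta}{2}\cdot 1+\tfrac{1-\theta}{2}\cdot 0+\theta\cdot\tfrac12=\tfrac12,
\]
so $p(0)=2$. By construction, $\mathcal{B}(0)$ consists precisely of the $n$-dimensional spaces $X(0)$ for compatible families $\{X(z)\}$ with $X(z)=OH_n$ on $J_1$ (of measure $\theta$) and $X(z)$ arbitrary elsewhere; this is exactly the class of $n$-dimensional $\theta$-O-Euclidean spaces (dropping ``arcwise'' if $J_1$ is not an arc). Theorem \ref{thm17.1} then gives $c\beta(0)=c\beta(2,\mathcal{B}(0))$ isometrically, i.e.\ $\|T\|_{B(0)}=\sup_{X\in\mathcal{B}(0)}\|T_X\|_{cb}$.

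It remains to identify this last supremum with $\|T\|_{(B_r(S^n_2),B(S^n_2))_\theta}$. One inequality is immediate from Theorem \ref{comthm9.4}, since every arcwise $\theta$-O-Hilbertian space lies in $\mathcal{B}(0)$. For the reverse inequality one adapts the argument of \S\ref{comsec6} to the operator space setting: given $X=X(0)\in\mathcal{B}(0)$ and $T$ in the open unit ball of $(B_r(S^n_2),B(S^n_2))_\theta$, lift $T$ to an analytic $T(z)$ with $\|T(z)\|_{B(z)}<1$ a.e., and then apply the interpolation principle \ref{comsec4.1+} to the $S^n_2[X(z)]$-valued family $T(z)\otimes id_{X(z)}$ to conclude $\|T_X\|_{cb}\le 1$. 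The only delicate point is this last step---the operator space analogue of Theorem \ref{comthm6.1}, i.e.\ the stability of $\|T_X\|_{cb}\le C$ under the appropriate ultraproduct/subquotient operations---but this is routine given the material already developed in \S\ref{comsec9} and \S\ref{sec13}.
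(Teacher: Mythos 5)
Your proof is correct and follows exactly the route the paper intends: Corollary~\ref{comcor12.4bis} is stated in \S\ref{sec13} as the operator-space transcription of Corollary~\ref{comcor11.4}, whose proof in the paper makes precisely your choices of $p(z)$ and $\mathcal{B}(z)$ and appeals to the general interpolation theorem; your identification of the arcwise/non-arcwise bridge (the operator-space counterpart of the \S\ref{comsec6} argument) as the only non-routine step is accurate and is the same implicit appeal made in the Banach-space proof.
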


The proof of Lemma 10.4 extends with routine modifications and yields:

\begin{lem}\label{lem17.4}
 Given $n$-dimensional operator spaces $E,F$. Let $c\gamma(z) = \Gamma_{c{\cl B}(z)}(E,F)$. Then for all $w\colon  \ E\to F$ we have
\begin{equation}
\|w\|_{\Gamma_{c{\cl B}(\xi)}(E,F)} \le \|w\|_{c\gamma(\xi)}.\tag*{$\forall \xi\in D$}
\end{equation}
\end{lem}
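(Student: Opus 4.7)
The proof should follow the same blueprint as Lemma~\ref{comlem10.3}, with cb-norms replacing operator norms throughout and with one extra compatibility check at every matrix level. My plan is as follows.

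\smallskip

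Step 1 (setup and measurable selection). Suppose $\|w\|_{c\gamma(\xi)}<1$. Since $\dim E=\dim F=n$, invertible maps are dense in $CB(E,F)$ in cb-norm, so by density one may assume $w$ is a linear isomorphism. By definition of complex interpolation for compatible families, there is a bounded analytic function $z\mapsto w(z)\in CB(E,F)$ on $D$ with $w(\xi)=w$ and
\begin{equation*}
\mathrm{ess\,sup}_{z\in\partial D}\ \gamma_{c{\cl B}(z)}(w(z))<1.
\end{equation*}
A standard measurable selection argument (see \ref{comsec1.10}; parametrize $n$-dimensional operator spaces by completely contractive embeddings into a fixed large $M_N$) then produces measurable families $X(z)\in{\cl B}(z)$ together with cb-maps $w_2(z)\colon E\to X(z)$ and $w_1(z)\colon X(z)\to F$ satisfying $w(z)=w_1(z)w_2(z)$, $\|w_1(z)\|_{cb}<1$ and $\|w_2(z)\|_{cb}\le 1$ almost everywhere on $\partial D$.

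\smallskip

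Step 2 (pulling back the operator space structure). Define an operator space structure $Y(z)$ on the underlying vector space $E$ by setting, for each $k\ge 1$,
\begin{equation*}
\|y\|_{M_k(Y(z))}=\|(w_2(z)\otimes I_{M_k})(y)\|_{M_k(X(z))},\qquad y\in M_k(E).
\end{equation*}
By Ruan's theorem this is a genuine operator space structure, and since $w(z)=w_1(z)w_2(z)$ is invertible a.e.\ on $\partial D$ (by analyticity and invertibility at $\xi$), $w_2(z)$ is injective a.e., so $Y(z)$ is completely isometric to the subspace $w_2(z)(E)\subset X(z)$. As ${\cl B}(z)$ is an $SQO(p(z))$-class, and therefore closed under subspaces, we have $Y(z)\in{\cl B}(z)$.

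\smallskip

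Step 3 (compatibility and conclusion). To see that $\{Y(z)\mid z\in\partial D\}$ is a compatible family of operator spaces one must verify, for every $k$, two-sided bounds between $\|\cdot\|_{M_k(Y(z))}$ and $\|\cdot\|_{M_k(E)}$ with logarithms in $L_1(\partial D)$. The upper bound $\|y\|_{M_k(Y(z))}\le\|y\|_{M_k(E)}$ is immediate from $\|w_2(z)\|_{cb}\le 1$. For the lower bound,
\begin{equation*}
\|y\|_{M_k(Y(z))}\ge \|w_1(z)\|_{cb}^{-1}\|(w(z)\otimes I_{M_k})(y)\|_{M_k(F)}\ge \|w(z)^{-1}\|_{cb}^{-1}\|y\|_{M_k(E)}.
\end{equation*}
Fixing bases of $E$ and $F$, the scalar function $z\mapsto\det(w(z))$ is bounded analytic on $D$ and nonzero at $\xi$, so $\log|\det(w(z))|\in L_1(\partial D)$ by Jensen, hence $\log\|w(z)^{-1}\|\in L_1(\partial D)$ by Cramer's rule; the equivalence of cb-norm and operator norm in fixed finite dimension (with a constant depending only on $n$) then gives $\log\|w(z)^{-1}\|_{cb}\in L_1(\partial D)$, uniformly in $k$. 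Hence $\{Y(z)\}$ is a compatible family of operator spaces, so $Y(\xi)$ is well-defined via \eqref{comeq9.0} and lies in ${\cl B}(\xi)$ by the very definition of the latter class. Finally, applying the interpolation principle \ref{comsec4.1+} at each matrix level $k$ to the pointwise inequalities
\begin{equation*}
\|(w(z)\otimes I_{M_k})(y)\|_{M_k(F)}\le \|y\|_{M_k(Y(z))}\le \|y\|_{M_k(E)}\quad(z\in\partial D)
\end{equation*}
transfers them to $z=\xi$, exhibiting $w=w(\xi)$ as a completely contractive composition $E\to Y(\xi)\to F$. Hence $\gamma_{c{\cl B}(\xi)}(w)\le 1$, and homogeneity together with the density reduction of Step~1 yields the stated inequality.

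\smallskip

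The main obstacle, absent from Lemma~\ref{comlem10.3}, is that compatibility of $\{Y(z)\}$ must be checked at \emph{every} matrix level $k$ simultaneously, and the lower-bound function controlling $\|y\|_{M_k(Y(z))}$ in terms of $\|y\|_{M_k(E)}$ is $\|w(z)^{-1}\|_{cb}^{-1}$ rather than merely $\|w(z)^{-1}\|^{-1}$. In finite dimension this is handled by equivalence of cb- and operator norms, but it deserves to be stated explicitly, together with the observation that the measurable selection must be carried out so that the maps $w_j(z)$ have jointly measurable cb-structures, exactly as in \S\ref{comsec9}.
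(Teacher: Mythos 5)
Your proof is correct and fills in exactly the ``routine modifications'' that the paper leaves implicit (the paper's own proof of this lemma is a single sentence deferring to Lemma~\ref{comlem10.3}). The one genuine extra care needed in the operator-space case is the one you isolate: compatibility of $\{Y(z)\}$ must hold at every matrix level $k$, and the lower bound is governed by $\|w(z)^{-1}\|_{cb}^{-1}$ rather than merely $\|w(z)^{-1}\|^{-1}$, which you correctly secure by combining the determinant/Jensen argument of Lemma~\ref{comlem10.3} with the finite-dimensional equivalence of cb- and operator norms (giving a $k$-independent lower weight $k_1(z)=\|w(z)^{-1}\|_{cb}^{-1}$ with $\log k_1\in L_1(\partial D)$). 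One minor simplification worth noting: since ${\cl B}(z)$ consists of operator spaces of the same dimension $n$ as $E$ and $w(z)$ is invertible a.e., the injection $w_2(z)$ is automatically bijective, so $Y(z)$ is completely isometric to $X(z)$ itself — this is the observation Lemma~\ref{comlem10.3} uses in the Banach case, and it makes your appeal to closure of $SQO(p)$-classes under subspaces correct but not strictly necessary.
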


\section{Examples with the Haagerup tensor product}\label{comsec10}

Using Kouba's interpolation theorem (\cite{Kou}) we will produce some interesting examples of compatible families of operator spaces involving quite naturally more than 2 spaces. Let $J{(1)}, J{(2)},\ldots, J{(d)}$ be arbitrary measurable subsets of $\partial D$.

We will denote for $j=1,2,\ldots, d$
\[
 J_0(j) = J(j),\qquad J_1(j)= \partial D\backslash J(j).
\]
Then for any $\vp = (\vp(j))$ in $\{0,1\}^d$ we set
\[
 J_\vp = J_{\vp(1)}(1)\cap\cdots\cap J_{\vp(d)} (d).
\]
Note that $\{J_\vp\mid \vp\in \{0,1\}^d\}$ is a partition of $\partial D$ into $2^d$ measurable subsets.

Now let $(A^1_0,A^1_1), \ldots, (A^d_0,A^d_1)$ be $d$ compatible pairs of finite dimensional operator spaces. We define a compatible family $\{X(z)\mid z\in\partial D\}$ by setting
\begin{equation}
 X(z) = A^1_{\vp(1)}\otimes_h\cdots\otimes_h A^d_{\vp(d)}.\tag*{$\forall z\in J_\vp$}
\end{equation}

\begin{thm}\label{comthm10.1}
 We have a completely isometric identification
\[
 X(0) \simeq (A^1_0,A^1_1)_{\theta_1} \otimes_h\cdots\otimes_h (A^d_0,A^d_1)_{\theta_d}
\]
where $\theta_j= m(J_1(j))$.
\end{thm}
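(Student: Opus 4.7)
The plan is to deduce the theorem as a family-valued analogue of Kouba's interpolation theorem \cite{Kou}, combined iteratively with the two-valued principle of \ref{comsec4.0}. Kouba's classical theorem for pairs asserts $(A_0 \otimes_h C_0, A_1 \otimes_h C_1)_\theta \simeq (A_0, A_1)_\theta \otimes_h (C_0, C_1)_\theta$ completely isometrically. What I need here is the analogous assertion for compatible families of finite-dimensional operator spaces: if $\{A(z)\}$ and $\{C(z)\}$ are two such families on $\partial D$, then $\{A(z) \otimes_h C(z)\}$ is again a compatible family and its interpolation at $0$ is completely isometric to $A(0) \otimes_h C(0)$. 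Granting this bipartite family version, the theorem follows by induction on $d$: each single-factor family $\{A^j_{\varepsilon_j(z)}\mid z \in \partial D\}$, with $\varepsilon_j(z) = \mathbf{1}_{J_1(j)}(z)$, interpolates at $0$ to $(A^j_0, A^j_1)_{\theta_j}$ by \ref{comsec4.0}, and iterating the bipartite identity then reassembles $X(0)$ as the Haagerup product of these interpolates.

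For the inequality $\|x\|_{X(0)} \le \|x\|_{A(0) \otimes_h C(0)}$ in the bipartite family-Kouba statement, I would exploit the Haagerup factorization: assume $\|x\|_{A(0) \otimes_h C(0)} < 1$ and write $x = a \odot c = \sum_k a_k \otimes c_k$ with $\|\sum a_k a_k^*\|^{1/2}_{A(0)} \cdot \|\sum c_k^* c_k\|^{1/2}_{C(0)} < 1$. By the $H^\infty_\#$-characterization \eqref{comeq4.22} and a measurable selection argument (\ref{comsec1.10}), lift each $a_k$ and $c_k$ to analytic functions $a_k(\cdot), c_k(\cdot)$ on $D$ whose boundary values respect the row and column Haagerup bounds in $A(z)$ and $C(z)$ respectively. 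The pointwise product $F(z) = a(z) \odot c(z)$ is then analytic, and by submultiplicativity of the Haagerup factorization one has $\|F(z)\|_{A(z) \otimes_h C(z)} < 1$ almost everywhere on $\partial D$. Applying the interpolation principle \ref{comsec4.1+} to the compatible family $\{A(z) \otimes_h C(z)\}$ yields $\|F(0)\|_{X(0)} = \|x\|_{X(0)} \le 1$; the same argument carried out at the matrix amplification level $M_n(\cdot)$ delivers complete contractivity rather than mere contractivity.

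For the reverse bound I would argue by duality. On finite-dimensional operator spaces the Haagerup tensor product is self-dual, in the sense that $(A \otimes_h C)^* \simeq A^* \otimes_h C^*$ completely isometrically, and by \ref{comsec4.1} the dual of the family interpolation at $0$ coincides with the interpolation at $0$ of the dual family. Combining these identifications, the reverse inequality reduces to the inequality just proved, applied to the dual families $\{A(z)^*\}$ and $\{C(z)^*\}$, yielding the full completely isometric identity. The principal obstacle lies in the analytic extension of boundary Haagerup factorizations whose ``inner dimensions'' (the length of the row $a(z)$ and column $c(z)$) may in principle vary with $z$. This will be handled, as in the proof of Theorem \ref{comthm5.1}, by approximating with analytic matrix-valued functions of a common fixed size via a matricial Szeg\H{o}-outer factorization in the style of Lemma \ref{comlem4.1}, followed by an ultrafilter limit to eliminate the dependence on the approximation.
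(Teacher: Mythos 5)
Your proof takes exactly the same route as the paper: reduce to a bipartite ``family Kouba theorem'' for the Haagerup tensor product, iterate it $d-1$ times, and finish with the two-valued formula $X^j(0)=(A^j_0,A^j_1)_{\theta_j}$ from \ref{comsec4.0}. The only real difference is that the paper \emph{cites} the family Kouba theorem for $\otimes_h$ as a known consequence of \cite{Kou} and \cite{P6}, whereas you sketch a proof of it. Your sketch does capture the two correct ideas: the contractive inclusion $A(0)\otimes_h C(0)\to X(0)$ by analytically lifting a Haagerup factorization and applying the subharmonicity principle of \ref{comsec4.1+}, and the reverse inclusion by the self-duality $(E\otimes_h F)^*=E^*\otimes_h F^*$ (a special feature of $\otimes_h$ that makes the argument close; this would fail, e.g., for the projective Banach tensor norm whose dual is the injective one). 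Two points would need tightening. First, the duality step uses that family interpolation commutes with duality as \emph{operator} spaces, i.e.\ $M_n(X(0)^*)=M_n(X^*)(0)$; the paper states this only at the Banach-space level in \ref{comsec4.1}, so you should justify the operator-space form via \eqref{comeq9.0} and \eqref{comeq9.3} as is done implicitly in \S\ref{comsec9}. Second, your final paragraph about Haagerup factorizations of varying inner dimension on the boundary is extraneous: that difficulty arises only if one tries to prove the \emph{hard} direction by factorizing $F(z)$ pointwise and reassembling analytically, which your duality argument entirely avoids — in your easy direction the inner dimension $K$ is fixed at $z=0$ and one lifts the whole row $a\in M_{1,K}(A(0))=M_{1,K}(A)(0)$ (not each $a_k$ separately), so no Szeg\H{o}/ultrafilter machinery is needed.
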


\begin{proof}
Kouba's theorem (cf.\ \cite{Kou}) for the Haagerup tensor product implies the following (see \cite{P6}):\ Let $\{A(z)\}$ and $\{B(z)\}$ be two compatible families of finite dimensional operator spaces. Let $T(z) = A(z) \otimes_h B(z)$. Then $\{T(z)\}$ is a compatible family such that
\[
T(0)\simeq A(0) \otimes _h B(0)
\]
completely isometrically.

In other words, the operations $X\to X(0)$ (interpolation) and $X,Y\to X \otimes_h Y$ (Haagerup tensor product) are commuting. In the situation of Theorem \ref{comthm10.1}, we may iterate the preceding and we obtain 
\[
 X(0)\simeq X^1(0) \otimes_h\cdots \otimes_h X^d(0)
\]
where $X^j(z)$ is defined by
\[
 X^j(z) = \begin{cases}
           A^j_0&\text{if $z\in J_0(j)$}\\
A^j_1&\text{if $z\in J_1(j)$}
          \end{cases}~~.
\]
By  \cite[Cor. 5.1]{CCRS3}, we have then
\[
 X^j(0) = (A^j_0, A^j_1)_{\theta_j}
\]
where $\theta_j = m(J_1(j))$ (recall $m$ is the \emph{normalized} Lebesgue measure on $\partial D$). \end{proof}

In operator space theory, the pair $(R,C)$ of row and column Hilbert spaces plays a fundamental r\^ole. It was studied as an interpolation pair in \cite{P6} (see also \cite{P5}). We use transposition $R\to C$ (or $C\to R$) to identify an element of $R$ with one in $C$ for the purposes of interpolation. This allows us to view $(R,C)$ as a compatible pair. Let $C[\theta] = (R,C)_\theta$. Then $C[\theta]$ can be identified with the subspace of column vectors in the Schatten class $S_p$ for $p=(1-\theta)^{-1}$. So we set
\begin{equation}\label{comeq10.1}
 C_{[p]} = C[1-1/p] = (R,C)_{1-1/p} = (C,R)_{1/p}.
\end{equation}
Let $\theta_j\ge 0$ be such that $\theta_0 + \theta_1 +\cdots+ \theta_d = 1$. Let $\Delta_0 \cup\cdots\cup \Delta_d$ be a partition of $\partial D$ into arcs so that $m(\Delta_j) =\theta_j$. For any $z$ in $\Delta_j$ we set 
\begin{equation}\label{comeq10.2}
 K(z) = C\otimes_h \cdots\otimes_h C \otimes_h R \otimes\cdots\otimes_h R
\end{equation}
where the product has its $j$ first factors equal to $C$ and the following $d-j$ equal to $R$. Let $K_j$ be space appearing on the right-hand side of \eqref{comeq10.2}. It is well known (see \cite{ER} or \cite[p. 96]{P5}) that $K_j$ can be identified with the compact operators from $\ell^{\otimes(d-j)}_2$ to $\ell^{\otimes j}_2$, and in the extreme case $j=0$ (resp.\ $j=d$) we find $\ell^{\otimes d}_2$ with its row (resp.\ column) operator space structure.

The space $K_0 \cap K_1\cap\cdots\cap K_d$ has already appeared in Harmonic Analysis over ${\bb F}_\infty$, the free group with countably infinitely many generators $\{g_i\}$. Indeed, it was shown in \cite{Buch}
(in \cite {HP} for the   $d=1$ case) that $K_0\cap\cdots\cap K_d$ can be identified with the closed span in $C^*_\lambda({\bb F}_\infty)$ of $\{\lambda(g_{i_1}g_{i_2}\ldots g_{i_d})\mid i_1,i_2,\ldots, i_d\in {\bb N}\}$. For an extension of this to the non-commutative $L_p$-space over ${\bb F}_\infty$, see \cite{PaP}. These results motivated us to study the interpolation spaces associated to the family $K(z)$ defined by \eqref{comeq10.2}. Curiously, one can identify them quite easily:

\begin{cor}\label{comcor10.2}
We have a completely isometric identity
\[
 K(0)\simeq C_{[p_0]} \otimes_h C_{[p_1]} \cdots \otimes_h C_{[p_{d-1}]}
\]
where 
\[
 p_j = (\theta_0+\theta_1 +\cdots+ \theta_j)^{-1}.
\]
Note that $p_0\ge p_1\ge\cdots\ge p_{d-1}$.
\end{cor}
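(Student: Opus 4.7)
\medskip

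\textbf{Proof proposal for Corollary \ref{comcor10.2}.}
The plan is to reduce this to a direct application of Theorem \ref{comthm10.1} by rewriting the $d$-valued family $\{K(z)\}$ as arising from $d$ compatible pairs in the sense of that theorem, one pair per Haagerup tensor factor. Concretely, for each slot $i=1,\ldots, d$, I will set $(A^i_0,A^i_1)=(R,C)$ and choose the measurable set
\[
J(i)=\Delta_0\cup\Delta_1\cup\cdots\cup\Delta_{i-1},
\]
so that $J_0(i)=J(i)$ (the set where slot $i$ should be $R$) and $J_1(i)=\Delta_i\cup\cdots\cup\Delta_d$ (the set where slot $i$ should be $C$).

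The first check is that with these choices the resulting family
\[
X(z)=A^1_{\varphi(1)}\otimes_h\cdots\otimes_h A^d_{\varphi(d)}\qquad (z\in J_\varphi)
\]
of Theorem \ref{comthm10.1} coincides with $K(z)$. This is an immediate combinatorial verification: for $z\in\Delta_j$, slot $i$ lies in $J_1(i)$ (and hence equals $C$) precisely when $i\le j$, while slot $i$ lies in $J_0(i)$ (and equals $R$) when $i>j$, which matches \eqref{comeq10.2}.

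Applying Theorem \ref{comthm10.1} then gives a completely isometric identity
\[
K(0)\simeq (R,C)_{\sigma_1}\otimes_h (R,C)_{\sigma_2}\otimes_h\cdots\otimes_h (R,C)_{\sigma_d},
\]
where $\sigma_i=m(J_1(i))=\theta_i+\theta_{i+1}+\cdots+\theta_d=1-(\theta_0+\cdots+\theta_{i-1})$. Since $p_{i-1}=(\theta_0+\cdots+\theta_{i-1})^{-1}$, we have $\sigma_i=1-1/p_{i-1}$, and hence, by the definition \eqref{comeq10.1} of $C_{[p]}$,
\[
(R,C)_{\sigma_i}=C[\,1-1/p_{i-1}\,]=C_{[p_{i-1}]}.
\]
Substituting this for each $i=1,\ldots, d$ yields the announced identification
\[
K(0)\simeq C_{[p_0]}\otimes_h C_{[p_1]}\otimes_h\cdots\otimes_h C_{[p_{d-1}]}.
\]

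There is no serious obstacle: the whole argument is a bookkeeping exercise converting the partition $\{\Delta_0,\ldots,\Delta_d\}$ into the $2^d$-type partition $\{J_\varphi\}$ required by Theorem \ref{comthm10.1}. The only subtlety worth stating explicitly is that the pairs $(R,C)$ are compatible (via the transposition identification recalled before \eqref{comeq10.1}), so that the hypotheses of Theorem \ref{comthm10.1} are satisfied.
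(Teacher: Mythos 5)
Your proof is correct and follows essentially the same route as the paper's: you invoke Theorem~\ref{comthm10.1} with $(R,C)$-pairs and the sets $J(i)=\Delta_0\cup\cdots\cup\Delta_{i-1}$, then read off the interpolation parameters and apply \eqref{comeq10.1}. The only difference from the paper is the cosmetic one of taking $(A^i_0,A^i_1)=(R,C)$ with $J_1(i)=\Delta_i\cup\cdots\cup\Delta_d$ rather than $(C,R)$ with $J_1(i)=\Delta_0\cup\cdots\cup\Delta_{i-1}$; both relabelings yield $(R,C)_{1-1/p_{i-1}}=(C,R)_{1/p_{i-1}}=C_{[p_{i-1}]}$.
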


\begin{proof} By routine arguments (particularly easy here because
of the ``homogeneity"
of the spaces $R$ and $C$), one can reduce to the case when $R,C$ are replaced by their
  $n$-dimensional version $R_n,C_n$. To lighten the notation, we ignore this and still denote
  them by $R,C$.  Then the corollary follows from Theorem \ref{comthm10.1}, once one observes that if we use
\[
 (A^j_0,A^j_1) = (C,R)
\]
and set 
\[
 J_1(j) = \Delta_0\cup\cdots\cup \Delta_{j-1} \qquad (1\le j\le d),
\]
  we   find that if $z\in \Delta_j$
\[
 X^1(z) \otimes_h\cdots \otimes_h X^d(z) = K_j.
\]
Therefore, to conclude it suffices to calculate $m(J_1(j)) = \theta_0 +\cdots+\theta_{j-1} = (p_{j-1})^{-1}$ and to recall \eqref{comeq10.1}.
\end{proof}

\end{document}